\pdfoutput=1

\documentclass[11pt,a4paper]{article}

\usepackage{amscd,amsmath}
\usepackage{amssymb,amsthm}
\usepackage{longtable}
\usepackage{graphicx}
\usepackage[all]{xy}
\usepackage{ifpdf}
\usepackage{hyperref}

\topmargin=-5mm
\evensidemargin=0cm
\oddsidemargin=0cm
\textwidth=16cm
\textheight=22cm

\theoremstyle{definition}
\newtheorem{Thm}{Theorem}[section]
\newtheorem{Def}{Definition}[section]
\newtheorem{Lem}[Thm]{Lemma}
\newtheorem{Cor}[Thm]{Corollary}
\newtheorem{prop}[Thm]{Proposition}

\newtheorem{Rem}{Remark}[section]

\newtheorem{Ex}{Example}[subsection]

\newtheorem*{ThmA}{Theorem A}
\newtheorem*{ThmB}{Theorem B}
\newtheorem*{ThmC}{Theorem C}
\newtheorem*{ThmD}{Theorem D}
\newtheorem*{ThmAS}{Theorem for Floer homology of a cotangent bundle}
\newtheorem*{ThmFC}{Theorem for the fiberwise convexities of the rotating Kepler problem and Hill's lunar problem}
\newtheorem*{prop 4.5.}{Proposition 4.5}
\newtheorem*{Lem 4.9.}{Lemma 4.9}

\begin{document}


\title{Spectrum estimates of Hill's lunar problem}
\author{\bf{Junyoung Lee} 
\\ Junyoung.Lee@math.uni-augsburg.de \\ Institut for Mathematics \\ Augsburg University}
\date{}
\maketitle


\begin{abstract}
We investigate the action spectrum of Hill's lunar problem by observing inclusions between the Liouville domains enclosed by the regularized energy hypersurfaces of the rotating Kepler problem and Hill's lunar problem. In this paper, we reinterpret the spectral invariant corresponding to every nonzero homology class $\alpha \in H_*(\Lambda N)$ in the loop homology as a symplectic capacity $c_N(M, \alpha)$ for a fiberwise star-shaped domain $M$ in a cotangent bundle with canonical symplectic structure $(T^*N, \omega_{can}=d \lambda_{can})$. Also, we determine the action spectrum of the regularized rotating Kepler problem. As a result, we obtain estimates of the action spectrum of Hill's lunar problem. This will show that there exists a periodic orbit of Hill's lunar problem whose action is less than $\pi$ for any energy $-c < -{3^{4 \over 3} \over 2}$.
\footnote{For a comparison, simple periodic orbits of Kepler problem have the action $2\pi\sqrt{{1 \over -2c}}$ at energy level $c<0$. The reader should be careful not to confuse the action with actual physical time.}
\end{abstract}

\section{Introduction}

Celestial mechanics has provided a huge playground for mathematicians and physicists for a long time. One of interesting object is the motion of the Moon. Before Hill, the accuracy of the lunar theory was not so good. Hill introduced a problem for the lunar theory which reflects successfully the perturbation effect of the Sun. This problem is called Hill's lunar problem. Hill's lunar problem can be derived from the (circular planar) restricted three body problem. The restricted three body problem is obtained from the three body problem by assuming one particle, say $M$(Moon), is massless and two primaries, say $S, E$(Sun, Earth), take the Keplerian circular motion on the plane. 
\footnote{In the restricted three body problem, many authors use the convention of letting the massless particle $S$(Satellite) and two primaries $E, M$(Earth, Moon). Here we use the Moon as a massless particle in the Sun-Earth system in order to emphasize the relation with Hill's lunar problem.}
With normalizations of physical constants, one can derive the Hamiltonian
\begin{gather*}
H_{R3BP}: T^*(\mathbb{R}^2-\{(-\mu, 0), (1-\mu, 0)\}) \rightarrow \mathbb{R},
\\ H_{R3BP}(q, p):={1 \over 2}|p|^2-{1-\mu \over |q-(\mu, 0)|}-{\mu \over |q-(1-\mu, 0)|}+p_1q_2-p_2q_1
\end{gather*}
for the motion of $M$ where $\mu={M_E \over M_S+M_E}$ is the mass ratio between the mass of the Earth and the total mass. In order to obtain a time-independent Hamiltonian we used the rotating reference frame. The term $p_1 q_2-p_2 q_1$ is due to this rotating coordinate. If one takes the limit $\mu \rightarrow 0$, then we get the Hamiltonian
\begin{gather*}
H_{R}: T^*(\mathbb{R}^2-(0,0)) \rightarrow \mathbb{R},
\\ H_R (q, p)={1 \over 2}|p|^2-{1 \over |q|}+p_1 q_2-p_2 q_1
\end{gather*}
of the rotating Kepler problem. As one can see, this is the Kepler problem on the rotating reference frame. The rotating Kepler problem is completely integrable like Kepler problem. In fact, $L=p_1 q_2-p_2 q_1$ is another integral. This problem provides a good starting point to approach the restrict three body problem as a limit. Because we can figure out all periodic orbit of the rotating Kepler problem from the Kepler problem, if one has a technique to obtain information by comparing or perturbing the Hamiltonians, then the rotating Kepler problem can be a good candidate as a reference problem to reach the restricted three body problem and Hill's lunar problem. At this point of view, In this paper, we will also use this problem to study Hill's lunar problem. Hill's lunar problem, in modern language, can be obtained by not only taking $\mu \rightarrow 0$ but also thinking of the blow-up coordinate of order $\mu^{1 \over 3}$ near the Earth, see \cite{MHO} for the derivation. We recall the Hamiltonian 
\begin{gather*}
H_H : T^*(\mathbb{R}^2-\{(0,0)\}) \rightarrow \mathbb{R}
\\ H_H (q,p)={1 \over 2}|p|^2-{1 \over |q|}+p_1q_2-p_2q_1-q_1^2+{1 \over 2}q_2^2.
\end{gather*}
of Hill's lunar problem. Hill's lunar problem was introduced by Hill in order to study the stability of the orbit of the Moon in \cite{Hill}. Hill assumed that the Sun is infinitely far away from the Earth and has infinite mass. This approach brought us a simple Hamiltonian with great improvement in accuracy. As one can see, the difference on Hamiltonians of the rotating Kepler problem and Hill's lunar problem is only the degree 2 term $-q_1^2+{1 \over 2}q_2^2$. However, in the dynamics, this difference gives a dramatic change. For example, Hill's lunar problem is not completely integrable while the rotating Kepler problem is completely integrable. Non-integrability of Hill's lunar problem has been proved by many authors with many versions. The analytic non-integrability of Hill's lunar problem was proved by Meletlidou, Ichtiaroglou and Winterberg in \cite{MIW}. Morales-Ruiz, Sim$\acute{o}$ and Simon gave an algebraic proof of meromorphic non-integrability in \cite{MSS}. Recently, Llibre and Roberto in \cite{LR} discussed the $C^1$ integrability based on the existence of two periodic orbits on every positive energy level. One can see the chaotic feature of Hill's lunar problem in the numerical research of  Sim$\acute{o}$ and Stuchi in \cite{SS}.

The fundamental motivation of this paper comes from Poincar$\acute{e}$. Poincar$\acute{e}$ emphasized the importance of periodic orbits in the study of dynamics. He said that 'the periodic orbits are the skeleton of dynamics of a given problem'. Indeed, periodic orbits in a Hamiltonian dynamics arise as generators of Floer theory. One can ask how much dynamics will be changed by the change of the Hamiltonian. Of course, it is a complicated problem in general. However, if we restrict our problem on the periodic orbit, there are many available tools in symplectic theory. Floer homology and symplectic homology is invariant under the change of Hamiltonians, respectively. Moreover, the action spectrum of the boundary of a Liouville domain is invariant under Liouville isomorphisms. This stability of the action spectrum was proved in \cite{CFHW} and they used this to define local Floer homology. At first glance, it seems Floer homology and symplectic homology do not give any information under the change of Hamiltonians. However, if we consider the action filtration on the homology, then the homology with action filtration can reflect the change of Hamiltonians because, in general, the change of Hamiltonians does not give a Liouville isomorphism. In addition, 

The study of the symplectic topology has been actively done. This study on the symplectic topology has brought sometimes the progress of the celestial mechanics. As a remarkable example, in \cite{AFFHvK}, they proved the existence of global surfaces of section in the restricted three body problem for some pair of mass ratio and energy $(\mu, c)$. They used the finite energy plane theory in \cite{HWZ} based on the pseudo-holomorphic curve theory. This theory is still developing actively. For example, in the upcoming book \cite{FvK}, they collect many valuable applications of holomorphic curve theory to celestial mechanics. Many valuable symplectic theories like contact homology, symplectic field theory and finite energy foliation has been originated from this theory. From Gromov's nonsqueezing Theorem, the relation between symplectic embedding and the periodic orbit has been emphasized, see the introduction of \cite{FH}. For a systematic approach of the symplectic embeddings, one can use the notion of symplectic capacity. Symplectic capacities are symplectic invariants inspired by the Gromov's work in \cite{Gromov}. Ekeland and Hofer introduced the definition of symplectic capacity for the subsets of $(\mathbb{R}^{2n}, \omega_0)$ in \cite{EH}. This can be generalized to all symplectic manifolds as follows.

\begin{Def}[Symplectic capacity]
A symplectic capacity is a map which associates a symplectic manifold $(M, \omega)$ a number $c(M, \omega) \in (0, +\infty]$ satisfying the following conditions

(1) (Conformality) $c(M, \kappa \omega)=|\kappa|c(M, \omega)$ for $\kappa \ne 0$.

(2) (Monotonicity) If there is a symplectic embedding of $(M_1, \omega_1)$ into $(M_2, \omega_2)$, then 
\\ $c(M_1, \omega_1) \le c(M_2, \omega_2)$.

(3) (Normalization) $c(B^{2n}(1))=c(Z^{2n}(1))=\pi$.

(3') (Nontriviality) $0<c(B^{2n}(1))$ and $c(Z^{2n}(1))<+\infty$.
\\ Here, $B^{2n}(r)$ is the ball in $\mathbb{R}^{2n}$ with radius $r$ and $Z^{2n}(r)$ is the cylinder $B^2(r)\times \mathbb{R}^{2n-2}$ in $\mathbb{R}^{2n}$.
\end{Def}

In this paper, we will not discuss the symplectic capacity for general symplectic manifolds as above. Instead of considering all symplectic manifolds, we focus on a particular class of symplectic manifolds, that is, Liouville domains enclosed by fiberwise star-shaped hypersurfaces in a cotangent bundle space. Let $(N, g)$ be a closed Riemannian manifold. The cotangent bundle $T^* N$ with the canonical symplectic structure $\omega_{can}$ is an open exact symplectic manifold. Let $M \subset T^* N$ be a fiberwise star-shaped domain, namely, $M \cap T^*_q N$ is a star-shaped domain with respect to the origin in $T^*_q N$ for every $q \in N$. We denote by $FSD(N)$ the set of all fiberwise star-shaped domains in $T^* N$. We introduce the following definition.

\begin{Def}[Symplectic capacity for $FSD(N)$]
Let $(N, g)$ be a closed Riemannian manifold. A symplectic capacity for $FSD(N)$ which associates a fiberwise star-shaped domain in $T^* N$ a number $c(M) \in (0, +\infty]$ satisfying the following conditions

(1) (Conformality) $c(k M)=kc(M)$ for all $k \in \mathbb{R}^+$ for $M \in FSD(N)$.

(2) (Monotonicity) $c(M_2) \ge c(M_1)$ if there is a symplectic embedding of $M_1$ into $M_2$ for $M_1, M_2 \in FSD(N)$.

(3) (Nontriviality) $0<c(D^*_g N)<+\infty$ where $D^*_g N:=\{(q, p) \in T^* N | g^*_q(p, p) \le 1\}$.
\\ Here, $kM$ is defined by fiberwise multiplication in each cotangent space.
\end{Def}

With this definition, one cannot discuss arbitrary embeddings of symplectic manifolds in general. This allows us to compare two fiberwise star-shaped domains in the same cotangent bundle. However, if we use this restricted definition, then we can easily obtain infinitely many symplectic capacities for $FSD(N)$ using the spetral invariant of symplectic homology. This is not a new idea, for example see \cite{FS}, \cite{Oh} and \cite{Schlenk}. However, we will get a convenient form for the practical application by the following reinterpretation. Since $M \in FSD(N)$ is a Liouville domain, we can define a symplectic homology of $M$. We have the long exact sequence
\begin{gather*}
\cdots \rightarrow SH_* ^{<b}(M) \xrightarrow{i^b_M} SH_* (M) \xrightarrow{j^b_M} SH_* ^{\ge b} \rightarrow SH_{*-1}^{<b}(M) \xrightarrow{i^b_M} \cdots
\end{gather*}
for the symplectic homology of $M$ for an action filtration. Moreover, we have the isomorphism
\begin{gather*}
\Psi_M : H_* (\Lambda N) \rightarrow SH_* (M) 
\end{gather*}
between the homology of the loop space of $N$ and the symplectic homology of $M$, see \cite{AS}, \cite{SW} and \cite{V2}. With these ingredients, we can define a map
\begin{gather*}
c_N : FSD(N) \times H_*(\Lambda N)^{\times} \rightarrow \mathbb{R},
\\ c_N (M, \alpha):=\inf \{b \in \mathbb{R} \cup \{+ \infty\} | \Psi_M(\alpha) \in \textrm{im}(i_M^b) \}
\end{gather*}
assigning a nonnegative number to the pair of a fiberwise star-shaped domain and a homology class of the free loop space of $N$. We will prove the properties of the map $c_N$.

\begin{ThmA}[Properties of $c_N$]
The map
\begin{gather*}
c_N : FSD(N) \times H_* (\Lambda N)^{\times} \rightarrow \mathbb{R}
\\ \quad \quad \quad \quad (M, \alpha) \mapsto c(M, \alpha)
\end{gather*}
satisfies the following properties.

(1) (Conformality) $c_N (kM, \alpha)=k c_N (M, \alpha)$ for all $k \in \mathbb{R}^+$.

(2) (Monotonicity) $c_N (M_2, \alpha) \ge \kappa_{min}(\Sigma_1, \Sigma_2) c_N (M_1, \alpha)$ for all $M_1, M_2 \in FSD(N)$ where $\Sigma_i=\partial M_i$, $i=1, 2$ and $\kappa_{min}(\Sigma_1, \Sigma_2)=\min_{x \in \Sigma_1} \{\kappa(x)| \kappa(x)x \in \Sigma_2, \kappa(x)>0 \}$.

(3) (Spectrality) $c_N (M, \alpha) \in Spec(\Sigma, \lambda_{can})$ where $\Sigma= \partial M$.
\\ for each $\alpha \in H_* (\Lambda N)^{\times}$.
\end{ThmA}

Whenever we choose a homology class $\alpha \in H_* (\Lambda N)^{\times}$, $c_N(\cdot, \alpha)$ gives a map from $FSD(N)$ to $[0, +\infty]$. By Theorem A, the map $c_N(\dot, \alpha)$ satisfies Conformality of symplectic capacity for $FSD(N)$. Also, with symplectic invariance of symplectic homology, the map $c_N(\cdot, \alpha)$ satisfies also Monotonicity of symplectic capacity for $FSD(N)$. Finally, Spectrality of Theorem A can replace Nontriviality of symplectic capacity provided $c_N(\cdot, \alpha) \ne 0 \iff c_N(D^*_g N, \alpha) \ne 0$.
 
We will apply this symplectic capacity for Liouvillie domains in a cotangent bundle to the rotating Kepler problem and Hill's lunar problem. For this application, of course, one has to be able to find Liouville domains related to these problems. The following Theorem makes this possible.

\begin{ThmFC}[\cite{CFvK} for the rotating Kepler problem, \cite{L} for Hill's lunar problem]
Below the critical energy levels, the energy hypersurfaces of the rotating Kepler problem and Hill's lunar problem can be symplectically embedded into the cotangent bundle of $S^2$ as fiberwise convex hypersurfaces, respectively.
\end{ThmFC}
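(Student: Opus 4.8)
\medskip\noindent\emph{Proof idea.}
The plan is to reduce the statement to a concrete computation on a regularised energy hypersurface inside $T^* S^2$, and then to verify fiberwise convexity slice by slice. Fix an energy $c$ below the relevant critical level. In either problem the energy hypersurface $\Sigma_c = H^{-1}(c)$ in $T^*(\mathbb{R}^2 \setminus \{0\})$ has a bounded connected component surrounding the collision locus $\{q = 0\}$, and this is the component we work with. First I would perform a Moser-type regularisation (Levi--Civita in the position variables, or equivalently Moser's switch in the cotangent directions followed by stereographic projection): the collision singularity is blown up, the Hamiltonian flow is reparametrised by a suitable conformal factor, and the compactified hypersurface becomes a smooth closed hypersurface inside $T^* S^2$; see \cite{MHO} for these derivations. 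By the very definition of $FSD(S^2)$, proving the theorem then amounts to showing that, after this identification, $\Sigma_c \cap T^*_q S^2$ is a convex curve enclosing the origin for every $q \in S^2$.

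The base case is the plain Kepler problem (the $\mu \to 0$ limit with the rotating term dropped as well), which anchors the whole argument. By Moser's theorem the regularised Kepler flow at energy $c < 0$ is a time-change of the geodesic flow of the round metric $g_0$ on $S^2$, so the regularised hypersurface is a rescaling of the unit cosphere bundle $\{ g_0^*(p,p) = 1 \}$. Each cotangent-fiber slice is then a round circle, hence strictly convex with uniformly positive geodesic curvature — and it is this quantitative positivity, rather than bare convexity, that one must arrange to survive the perturbations.

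For the rotating Kepler problem I would write $H_R = H_{\mathrm{Kepler}} + L$ with $L = p_1 q_2 - p_2 q_1$. Since $L$ extends smoothly across the collision, the same regularisation applies verbatim, and transporting a defining function to $T^* S^2$ exhibits the angular-momentum term as a bounded smooth perturbation of the round model above. Restricting to a fiber $T^*_q S^2 \cong \mathbb{R}^2$ and computing the second fundamental form of the boundary curve (equivalently, the Hessian of a defining function restricted to that plane), one checks that positive curvature is preserved, so the slice remains convex. This is exactly where the hypothesis ``$c$ below the critical value $-\tfrac32$'' enters: for $c \ge -\tfrac32$ the bounded component degenerates at the synchronous relative equilibrium and the regularised slice ceases to be convex (or compact). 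The explicit estimate is carried out in \cite{CFvK}.

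For Hill's lunar problem the extra term is the position-dependent quadratic $U(q) = -q_1^2 + \tfrac12 q_2^2$. Under the regularisation, positions and momenta are interchanged (up to rescaling by powers of $|q|$ along $\Sigma_c$), so $U$ turns into an anisotropic term in the fiber variables and therefore deforms the fiber slices directly, not merely the flow. I expect this to be the main obstacle: one must show that the deformed boundary curve in each $T^*_q S^2$ still has strictly positive curvature, uniformly in $q \in S^2$, and this is what forces the restriction to energies below the critical level $-\tfrac{3^{4/3}}{2}$ appearing in the abstract. Below that level the Hill region is bounded, the regularised hypersurface is a smooth sphere, and the quadratic perturbation is small relative to the dominant Kepler contribution, so the positive-curvature estimate of the base case persists; at or above the critical level the slice develops a non-convex or singular boundary. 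The required uniform curvature estimate is carried out in \cite{L}.
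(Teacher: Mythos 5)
Your proposal is essentially the same as the paper's treatment: the paper does not prove this statement either, but sets up the Moser regularization (the switch $\Psi$ of position and momentum followed by stereographic projection $\Phi$ into $T^*S^2$, taking the closure of the bounded component) in Section 2.2 and then quotes the fiberwise convexity results from \cite{CFvK} and \cite{L} --- the same two references to which you defer the key convexity estimates. One caveat: your perturbative framing (the angular-momentum term and the quadratic Hill term as ``small'' deformations of the round cosphere bundle whose positive curvature persists) is a heuristic rather than how those papers actually argue --- they verify convexity of each fiber slice by a direct computation, and near the critical energy the deformation is not small in any useful sense --- but since you explicitly cite \cite{CFvK} and \cite{L} for the required estimates, this does not constitute a gap.
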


We will explain this in Section 2.2. We call these fiberwise convex hypersurfaces by the regularized energy hypersurfaces of the rotating Kepler problem and Hill's lunar problem. We denote by $\Sigma_R^c$ and $\Sigma_H^{c'}$ the regularized energy hypersurface of the rotating Kepler problem of energy $-c$ and Hill's lunar problem at energy $-c'$, respectively. Since they are fiberwise convex, they bound Liouville domains, denoted by $M_R^c$ and $M_H^{c'}$. We define increasing sequences
\begin{gather*}
c_R^P:={P+3 \over 2(P+1)^{1 \over 3}},
\\ c_H^P:={2P+8-\sqrt{(P+1)(P+9)} \over 2(P+1)^{1 \over 3}}
\end{gather*}
for $P=1, 2, 3, \cdots$. We define by $-c_R^0=-{3 \over 2}$ and $-c_H^0=-{3^{4 \over 3} \over 2}$ the critical values of the rotating Kepler problem and Hill's lunar problem. We will prove the following Theorem.

\begin{ThmB}
For the fiberwise convex domains $M_R^c$ and $M_H^{c'}$ in $T^* S^2$ defined by the regularized energy hypersurfaces of the rotating Kepler problem and Hill's lunar problem, we have the following inclusions in $T^* S^2$.
\\ (1) $M_H^{c} \subset M_R^{c_R^1}$ for all $c \ge c_H^0$.
\\ (2) $M_H^{c} \subset M_R^{c_R^P}$ if $c \ge c_H^P$ for all $P=2, 3, 4, \cdots$.
\\ (3) $M_R^{c+{1 \over 2c^2}} \subset M_H^c$ for all $c>c_H^0$.
\end{ThmB}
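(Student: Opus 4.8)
\medskip
\noindent\textbf{Proof proposal.}
The plan is to pull all three inclusions back to the original phase space $T^*(\mathbb R^2\setminus\{0\})$. Recall from Section~2.2 that the Moser-type regularization turning a subcritical energy hypersurface into a fiberwise star-shaped domain in $T^*S^2$ is carried out by a fixed, \emph{energy-independent} embedding $\Phi$ (the switch $(q,p)\mapsto(p,-q)$ followed by the cotangent lift of the inverse stereographic compactification of the old momentum plane), and that the domain $M_\bullet^c$ bounded by the regularized hypersurface equals $\overline{\Phi(K_\bullet^c)}$, the closure being taken in $T^*S^2$ and adding only the fibre over the north pole (the image of the collision end $q\to 0$, $|p|\to\infty$, which is the same Kepler end for both problems); here $K_\bullet^c$ is the bounded component of $\{H_\bullet\le -c\}$. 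Since $\Phi$ is an energy-independent embedding, each inclusion in Theorem~B is equivalent to the corresponding inclusion of the $K_\bullet^c$, with the north-pole fibres matched up by continuity, so it suffices to prove those. Two elementary observations drive everything. First, completing the square gives $H_R=\tfrac12(p_1+q_2)^2+\tfrac12(p_2-q_1)^2-\tfrac12|q|^2-\tfrac1{|q|}$ and $H_H=\tfrac12(p_1+q_2)^2+\tfrac12(p_2-q_1)^2-\tfrac32 q_1^2-\tfrac1{|q|}$, hence $H_R=H_H+q_1^2-\tfrac12 q_2^2$. Second, minimizing over $p$ identifies the configuration-space shadow of $K_R^c$ as $\{\,\tfrac1{|q|}+\tfrac12|q|^2\ge c,\ |q|\ \text{small}\,\}=\{|q|\le\rho_R(c)\}$, with $\rho_R(c)$ the smaller positive root of $\tfrac1\rho+\tfrac12\rho^2=c$, and that of $K_H^c$ as $\{\,\tfrac1{|q|}+\tfrac32 q_1^2\ge c,\ |q|\ \text{small}\,\}$, on which $|q|\le\rho_H(c)$ and $q_1^2\le\rho_H(c)^2$ with $\rho_H(c)$ the smaller positive root of $\tfrac1\rho+\tfrac32\rho^2=c$; the critical values $c_R^0=\tfrac32$ and $c_H^0=\tfrac{3^{4/3}}{2}$ are precisely the minima of these two effective potentials.

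For items~(1) and~(2): if $(q,p)\in K_H^c$ then $q_1^2\le\rho_H(c)^2$, so $H_R=H_H+q_1^2-\tfrac12 q_2^2\le -c+\rho_H(c)^2$. Whenever $c-\rho_H(c)^2\ge c_R^P$ we therefore have $H_R\le -c_R^P$ on $K_H^c$, and the analogous shadow bound $|q|\le\rho_H(c)\le\rho_R(c_R^P)$ holds under the same hypothesis (for $P\ge 2$ the defining relation below forces $\rho_H(c_H^P)=\sqrt{c_H^P-c_R^P}=\rho_R(c_R^P)$, and $\rho_H$ is decreasing; for $P=1$ one has $\rho_H(c)\le\rho_H(c_H^0)=3^{-1/3}<\rho_R(c_R^1)$). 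Hence $K_H^c$ lies in the Hill's region of the rotating Kepler problem at energy $-c_R^P$, and being connected and containing a punctured neighbourhood of the collision locus it lies in $K_R^{c_R^P}$ itself; applying $\Phi$ gives $M_H^c\subset M_R^{c_R^P}$. It remains to check the scalar condition $c-\rho_H(c)^2\ge c_R^P$ for the stated range of $c$. The function $c\mapsto c-\rho_H(c)^2$ is increasing on $[c_H^0,\infty)$ (on the decreasing branch $\rho_H'<0$), so it suffices to check the left endpoint: for $P=1$, $c_H^0-\rho_H(c_H^0)^2=\tfrac{3^{4/3}}{2}-3^{-2/3}=\tfrac{7}{2\cdot 3^{2/3}}>2^{2/3}=c_R^1$, giving~(1) on the whole subcritical range; for $P\ge 2$ one defines $c_H^P$ as the unique solution above $c_H^0$ of $c-\rho_H(c)^2=c_R^P$, equivalently of $(3c_R^P-c_H^P)^2(c_H^P-c_R^P)=4$, and a short computation — using $c_R^P=\tfrac1{2a}+\sqrt a$ with $a=(P+1)^{-2/3}$ — identifies its root with $\tfrac{2P+8-\sqrt{(P+1)(P+9)}}{2(P+1)^{1/3}}$. (Item~(1) is stated with threshold $c_H^0$ rather than $c_H^1$ because for $P=1$ the exact threshold $c-\rho_H(c)^2=c_R^1$ is already attained below the critical energy $-c_H^0$.)

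For item~(3) the perturbation now has the favourable sign. Put $c'=c+\tfrac1{2c^2}$; for $(q,p)\in K_R^{c'}$ one has $H_H=H_R-q_1^2+\tfrac12 q_2^2\le -c'+\tfrac12 q_2^2$. On $K_R^{c'}$ the shadow is $\{|q|\le\rho_R(c')\}$, and by direct substitution $\rho=1/c$ solves $\tfrac1\rho+\tfrac12\rho^2=c'$ and is its smaller root (as $1/c<1$ for $c>c_H^0$), so $|q|\le 1/c$, whence $q_2^2\le 1/c^2$ and $H_H\le -c'+\tfrac1{2c^2}=-c$. Thus $K_R^{c'}\subset\{H_H\le -c\}$, and by the connectedness argument again $K_R^{c'}\subset K_H^c$, so $M_R^{c+1/(2c^2)}\subset M_H^c$.

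The main obstacle I anticipate is the first paragraph: carefully justifying that the regularization map may be taken energy-independent and that $M_\bullet^c=\overline{\Phi(K_\bullet^c)}$ with the closure contributing only the north-pole fibre, so that a genuine inclusion of the $K_\bullet^c$ descends to an inclusion of the fiberwise star-shaped domains $M_\bullet^c$ in $T^*S^2$, the comparison in particular surviving on the glued-in fibre. Everything downstream is then the two elementary facts above, the monotonicity of $c\mapsto c-\rho_H(c)^2$, and the explicit solution of the cubic defining $c_H^P$, none of which should cause trouble.
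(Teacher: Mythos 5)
Your argument is correct and is essentially the paper's: the identity $H_R=H_H+q_1^2-\tfrac12 q_2^2$, the bound $q_1^2\le c_H^P-c_R^P$ on the Hill region at the threshold energies (the paper's Theorem 5.5 and its Claim, with the critical-energy bound $q_1^2\le 3^{-2/3}$ giving item (1) exactly as in Theorem 5.3), and the bound $|q|\le 1/c$ on $\Sigma_R^{c+1/(2c^2)}$ (Proposition 5.7) are precisely the computations of Section 5. The only difference is bookkeeping: the paper reduces the inclusion of the domains to an inequality checked on the boundary hypersurfaces via the fiberwise criterion of Proposition 5.2, whereas you check it on the whole bounded sublevel sets in the unregularized coordinates and invoke connectedness, and you replace the nesting $M_H^c\subset M_H^{c_H^P}$ by monotonicity of $c\mapsto c-\rho_H(c)^2$; these are equivalent.
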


Since fiberwise convexity implies fiberwise star-shapedness, we can apply the symplectic capacity $c_{S^2}$ in Theorem A to  the Liouville domains $M_R^c$ and $M_H^{c'}$. On the other hand, we can compute the symplectic capacity $c_{S^2}(M_R^c, \alpha)$ when $\alpha$ can be uniquely expressed by the retrograde and direct orbits(possibly multiply covered) in the symplectic homology of $M_R^c$. We will denote by $\delta_{R, N}, \delta_{D, N}$ the homology classes determined by the $N$th-iterations of retrograde and direct orbits in the symplectic homology of $M_R^c$ if they are cycles. When $\delta_{R, N}$ and $\delta_{D, N}$ are defined, we define the homology classes $\Delta_{R, N}=\Psi_{M_R^c}^{-1}(\delta_{R, N})$ and $\Delta_{D, N}=\Psi_{M_R^c}^{-1}(\delta_{D, N})$ in $H_*(\Lambda S^2)$ by the isomorphism $\Psi_{M_R^c}: H_* (\Lambda S^2) \xrightarrow{\approx} SH_*(M_R^c)$. More precisely, we will prove the following Theorem.

\begin{ThmC}
For the energy $c \in [c_R^P, c_R^{P+1})$, the homology classes $\Delta_{R, N}$ and $\Delta_{D, N}$ are well-defined for $N=1, 2, \cdots, P$. Moreover, we have the symplectic capacity
\begin{gather*}
c_{S^2}(M_R^c, \Delta_{R, N})=2\pi N L_R(c)=\pi N \sqrt{{3 \over 2c}} \sec \left({1 \over 3} \arccos \left( \left({3 \over 2c} \right)^{3 \over 2}\right)\right),
\\ c_{S^2}(M_R^c, \Delta_{D, N})=-2\pi N L_D(c)=-\pi N \sqrt{{3 \over 2c}} \sec \left({1 \over 3} \arccos \left( \left({3 \over 2c} \right)^{3 \over 2}\right)+{2 \pi \over 3}\right)
\end{gather*}
of $M_R^c$ in $T^* S^2$ with respect to $\Delta_{R, N}$ and $\Delta_{D, N}$ for $N=1, 2, \cdots, P$. Here, $2 \pi L_R (c)$ and $- 2 \pi L_D (c)$ are the actions of the retrograde and direct orbits, respectively.
\end{ThmC}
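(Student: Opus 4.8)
The plan is to use the complete integrability of the rotating Kepler problem to understand explicitly the filtered symplectic homology of $M_R^c$, and then read off the spectral invariants. First I would recall the Delaunay-type description of the rotating Kepler problem: after regularization (Moser/Levi-Civita), the energy hypersurface $\Sigma_R^c$ becomes a fiberwise convex hypersurface in $T^*S^2$, and its closed Reeb orbits, for $c\in[c_R^P,c_R^{P+1})$, are precisely the circular orbits together with their iterates up to the $P$-th, plus the noncircular (rotated Keplerian ellipse) families. The two distinguished families are the retrograde orbit and the direct orbit, corresponding to the two circular solutions; their $N$-th iterates have actions $2\pi N L_R(c)$ and $-2\pi N L_D(c)$ respectively. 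I would compute $L_R(c)$ and $L_D(c)$ from the fixed-point equation for circular solutions of $H_R$ in the rotating frame: a circular orbit of radius $r$ has angular momentum $L$ satisfying the cubic coming from balancing centrifugal, Coriolis and Newtonian forces, and substituting the energy constraint $H_R=-c$ produces exactly the cubic whose solution via the trigonometric (Cardano) formula gives the stated $\sec\!\big(\tfrac13\arccos((3/2c)^{3/2})+ k\tfrac{2\pi}{3}\big)$ expressions, with $k=0$ for retrograde and $k=1$ for direct. The threshold $c_R^P=\frac{P+3}{2(P+1)^{1/3}}$ is where the $P$-th iterate of the retrograde circular orbit (the minimal-action family) has its Conley–Zehnder index jump so that it represents the homology class of the appropriate iterated loop.

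Next I would identify the homology classes $\delta_{R,N},\delta_{D,N}\in SH_*(M_R^c)$. Because the rotating Kepler problem is integrable, the relevant part of the symplectic homology is Morse–Bott; the circular orbits and their iterates are isolated (up to the $S^1$-reparametrization and, for $S^2$, the $SO(2)$ fiber symmetry), and one can use an explicit Morse–Bott spectral sequence or the Legendrian/mean-index computation à la Kwon–van Koert for $\Sigma_R^c$ to see that for $N\le P$ the $N$-th iterate of the retrograde orbit is a cycle representing a well-defined nonzero class $\delta_{R,N}$, and similarly $\delta_{D,N}$; for $N>P$ the corresponding generator fails to close up (its index no longer matches), which is exactly why the statement restricts to $N=1,\dots,P$. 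Pulling back under $\Psi_{M_R^c}$ defines $\Delta_{R,N},\Delta_{D,N}\in H_*(\Lambda S^2)$, and these are concretely the classes carried by the $N$-fold iterated great-circle geodesics once one tracks the Abbondandolo–Schwarz isomorphism through the Levi-Civita/Moser regularization.

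Finally, to pin down the spectral invariant itself I would invoke Spectrality from Theorem A together with a lower bound: $c_{S^2}(M_R^c,\Delta_{R,N})\in \mathrm{Spec}(\Sigma_R^c,\lambda_{can})$, so it equals the action of some closed Reeb orbit; to show it equals precisely $2\pi N L_R(c)$ I would argue that $\Psi_{M_R^c}(\Delta_{R,N})$ first enters the image of $i^b_{M_R^c}$ exactly at the action level of the $N$-th retrograde iterate — below that level the filtered homology in the relevant degree does not yet contain this class, because the only generators of lower action are lower iterates and the direct orbit, which carry different homology classes (here the fiber $SO(2)$-action and the grading separate retrograde from direct). The analogous argument with the chain of direct orbits gives $c_{S^2}(M_R^c,\Delta_{D,N})=-2\pi N L_D(c)$, using $L_D(c)<0$ so the action is positive.

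The main obstacle I anticipate is the bookkeeping of gradings and the Morse–Bott (or $S^1$-equivariant vs.\ non-equivariant) structure needed to justify that $\delta_{R,N}$ and $\delta_{D,N}$ are genuinely distinct, nonzero, and that no lower-action generator represents the same class — i.e.\ separating the retrograde and direct towers inside $SH_*(M_R^c)$ and matching degrees with $H_*(\Lambda S^2)$. The explicit cubic and its trigonometric solution are routine once the force-balance equation is set up; the delicate point is the precise range $N\le P$ and the identification of the threshold energies $c_R^P$ with index jumps, which is where I would spend most of the care, likely by a direct Conley–Zehnder index computation along each circular family as in the fiberwise-convex setting of the cited works.
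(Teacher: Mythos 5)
Your proposal follows essentially the same route as the paper: compute the actions of the circular orbits from the cubic $c=\tfrac{1}{2x^2}-x$, use the Conley--Zehnder index computation for the iterated retrograde and direct orbits (together with the fact that the $T_{k,l}$ orbits have index $2k-1\ge 2P+3$) to see that in degrees up to $2P+1$ the chain ranks match $H_*(\Lambda S^2)$, so all differentials vanish and the iterates give well-defined classes $\delta_{R,N},\delta_{D,N}$, and then conclude via the spectral/min-max characterization (Lemma 4.4 in the paper) that the capacity equals the action of the unique representing cycle. One small correction: the threshold $c_R^P$ is not where the retrograde iterate's index jumps but is determined by the direct orbit, namely $-L_D(c)^3=\tfrac{1}{P+1}$ (the bifurcation of $\gamma_{P+1,1}$ from the direct orbit), since $-L_D(c)>L_R(c)$ makes the direct-orbit condition the binding one; your planned direct index computation along both circular families would catch this.
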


Theorem C can be proved by computing the action values and index of all periodic orbits of the rotating Kepler problem. In fact, The Conley-Zehnder indices of the rotating Kepler problem were already computed in \cite{AFFvK} and thus we will use this. If we compute the action value in this paper, as a result of this computation, we can prove that the retrograde orbit is always the systole of the regularized energy hypersurface $(\Sigma_R^c, \lambda_{can})$. We leave the computation of systolic volume of the contact manifold $(\Sigma_R^c, \lambda_{can})$ in the Appendix.

Using Theorem A, B and C, we can obtain estimates of the symplectic capacity of $M_H^c$ in $T^* S^2$. We introduce the final goal of this paper.

\begin{ThmD}
For the fiberwise convex domain $M_H^c$, we have the following estimates for the symplectic capacity for $FSD(S^2)$.
\\ (1) The inequalities
\begin{gather*}
2 \pi {-1 +\sqrt{1+8c^3} \over 4c^2} \le c_{S^2}(M_H^c, \Delta_{R, 1})<2^{-{11 \over 6}} \cdot 3^{1 \over 2} \pi \sec \left( {1 \over 3} \arccos(2^{-{5 \over 2}}\cdot 3^{3 \over 2})\right) \approx 2\pi \times 0.490534,
\\ 2 \pi {1 +\sqrt{1+8c^3} \over 4c^2} \le c_{S^2}(M_H^c, \Delta_{D, 1})<-2^{-{11 \over 6}} \cdot 3^{1 \over 2} \pi \sec \left( {1 \over 3} \arccos(2^{-{5 \over 2}}\cdot 3^{3 \over 2})+{2 \pi \over 3}\right) \approx 2\pi \times 0.793701
\end{gather*}
hold for all $c>c_H^0$.
\\ (2) If $c \in [c_H^P, c_H^{P+1})$ for some $P \in \{2, 3, 4, \cdots \}$, then the inequalities
\begin{gather*}
2 \pi N{-1+\sqrt{1+8c^3} \over 4c^2} \le c_{S^2}(M_H^c, \Delta_{R, N}) \le 2\pi N {-(P+1)+\sqrt{(P+1)(P+9)} \over 4(P+1)^{1 \over 3}},
\\ 2 \pi N{1+\sqrt{1+8c^3} \over 4c^2} \le c_{S^2}(M_H^c, \Delta_{D, N}) \le 2\pi N (P+1)^{-{1 \over 3}}
\end{gather*}
hold for all $N=1, 2, \cdots, P$.
\end{ThmD}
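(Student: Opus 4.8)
The plan is to obtain all four chains of inequalities by \emph{sandwiching} the capacity $c_{S^2}(M_H^c,\cdot)$ between the corresponding capacities of regularized rotating--Kepler domains, which are known in closed form from Theorem C, using the inclusions of Theorem B together with the Monotonicity property of Theorem A. Concretely, since all the domains in sight lie in $FSD(S^2)$, an honest inclusion $M_1\subset M_2$ of fiberwise star-shaped domains has $\kappa_{\min}(\partial M_1,\partial M_2)\ge 1$, so Theorem A(2) gives $c_{S^2}(M_1,\alpha)\le c_{S^2}(M_2,\alpha)$. Applying this to Theorem B(3), $M_R^{c+\frac{1}{2c^2}}\subset M_H^c$, yields the lower bounds $c_{S^2}(M_H^c,\Delta_{R,N})\ge c_{S^2}(M_R^{\,c+\frac{1}{2c^2}},\Delta_{R,N})$ and the analogous one for $\Delta_{D,N}$; applying it to Theorem B(1)--(2), $M_H^c\subset M_R^{c_R^P}$ (with $P=1$ for $c\ge c_H^0$, and general $P$ for $c\ge c_H^P$), yields the upper bounds $c_{S^2}(M_H^c,\Delta_{R,N})\le c_{S^2}(M_R^{c_R^P},\Delta_{R,N})$ and likewise for $\Delta_{D,N}$. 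Feeding the explicit values of Theorem C into the right-hand sides then reduces the theorem to two algebraic simplifications.

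For the lower bounds, write $\tilde c=c+\frac{1}{2c^2}=\frac{2c^3+1}{2c^2}$. By Theorem C the right-hand side is $2\pi N L_R(\tilde c)$, and $L_R(\tilde c)$ is the unique positive root of the cubic $2u^3+2\tilde c\,u^2-1=0$ (this is just the Kepler relation $\tilde c=\tfrac{1}{2u^2}-u$ for the retrograde circular orbit at energy $-\tilde c$, together with $L_R=\sqrt r$). I would then check directly that $u=\frac{-1+\sqrt{1+8c^3}}{4c^2}$, which satisfies the \emph{quadratic} $2c^2u^2+u-c=0$, also satisfies that cubic after substituting $\tilde c=\frac{2c^3+1}{2c^2}$; uniqueness of the positive root forces equality, which is the claimed lower bound for $\Delta_{R,N}$. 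The direct case is identical with the relevant root $v=\frac{1+\sqrt{1+8c^3}}{4c^2}$, the quadratic $2c^2v^2-v-c=0$, and the Kepler relation $\tilde c=\tfrac{1}{2v^2}+v$. For the upper bounds one performs the same trick at the argument $c_R^P=\frac{P+3}{2(P+1)^{1/3}}$: one checks that $-L_D(c_R^P)=(P+1)^{-1/3}$ (equivalently, the relevant direct orbit at energy $-c_R^P$ is the $(P{+}1){:}1$ resonant circular orbit $r^{-3/2}=P+1$, which is exactly what the numerology of $c_R^P$ encodes) and that $L_R(c_R^P)=\frac{-(P+1)+\sqrt{(P+1)(P+9)}}{4(P+1)^{1/3}}$, the latter by writing $L_R(c_R^P)=a\,(P+1)^{-1/3}$ and verifying that $a$ satisfies both the induced cubic and the quadratic $2a^2+(P+1)a-(P+1)=0$. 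Substituting these into $2\pi N L_R(c_R^P)$ and $-2\pi N L_D(c_R^P)$ gives the upper bounds in part (2); part (1) is the special case $P=1$, $c_R^1=2^{2/3}$, read off directly from the $\sec$-formula of Theorem C (noting that $\frac{3}{2c_R^1}=3\cdot 2^{-5/3}$).

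Two further points need attention. First, the strict inequalities in part (1): these hold because the range there is the \emph{open} set $c>c_H^0$, on which $M_H^c$ is a genuine fiberwise convex domain with compact boundary contained in the interior of $M_R^{c_R^1}$, so $\kappa_{\min}(\Sigma_H^c,\Sigma_R^{c_R^1})>1$ strictly; since $c_{S^2}(M_R^{c_R^1},\Delta_{R,1})>0$, Theorem A(2) then upgrades $\le$ to $<$. Establishing $\kappa_{\min}>1$ uses the explicit pointwise comparison of the two hypersurfaces from the proof of Theorem B rather than the bare inclusion, so I would extract that from there. Second --- and this is where I expect most of the care to be needed, rather than in any conceptual difficulty --- one must check, stratum by stratum in $c$, that the homology classes $\Delta_{R,N}$ and $\Delta_{D,N}\in H_*(\Lambda S^2)$ are simultaneously well-defined for, and independent of, \emph{all three} domains $M_R^{\,c+\frac{1}{2c^2}}$, $M_H^c$, $M_R^{c_R^P}$ appearing in a given range of $c$; this is exactly what licenses the sandwiching, and it relies on $\Psi_M$ being an isomorphism for every $M\in FSD(S^2)$ together with the fact (from Theorem C) that the first $P$ iterates of retrograde and direct orbits remain cycles throughout the relevant energy windows. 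Once this compatibility and the cubic-to-quadratic identities are in place, the sandwiching simultaneously shows that each $c_{S^2}(M_H^c,\Delta_{\cdot,N})$ is a genuine finite positive number and pins it to the stated interval, completing the proof.
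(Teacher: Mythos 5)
Your proposal is correct and follows essentially the same route as the paper: sandwich $c_{S^2}(M_H^c,\cdot)$ between rotating--Kepler capacities via Theorem B(3) for the lower bounds and Theorem B(1)--(2) for the upper bounds, apply the monotonicity of Theorem A, and verify algebraically that $L_R\bigl(c+\tfrac{1}{2c^2}\bigr)=\tfrac{-1+\sqrt{1+8c^3}}{4c^2}$, $-L_D\bigl(c+\tfrac{1}{2c^2}\bigr)=\tfrac{1+\sqrt{1+8c^3}}{4c^2}$, $L_R(c_R^P)=\tfrac{-(P+1)+\sqrt{(P+1)(P+9)}}{4(P+1)^{1/3}}$ and $-L_D(c_R^P)=(P+1)^{-1/3}$, exactly as in Theorems 6.1, 6.2 and 6.4. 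The only deviation is how you get the strict inequality in part (1) --- via $\kappa_{\min}(\Sigma_H^c,\Sigma_R^{c_R^1})>1$ extracted from the pointwise estimate in the proof of Theorem B, where the paper instead uses the sharper inclusion $M_H^c\subset M_R^{c-3^{-2/3}}$ together with the strict monotonicity of $L_R$ --- and both work.
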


The lower estimates are the result of (3) of Theorem B. The upper estimates in (1) and (2) are the result of (1) and (2) of Theorem B, respectively. From (1) of Theorem D, one can say that  there exists at least one periodic orbit whose action is less than $\pi$ in the regularized Hill's lunar problem for any energy level below the critical value $-c_H^0=-{3^{4 \over 3} \over 2}$. Moreover, the periodic orbit whose action is $c_{S^2}(M_H^c, \Delta_{R, 1})$ has Conley-Zehnder index 1 and the periodic orbit whose action is $c_{S^2}(M_H^c, \Delta_{D, 1})$ has Conley-Zehnder index 3. From (2), we can say basically same, it is better to visualize the result, see Figure 1 and 2.
\\ \\
$\bold{Acknowledgements : }$ I thank Urs Frauenfelder for valuable discussions. I also thank colleagues in Augsburg university for many helps and encouragements. This research is supported by DFG-CI 45/6-1: Algebraic Structures on Symplectic Homology and Their Applications.

\begin{figure}
\centering
\includegraphics[]{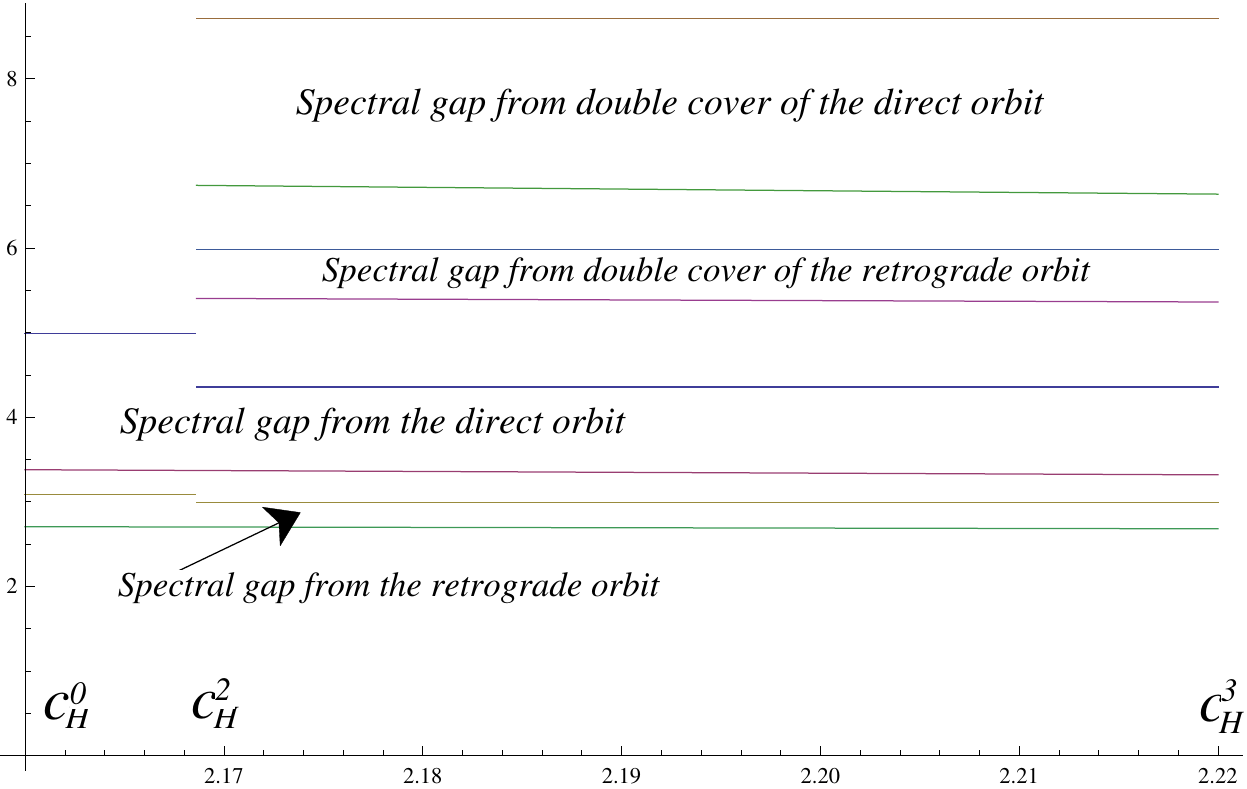}
\caption{Estimates for the action of Hill's lunar problem on $c \in (c_H^0, c_H^3)$}
\end{figure}

\begin{figure}
\centering
\includegraphics[]{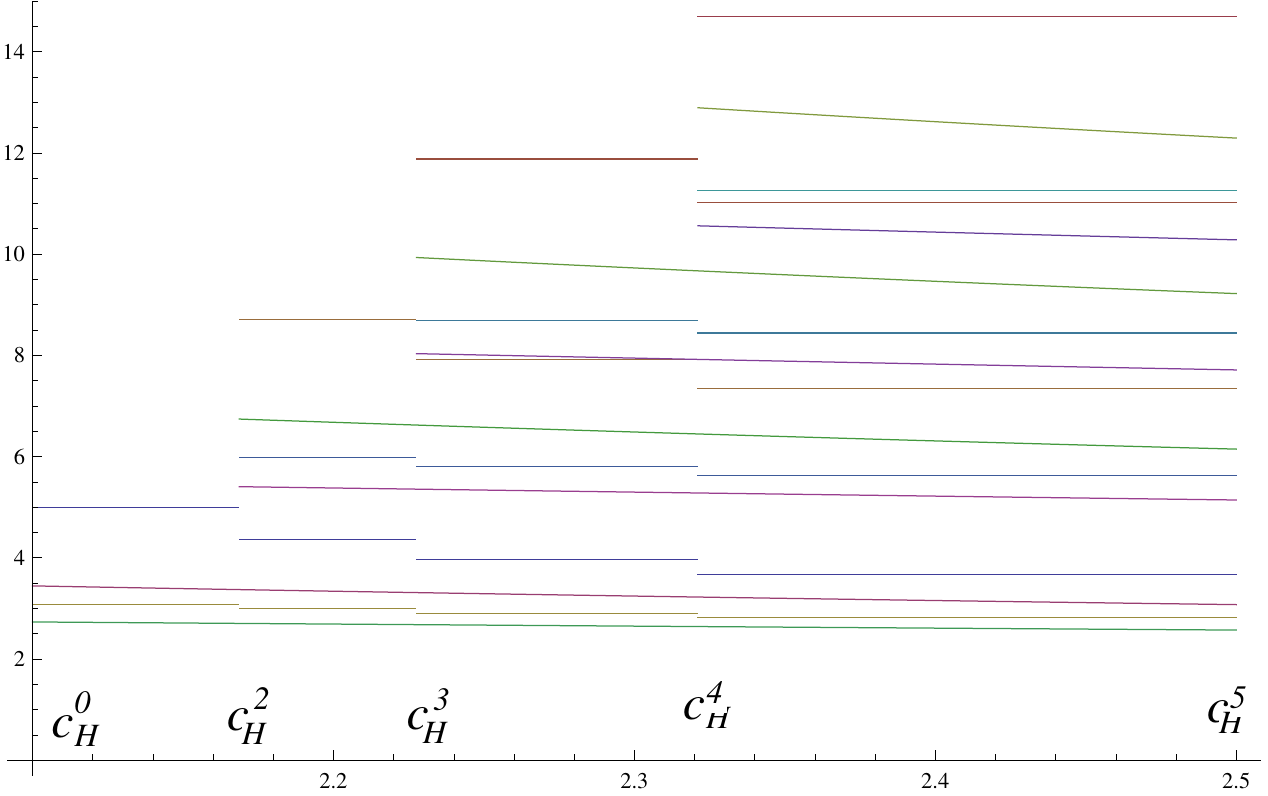}
\caption{Estimates for the action of Hill's lunar problem on $c \in (c_H^0, c_H^5)$. Note that they can be overlapped from third cover.}
\end{figure}

\section{Fiberwise star-shaped hypersurfaces in a cotangent bundle}

\subsection{Geodesic and Hamiltonian equation}
Let $(N, g)$ be a Riemannian $n$-manifold. In the dynamical aspect, one of the most interesting objects is the geodesic. The geodesic equation of $(N, g)$ is a second order differential equation
\begin{gather*}
x : (-\epsilon, \epsilon) \rightarrow U,
\\ {d^2 x^i \over dt^2}+\Gamma^i_{jk} {dx^j \over dt}{dx^k \over dt}=0, i=1,2, \cdots, n
\end{gather*}
for a curve on $N$ in a local coordinate $U$ of $N$ where $\Gamma^i_{jk}={1 \over 2}g^{im}(g_{mj,k}+g_{mk,j}-g_{ij,m})$ are Christoffel symbols. Here we follow the Einstein summation convention. This differential equation is derived from the Euler-Lagrange equations of motion of the energy functional
\begin{gather*}
E : \Omega_{x_0, x_1}(N) \rightarrow \mathbb{R},
\\ E(\gamma)= \int_{0}^{1}{{1 \over 2} g_{\gamma(t)}(\dot{\gamma}(t), \dot{\gamma}(t))} dt
\end{gather*}
on the smooth path space $\Omega_{x_0, x_1}(N)=\{ \gamma \in C^{\infty}([0,1], N) | \gamma(0)=x_0, \gamma(1)=x_1 \}$ connecting two points $x_0, x_1 \in N$. In other words, a geodesic connecting two points $x_0, x_1$ is the stationary point of the energy functional $E$ defined above. One can regard the integrand ${1 \over 2} g_{\gamma(t)}(\dot{\gamma}(t), \dot{\gamma}(t))$ as a Lagrangian $L$ defined on the tangent bundle $TN$ by
\begin{gather*}
L : TN \rightarrow \mathbb{R},
\\ L(q, v)={1 \over 2}g_q(v, v)
\end{gather*}
where $q \in N, v \in T_q N$. Using the Legendre transformation, one can derive the Hamiltonian equation on the cotangent bundle $T^* N$ corresponding to the above Euler-Lagrangian equation and consequently to the geodesic equation. Before we introduce this Hamiltonian equation, we explain the canonical symplectic structure on the cotangent bundle and the Hamiltonian flow associated with a Hamiltonian. Every cotangent bundle has the canonical symplectic structure. The canonical symplectic form $\omega_{can}$ is defined by $\omega_{can}=d \lambda_{can}$ where $\lambda_{can}$ is the Liouville 1-form on $T^* N$.

\begin{Def}
Let $\pi : T^* N \rightarrow N$ be the canonical projection. The Liouville 1-form(or canonical 1-form) $\lambda_{can}$ is defined by
\begin{gather*}
\lambda_{can}(v)=p(\pi_*(v))
\end{gather*}
for $v \in T_x T^* N$ where $x=(q, p) \in T^* N$ with $q=\pi(x) \in M$ and $p \in T^*_q N$.
\end{Def}

In canonical coordinates $(q,p)$, that is, $q$-variables are coordinates on $N$ and $p$-variables are the conjugated momentum, we can express these forms
\begin{eqnarray*}
\lambda_{can}=pdq, \quad \omega_{can}=dp \wedge dq
\end{eqnarray*}
in terms of $q, p$. It is independent of the choice of canonical coordinates. One can easily see that $\omega_{can}$ is closed and nondegenerate. This fact leads us naturally to the definition of symplectic manifolds.

\begin{Def}
A smooth manifold $M$ equipped with a 2-form $\omega$ is called a symplectic manifold if $\omega$ is closed and nondegenerate.
\end{Def}

We will discuss about the Hamiltonian equation of a symplectic manifold for a while. Suppose that $(M, \omega)$ is a symplectic manifold. A Hamiltonian is a function on $\mathbb{R} \times M$. Let $H$ be a Hamiltonian on $M$, namely we have a function
\begin{eqnarray*}
H : \mathbb{R} \times M \rightarrow \mathbb{R}
\end{eqnarray*}
on $\mathbb{R} \times M$. We will write
\begin{gather*}
H_t:=H(t, \cdot): M \rightarrow \mathbb{R}
\end{gather*} 
for notational convenience. We can define the Hamiltonian vector field $X_H^t$ associated to $H$ by
\begin{eqnarray*}
\iota_{X_H^t} \omega=-dH_t
\end{eqnarray*}
and this is uniquely defined by nondegeneracy of $\omega$. 

The Hamiltonian flow $\phi_H^t$ is the flow of the Hamiltonian vector field and so defined by the differential equation
\begin{eqnarray*}
{d \over dt}\phi_H^t(x)=X_H^t(\phi_H^t (x))
\end{eqnarray*}
and $\phi_H^t(x_0)$ is given by solving the initial value problem
\begin{eqnarray*}
\dot{x}(t)=X_H^t(x(t)), \quad x(0)=x_0 \in M
\end{eqnarray*}
We call the above equation Hamiltonian equation. We will call this diffeomorphism $\phi_H^t$ a Hamiltonian diffeomorphism generated by $H$ at time $t$ for each fixed $t$. Hamiltonian diffeomorphisms satisfy some usefule properties. Let us check the following basic properties.

\begin{Thm}
The Hamiltonian diffeomorphism $\phi_H^t$ is a symplectomorphism for each $t$. Namely, $(\phi_H^t)^* \omega=\omega$ holds for each $t \in \mathbb{R}$.
\end{Thm}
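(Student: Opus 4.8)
The plan is to show that the map $t \mapsto (\phi_H^t)^*\omega$ is constant, and then to evaluate it at $t=0$. Since $\phi_H^0 = \mathrm{id}_M$ by the initial condition $x(0)=x_0$ in the Hamiltonian equation, we have $(\phi_H^0)^*\omega = \omega$; once constancy is established, $(\phi_H^t)^*\omega = \omega$ for every $t$ follows immediately.

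To prove constancy I would differentiate in $t$ and use the relation between the derivative of a pullback along a flow and the Lie derivative. For the (a priori time-dependent) vector field $X_H^t$ with flow $\phi_H^t$, and for the fixed $2$-form $\omega$, one has
\begin{gather*}
\frac{d}{dt}\,(\phi_H^t)^*\omega = (\phi_H^t)^*\,\mathcal{L}_{X_H^t}\omega .
\end{gather*}
I would justify this by writing $\frac{d}{dt}\big|_{t=t_0}(\phi_H^t)^*\omega = (\phi_H^{t_0})^*\,\frac{d}{ds}\big|_{s=0}\big(\phi_H^{t_0+s}\circ(\phi_H^{t_0})^{-1}\big)^*\omega$ and recognizing the inner derivative as the Lie derivative of $\omega$ along the generating vector field at time $t_0$; the extra $\partial_t$ term that would appear for a genuinely time-dependent form is absent here because $\omega$ carries no $t$-dependence. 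Applying Cartan's magic formula $\mathcal{L}_{X_H^t}\omega = d\,\iota_{X_H^t}\omega + \iota_{X_H^t}\,d\omega$ and using that $\omega$ is closed (so $d\omega = 0$) together with the defining relation $\iota_{X_H^t}\omega = -dH_t$, we get $\mathcal{L}_{X_H^t}\omega = d(-dH_t) = 0$ by $d^2=0$. Hence $\frac{d}{dt}(\phi_H^t)^*\omega = 0$, which together with $(\phi_H^0)^*\omega = \omega$ gives the assertion.

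There is no substantial obstacle in this argument; the only point that merits a little care is the possible time-dependence of $X_H^t$, which is why I would state the differentiation formula for a general non-autonomous vector field and explicitly note why the additional term does not arise for the fixed form $\omega$. Everything else is a direct consequence of $\omega$ being closed and of the definition $\iota_{X_H^t}\omega = -dH_t$.
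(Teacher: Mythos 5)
Your proposal is correct and follows essentially the same route as the paper: show $(\phi_H^t)^*\omega$ is $t$-independent by differentiating, writing $\frac{d}{dt}(\phi_H^t)^*\omega=(\phi_H^t)^*\mathcal{L}_{X_H^t}\omega$, and applying Cartan's formula together with $d\omega=0$ and $\iota_{X_H^t}\omega=-dH_t$. Your extra remark justifying the differentiation formula for a time-dependent vector field is a point the paper passes over silently, but it does not change the argument.
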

\begin{proof}
Since $\phi_H^0$ is the identity, it is enough to see that $(\phi_H^t)^* \omega$ is time independent. Hence we will show that
\begin{gather*}
{d \over dt}(\phi_H^t)^* \omega=0
\end{gather*}
for any $t$. Using Cartan's formula, we have that
\begin{gather*}
{d \over dt}(\phi_H^t)^* \omega=(\phi_H^t)^* L_{X_H^t} \omega=(\phi_H^t)^*(d \iota_{X_H^t} \omega+\iota_{X_H^t}d \omega)
\end{gather*}
and we also know that
\begin{gather*}
\iota_{X_H^t}d \omega=0,
\\ d \iota_{X_H^t} \omega=d(-dH_t)=0
\end{gather*}
from the closedness of $\omega$ and the definition of Hamiltonian vector field. This implies
\begin{gather*}
{d \over dt}(\phi_H^t)^* \omega=0
\end{gather*}
and therefore this proves Theorem 2.1.
\end{proof}

When the Hamiltonian $H$ is time-independent, the energy is conserved. Since we mostly deal with time-independent Hamiltonians in this paper, the following Theorem is important.
\begin{Thm}
If $H$ is time-independent, that is, $H$ is a function on $M$, then the Hamiltonian flow $\phi_H^t$ preserves energy, that is, $H(\phi_H^t(x))=H(x)$ for all $t \in \mathbb{R}$ and $x \in M$.
\end{Thm}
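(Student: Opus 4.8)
The plan is to show that the function $t \mapsto H(\phi_H^t(x))$ is constant for each fixed $x \in M$, and then evaluate it at $t=0$. Since $\phi_H^0 = \mathrm{id}$, constancy immediately gives $H(\phi_H^t(x)) = H(\phi_H^0(x)) = H(x)$ for all $t$, which is the claim.

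First I would compute the time derivative of $t \mapsto H(\phi_H^t(x))$ using the chain rule together with the defining equation $\frac{d}{dt}\phi_H^t(x) = X_H(\phi_H^t(x))$ of the Hamiltonian flow (here $X_H$ is time-independent because $H$ is). This yields
\begin{gather*}
\frac{d}{dt}H(\phi_H^t(x)) = dH_{\phi_H^t(x)}\left( X_H(\phi_H^t(x)) \right).
\end{gather*}
Next I would invoke the definition of the Hamiltonian vector field, $\iota_{X_H}\omega = -dH$, to rewrite the right-hand side as $-\omega_{\phi_H^t(x)}\left( X_H(\phi_H^t(x)), X_H(\phi_H^t(x)) \right)$.

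The key step is then simply the antisymmetry of the symplectic form: $\omega(v,v) = 0$ for any tangent vector $v$. Applying this with $v = X_H(\phi_H^t(x))$ shows that $\frac{d}{dt}H(\phi_H^t(x)) = 0$ for every $t$, so the function is constant along each orbit, completing the argument.

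There is no real obstacle here; this is a routine verification. The only point requiring any care is to make sure the chain rule is applied correctly and that one uses the sign convention $\iota_{X_H}\omega = -dH$ already fixed in the excerpt, but even the sign is irrelevant since the contraction $\omega(X_H, X_H)$ vanishes regardless. This also explains, a posteriori, why the energy hypersurfaces $\{H = -c\}$ appearing later for the rotating Kepler problem and Hill's lunar problem are invariant under the respective flows, which is what makes it meaningful to speak of periodic orbits on a fixed energy level.
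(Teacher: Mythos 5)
Your proposal is correct and follows essentially the same route as the paper: differentiate $t \mapsto H(\phi_H^t(x))$, use $\iota_{X_H}\omega = -dH$ to rewrite the derivative as $-\omega(X_H, X_H)$, and conclude by antisymmetry of $\omega$. No gaps; the chain-rule computation matches the paper's displayed calculation line for line.
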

\begin{proof}
The proof can be done by the following equation
\begin{eqnarray*}
{d \over dt} H(\phi_H^t(x))&=&dH(\phi_H^t(x))[{d \over dt}\phi_H^t(x)]=dH(\phi_H^t(x))[X_H(\phi_H^t(x))]
\\ &=&-\omega(\phi_H^t(x))(X_H(\phi_H^t(x)),X_H(\phi_H^t(x)))=0
\end{eqnarray*}
for any $t \in \mathbb{R}$ and $x \in M$. This proves Theorem 2.2.
\end{proof}

Theorem 2.2 tells us that if $H$ is a time-independent Hamiltonian and $x \in H^{-1}(c)$, then $\phi_H^t(x) \in H^{-1}(c)$ for every $t$. In other words, the Hamiltonian vector field $X_H$ is tangential to the energy hypersurface of $H$. Suppose $\Sigma$ is a hypersurface, codimension 1 submanifold, of a symplectic manifold $(M, \omega)$. Then $\Sigma$ induces a canonical line bundle
\begin{gather*}
L_{\Sigma} \rightarrow \Sigma
\end{gather*}
as a subbundle of the tangent bundle $T\Sigma$ by defining the fiber
\begin{gather*}
L_{\Sigma, x}=\{v \in T_x M | \omega(v, w)=0 \textrm{ for all } w \in T_x \Sigma \}
\end{gather*}
as the symplectic complement of $T_x \Sigma$ for each $x \in \Sigma$. Since $\dim T_x \Sigma +\dim T_x \Sigma^{\omega}=\dim T_x M$ and every hyperplane is a coisotropic subspace in the symplectic space, $L_{\Sigma}$ is a line subbundle of $T\Sigma$.

\begin{Lem}
Let $H: M \rightarrow \mathbb{R}$ be a time-independent Hamiltonian on a symplectic manifold $(M, \omega)$. Then the Hamiltonian vector field $X_H$ on a energy hypersurface $H^{-1}(c)$ defines a section of the canonical line bundle $L_{H^{-1}(c)} \rightarrow H^{-1}(c)$.
\end{Lem}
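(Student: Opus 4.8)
The plan is to check the two conditions that make a vector field on $\Sigma:=H^{-1}(c)$ a section of the canonical line bundle $L_\Sigma\to\Sigma$: that $X_H$ is everywhere tangent to $\Sigma$, so that it restricts to a section of $T\Sigma$, and that $X_H(x)$ lies in the symplectic complement $L_{\Sigma,x}$ of $T_x\Sigma$ for every $x\in\Sigma$. Both facts are read off directly from the defining relation $\iota_{X_H}\omega=-dH$, so no serious work is involved.

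First I would note that, as is implicit in speaking of the energy hypersurface $H^{-1}(c)$, the value $c$ is a regular value of $H$; hence $\Sigma$ is a codimension one submanifold and $T_x\Sigma=\ker dH_x$ for each $x\in\Sigma$. Tangency of $X_H$ to $\Sigma$ is then precisely the content of Theorem 2.2: the flow $\phi_H^t$ preserves the level set $H^{-1}(c)=\Sigma$, so the curve $t\mapsto\phi_H^t(x)$ lies in $\Sigma$ and its velocity $X_H(x)$ at $t=0$ lies in $T_x\Sigma$. Equivalently, one may differentiate $H\circ\phi_H^t=H$ to get $dH_x(X_H(x))=0$, i.e.\ $X_H(x)\in\ker dH_x=T_x\Sigma$. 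Thus $X_H|_\Sigma$ is a smooth section of $T\Sigma\to\Sigma$, smoothness being inherited from smoothness of the vector field $X_H$ on $M$.

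Next, to see that this section takes values in $L_\Sigma$, fix $x\in\Sigma$ and an arbitrary $w\in T_x\Sigma$. Since $T_x\Sigma\subseteq\ker dH_x$, the definition of the Hamiltonian vector field gives
\begin{gather*}
\omega_x\bigl(X_H(x),w\bigr)=(\iota_{X_H}\omega)_x(w)=-dH_x(w)=0 .
\end{gather*}
As $w\in T_x\Sigma$ was arbitrary, this says exactly that $X_H(x)\in L_{\Sigma,x}$. Letting $x$ vary over $\Sigma$, we conclude that $X_H$ restricts to a smooth section of $L_{H^{-1}(c)}\to H^{-1}(c)$, as claimed. I would append the remark that at a regular value $dH_x\neq 0$, so nondegeneracy of $\omega$ forces $X_H(x)\neq 0$; hence this section is nowhere vanishing and in particular trivializes the line bundle $L_\Sigma$.

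The argument has no real obstacle; the single point deserving a word of care is the identification $T_x\Sigma=\ker dH_x$, which requires $c$ to be a regular value. Note, however, that only the always-valid inclusion $T_x\Sigma\subseteq\ker dH_x$ is used in the displayed computation, so the membership $X_H(x)\in L_{\Sigma,x}$ holds regardless; regularity of $c$ enters solely to guarantee that $\Sigma$ is smooth and that $X_H$ is genuinely tangent to it.
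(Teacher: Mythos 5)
Your proof is correct and follows essentially the same route as the paper: the key step in both is the one-line computation $\omega(X_H(x),w)=-dH(x)[w]=0$ for $w\in T_x H^{-1}(c)$, which places $X_H(x)$ in the symplectic complement $L_{H^{-1}(c),x}$. Your additional remarks on tangency (via Theorem 2.2) and on $c$ being a regular value are sound and only make explicit what the paper leaves implicit.
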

\begin{proof}
We have to show that
\begin{gather*}
X_H(x) \in L_{H^{-1}(c), x}
\end{gather*}
for each $x \in H^{-1}(c)$. By definition, it is enough to see that
\begin{gather*}
\omega(X_H(x), w)=0
\end{gather*}
for all $w \in T_x H^{-1}(c)$. In fact, for any $w \in T_x H^{-1}(c)$ we have
\begin{gather*}
\omega(X_H(x), w)=-dH(x)[w]=0.
\end{gather*}
This proves Lemma 2.3.
\end{proof}

Lemma 2.3 implies that if two Hamiltonians have same regular energy hypersurface than the Hamiltonian flows are same on that energy hypersurface up to reparametrization. For example, if we composite a monotone increasing invertible function $f: \mathbb{R} \rightarrow \mathbb{R}$ to a given Hamiltonian $H$, then $X_{f\circ H}$ is parallel to $X_H$ and so has the same Hamiltonian flow up to reparametrization.

We return to the geodesic problem on a Riemannian manifold $(N, g)$. We want to find the Hamiltonian flow corresponding to the geodesic flow on $(N, g)$. Thus we will derive the Hamiltonian function $H: T^* N \rightarrow \mathbb{R}$ corresponding to the above Lagrangian $L(q, v)={1 \over 2} g_q (v,v)$. We apply the Legendre transformation
\begin{eqnarray*}
H(q,p)=\sup_{v \in {T_q N}}{(\left<p,v\right>-L(q,v))}
\end{eqnarray*}
to $L$ in order to obtain $H$ at $(q, p) \in T^* N$. One can easily see that the supremum is attained at $v(p) \in T_q N$ such that $p=d_v L(q, v(p))=\iota_{v(p)} g_q$. There exists a unique $v(p) \in T_q N$ such that $p=\iota_{v(p)}g_q$ for each $p \in T^*_q N$ by the nondegeneracy of a Riemannian metric. Then we have the Hamiltonian function
\begin{eqnarray*}
H(q, p)=\left<p,v(p) \right> -L(q, v(p))=g_q (v(p), v(p))-{1 \over 2} g_q (v(p), v(p))={1 \over 2}g^*_q(p, p)
\end{eqnarray*}
on $T^* N$ where $g^*$ is a metric on $T^* N$ which is dual to $g$. Intuitively, one can think of a geodesic as a free motion of a particle and so the Hamiltonian corresponding to the geodesic equation has only the kinetic energy term. We summarize the above discussion in the following Theorem.
\begin{Thm}
Let $(N, g)$ be a Riemannian manifold. Then the Hamiltonian flow of the Hamiltonian
\begin{eqnarray*}
H(q, p)={1 \over 2}g_q^*(p,p)
\end{eqnarray*}
on the symplectic manifold $(T^* N, d \lambda_{can})$ is a lift of the geodesic flow of $(N, g)$. Namely, the projection  $\pi(\phi_H^t ((q,p)))$ of a Hamiltonian flow into the base manifold $N$ is the geodesic flow starting at $q$ with tangent vector $v(p) \in T_q N$.
\end{Thm}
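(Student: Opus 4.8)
The plan is to verify the claim by a direct computation in local canonical coordinates $(q^1,\dots,q^n,p_1,\dots,p_n)$ on $T^*N$, in which $H(q,p)=\tfrac12 g^{ij}(q)p_ip_j$ with $(g^{ij})$ the inverse matrix of $(g_{ij})$ and $\omega_{can}=dp_i\wedge dq^i$. With this convention the equation $\iota_{X_H}\omega_{can}=-dH$ unwinds to the usual Hamilton equations
\begin{gather*}
\dot q^i=\frac{\partial H}{\partial p_i}=g^{ij}p_j,\qquad \dot p_i=-\frac{\partial H}{\partial q^i}=-\tfrac12(\partial_i g^{jk})p_jp_k .
\end{gather*}
The first of these is precisely the Legendre relation $p=\iota_{v}g_q$ derived in the discussion above, now with $v=\dot q$; equivalently $p_i=g_{ij}\dot q^j$. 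In particular, evaluating at $t=0$ identifies $\dot q(0)$ with the vector $v(p)$ appearing in the statement, and of course $q(0)=\pi(q,p)=q$.

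Next I would eliminate the momentum. Differentiating $p_i=g_{ij}\dot q^j$ in $t$ gives $\dot p_i=(\partial_k g_{ij})\dot q^k\dot q^j+g_{ij}\ddot q^j$, while differentiating $g^{jl}g_{lm}=\delta^j_m$ gives the identity $\partial_i g^{jk}=-g^{jr}g^{ks}\partial_i g_{rs}$, so that the right–hand side of the second Hamilton equation becomes $\tfrac12(\partial_i g_{rs})\dot q^r\dot q^s$. Equating the two expressions for $\dot p_i$, symmetrizing the term $(\partial_k g_{ij})\dot q^k\dot q^j$ in the indices $j,k$, and contracting with $g^{li}$ yields
\begin{gather*}
\ddot q^{\,l}+\Gamma^l_{jk}\,\dot q^j\dot q^k=0,\qquad \Gamma^l_{jk}=\tfrac12 g^{li}\big(\partial_j g_{ik}+\partial_k g_{ij}-\partial_i g_{jk}\big),
\end{gather*}
which is exactly the geodesic equation with the Christoffel symbols recorded at the start of this subsection. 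Hence $t\mapsto\pi(\phi_H^t(q,p))$ is a geodesic, and by the observation on initial conditions it is the geodesic starting at $q$ with velocity $v(p)$; uniqueness for the second–order geodesic ODE then shows that the entire geodesic flow is obtained this way.

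The computation is routine, and the only place that needs a little care is the index bookkeeping that converts $\partial_i g^{jk}$ into the Christoffel symbols — that is the step I would check most carefully. I note that one can also argue without coordinates: the fibrewise Legendre transform $v\mapsto \iota_v g_q$ intertwines the Euler–Lagrange flow of $L(q,v)=\tfrac12 g_q(v,v)$ — whose base trajectories are the geodesics, as recalled above — with the Hamiltonian flow of $H$, and the local computation just given is precisely the verification of this general fact in the present special case.
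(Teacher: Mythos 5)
Your computation is correct: with the paper's convention $\iota_{X_H}\omega_{can}=-dH$ and $\omega_{can}=dp_i\wedge dq^i$ one indeed gets $\dot q^i=\partial H/\partial p_i$, $\dot p_i=-\partial H/\partial q^i$, and your elimination of $p$ via $p_i=g_{ij}\dot q^j$, the identity $\partial_i g^{jk}=-g^{jr}g^{ks}\partial_i g_{rs}$, and the symmetrization in $j,k$ correctly produce the geodesic equation with the standard Christoffel symbols, with the right initial data $q(0)=q$, $\dot q(0)=v(p)$. Your route differs from the paper's: the paper does not verify Hamilton's equations in coordinates at all, but obtains $H(q,p)=\tfrac12 g_q^*(p,p)$ as the fiberwise Legendre transform of $L(q,v)=\tfrac12 g_q(v,v)$ and then invokes (implicitly, as a "summary of the discussion") the general fact that the Legendre transformation carries the Euler--Lagrange flow of $L$ --- whose trajectories are the geodesics --- to the Hamiltonian flow of $H$; this is exactly the coordinate-free argument you sketch in your last sentence. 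What your direct computation buys is a self-contained verification that does not rely on that general Lagrangian--Hamiltonian correspondence, making explicit the index bookkeeping that turns $\partial_i g^{jk}$ into $\Gamma^l_{jk}$; what the paper's approach buys is brevity and a conceptual explanation of why the kinetic-energy Hamiltonian is the right one (it is the Legendre dual of the energy Lagrangian). As a minor remark, your Christoffel formula is the correct one; the expression printed at the start of the paper's Section 2.1 contains a typo in its last term ($g_{ij,m}$ should be $g_{jk,m}$), so do not be troubled by the apparent mismatch.
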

In the upshot the geodesic problem on any Riemannian manifold $(N, g)$ can be interpreted as a Hamiltonian dynamics problem on its cotangent bundle $T^* N$. Let us see the following familiar example.

\begin{Ex}
We consider the 2-sphere $(S^2, g_{round})$ with the round metric. Then the Hamiltonian $H_S$ for the geodesic flow on $(S^2, g_{round})$ is given by $H_S(q, p)={1 \over 2}g_{round}^*(p, p)$ for each $(q, p) \in T^* S^2$. We want to see this Hamiltonian in a local coordinate chart. In particular, if we think of the stereographic projection 
\begin{eqnarray*}
\phi: \mathbb{R}^2 \rightarrow S^2-\{ N \}
\end{eqnarray*}
from the north pole $N=(0, 0, 1)$ given by
\begin{gather*}
\phi(x_1, x_2)=({2x_1 \over x_1^2+x_2^2+1}, {2x_2 \over x_1^2+x_2^2+1}, {x_1^2+x_2^2-1 \over x_1^2+x_2^2+1}),
\\ \phi^{-1}(q_1,q_2,q_3)=({q_1 \over 1-q_3}, {q_2 \over 1-q_3}).
\end{gather*}
This induces the canonical local coordinate chart $\Phi : T^* \mathbb{R}^2 \rightarrow T^* (S^2-\{ N \})$ of the cotangent bundle. The Hamiltonian $H$ on this coordinate chart have the expression
\begin{eqnarray*}
\tilde{H}(x, y):=H_S \circ \Phi(x, y)={1 \over 8}(|x|^2+1)^2 |y|^2
\end{eqnarray*}
in terms of $(x, y) \in T^* \mathbb{R}^2$. This Hamiltonian system defined on $T^* \mathbb{R}^2=\mathbb{R}^2 \times \mathbb{R}^2$ is equivalent to the geodsic problem on $(S^2-\{N\}, g_{round})$.
\end{Ex}

Of course, not every Hamiltonian system is a geodesic problem. It is hard to imagine that an arbitrary Hamiltonian mechanics problem can be interpreted as a geodesic problem of some Riemannian manifold. However, Moser in \cite{Moser} found a beautiful connection between the Kepler problem and the geodesic problem on $(S^2, g_{round})$. The Kepler problem is important in celestial mechanics as the most fundamental problem. As one knows, the geodesic problem on the standard unit sphere equipped with the round metric is also one of the most fundamental problems in the geodesic problem. We will discuss this relationship and its generalization in the next section.

\subsection{Moser regularization and generalization}

We will discuss only the planar Kepler problem and will give a sketch of the proof of Theorem in \cite{Moser}. The generalization of the Kepler problem on $\mathbb{R}^n$ is not difficult. One can find a precise proof of the $n$-dimensional problem in \cite{Moser}. 

The planar Kepler problem is a one body problem under the gravitational force toward the origin on the plane. One can see that any two body problem can be decoupled into two one body problems by fixing their center of mass on the origin. The Hamiltonian of the planar Kepler problem is
\begin{gather*}
H_{KP}: T^*(\mathbb{R}^2-\{(0,0)\} ) \rightarrow \mathbb{R},
\\ H_{KP}(q, p)={1 \over 2}|p|^2-{1 \over |q|}
\end{gather*}
with some normalizations of physical constants. We consider the Hamiltonian
\begin{eqnarray*}
K_{KP}^c(q, p)=|q|(H_{KP}(q,p)+c)={1 \over 2}(|p|^2+2c)|q|-1
\end{eqnarray*}
in order to remove the singularity. Note that the energy hypersurfaces are same
\begin{eqnarray*}
(K_{KP}^c)^{-1}(0)=H_{KP}^{-1}(-c)
\end{eqnarray*}
and so their Hamiltonian vector fields $X_{K_{KP}}$ and $X_{H_{KP}}$ are parallel on this common energy hypersurfaces $(K_{KP}^c)^{-1}(0)$. We focus on the case of $c={1 \over 2}$. Other negative energy levels can be achieved simply by rescaling the variables. We consider the following symplectic transformation
\begin{eqnarray*}
&\Psi: (T^* \mathbb{R}^2=\mathbb{R}^2 \times \mathbb{R}^2, dx \wedge dy) \rightarrow (T^* \mathbb{R}^2=\mathbb{R}^2 \times \mathbb{R}^2, dq \wedge dp),&
\\ &\Psi(x, y)=(y, -x)&
\end{eqnarray*}
namely $p=-x, q=y$. Then we define the Hamiltonian 
\begin{eqnarray*}
\tilde{K}(x, y):=K_{KP}^{1 \over 2} \circ \Psi={1 \over 2}(|x|^2+1)|y|-1
\end{eqnarray*}
by applying the above symplectic transformation. We remark that this symplectic transformation plays the role of changing the position and momentum variables in our case. We recall the Hamiltonian $\tilde{H}(x,y)={1 \over 8}(|x|^2+1)^2|y|^2$ on $T^* \mathbb{R}^2$ in Example 2.1.1. Then the energy hypersurfaces $\tilde{K}^{-1}(0)$ and $\tilde{H}^{-1}({1 \over 2})$ are same and they have same Hamiltonian flows up to reparametrization. We know these energy hypersurfaces $\tilde{K}^{-1}(0)$ and $\tilde{H}^{-1}({1 \over 2})$ come from $H_{KP}^{-1}(-{1 \over 2})$ and $H_S^{-1}({1 \over 2})$, respectively, where $H_S(q, p)={1 \over 2}g_{round}^*(p, p)$ for each $(q, p) \in T^* S^2$. We summarize the Moser's result.

\begin{Thm}[Moser]
For a negative energy $c<0$, the energy hypersurface $H_{KP}^{-1}(c)$ can be symplectically embedded into the cotangent bundle $T^* S^2$ as the unit cotangent bundle of $S^2- \{N\}$. Moreover, we can compactify these energy hypersufaces into the unit cotangent bundle of $S^2$ by adding the collision orbits.
\end{Thm}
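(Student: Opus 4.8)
The plan is to follow Moser's regularisation, reducing everything to the energy level $-c=-\tfrac12$, for which the whole picture has already been assembled above, and then to obtain every other negative energy by a rescaling.

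\emph{Step 1 (Reduction to $c=\tfrac12$).} For $\lambda>0$ the diffeomorphism $(q,p)\mapsto(\lambda^{2}q,\lambda^{-1}p)$ of $T^*(\mathbb{R}^2\setminus\{0\})$ satisfies $H_{KP}(\lambda^{2}q,\lambda^{-1}p)=\lambda^{-2}H_{KP}(q,p)$ and pulls $dq\wedge dp$ back to $\lambda\,dq\wedge dp$; hence it carries $H_{KP}^{-1}(-\tfrac12)$ onto $H_{KP}^{-1}(-\tfrac1{2\lambda^{2}})$ and conjugates the characteristic foliations. Since multiplying $\omega_{can}$ by a positive constant alters neither the existence of a symplectic embedding of a hypersurface nor its characteristic foliation, and since by Lemma~2.3 only the hypersurface (with its foliation, i.e.\ the flow up to reparametrisation) matters, it is enough to treat $c=\tfrac12$; all other negative energies follow by such a rescaling of the variables, as already remarked in the text.

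\emph{Step 2 (The hypersurface is a unit cotangent bundle).} By the construction preceding this statement, the symplectomorphism $\Psi(x,y)=(y,-x)$ identifies $(K_{KP}^{1/2})^{-1}(0)=H_{KP}^{-1}(-\tfrac12)$ with $\tilde K^{-1}(0)\subset T^{*}\mathbb{R}^{2}$, where $\tilde K(x,y)=\tfrac12(|x|^{2}+1)|y|-1$. Because $(|x|^{2}+1)|y|=2(\tilde K+1)$ and $|x|^{2}+1>0$, one gets $\tilde H(x,y)=\tfrac18(|x|^{2}+1)^{2}|y|^{2}=\tfrac12(\tilde K+1)^{2}=f\circ\tilde K$ with $f(s)=\tfrac12(s+1)^{2}$ strictly increasing near $s=0$; hence $\tilde K^{-1}(0)=\tilde H^{-1}(\tfrac12)$ and $X_{\tilde H}=X_{\tilde K}$ on this common level set. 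By Example~2.1.1, $\tilde H=H_{S}\circ\Phi$, where $\Phi\colon T^{*}\mathbb{R}^{2}\to T^{*}(S^{2}\setminus\{N\})$ is the canonical symplectomorphic lift of stereographic projection; therefore $\Phi$ maps $\tilde H^{-1}(\tfrac12)$ onto $H_{S}^{-1}(\tfrac12)=\{(q,p)\in T^{*}S^{2}:g_{round}^{*}(p,p)=1\}\cap T^{*}(S^{2}\setminus\{N\})$, that is, onto the unit cotangent bundle of $S^{2}\setminus\{N\}$. The composite $\Phi\circ\Psi^{-1}\colon H_{KP}^{-1}(-\tfrac12)\hookrightarrow T^{*}S^{2}$ is the desired symplectic embedding, and it intertwines the Kepler flow with the geodesic flow up to reparametrisation.

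\emph{Step 3 (Compactification by collision orbits).} The image $S^{*}(S^{2}\setminus\{N\})=S^{*}S^{2}\setminus S^{*}_{N}S^{2}$ is dense in the connected closed hypersurface $S^{*}S^{2}=H_{S}^{-1}(\tfrac12)\subset T^{*}S^{2}$, on which the geodesic flow of $g_{round}$ is globally defined and smooth; so the only non-compactness of $H_{KP}^{-1}(-\tfrac12)$ is removed by adjoining the cosphere circle $S^{*}_{N}S^{2}$. It remains to check that what is added are exactly the regularised collision orbits. A collision orbit of the Kepler problem is a zero–angular–momentum solution, a degenerate ellipse along which $q\to0$; under $\Psi$ this means $y\to0$, and on $\tilde K^{-1}(0)$ the relation $(|x|^{2}+1)|y|=2$ then forces $|x|\to\infty$, which under $\Phi$ is precisely convergence to the fibre over $N$. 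Conversely the geodesics of $g_{round}$ that meet $N$ are the great circles through $N$, which pull back through $\Phi^{-1}$ and $\Psi$ to exactly the collision ellipses, meeting the collision set once per period. Hence adjoining $S^{*}_{N}S^{2}$ closes up precisely the collision orbits, yielding the full unit cotangent bundle $S^{*}S^{2}$; the general case $c<0$ follows by Step~1.

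\emph{Main obstacle.} Steps 1 and 2 are essentially algebraic bookkeeping on material already in the text. The substantive point is Step~3: one must verify that the metric closure inside $T^{*}S^{2}$ adds exactly the fibre $S^{*}_{N}S^{2}$ and — more importantly — match the great circles through $N$ with the zero–angular–momentum Kepler ellipses so that ``adding the collision orbits'' is literally correct. This requires controlling the behaviour of solutions near the singularity, not just the shape of level sets, which is where Moser's regularisation does its real work.
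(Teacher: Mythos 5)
Your Steps 2 and 3 follow the paper's own sketch essentially verbatim: the text also passes through $K_{KP}^{1/2}$, the switch map $\Psi$, the Hamiltonians $\tilde K$ and $\tilde H$ and Example 2.1.1, and it merely asserts the compactification by collision orbits, so your Step 3 is actually more detailed than the source. The one step that would fail if pressed is Step 1. The rescaling $(q,p)\mapsto(\lambda^2 q,\lambda^{-1}p)$ is only conformally symplectic, and the claim that multiplying $\omega_{can}$ by a positive constant does not affect the existence of a symplectic embedding is false in the form you need: no symplectic embedding of $H_{KP}^{-1}(-c)$ onto the round \emph{unit} cotangent bundle $S^*(S^2\setminus\{N\})$ can exist for $c\ne\tfrac12$, because a symplectic identification of hypersurfaces carries characteristics to characteristics preserving their actions (on these hypersurfaces, diffeomorphic to $\mathbb{R}P^3$ minus a circle sitting inside $\mathbb{R}P^3$ with $H^1_{dR}(\mathbb{R}P^3;\mathbb{R})=0$, the action of a closed characteristic is determined by the restricted $2$-form), while the simple Kepler orbits at energy $-c$ have action $2\pi(2c)^{-1/2}$ (as computed in Section 3.2) and the great circles of $g_{round}$ have action $2\pi$; these agree only at $c=\tfrac12$.

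The paper's way around this is not your anisotropic rescaling but the genuinely symplectic linear map $T_c(q,p)=\bigl(q/\sqrt{2c},\,\sqrt{2c}\,p\bigr)$: composing $\Phi\circ\Psi\circ T_c$ one obtains an honest symplectic embedding of $H_{KP}^{-1}(-c)$ whose closure is $S^*_{\sqrt{2c}}S^2$, i.e.\ the unit cotangent bundle with respect to an energy-dependent (rescaled round) metric $g_c$ --- and this is how the phrase ``as the unit cotangent bundle'' in the statement is to be read; the paper says exactly this right after the theorem (``In fact, this completes the above Theorem''). With Step 1 replaced by this remark, the rest of your argument, including the identification of the added fibre $S^*_NS^2$ with the regularized collision orbits via $q=y\to0$, $|x|\to\infty$ and the great circles through $N$, goes through and matches the paper's (sketched) proof.
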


We summarize what we have done above. The procedure can be simplified by the following composition of maps
\begin{eqnarray*}
H_{KP}^{-1}(-c) \subset (T^* \mathbb{R}^2, \omega_{std}) \xrightarrow[Symp.]{\Psi} (T^* \mathbb{R}^2, \omega_{std}) \xrightarrow[Stereo.]{\Phi} (T^* S^2, \omega)
\end{eqnarray*}
for $c>0$ and the closure of image $\overline{\Phi \circ \Psi(H_{KP}^{-1}(-{1 \over 2}))}$ under the maps was amazingly the unit cotangent bundle $S_1^*S^2$ of $(S^2, g_{round})$. In general, if we choose another energy level, then we have a hypersurface
\begin{gather*}
\overline{\Phi \circ \Psi  (H_{KP}^{-1}(-c))}=\Sigma_K^c \subset T^*S^2
\end{gather*}
of the cotangent bundle over $S^2$. This hypersurface can be interpreted as a unit cotangent bundle of $S^2$ with respect to a Riemannian metric $g_c$. We have many possibilities for a generalization. For example, one can replace $\Psi$ and $\Phi$ by other symplectomorphisms. In this case, we let $T_c: T^* \mathbb{R} \rightarrow T^* \mathbb{R}$ be a linear symplectic map
\begin{gather*}
T_c(q, p)=({q \over \sqrt{2c}}, \sqrt{2c}p).
\end{gather*}
If we replace $\Psi$ by $\Psi \circ T_c$, then we have that
\begin{gather*}
\overline{\Phi \circ \Psi \circ T_c (H_{KP}^{-1}(-c))}=S_{\sqrt{2c}}^* S^2:=\{(q,p) \in T^* S^2 | \sqrt{g_{round}^*(p,p)}=\sqrt{2c}\}
\end{gather*}
for each $c>0$. In fact, this completes the above Theorem.

In this paper, we will discuss another kind of generalization by considering the metric on $S^2$. For this generalization, we recall the definition of a Finsler metric on a smooth manifold.

\begin{Def}
A Finsler manifold is a differentiable manifold $N$ equipped with a Finsler function $F$ on the tangent bundle $TN$. Namely, $F$ satisfies the following conditions.
\\ $\cdot$ $F$ is smooth on $TN \backslash N$. Here, $N$ means the zero section.
\\ $\cdot$ $F((q, v)) \ge 0$ for all $(q, v) \in TN$ and $F((q, v))=0$ if and only if $v=0$.
\\ $\cdot$ $F((q, \lambda v))=\lambda F((q, v))$ for all $\lambda \ge 0$ and $(q, v) \in TN$.
\\ $\cdot$ $F((q, v+w)) \le F((q, v))+F((q, w))$ for all $(q,v), (q,w) \in TN$
\\ We call $F$ a Finsler metric on $N$.
\end{Def}

In general, a Finsler metric $F$ is not an inner product on each tangent space but defines a norm on each tangent space. Let us define the corresponding geometric object.

\begin{Def}
Let $N$ be a differentiable manifold. A hypersurface $\Sigma$, codimension 1 submanifold, of the tangent bundle $TN$ is called fiberwise convex if $\Sigma \cap T_q N$ bounds a strictly convex bounded domain of $T_q N$ which contains the origin for each $q \in N$. 
\end{Def}

One can immediately see that there is a one-to-one correspondence between the set of all Finsler metrics and the set of all fiberwise convex hypersurfaces for any fixed manifold $N$.
\begin{gather*}
\{ \textrm{Finsler metric on } N\} \longleftrightarrow \{ \textrm{Fiberwise convex hypersurface of } TN\},
\\ F \longmapsto F^{-1}(1). \quad \quad \quad \quad \quad
\end{gather*}

\begin{Rem}
We can rewrite the above two definitions for the cotangent bundle $T^*N$ by the exactly same way. Moreover, we also have the one-to-one correspondence between the set of dual Finsler metric on $N$ and the set of fiberwise convex hypersurfaces of $T^* N$.
\end{Rem}

We can extend the idea of the Moser regularization. For a given Hamiltonian
\begin{gather*}
H:T^* \mathbb{R}^2 \rightarrow \mathbb{R}
\end{gather*}
on $T^* \mathbb{R}^2 = \mathbb{R}^2 \times \mathbb{R}^2$. For the embedding
\begin{gather*}
\Sigma^c:=\Phi \circ \Psi (H^{-1}(-c)) \subset T^* S^2
\end{gather*}
of an energy hypersurface under the above maps, if its closure $\overline{\Sigma^c}$ in $T^* S^2$ is a fiberwise convex hypersurface of $T^* S^2$, then the Hamiltonian flow on $H^{-1}(-c)$ can be interpreted as a geodesic flow of the corresponding Finsler metric on $S^2$. In this case, we will say that the Hamiltonian dynamics defined by $H$ is fiberwise convex for energy $-c$. This generalization has been applied particularly to celestial mechanics problems related to Kepler problem. In \cite{CFvK}, they prove fiberwise convexity of the rotating Kepler problem. Fiberwise convextity of Hill's lunar problem was also proved in \cite{L}. One goal in this generalization is determining fiberwise convexity of the restricted three body problem. We still do not know fiberwise convexity of the restricted three body problem. Now, we want to introduce precisely the main ingredients of this paper as examples of this generalization.

\begin{Thm}[Fiberwise convexity of the rotating Kepler problem, \cite{CFvK}]
The bounded component of the regularized rotating Kepler problem is fiberwise convex for all energy below the critical level.
\end{Thm}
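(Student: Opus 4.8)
The plan is to run the Moser-type regularization of Section 2.2 explicitly for $H_R$ and then read off the fiberwise geometry of the resulting hypersurface in $T^*S^2$. Set $K^c(q,p)=|q|\big(H_R(q,p)+c\big)$ and apply the symplectic swap $\Psi(x,y)=(y,-x)$ followed by the cotangent lift $\Phi$ of the stereographic projection; a direct computation gives $K^c\circ\Psi(x,y)=|y|\big(\tfrac12|x|^2+c\big)+|y|(x_2y_1-x_1y_2)-1$, so that the regularized hypersurface $\Sigma_R^c:=\overline{\Phi\circ\Psi\big(H_R^{-1}(-c)\big)}$ is cut out by $|y|\big(\tfrac12|x|^2+c+x_2y_1-x_1y_2\big)=1$. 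Conformality of the stereographic chart gives, for $(\xi,\eta)=\Phi(x,y)\in T^*S^2\subset\mathbb{R}^3\times\mathbb{R}^3$, the relations $|y|=(1-\xi_3)|\eta|$, $|x|^2=\tfrac{1+\xi_3}{1-\xi_3}$ and $x_2y_1-x_1y_2=(1-\xi_3)\langle w(\xi),\eta\rangle$ for a covector $w(\xi)\in T^*_\xi S^2$ with $|w(\xi)|=|x|$. Writing $a=a(\xi_3):=\tfrac{1+\xi_3}{2}+c(1-\xi_3)$ and $u=u(\xi):=(1-\xi_3)^2 w(\xi)$, so $|u(\xi)|=(1-\xi_3)^{3/2}(1+\xi_3)^{1/2}=:k(\xi_3)$, the hypersurface $\Sigma_R^c$ is cut out fiberwise in $T^*_\xi S^2$ by
\begin{gather*}
|\eta|\big(a(\xi_3)+\langle u(\xi),\eta\rangle\big)=1 .
\end{gather*}
As in the Kepler case one checks that this closes up smoothly over the north pole (adding the collision circle $|\eta|=1$), and $K^c\circ\Psi(x,0)=-1<0$ shows the zero covector lies in the interior of each fiber.

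The next step is to analyse one fiber. Fix $\xi$ and pick polar coordinates $\eta=r\theta$ on $T^*_\xi S^2\cong\mathbb{R}^2$ with $u(\xi)$ pointing along $\theta=0$, so the fiber equation reads $r\big(a+k\cos\theta\,r\big)=1$. For $\cos\theta\ge 0$ there is a unique positive root, and for $\cos\theta<0$ either two positive roots (the larger one running off to infinity as $\cos\theta\to0^-$, which is the unbounded part of $\{K^c\circ\Psi=0\}$) or none. The bounded component of $\Sigma_R^c$ thus corresponds fiberwise to the inner root, which after rationalization is the polar curve
\begin{gather*}
r_\xi(\theta)=\frac{2}{a(\xi_3)+\sqrt{a(\xi_3)^2+4k(\xi_3)\cos\theta}} ,
\end{gather*}
which is defined for every $\theta$ --- equivalently, a bounded component exists over $\xi$, and every ray from the origin meets it once --- exactly when $a(\xi_3)^2>4k(\xi_3)$.

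Then I would show that $a(t)^2>4k(t)$ for all $t=\xi_3\in[-1,1]$ holds precisely when $-c<-\tfrac32$. With $s=1-t\in[0,2]$ one has $a=1+(c-\tfrac12)s$ and $k^2=s^3(2-s)$, so the condition becomes $\big(1+(c-\tfrac12)s\big)^4>16\,s^3(2-s)$ for all $s\in[0,2]$. At the borderline $c=\tfrac32$ the difference of the two sides factors as $(s-1)^2(17s^2+6s+1)\ge 0$, with equality only at $s=1$ (the equator $\xi_3=0$); since the left side is strictly increasing in $c$ for $s>0$, the strict inequality holds at every $\xi$ iff $c>\tfrac32$, and fails near the equator once $c\le\tfrac32$. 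This matches the classical fact that the Hill region $\{\tfrac12|q|^2+\tfrac1{|q|}\ge c\}$ of the rotating Kepler problem disconnects exactly for $c>\tfrac32$.

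Finally, assuming $c>\tfrac32$, the strict convexity of each $r_\xi$ should come for free. Writing $\rho=1/r_\xi=\tfrac12(a+S)$ with $S=\sqrt{a^2+4k\cos\theta}$, the curvature formula for a polar curve gives convexity $\iff\rho+\rho''\ge 0$, i.e. $a+S+S''\ge 0$. Differentiating $S$ twice and clearing the denominator $S^3$, this reduces to $a\,P^{3/2}\ge Q(\cos\theta)$ with $P:=a^2+4k\cos\theta=S^2>0$ and $Q(t):=4k^2-a^4-6ka^2t-12k^2t^2$. The left side is positive since $a>0$ and $P>0$, while for $k>0$ the parabola $Q$ opens downward with vertex at $t=-a^2/(4k)<-1$, hence $Q(t)\le Q(-1)=-(a^2-2k)(a^2-4k)<0$ on $[-1,1]$ (for $k=0$, at the poles, the fiber is the round circle $|\eta|=1/a$). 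So $\rho+\rho''>0$ everywhere, every fiber of the bounded component is strictly convex, and $\Sigma_R^c$ is fiberwise convex below the critical level. I expect the main obstacle to be the coordinate bookkeeping --- pinning down the cotangent lift so the rotating term is exactly $(1-\xi_3)\langle w,\eta\rangle$ with $|w|=|x|$ (hence the precise exponents in $k(\xi_3)$), checking smoothness of the compactification across the north-pole fiber, and confirming that the fiberwise inner branches assemble into the globally bounded component of $\{K^c\circ\Psi=0\}$ --- after which the theorem collapses to the elementary inequalities above, with only $a^2>4k$ responsible for the threshold $-c<-\tfrac32$.
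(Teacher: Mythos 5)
The paper itself does not prove this statement --- it is quoted from \cite{CFvK}, and the only trace of the argument here is the dual Finsler function $F_c^*$ recalled in the Appendix --- so there is no internal proof to compare against; judged on its own, your argument is correct and recovers exactly that description. Your fiber equation $|\eta|\bigl(a(\xi_3)+\langle u(\xi),\eta\rangle\bigr)=1$ with $a=\tfrac{1+\xi_3}{2}+c(1-\xi_3)$ and $|u|=(1-\xi_3)^{3/2}(1+\xi_3)^{1/2}$ is the chart identity $\tfrac12(|p|^2+2c)|q|+|q|\langle p^{\perp},q\rangle=1$ (equivalently $F_c^*=1$, as in the Appendix) transported to $T^*S^2$ by the conformal factor $1-\xi_3$, and I checked the key steps: the criterion ``convex $\iff \rho+\rho''\ge 0$'' for $\rho=1/r$ is the standard polar identity $r^2+2r'^2-rr''=\rho^{-3}(\rho+\rho'')$; the factorization $(1+s)^4-16s^3(2-s)=(s-1)^2(17s^2+6s+1)$ is correct, so $a^2>4k$ on every fiber exactly when $c>\tfrac32$, with the borderline occurring over the equator where the critical point sits; and with $Q(t)=4k^2-a^4-6ka^2t-12k^2t^2$ one indeed has the vertex at $-a^2/(4k)<-1$ and $Q(-1)=-(a^2-2k)(a^2-4k)<0$, giving $\rho+\rho''>0$ on every fiber (the polar fibers being round circles). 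So where \cite{CFvK} obtain the result by direct computation in the Moser chart, leading to the function $F_c^*$, your route is an equivalent, self-contained fiberwise curvature estimate carried out intrinsically on $T^*S^2$; both hinge on the same threshold inequality $a^2>4k$. The only items you should still write out in full are the two you flag yourself: that $u(\xi)$ is $(1-\xi_3)$ times the rotational Killing covector field, hence smooth on all of $S^2$ with $u=0$ at the poles, so the closure across the north-pole fiber $|\eta|=1$ (the collision circle) is a smooth hypersurface transverse to the fibers (the radial derivative at the inner root equals $S(S+a)/(S+a)>0$); and that for $c>\tfrac32$ the inner roots form a single compact component disjoint from the outer branches, which follows from the strict positivity of the discriminant at $\theta=\pi$ over every base point.
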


\begin{Thm}[Fiberwise convexity of Hill's lunar problem, \cite{L}]
The bounded component of the regularized Hill's lunar problem is fiberwise convex for all energy below the critical level.
\end{Thm}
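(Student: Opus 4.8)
The plan follows the structure of the proof of fiberwise convexity of the rotating Kepler problem (the preceding theorem, from \cite{CFvK}; see also \cite{L}): carry out the Moser-type regularization of Section 2.2 for $H_H$, describe the regularized hypersurface fibrewise in $T^*S^2$, and reduce strict fiberwise convexity to the positivity of an explicit scalar quantity. The extra degree-two term $-q_1^2+\tfrac12q_2^2$ of Hill's Hamiltonian destroys the rotational symmetry that trivializes the rotating Kepler case, so that scalar quantity will genuinely depend on the base point, and the threshold $c_H^0=3^{4/3}/2$ will come out as the critical value of the amended potential of Hill's problem.

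First I would fix $c>c_H^0$, so that $H_H^{-1}(-c)$ has a bounded component $B^c$ (the Hill region around the Earth) to which, by Theorem 2.2, the Hamiltonian flow is tangent, and carry out the regularization of Section 2.2: replace $H_H$ by $K_H^c(q,p):=|q|\bigl(H_H(q,p)+c\bigr)$, which by Lemma 2.3 has the same flow up to parametrization on $(K_H^c)^{-1}(0)=H_H^{-1}(-c)$ and is, explicitly,
\[
K_H^c(q,p)=\tfrac12|p|^2|q|-1+\bigl(p_1q_2-p_2q_1\bigr)|q|+\bigl(-q_1^2+\tfrac12q_2^2+c\bigr)|q|.
\]
Every term beyond the Kepler part $\tfrac12|p|^2|q|-1$ carries an explicit factor $|q|$ and so is negligible near the collision $|q|=0$; hence, applying $\Psi$ and the stereographic chart $\Phi$ exactly as in Moser's Theorem, $\Phi\circ\Psi\bigl((K_H^c)^{-1}(0)\bigr)$ closes up smoothly across the fibre over the north pole (the added collision orbits) to a compact hypersurface $\Sigma_H^c\subset T^*S^2$, and I let $M_H^c$ be the Liouville domain it bounds.

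Next I would write down, for each base point of $S^2$, an explicit equation for the slice of $\Sigma_H^c$ in the corresponding cotangent fibre by substituting the coordinate change of the regularization into $K_H^c$. Since the cotangent lift of a diffeomorphism acts linearly on fibres, fiberwise convexity is equivalent to strict convexity of each of these plane curves together with the origin lying inside; on a two-dimensional fibre this is a single scalar condition, namely that the second fundamental form of the slice with respect to the outward conormal be positive at every point. I would compute this quantity, exploit the $(\mathbb{Z}/2)^2$-symmetry of $H_H$ (generated by $q_1\mapsto-q_1$ and $q_2\mapsto-q_2$ with the matching momentum reflections) to cut the base points and fibre directions down to a manageable family, and reduce the verification to the positivity of an explicit function of one or two real variables. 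One then checks that this positivity holds for all $c>c_H^0$, the number $c_H^0=3^{4/3}/2$ being the critical value of the amended potential $V_{\mathrm{eff}}(q)=-\tfrac1{|q|}-\tfrac32q_1^2$ obtained from the square completion $\tfrac12|p|^2+p_1q_2-p_2q_1=\tfrac12(p_1+q_2)^2+\tfrac12(p_2-q_1)^2-\tfrac12|q|^2$ --- that is, exactly the energy threshold below which the bounded component $B^c$ exists.

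The main obstacle is this last estimate. In the rotating Kepler problem the fibre slices are circles (spheres after the rescaling $T_c$), so convexity is immediate; here the anisotropy forces the convexity quantity to depend on the base point, and one must bound it below uniformly over all of $S^2$ and all fibre directions and check that this bound stays positive all the way down to $c=c_H^0$ --- this is where the symmetry reduction is essential, and the only place where the value $3^{4/3}/2$ enters. One must also verify that strict convexity survives the compactification across the fibre over the north pole, where, as for the Kepler and rotating Kepler problems, the slices degenerate to the zero-section point.
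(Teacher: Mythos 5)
There is a genuine gap: your text is a plan, not a proof, and the step you defer with ``one then checks that this positivity holds for all $c>c_H^0$'' is the entire content of the theorem. Note first that the paper under review does not prove this statement at all --- it is imported wholesale from \cite{L}; so the only meaningful comparison is with the proof there, whose strategy you do reproduce correctly (regularize $K_H^c=|q|(H_H+c)$, switch $q$ and $p$ via $\Psi$, pass to the stereographic chart $\Phi$, and show that for each fixed momentum $p$ the bounded component of $\{q: H_H(q,p)=-c\}$ bounds a strictly convex domain containing the origin, which suffices since $\Phi$ acts linearly on fibres). But in \cite{L} the positivity of the fibrewise Hessian is a long, delicate quantitative estimate --- one must control the curvature quantity on the whole Hill region, uniformly in the base point and in the fibre direction, all the way down to $c=c_H^0$, and separately handle the fibre over the north pole where the collision orbits are added. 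None of that is carried out here: you correctly identify it as ``the main obstacle,'' correctly identify $c_H^0=3^{4/3}/2$ as the critical value of the effective potential $-\tfrac1{|q|}-\tfrac32 q_1^2$, and then stop. A referee cannot accept ``reduce to the positivity of an explicit function of one or two real variables'' without the function and the verification, because a priori the convexity could fail somewhere strictly below the critical energy; that this does not happen is precisely the theorem.

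A secondary inaccuracy: your motivating remark that ``in the rotating Kepler problem the fibre slices are circles, so convexity is immediate'' is false. For fixed $p$ the regularized slice of the \emph{rotating} Kepler problem is $\tfrac12(|p|^2+2c)|q|+\langle p^{\perp},q\rangle|q|=1$ (this is exactly the curve appearing in the Appendix), which is a circle only when the rotation term is dropped; fibrewise convexity of the rotating Kepler problem is itself a nontrivial theorem of \cite{CFvK}, not an immediate observation. This matters for your outline, because it removes the suggestion that only the anisotropic term $-q_1^2+\tfrac12 q_2^2$ creates work: already the term $\langle p^{\perp},q\rangle|q|$ does, and the estimate for Hill's problem must absorb both.
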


We introduce the Hamiltonians 
\begin{gather*}
H_R : T^*(\mathbb{R}^2 -\{0\})=(\mathbb{R}^2 -\{0\}) \times \mathbb{R}^2 \rightarrow \mathbb{R}
\\ H_R(q, p)={1 \over 2}|p|^2-{1 \over |q|}+p_1 q_2-p_2 q_1
\end{gather*}
and
\begin{gather*}
H_H : T^*(\mathbb{R}^2 -\{0\})=(\mathbb{R}^2 -\{0\}) \times \mathbb{R}^2 \rightarrow \mathbb{R}
\\ H_H (q, p)={1 \over 2}|p|^2-{1 \over |q|}+p_1 q_2-p_2 q_1-q_1^2+{1 \over 2}q_2^2
\end{gather*}
of the rotating Kepler problem and Hill's lunar problem, respectively. One can easily see that $H_R$ and $H_H$ have one critical value $-c_R^0=-{3 \over 2}$ and $-c_H^0=-{3^{4 \over 3} \over 2}$, respectively. We define the bounded component of each problem
\begin{gather*}
\Sigma_{R}^c:=(\overline{\Phi \circ \Psi(H_{R}^{-1}(-c))})^b,
\\ \Sigma_{H}^{c'}:=(\overline{\Phi \circ \Psi(H_{H}^{-1}(-c'))})^b
\end{gather*}
where the overlines denote the closure in $T^* S^2$ and superscripts $b$ denote the bounded component in $T^* S^2$ for each $c>c_R^0$ and $c'>c_H^0$. We will call $\Sigma_{R}^c$($\Sigma_{H}^c$) the energy hypersurface of the regularized rotating Kepler problem(Hill's lunar problem) at energy $-c$ for each $c>c_R^0$ $(c>c_H^0)$.The above Theorems mean that $\Sigma_{R}^c$ and $\Sigma_H^{c'}$ are fiberwise convex hypersurfaces for each $c>c_R^0$ and $c'>c_H^0$. Therefore, we can regard each of the rotating Kepler problem and Hill's lunar problem as a geodesic problem on $S^2$ equipped with a Finsler metric. One immediate consequence of fiberwise convexity of a Hamiltonian problem is admitting a contact structure of the Hamiltonian problem. Therefore, we can apply Theorem in contact topology. By Eliashberg' work in \cite{Eli2}, there is a unique tight contact sturcture up to isotopy. From the criterion due to Eliashberg and Gromov \cite{Eli1}, \cite{Gromov}, any symplectically fillable contact 3-manifold is tight. Because regularized energy hypersurfaces $\Sigma_R^c$ and $\Sigma_H^{c'}$ are symplectically fillable and diffeomorphic to $\mathbb{R}P^3$, we have the following Corollary.

\begin{Cor}
The bounded component of the regularized rotating Kepler problem and the regularized Hill's lunar problem has a contact structure for the energy level below each critical value. Moreover, these contact structures are the unique tight contact structure on $\mathbb{R}P^3$ up to contact isotopy.
\end{Cor}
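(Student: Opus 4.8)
The plan is to assemble the statement from two ingredients that are already on the table: the fiberwise convexity Theorems just proved (Theorems~2.6 and~2.7), which realize the regularized hypersurfaces inside $(T^*S^2,\omega_{can})$ as boundaries of Liouville domains, together with the Eliashberg--Gromov fillability criterion and Eliashberg's classification of tight contact structures on $\mathbb{R}P^3$.

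First I would upgrade ``fiberwise convex'' to ``of contact type''. On $(T^*S^2,\omega_{can}=d\lambda_{can})$ the Liouville vector field $Y$ determined by $\iota_Y\omega_{can}=\lambda_{can}$ is the fiberwise radial field; in canonical coordinates $Y=\sum_i p_i\,\partial_{p_i}$. A fiberwise convex (indeed already a fiberwise star-shaped) hypersurface $\Sigma$ is transverse to $Y$, since along each fiber $Y$ points strictly outward from a star-shaped domain containing the origin. Hence $\alpha:=\lambda_{can}|_\Sigma$ is a contact form and $\xi:=\ker\alpha$ a contact structure on $\Sigma$; applying this to $\Sigma_R^c$ and $\Sigma_H^{c'}$ for $c>c_R^0$, $c'>c_H^0$ proves the first sentence of the Corollary.

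Next I would pin down the diffeomorphism type and exhibit the filling. Radial projection along $Y$ carries any fiberwise star-shaped hypersurface of $T^*S^2$ diffeomorphically onto the unit cotangent bundle $S^*_{g_{round}}S^2$, and $S^*S^2\cong\mathbb{R}P^3$, so $\Sigma_R^c,\Sigma_H^{c'}\cong\mathbb{R}P^3$ (here one uses that the Moser-regularized hypersurfaces are genuine smooth closed submanifolds of $T^*S^2$, the collision orbits being smoothly glued in as in Theorem~2.5). Moreover the bounded component $M_R^c$, resp.\ $M_H^{c'}$, that $\Sigma_R^c$, resp.\ $\Sigma_H^{c'}$, cuts off in $T^*S^2$ is a compact exact symplectic manifold with $Y$-convex boundary, i.e.\ a Liouville domain, hence a strong (indeed exact) symplectic filling of $(\Sigma,\xi)$.

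Finally I would invoke the two quoted contact-topological facts. By the Eliashberg--Gromov criterion \cite{Eli1},\cite{Gromov} a symplectically fillable contact $3$-manifold is tight, so $(\Sigma_R^c,\xi)$ and $(\Sigma_H^{c'},\xi)$ are tight; by Eliashberg's uniqueness theorem \cite{Eli2} (for $S^3$, pushed to its $\mathbb{Z}_2$-quotient $\mathbb{R}P^3$; equivalently Honda's classification of tight contact structures on lens spaces) $\mathbb{R}P^3$ carries exactly one tight contact structure up to contact isotopy. Therefore both contact structures are contact-isotopic to the standard one, which proves the second sentence. The only genuinely nontrivial inputs are the passage from ``tight'' to ``unique up to contact isotopy'' — which is the Eliashberg/Honda classification and does not follow from fillability alone — and the verification that the regularized hypersurface is a smooth closed submanifold of $T^*S^2$ so that the radial diffeomorphism onto $\mathbb{R}P^3$ and the fillability statement are legitimate; the latter is supplied by the regularization construction of Section~2.2. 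Everything else is the routine ``Liouville field transverse to a star-shaped hypersurface'' argument.
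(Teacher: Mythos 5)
Your proposal is correct and follows essentially the same route as the paper: the paper's justification (given in the paragraph preceding the Corollary) likewise combines fiberwise convexity (hence star-shapedness and transversality to the Liouville field, giving the contact structure and the Liouville filling), the Eliashberg--Gromov fillability-implies-tightness criterion \cite{Eli1},\cite{Gromov}, and Eliashberg's uniqueness of the tight contact structure on $\mathbb{R}P^3$ \cite{Eli2}, using that $\Sigma_R^c$ and $\Sigma_H^{c'}$ are diffeomorphic to $\mathbb{R}P^3$. You merely spell out the transversality and radial-projection details that the paper leaves implicit.
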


Actually, what we need for the contact structure is fiberwise star-shapedness. If one can prove that a Hamiltonian $H$ defines a fiberwise star-shaped hypersurface $\Sigma$ in $T^* S^2$ along this procedure, then we can think of the Hamiltonian flow as a Reeb flow of the contact manifold $(\Sigma, \lambda_{can})$ up to reparametrization. Moreover, the inside $M$ of $\Sigma$ in $T^* S^2$ defines a Liouville domain. Because  the tools in this paper can be applied to any Liouville domain defined by a fiberwise star-shaped hypersurface in a cotangent bundle, it is worthwhile to mention the contact structure of the restricted three body problem in \cite{AFvKP}.

\begin{Thm}[Albers-Frauenfelder-van Koert-Paternain]
For a energy $c$ below the first critical value, two bounded components $\Sigma_E^c$ and  $\Sigma_M^c$ of the regularized restricted three body problem in $T^* S^2$ admit compatible contact forms, respectively. Moreover, there exists $\epsilon>0$ such that for $-c \in (H(L_1), H(L_1)+\epsilon)$ the bounded component $\Sigma_{E, M}^c$ admits a compatible contact form $\lambda$.
\end{Thm}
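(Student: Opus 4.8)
The plan is to deduce the contact property from fiberwise star-shapedness in $T^*S^2$ through a Moser-type regularization, exactly as in Section~2.2, and to treat the two assertions separately.

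For the individual bounded components, fix the primary $P$ sitting inside the chosen component, say $\Sigma_E^c$ (the argument for $\Sigma_M^c$ is identical). Since the energy lies below the first critical value $H(L_1)$, the corresponding component of the Hill region is a topological disk whose only singularity is $q=P$, so the single-primary Moser recipe applies: I would multiply $H_{R3BP}+c$ by $|q-P|$ to obtain a Hamiltonian $K_P^c$ with $(K_P^c)^{-1}(0)=H_{R3BP}^{-1}(-c)$ near $P$ and no singularity, then compose with the switch map $\Psi(x,y)=(y,-x)$ and the stereographic chart $\Phi$ of Example~2.1.1 to produce a hypersurface in $T^*S^2$ whose closure adds the collision orbits and is diffeomorphic to $\mathbb{R}P^3$. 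The heart of the matter is to show that this regularized hypersurface $\overline{\Sigma_E^c}$ is fiberwise star-shaped about the zero section, equivalently that the fiberwise Liouville vector field $Y$ characterized by $\iota_Y\omega_{can}=\lambda_{can}$ is transverse to it; this transversality is precisely the contact condition $\lambda_{can}\wedge d\lambda_{can}\neq0$ along $\overline{\Sigma_E^c}$. Concretely, I would compute $dK_P^c(Y)$ along $(K_P^c)^{-1}(0)$ in the regularized coordinates and check that it never vanishes: the Kepler-type term $-m_P/|q-P|$ contributes, after the multiplication by $|q-P|$ and the symplectic changes of coordinates, a term of definite sign, while the rotation term $p_1q_2-p_2q_1$ and the potential of the far primary contribute lower-order corrections that cannot overturn that sign on the bounded component. (As a sanity check, in the limit of vanishing mass ratio the component around the surviving primary tends to $\Sigma_R^c$, which is fiberwise convex by Theorem~2.7, and fiberwise star-shapedness is a $C^1$-open condition; the explicit estimate is what upgrades this to all mass ratios and all sub-critical energies.) Once transversality holds, $\lambda:=\lambda_{can}|_{\overline{\Sigma_E^c}}$ is a compatible contact form, and symplectic fillability together with the diffeomorphism type $\mathbb{R}P^3$ identifies the contact structure, just as for $\Sigma_R^c$ and $\Sigma_H^{c'}$.

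For the merged component one takes the energy $-c$ slightly above $H(L_1)$, so that the two primary ovals have just coalesced through the hyperbolic critical point $L_1$ into a single component $\Sigma_{E,M}^c$ (diffeomorphic to $\mathbb{R}P^3\#\mathbb{R}P^3$). A single primary-centered Moser regularization no longer covers the whole component, so the plan is to regularize near each of the two singularities on overlapping neighborhoods: away from a fixed neighborhood $U$ of $L_1$ the resulting hypersurface is fiberwise star-shaped by the computation above, while near $L_1$ one passes to the local normal form of $H_{R3BP}$ at the saddle --- a nondegenerate quadratic saddle plus higher-order terms --- where, for $-c$ within $\epsilon$ of $H(L_1)$, the hypersurface is a small perturbation of the critical level on which a transverse Liouville vector field is written down explicitly in the local model. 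Since the Liouville vector fields transverse to a fixed hypersurface form a convex set, these local choices glue through a partition of unity into a globally transverse Liouville field, whose primitive of $\omega_{can}$ restricts to the desired contact form $\lambda$. The positive constant $\epsilon$ is exactly what is needed to keep the un-regularized neck around $L_1$ in this perturbative regime.

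I expect the last step to be the main obstacle: reconciling two independent Kepler regularizations with the hyperbolic dynamics at $L_1$ and controlling the contact condition uniformly across the neck. By contrast, the single-component case is the generalized Moser regularization of Section~2.2 together with a perturbation estimate for $dK_P^c(Y)$, which I expect to go through with only routine, if somewhat lengthy, computation.
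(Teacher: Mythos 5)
First, note that the paper you are working from does not prove this statement at all: it is quoted verbatim from \cite{AFvKP}, so the only meaningful comparison is with the argument in that reference. Your first paragraph does reproduce its strategy for the components $\Sigma_E^c$, $\Sigma_M^c$: Moser-regularize at the primary contained in the chosen component, observe that contact type is equivalent to transversality of the (fiberwise radial) Liouville vector field, i.e.\ to fiberwise star-shapedness, and verify this by an estimate on $dK_P^c(Y)$ valid on the bounded Hill region below $H(L_1)$. Be aware, however, that declaring the rotation term and the far primary to be ``lower-order corrections that cannot overturn the sign'' is not an argument: the inequality has to hold for \emph{every} mass ratio $\mu\in(0,1)$, so nothing is small, and your $C^1$-openness check from the rotating Kepler problem only covers $\mu$ near $0$. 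The explicit inequality on the bounded component is the substance of the theorem in \cite{AFvKP}, not a routine afterthought.

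The genuine gap is in your treatment of $\Sigma_{E,M}^c$. Liouville vector fields for a fixed $\omega$ form an affine space (differences are symplectic vector fields) and are closed under convex combinations with \emph{constant} coefficients, but they cannot be glued by a partition of unity: for a nonconstant function $\rho$ one has $L_{\rho Y}\omega=\rho\,\omega+d\rho\wedge\iota_Y\omega\neq\omega$, and more precisely, if $\rho_1+\rho_2=1$ then $L_{\rho_1Y_1+\rho_2Y_2}\omega=\omega+d\rho_1\wedge(\lambda_1-\lambda_2)$, which is not $\omega$ unless $\lambda_1-\lambda_2$ is controlled. Equivalently, $\rho_1\lambda_1+\rho_2\lambda_2$ is not a primitive of $\omega$. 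The correct interpolation writes $\lambda_1-\lambda_2=dg$ and uses $\lambda_2+d(\rho g)$, which \emph{is} a primitive, but this produces the extra term $g\,d\rho$ (a Hamiltonian correction supported where $d\rho\neq0$) whose effect on transversality must be estimated explicitly in the neck around $L_1$; this is exactly the delicate point in \cite{AFvKP}, and it is where the hypothesis $-c\in(H(L_1),H(L_1)+\epsilon)$ is used, since only for energies close to the critical value can the interpolation region and the size of $g$ on it be controlled. As written, your ``convexity plus partition of unity'' step fails, and with it the construction of the compatible contact form on the $\mathbb{R}P^3\#\mathbb{R}P^3$ component; the first half of your sketch, by contrast, is the right outline but with the decisive estimate left unproved.
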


In \cite{AFvKP}, they opened the possibility of using contact topology in order to understand the dynamics of the restricted three body problem.

\begin{Cor}[Albers-Frauenfelder-van Koert-Paternain]
For $-c <H(L_1)$, the contact structures of $(\Sigma_E^c, \ker \lambda_{can})$ and $(\Sigma_M^c, \ker  \lambda_{can})$ are the tight contact structure on $\mathbb{R}P^3$ up to contact isotopy. Moreover, for $-c \in (H(L_1), H(L_1)+\epsilon)$, the contact structure of $(\Sigma_{E, M}^c, \ker \lambda)$ is the tight contact structure on $\mathbb{R}P^3 \# \mathbb{R}P^3$
\end{Cor}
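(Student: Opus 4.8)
The plan is to derive the statement from two classical inputs, exactly as in the proof of the preceding Corollary on the rotating Kepler and Hill's lunar problems: the Eliashberg--Gromov filling obstruction \cite{Eli1}, \cite{Gromov}, by which every symplectically fillable contact $3$-manifold is tight, and the classification of tight contact structures on the two $3$-manifolds that occur here. The analytic heart of the matter --- producing the compatible contact forms --- is already taken care of by the previous Theorem, so what remains is to (i) recognize the underlying smooth $3$-manifolds, (ii) check symplectic fillability, and (iii) quote the appropriate uniqueness theorems.

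For $-c < H(L_1)$ I would argue verbatim as in the earlier Corollary. By the previous Theorem $(\Sigma_E^c, \ker\lambda_{can})$ and $(\Sigma_M^c, \ker\lambda_{can})$ are contact manifolds, and each of $\Sigma_E^c$, $\Sigma_M^c$ bounds a compact region in $T^*S^2$ cut out by a sublevel set of the regularized Hamiltonian; since the contact form is the restriction of $\lambda_{can}$, these regions are Liouville domains, so both contact manifolds are symplectically fillable and hence tight. It is part of the analysis of the regularized restricted three body problem in \cite{AFvKP} that, below the first critical value, each bounded component is diffeomorphic to $\mathbb{R}P^3$. Eliashberg's classification \cite{Eli2} then gives that the only tight contact structure on $\mathbb{R}P^3$ up to contact isotopy is the standard one, which is the first assertion.

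For $-c \in (H(L_1), H(L_1)+\epsilon)$ the new point is that as $-c$ passes the first critical value the Hill region opens a channel at the Lagrange point $L_1$, the two bounded components $\Sigma_E^c$ and $\Sigma_M^c$ merge, and a Morse-theoretic analysis of the regularized energy hypersurface across this value shows that $\Sigma_{E,M}^c$ is diffeomorphic to $\mathbb{R}P^3 \# \mathbb{R}P^3$. The contact manifold $(\Sigma_{E,M}^c, \ker\lambda)$ is again symplectically fillable --- this is built into the construction in \cite{AFvKP} --- hence tight. Since $\mathbb{R}P^3$ is irreducible, Colin's connected-sum theorem \cite{Colin} identifies isotopy classes of tight contact structures on $\mathbb{R}P^3 \# \mathbb{R}P^3$ with pairs of isotopy classes of tight contact structures on the two $\mathbb{R}P^3$ factors; as each factor carries a unique tight contact structure, so does the connected sum, and $(\Sigma_{E,M}^c, \ker\lambda)$ must realize it.

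I expect the main obstacle to lie not in the contact-topological uniqueness results, which are on the shelf, but in the two smooth identifications that feed them: that $\Sigma_E^c$ and $\Sigma_M^c$ genuinely are copies of $\mathbb{R}P^3$, and especially that the merged component is exactly $\mathbb{R}P^3 \# \mathbb{R}P^3$ and not some other reducible $3$-manifold, together with the verification that after Moser-- or Levi-Civita-type regularization the bounded regions are honest compact Liouville domains so that the filling argument applies near $L_1$. These are precisely the points carried out in \cite{AFvKP}, and in a self-contained account one would reproduce the relevant regularization together with the bifurcation analysis of the Hill region across $H(L_1)$.
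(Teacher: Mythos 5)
You should first note that the paper itself does not prove this Corollary: it is quoted from \cite{AFvKP}, and the only argument of this type actually carried out in the paper is the one preceding the analogous Corollary for the rotating Kepler and Hill's lunar problems (symplectic fillability $\Rightarrow$ tight by Eliashberg--Gromov, the hypersurfaces are diffeomorphic to $\mathbb{R}P^3$, and Eliashberg's uniqueness of the tight contact structure on $\mathbb{R}P^3$). Your treatment of the case $-c < H(L_1)$ reproduces exactly that template, so for the two components $\Sigma_E^c$, $\Sigma_M^c$ your argument is essentially the paper's (and \cite{AFvKP}'s) approach and is fine, granted the smooth identification with $\mathbb{R}P^3$ from \cite{AFvKP}.

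For the component $\Sigma_{E,M}^c$ with $-c \in (H(L_1), H(L_1)+\epsilon)$ there is a genuine gap in the way you get tightness. You assert that symplectic fillability ``is built into the construction in \cite{AFvKP}'', but the Theorem you are allowed to quote only provides a \emph{compatible contact form} $\lambda$ on the hypersurface (note that it is $\lambda$, not $\lambda_{can}$): above the first critical value the level set is no longer fiberwise star-shaped, the Liouville vector field produced in \cite{AFvKP} is an interpolation defined only near the hypersurface (after modification close to $L_1$), and it is not immediate that the compact region it bounds is a Liouville or even a strong symplectic filling. The standard route -- and the one underlying \cite{AFvKP} -- is instead to recognize $(\Sigma_{E,M}^c, \ker\lambda)$ as the contact connected sum of the two tight $\mathbb{R}P^3$ components via the index-one (Weinstein) handle created at $L_1$, and then to invoke Colin's theorems that a connected sum of tight contact $3$-manifolds is tight and that tight structures on $M_1 \# M_2$ correspond bijectively to pairs of tight structures on the summands; uniqueness on $\mathbb{R}P^3 \# \mathbb{R}P^3$ then follows from \cite{Eli2} applied to each factor. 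Your final uniqueness step via Colin is the right idea (though the reference \cite{Colin} does not exist in this paper's bibliography and would have to be added), but the tightness of the merged component should be obtained from this connected-sum description, or else you must actually verify fillability near $L_1$ rather than assume it.
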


 A challenging problem is to get a suitable action spectrum estimate or systole bound for the restricted three body problem using the methods in this paper.

\section{Conley-Zehnder indices and action spectrum of the rotating Kepler problem}

In symplectic topology, it is important to know the Conley-Zehnder indices of periodic orbits. These indices play important roles in finite energy foliations, Floer homology and so on. In a general Hamiltonian problem, these indices are hard to compute. Moreover, we do not know where and how many periodic orbits are in many Hamiltonian problem. However, in the rotating Kepler problem, the Conley-Zehnder indices of all periodic orbits for energies below the critical value were completely determined in \cite{AFFvK}. We will introduce the result briefly and we will compute the action value of each orbits. This will lead us to understand the chain complex structure in the symplectic homology of the Liouville domains determined by the rotating Kepler problem.

\subsection{Conley-Zehnder indices of the rotating Kepler problem}

In this section, we will recall the result in \cite{AFFvK}. They use the Conley-Zehnder index defined in \cite{HWZ}. Because the rotating Kepler problem is  time-independent, there is the always present degeneracy if we use the definition in Section 4.1. In \cite{HWZ}, they use the restirction to the contact plane of hypersurface and so according to this definition, the retrograde and direct orbits are generically nondegenerate. In \cite{AFFvK}, they compute directly the indices of the retrograde and direct orbits using the suitably chosen trivialization of the contact structure. For the noncircular orbits $T_{k, l}$, the Conley-Zehnder indices are computed by using the fiberwise convexity of the regularized rotating Kepler problem. Because one can interpret the periodic orbits as critical points of the energy functional associated to a Finsler metric, the Conley-Zehnder index agrees with the Morse index of the energy functional. Then one can use the local invariance of Morse homology to determine the Morse index of $T_{k,l}$ at the bifurcatiion point. For example, if one has a degenerate orbit of $S^1$-family with the Conley-Zehnder index $k$, then this will become the nondegenerate obits of Conley-Zehnder index $k$ and $k+1$ after suitable perturbation in Section 4.1.

We introduce the notations. We start with the Hamiltonian of the Kepler problem
\begin{gather*}
H_{KP}: T^*(\mathbb{R}^2-\{(0,0)\} ) \rightarrow \mathbb{R},
\\ H_{KP}(q, p)={1 \over 2}|p|^2-{1 \over |q|}
\end{gather*}
and we know this has angular momentum integral
\begin{gather*}
L:=q_1p_2-q_2p_1
\end{gather*}
because $H_{KP}$ is invariant under rotations around $0$. The Hamiltonian $H_{KP}$ is an integral as well because it is time-independent. We will denote $H_{KP}$ by $E$ for the notational convenience. As we have seen in Moser regularization, every orbit of the Hamiltonian equation for $H_{KP}$ is periodic orbit, including the collision orbit after regularization, for negative energy. In fact, we know the orbits are either ellipses of eccentricity $\epsilon:=\sqrt{2EL^2+1}$ or collision orbits by Kepler's laws of planetary motion. Moreover, we have the equality 
\begin{gather*}
T^2=-{\pi^2 \over 2 E^3}
\end{gather*}
for the period $T$ of the ellipse. The rotating Kepler problem is the Kepler problem in a rotating coordinate system. The Hamiltonian of the rotating Kepler problem is given by
\begin{gather*}
H_{RKP}=E+L={1 \over 2}|p|^2-{1 \over |q|}+q_1p_2-q_2p_1
\end{gather*}
in our convention. This has the unique critical value $-c_R^0:=-{3 \over 2}$. We are interested in the energy hypersurfaces below this critical value. We can easily see that $E, L$ are integrals of the rotating Kepler problem and invariant under the Hamiltonian flow. Even though every orbit is periodic in the Kepler problem, not every orbit is periodic in the rotating Kepler problem. In fact, the Hamiltonian flow of the rotating Kepler problem is given by the composition
\begin{gather*}
\phi_{L}^t \circ \phi_E^t
\end{gather*}
of two Hamiltonian flows where $\phi_{L}^t$ is the rotation 
\begin{gather*}
\phi_{L}^t : \mathbb{R}^4=\mathbb{R}^2 \times \mathbb{R}^2 \rightarrow \mathbb{R}^4=\mathbb{R}^2 \times \mathbb{R}^2
\\ \phi_{L}^t=\begin{pmatrix} \cos t & -\sin t \\ \sin t & \cos t \end{pmatrix} \oplus \begin{pmatrix} \cos t & -\sin t \\ \sin t & \cos t \end{pmatrix}
\end{gather*}
generated by the Hamiltonian $L$ and $\phi_{E}^t$ is the Hamiltonian flow generated by the Hamiltonian $E$. Therefore a periodic orbit in the Kepler problem should satisfy a resonance condition in general in order to be a periodic orbit in the rotating Kepler problem.

First, the circular orbits in the Kepler problem give the circular orbits in the rotating Kepler problem and always give the periodic orbits. By the direction of the rotation of circular orbits in the Kepler problem, we have two types of the circular orbits in the rotating Kepler problem. If we consider the opposite direction of the coordinate rotation, then we have the retrograde orbit and denote by $\gamma_{R}$. If we consider the same direction of rotation for orbit with the coordinate rotation, then we have the direct orbit and denote by $\gamma_{D}$. The circular orbits have the eccentricity $0=\sqrt{2EL^2+1}$. If we fix an energy hypersurface $H_{RKP}^{-1}(-c)$, then we have the equation
\begin{gather*}
0=2E(-c-E)^2+1
\end{gather*}
of the value $E$ for the circular orbits. There exist two zeros less than $-{1 \over 2}$ for each $c>{3 \over 2}$. The smaller zero corresponds to the retrograde orbit and the other zero corresponds to the direct orbit.

Second, an ellipse orbit with positive eccentricity in the Kepler problem gives a periodic orbit in the rotating Kepler problem if and only if the period is a rational multiple of $2\pi$. If $T_R=2\pi l$ for some $l \in \mathbb{N}$ and the orbit is a $k$-fold cover of ellipses in the inertial coordinate, then we we call this periodic orbit a $k$-fold covered ellipse in an $l$-fold covered coordinate system and denote it by $\gamma_{k, l}$. In the circular orbit case, there exist a retrograde orbit and a direct orbit for each $c>{3 \over 2}$, up to reparametrization. On the other hand, $\gamma_{k,l}$ does not exist always. We discuss the energy values where $\gamma_{k,l}$ exists for each $k,l$. From the definition of $\gamma_{k,l}$, the period of underlyng ellipse in the Kepler problem is $T={2\pi l \over k}$. Using Kepler's law $T^2=-{\pi^2 \over 2E^3}$, we have
\begin{gather*}
{4\pi^2 l^2 \over k^2}=-{\pi^2 \over 2E^3}
\end{gather*}
and the energy level $E_{k, l}$ of this underlying ellipse of $\gamma_{k,l}$ is
\begin{gather*}
E_{k, l}=-{1 \over 2}({k \over l})^{2 \over 3}
\end{gather*} 
for each $k,l \in \mathbb{N}$. In fact, we only consider the energy $E<{-1 \over 2}$ and so we will assume $k>l$. From the eccentricity equation, $\gamma_{k, l}$ can exist only when the inequality
\begin{gather*}
0<2E_{k,l}(c+E_{k,l})^2+1
\end{gather*}
holds. We solve this inequality for $c$. Then we have the energy range
\begin{gather*}
c_{k,l}^- <c<c_{k,l}^+
\end{gather*}
for $\gamma_{k,l }$ where
\begin{gather*}
c_{k,l}^- :=-E_{k,l}-\sqrt{1 \over {-2E_{k,l}}},
\\ 
c_{k,l}^+ :=-E_{k,l}+\sqrt{1 \over {-2E_{k,l}}}.
\end{gather*}
At $c=c_{k,l}^-$, the eccentricity is $0$ and $L=-E_{k,l}-c_{k,l}^+=\sqrt{1 \over {-2E_{k,l}}}>0$. This means that $\gamma_{k,l}$ is the multiple cover of the retrograde orbit. In fact, the periodic orbit $\gamma_{k,l}$ degenerates to $k+l$-fold cover of the retrograde orbit at $c=c_{k,l}^-$. Similarly, the periodic orbit $\gamma_{k,l}$ degenerates to $k-l$-fold cover of the direct orbit at $c=c_{k,l}^+$. Using direct computation with suitably chosen trivialization, Morse index theory with fiberwise convexity of the rotating Kepler problem and bifurcation argument, they determined all Conley-Zehnder indices of above orbits.

\begin{prop}[Albers-Fish-Frauenfelder-van Koert, \cite{AFFvK}]
We define the $N$-th iteration of $\gamma_{R}$ and $\gamma_D$ by $\gamma_{R, N}$ and $\gamma_{D, N}$, respectively.
The Conley-Zehnder indices of $\gamma_{R, N}$ and $\gamma_{D, N}$ are given by
\begin{gather*}
\mu_{CZ}(\gamma_{R,N})=1+2 \max \{ k \in \mathbb{Z} | k{2 \pi \over (-2E)^{3 \over 2}}<NS_{R} \}
\end{gather*}
and
\begin{gather*}
\mu_{CZ}(\gamma_{D,N})=1+2 \max \{ k \in \mathbb{Z} | k{2 \pi \over (-2E)^{3 \over 2}}<NS_{D} \}
\end{gather*}
for $NS_{R}, NS_D \notin \mathbb{Z} {2 \pi \over (-2E)^{3 \over 2}}$ where $S_{R}={2\pi \over {(-2E)^{3 \over 2} + 1}}$ and $S_{D}={2\pi \over {(-2E)^{3 \over 2} - 1}}$ are the periods of $\gamma_{R}$ and $\gamma_{D}$, respectively. Moreover, the Conley-Zehnder index of $T_{k, l}$ is $2k-1$ for each $k>l \ge 1$.
\end{prop}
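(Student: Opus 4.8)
The plan is to prove the two halves of the statement by genuinely different arguments, mirroring the split between circular and noncircular orbits.

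\emph{Circular orbits $\gamma_{R,N}$ and $\gamma_{D,N}$.} Near a circular Kepler orbit the flow of the rotating Kepler problem is the composition $\phi_L^t\circ\phi_E^t$ of the ambient coordinate rotation with the Kepler flow, and on a circular orbit the Kepler flow is, after Moser regularization, a rigid rotation with the Kepler period $T=2\pi(-2E)^{-3/2}$. I would realize $\gamma_R$ (resp. $\gamma_D$) inside the regularized hypersurface $\Sigma_R^c\subset T^*S^2$, fix the trivialization of the contact distribution $\xi=\ker\lambda_{can}$ adapted to the residual $S^1$-symmetry used in \cite{AFFvK}, and write the linearized Reeb flow restricted to $\xi$ as an explicit path in $\mathrm{Sp}(2)$. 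Since the transverse linearized dynamics of a circular orbit is a genuine rotation, a direct computation --- accounting for the $\phi_E$- and the $\phi_L$-contributions together with the twist of the trivialization --- shows that this path is a rotation whose rotation number, per revolution of $\gamma_R$, equals $S_R/T$ (the fraction of a Kepler period occupied by one revolution of $\gamma_R$), and likewise $S_D/T$ for $\gamma_D$. The Conley--Zehnder index of the $N$-fold iterate of such a rotation is $1+2\lfloor N\rho\rfloor$ when the rotation number $\rho$ is non-resonant, and since $\lfloor NS_R/T\rfloor=\max\{k\in\mathbb{Z}\mid kT<NS_R\}=\max\{k\in\mathbb{Z}\mid k\,2\pi(-2E)^{-3/2}<NS_R\}$, this yields the two displayed formulas; the hypothesis $NS_R,NS_D\notin\mathbb{Z}\cdot 2\pi(-2E)^{-3/2}$ is exactly the nondegeneracy condition excluding the integer jumps. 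The care here is in the signs: the coordinate rotation runs against the retrograde orbit, producing the ``$+1$'' in $S_R=2\pi/((-2E)^{3/2}+1)$, and with the direct orbit, producing the ``$-1$'' in $S_D$, and keeping this straight is what fixes the Conley--Zehnder normalization.

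\emph{Noncircular orbits $T_{k,l}$.} By the fiberwise convexity of the regularized rotating Kepler problem (\cite{CFvK}, recalled above), the Reeb flow on $\Sigma_R^c$ is the geodesic flow of a Finsler metric on $S^2$, so $T_{k,l}$ is a critical point --- in fact an $S^1$-family of critical orbits, by the surviving rotational symmetry --- of the associated energy functional, and $\mu_{CZ}(T_{k,l})$ equals its Morse index. I would then use that $T_{k,l}$ exists exactly for $c\in(c_{k,l}^-,c_{k,l}^+)$ and degenerates at the endpoints: at $c=c_{k,l}^-$ onto the $(k+l)$-fold iterate of $\gamma_R$ and at $c=c_{k,l}^+$ onto the $(k-l)$-fold iterate of $\gamma_D$, in both cases at the energy $E_{k,l}=-\tfrac12(k/l)^{2/3}$, for which $(-2E_{k,l})^{3/2}=k/l$. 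Feeding this into the formulas of the circular case, at each endpoint the relevant ratio $NS_R/T$ (resp. $NS_D/T$) equals exactly $k$: the iterated circular orbit sits precisely on the resonance, which is where the branch $T_{k,l}$ is born. Near such a bifurcation the critical set is a clean (Morse--Bott) family, so by the local invariance of Morse homology --- the mechanism recorded in Section~4.1, whereby an $S^1$-family carrying Conley--Zehnder index $m$ perturbs into nondegenerate orbits of consecutive indices $m$ and $m+1$ --- the Morse index of $T_{k,l}$ is determined by that of the neighbouring iterated circular orbit. Since $T_{k,l}$ is nondegenerate (up to the symmetry) for generic interior $c$, its index is locally constant on all of $(c_{k,l}^-,c_{k,l}^+)$, so the values transferred from the two endpoints must coincide; carrying this bookkeeping out pins the index to $2k-1$. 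As an internal check, when $d=\gcd(k,l)>1$ so that $T_{k,l}$ is the $d$-fold iterate of the prime orbit $T_{k/d,\,l/d}$ of index $2(k/d)-1$, the Bott iteration formula reproduces $2k-1$.

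The step I expect to be the main obstacle is the bifurcation analysis for $T_{k,l}$: checking that the degenerate critical set at $c=c_{k,l}^{\pm}$ is genuinely Morse--Bott as the $S^1$-family collapses onto the iterated circular orbit, determining the direction of the index split, and reconciling the outcome with the direct Conley--Zehnder computation of the high iterates $\gamma_{R,k+l}$ and $\gamma_{D,k-l}$ at the resonant energy. The circular-orbit computation is, by comparison, essentially a linear-algebra exercise once the symmetry-adapted trivialization is fixed.
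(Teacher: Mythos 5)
Your proposal follows essentially the same route as the paper, which does not reprove this proposition but cites \cite{AFFvK} and sketches exactly your two-pronged strategy in Section 3.1: a direct computation of the rotation number in a symmetry-adapted trivialization of the contact structure for the circular orbits (matching the paper's $\alpha_R$, $\alpha_D$ formulation in Example 4.1.7), and, for $T_{k,l}$, the identification of the Conley--Zehnder index with the Morse index of the Finsler energy functional via fiberwise convexity combined with the bifurcation/local-invariance argument at $c_{k,l}^{\pm}$. Your outline and consistency checks are sound and consistent with the cited result.
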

From the above computation of Conley-Zehnder indices of all periodic orbits, in \cite{AFFvK}, they proved the dynamically convexity and therefore there exists a global disk-like surfaces of sections for each energy hypersurface of the rotating Kepler problem after the Levi-Civita transformation using the following remarkable statement in \cite{HWZ}.

\begin{Def}[Hofer-Wysocki-Zehnder]
Let $(\Sigma, \xi= \ker \lambda)$ be a contact 3-manifold. The contact form $\lambda$ is called dynamically convex if $c_1(\xi)$ vanishes on $\pi_2(N)$ and $\mu_{CZ}(\gamma) \ge 3$ for any contractible Reeb periodic orbit $\gamma$.
\end{Def}

\begin{Thm}[Hofer-Wysocki-Zehnder]
Let $\lambda$ be a dynamically convex contact form on $S^3$. Then there exists a disk-like global surfaces of section for Reeb vector field.
\end{Thm}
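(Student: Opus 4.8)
The plan is to recover the Hofer--Wysocki--Zehnder argument via pseudo-holomorphic curves in the symplectization $(\mathbb{R}\times S^3,\, d(e^a\lambda))$. First I would fix an adapted almost complex structure $J$: one that is $\mathbb{R}$-translation invariant, sends $\partial_a$ to the Reeb field $R$, and restricts on the contact plane field $\xi=\ker\lambda$ to a $d\lambda$-compatible complex structure. The objects of interest are finite-energy planes $\tilde u=(a,u):\mathbb{C}\to\mathbb{R}\times S^3$ with $\bar\partial_J\tilde u=0$ and finite Hofer energy. By Hofer's asymptotic analysis every nonconstant such plane converges at the puncture to a periodic Reeb orbit $P$ (necessarily contractible, since $\mathbb{C}$ is simply connected), and for generic behavior the projection $u:\mathbb{C}\to S^3$ is an embedding whose closure $\overline{u(\mathbb{C})}=u(\mathbb{C})\cup P$ is an embedded disk spanning $P$.

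Second, I would use dynamical convexity to pin down the binding orbit and the correct moduli space. Choosing $P$ to be a Reeb orbit of minimal action, the hypothesis $\mu_{CZ}\ge 3$ forces the associated finite-energy plane to be \emph{fast}: its asymptotic winding number relative to $P$ is as small as possible, the orbit has self-linking number $-1$, the projected plane is embedded and disjoint from its $\mathbb{R}$-translates, and the relevant Fredholm index equals $2$. Existence of at least one such plane I would obtain by a Bishop-type filling argument — filling $S^3$ with a one-parameter family of $J$-holomorphic disks attached to a small totally real perturbation of a round $3$-ball — or, alternatively, by deforming from the round $S^3$, where the Hopf fibration exhibits the disk foliation explicitly, and then invoking compactness; here $c_1(\xi)|_{\pi_2}=0$ (automatic on $S^3$) guarantees the moduli spaces have the expected dimension.

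Third — the heart of the matter — I would show that the moduli space $\mathcal{M}$ of fast planes, taken modulo $\mathbb{R}$-translation and holomorphic reparametrization, is a smooth surface (by automatic transversality, available because the ambient manifold is four-dimensional), that it is compact, and that the induced map $\mathcal{M}/S^1\cong S^1$ together with the projections $\{u(\mathbb{C})\}$ gives a foliation of $S^3\setminus P$ by pairwise disjoint embedded pages, each crossed transversally by the Reeb flow (transversality is automatic off $P$, since $du$ and $R$ together span $TS^3$ there). Compactness rests on SFT/Gromov compactness in symplectizations; the delicate point is to rule out every degeneration — sphere bubbling, breaking into multi-level buildings with intermediate periodic orbits, or convergence onto multiply covered curves. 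This is exactly where dynamical convexity is indispensable: any orbit entering such a limit would carry a Conley--Zehnder index forcing a Fredholm-index inequality, or a violation of intersection positivity for curves in a four-manifold, incompatible with $\mu_{CZ}\ge 3$; hence no breaking occurs and the pages vary continuously while staying embedded and disjoint. This compactness-plus-no-breaking step is the main obstacle.

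Finally, I would assemble the global surface of section. Fix one page $\dot D=u(\mathbb{C})$; then $\overline{\dot D}=\dot D\cup P$ is an embedded closed disk with boundary the Reeb orbit $P$, and the foliation property means every Reeb trajectory other than $P$ meets $\dot D$ transversally, in both forward and backward time. This is precisely the definition of a disk-like global surface of section with binding $P$, and the induced first-return map is a homeomorphism of the open disk preserving the area form $d\lambda|_{\dot D}$, which completes the proof.
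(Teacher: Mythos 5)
The paper does not prove this statement at all: it is quoted as a black-box theorem of Hofer--Wysocki--Zehnder from \cite{HWZ} (together with the criterion that strictly convex hypersurfaces in $\mathbb{R}^4$ are dynamically convex), and is only used as background motivation. So the comparison can only be with the original published argument, and there your outline does reproduce its architecture faithfully: adapted almost complex structure on the symplectization, a Bishop family of holomorphic disks producing a finite-energy plane, ``fast'' planes with embedded projection, dynamical convexity excluding breaking, and an $S^1$-family of pages foliating $S^3\setminus P$ whose leaves are disk-like global surfaces of section.

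As a proof, however, the proposal has genuine gaps precisely at the points that constitute the content of the theorem. First, you choose the binding $P$ as an orbit of minimal action and assert that $\mu_{CZ}\ge 3$ ``forces'' the existence of a fast, embedded plane asymptotic to it with self-linking number $-1$; no argument is given that a minimal-action orbit bounds any finite-energy plane, and in \cite{HWZ} the binding is not selected a priori but is produced by the bubbling-off analysis of the Bishop family, combined with delicate asymptotic winding estimates showing the limit orbit is unknotted, has self-linking number $-1$ and index exactly $3$. Second, the compactness/no-breaking step --- the heart of the matter, as you say --- is dispatched in one sentence; since every orbit on $S^3$ is contractible, ruling out multi-level buildings and multiply covered limits requires the precise Conley--Zehnder index arithmetic for iterated asymptotics, positivity of intersection and asymptotic winding numbers, and automatic transversality for possibly non-immersed limits, none of which is actually set up. Third, the final claim that every orbit other than $P$ meets a fixed page in both forward and backward time does not follow formally from transversality of the Reeb field to the leaves: one needs the structure of the foliation near the binding and an argument excluding trajectories that avoid the chosen page while drifting through the $S^1$-family of leaves. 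There is also a small bookkeeping slip: with unparametrized Fredholm index $2$, the space of pages modulo the $\mathbb{R}$-translation is already a one-dimensional $S^1$-family, not a surface to be further quotiented by $S^1$.
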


Because, in general, one do not know all Reeb orbits of a contact three manifold, it is hard to determine dynamical convexity. However, in \cite{HWZ}, they gave a useful criterion for dynamical convexity.

\begin{Thm}[Hofer-Wysocki-Zehnder]
A strictly convex regular energy hypersurface $\Sigma$ of $\mathbb{R}^4$ with the canonical contact form $\lambda_{can}$ is dynamically convex. 
\end{Thm}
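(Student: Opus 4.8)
\medskip
\noindent\textbf{Proof proposal.}
The requirement that $c_1(\xi)$ vanish on $\pi_2$ is automatic here: a compact strictly convex regular hypersurface $\Sigma\subset\mathbb R^4$ is diffeomorphic to $S^3$ and bounds a strictly convex body $K$ which, since $\lambda_{can}|_\Sigma$ is assumed to be a contact form, must contain the origin in its interior; hence $\pi_2(\Sigma)=0$. So the content is the estimate $\mu_{CZ}(\gamma)\ge 3$ for every closed Reeb orbit $\gamma$, and the strategy is to transfer convexity to the linearized flow. First I would normalize: letting $j_K$ be the gauge function of $K$ and setting $H:=j_K^2$, we obtain a positively $2$-homogeneous function with $\Sigma=H^{-1}(1)$ and, by strict convexity of $K$ together with Euler's identity $\langle\nabla j_K(x),x\rangle=j_K(x)$, with positive definite Hessian $H''(x)>0$ for $x\ne0$. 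Since $H$ is $2$-homogeneous, Euler's identity yields $\lambda_{can}(X_H)=1$ on $\Sigma$, and since $\iota_{X_H}\omega_0=-dH$ vanishes on $T\Sigma$, the Hamiltonian vector field $X_H$ is exactly the Reeb vector field of $(\Sigma,\lambda_{can})$; thus a closed Reeb orbit $\gamma$ of period $T$ is a $T$-periodic solution of the convex Hamiltonian system $\dot x=X_H(x)$.

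\smallskip
Next I would set up the linearized return map. Let $\Gamma(t)\in Sp(4)$ solve $\dot\Gamma=JS(t)\Gamma$, $\Gamma(0)=\mathrm{Id}$, with $S(t):=H''(\gamma(t))>0$. Differentiating $\dot\gamma=X_H(\gamma)$ shows that $\zeta_0(t):=\dot\gamma(t)$ solves the linearized equation, and $2$-homogeneity (which gives $H''(x)x=\nabla H(x)$) shows that $\zeta_1(t):=\gamma(t)$ does too; both are $T$-periodic, so $\Gamma(T)$ fixes the plane $V:=\mathrm{span}(\gamma(0),\dot\gamma(0))$ pointwise. Euler's identity gives $\omega_0(\dot\gamma(0),\gamma(0))=-2H(\gamma(0))\ne0$, so $V$ is symplectic, the contact plane $\xi_{\gamma(0)}$ is the $\omega_0$-orthogonal complement $V^{\omega_0}$, and $\Gamma(T)$ descends to a symplectic automorphism of $\xi_{\gamma(0)}$ which is precisely the linearized Reeb return map. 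Now choose a symplectic frame of $\xi$ along $\gamma$ extending over a disk bounding $\gamma$ (possible since $\pi_1(\Sigma)=0$) and write the linearized Reeb flow on $\xi$ as a path $\Psi:[0,T]\to Sp(2)$, $\Psi(0)=\mathrm{Id}$, solving $\dot\Psi=J_2B(t)\Psi$. The key claim --- the one genuine use of convexity --- is that the reduced symmetric matrix $B(t)$ is again positive definite; this is not formal, since $\xi_{\gamma(t)}$ is a moving subspace and the frame contributes connection terms, so one must pick the frame carefully (adapted to the splitting $\mathbb R^4=V\oplus\xi$ along the orbit) and use $2$-homogeneity to see those terms preserve positivity. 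Granting $B(t)>0$, the elementary identity $\dot\theta=\langle z,Bz\rangle/|z|^2>0$ for the polar angle of any nonzero solution $z$ of $\dot z=J_2Bz$ shows that every solution of the reduced system winds strictly monotonically in the positive direction.

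\smallskip
It then remains to upgrade ``positive rotation'' to ``at least one full turn'', which is where closedness of $\gamma$ enters a second time. The fact that $\Gamma(T)$ fixes $V$ pointwise --- equivalently, that $\dot\gamma$ is itself a $T$-periodic solution of the four-dimensional linearized equation --- when read through the disk trivialization forces the reduced path $\Psi$ to rotate by at least $2\pi$ over $[0,T]$. Combined with the previous step, the standard description of the Conley--Zehnder index of a positively rotating path in $Sp(2)$ in terms of its winding interval then gives $\mu_{CZ}(\gamma)=\mu_{CZ}(\Psi)\ge 2\cdot1+1=3$, as desired. Alternatively one may invoke the Ekeland index theory for strictly convex Hamiltonian systems, which assigns index $\ge1$ to every closed orbit of such a system and differs from $\mu_{CZ}$ by a fixed shift of $2$.

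\smallskip
The main obstacle, to my mind, is the positivity of the \emph{reduced} generator $B(t)$ in the previous step: it is the essential place where convexity (and not merely star-shapedness) is used, it is the least formal step, and it depends on committing to the right trivialization of $\xi|_\gamma$. The secondary difficulty is the bookkeeping that turns monotone winding into a complete revolution; here the round sphere, on which every simple closed Reeb orbit has $\mu_{CZ}$ equal to exactly $3$, is the correct borderline test case to keep the conventions honest.
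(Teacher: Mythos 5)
The paper does not prove this statement at all: it is quoted verbatim from \cite{HWZ} (Hofer--Wysocki--Zehnder, Ann.\ of Math.\ 148 (1998)) and used as a black box, so the only benchmark for your sketch is the original HWZ argument, whose skeleton you have reproduced correctly: the $2$-homogeneous extension $H=j_K^2$ with $H''>0$, the identification of $X_H$ with the Reeb field via Euler's identity, the two periodic solutions $\gamma$ and $\dot\gamma$ of the linearized equation spanning an invariant symplectic plane $V$ with $\xi=V^{\omega_0}$, and the reduction of the index computation to a winding estimate for a path in $Sp(2)$. Your preliminary computations (e.g.\ $\omega_0(\dot\gamma,\gamma)=-2H\neq0$, $H''(x)x=\nabla H(x)$, triviality of $c_1$ on $\pi_2(\Sigma)\cong\pi_2(S^3)=0$) are all correct.

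However, the two steps you yourself flag are genuine gaps, and they are exactly where the theorem lives. First, the ``key claim'' that the reduced generator $B(t)$ of the linearized flow on $\xi$ is positive definite in a suitable disk-compatible frame is asserted, not proved; it is not a formal consequence of $H''>0$, since the frame terms for the moving subspace $\xi_{\gamma(t)}$ enter with no definite sign, and HWZ in fact do not establish pointwise positivity of a reduced generator but a winding estimate obtained by combining $S(t)>0$ with a carefully chosen trivialization and the invariant plane. Second, the bridging claim that ``$\Gamma(T)$ fixes $V$ pointwise, read through the disk trivialization, forces $\Psi$ to rotate by at least $2\pi$'' is not a valid implication as stated: fixing $V$ pointwise says nothing by itself about the transversal rotation, and the extra full turn actually comes from the relative winding between a frame adapted to $\xi=V^{\omega_0}$ (which co-rotates with the orbit; compare the ellipsoid, where the constant $z_2$-frame and a global frame of the tight $\xi$ differ by exactly one turn per period, accounting for $\mu_{CZ}=1+2=3$ for the short orbit) and the global/disk trivialization --- quantifying that relative winding and combining it with monotonicity is the second half of the work, which your sketch does not carry out. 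Finally, the fallback via Ekeland's index is also not off-the-shelf as stated: you need the correct normalization (either $i_E\ge 1$ with shift $n=2$, or $i_E\ge 0$ with shift $n+1$; an error here changes the bound by one and, with the minimizer of the dual action having index $0$, would only give $\mu_{CZ}\ge 2$), and the relevant comparison statements are themselves theorems of the same depth as the one being proved. So the proposal is a reasonable roadmap to the HWZ proof, but not a proof.
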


As an application of above Theorems, one can see the following result for the restricted three body problem in \cite{AFFHvK}.

\begin{Thm}[Albers-Fish-Frauenfelder-Hofer-van Koert]
Given $c>{3 \over 2}$, there exists $\mu_0=\mu_0 (c) \in [0,1)$ such that for all $\mu_0<\mu<1$ there exists a disk-like global surface of section for the hypersurface of the Levi-Civita regularized restricted three body problem of mass ratio $\mu$ with its Reeb vector field.
\end{Thm}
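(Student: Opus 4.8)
We will deduce the statement from the two Hofer--Wysocki--Zehnder theorems quoted above --- Theorem 3.2 (dynamical convexity on $S^3$ yields a disk-like global surface of section) and Theorem 3.3 (a strictly convex regular hypersurface of $\mathbb{R}^4$ is dynamically convex) --- by realizing the Levi-Civita regularized restricted three body flow as a Reeb flow on a hypersurface of $\mathbb{R}^4$ and checking convexity by perturbing off the rotating Kepler problem. Fix $c>\tfrac{3}{2}$ and perform the Levi-Civita regularization of $H_{R3BP}$ centered at the Earth, i.e. at the primary $(1-\mu,0)$, whose mass $\mu$ tends to $1$ and whose position tends to the origin as $\mu\to1$ (see \cite{Moser}, \cite{MHO}): multiply $H_{R3BP}+c$ by $|q-(1-\mu,0)|$ to remove the Kepler singularity and pull back by the symplectic lift of the squaring map $w\mapsto w^2$. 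Since the first critical value $H_{R3BP}(L_1)$ of the restricted three body problem converges to the critical value $-\tfrac{3}{2}$ of the rotating Kepler problem as $\mu\to1$, the hypothesis $c>\tfrac{3}{2}$ ensures that for $\mu$ close to $1$ the energy $-c$ lies below the first critical level, so the component of the Hill region around the Earth is bounded; the regularization then produces a compact regular energy hypersurface $\widehat{\Sigma}^c_\mu\subset T^*\mathbb{R}^2=\mathbb{R}^4$ diffeomorphic to $S^3$ (it double covers the $\mathbb{R}P^3$ of the Moser picture, cf. \cite{AFFHvK}). By Lemma 2.3 the Reeb flow of $(\widehat{\Sigma}^c_\mu,\lambda_{can})$ is a reparametrization of the regularized restricted three body flow, so it suffices to prove that $\widehat{\Sigma}^c_\mu$ is strictly convex in $\mathbb{R}^4$ for all $\mu$ sufficiently close to $1$, with the threshold allowed to depend on $c$.

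To get there I would pass to the limit $\mu\to1$. On a neighborhood of the bounded Hill region around the Earth one has $H_{R3BP}\to H_R$ in $C^\infty$, the Sun's potential $-(1-\mu)/|q-(\mu,0)|$ becoming negligible as its mass $1-\mu\to0$ and its singular locus receding; the regularization procedure depends smoothly on the Hamiltonian and on the regularization center, so $\widehat{\Sigma}^c_\mu\to\widehat{\Sigma}^c_1$ in the $C^\infty$-topology, where $\widehat{\Sigma}^c_1$ is the Levi-Civita regularized energy hypersurface (about the origin) of the rotating Kepler problem $H_R$ at energy $-c$. Strict convexity of a closed embedded hypersurface of $\mathbb{R}^4$ is an open condition in the $C^2$-topology, so the reduction just made follows once one shows that $\widehat{\Sigma}^c_1$ itself is strictly convex for every $c>\tfrac{3}{2}$.

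This last point is the heart of the matter and the step I expect to be the main obstacle. Here one writes the explicit defining function $g_c$ of $\widehat{\Sigma}^c_1$ produced by the regularization in the four Levi-Civita variables and shows that its Hessian $D^2 g_c$, restricted to the tangent hyperplanes of $\widehat{\Sigma}^c_1$, is positive definite --- equivalently, that $D^2 g_c$ is positive semidefinite with a one-dimensional radial kernel. This is an explicit but delicate estimate, and the way the convexity bound degenerates as $c\downarrow\tfrac{3}{2}$ and as $c\to\infty$ is exactly what forces the threshold $\mu_0$ to depend on $c$. If strict convexity were to fail for some energies, the fallback is to use the fiberwise convexity of the regularized rotating Kepler problem (\cite{CFvK}) to produce a contact form and then to establish dynamical convexity of $\widehat{\Sigma}^c_1$ directly from the Conley--Zehnder index list of Proposition 3.1 --- every iterate of $\gamma_R$, every iterate of $\gamma_D$, and every $T_{k,l}$ with $\mu_{CZ}(T_{k,l})=2k-1\ge3$ having index at least $3$ in the $S^3$-picture --- but this route trades the soft openness argument for the considerably harder assertion that no Reeb orbit of Conley--Zehnder index less than $3$ is created under the perturbation $\mu\to1$.

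Finally, granting strict convexity of $\widehat{\Sigma}^c_\mu$ for $\mu_0(c)<\mu<1$: Theorem 3.3 shows that $\lambda_{can}|_{\widehat{\Sigma}^c_\mu}$ is dynamically convex, and Theorem 3.2 then produces a disk-like global surface of section for the Reeb flow on $\widehat{\Sigma}^c_\mu\cong S^3$. By the first paragraph this is a disk-like global surface of section for the Levi-Civita regularized restricted three body problem of mass ratio $\mu$, which is the claim.
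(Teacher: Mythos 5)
Your overall architecture (realize the regularized flow as a Reeb flow on a star-shaped hypersurface in $\mathbb{R}^4$, prove strict convexity, then invoke the two Hofer--Wysocki--Zehnder theorems) is indeed the architecture behind the cited result: the paper does not prove this theorem itself but quotes it from \cite{AFFHvK}, summarizing their proof as ``for such pairs $(\mu,c)$ the Levi-Civita regularized energy hypersurfaces are strictly convex,'' after which Theorems 3.2 and 3.3 give the disk-like global surface of section. The gap is in the step you yourself single out as the heart of the matter: you propose to prove strict convexity of the limiting hypersurface $\widehat{\Sigma}^c_1$, the Levi-Civita regularized rotating Kepler problem, and then transfer it to $\widehat{\Sigma}^c_\mu$ for $\mu$ close to $1$ by $C^2$-openness. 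But strict convexity of the Levi-Civita regularized rotating Kepler problem is exactly what fails: the paper states, two sentences after the theorem, that in \cite{AFFvK} ``they also proved the fail of strict convexity for energy hypersurfaces of the rotating Kepler problem after Levi-Civita transformation. Thus Theorem 3.3 cannot be used for the rotating Kepler problem,'' and it records as an open question whether the rotating Kepler problem admits any convex embedding at all. So the soft openness reduction cannot close: the limit is at best degenerately convex, and positivity of the Hessian of the defining function for $\mu<1$ is not inherited from the limit. This is precisely why the actual argument of \cite{AFFHvK} is a quantitative Hessian estimate for the restricted three body problem itself, showing that the terms of size $1-\mu$ coming from the second primary dominate in the directions where the rotating Kepler Hessian is not positive; the dependence $\mu_0=\mu_0(c)$ comes out of that estimate, not out of an abstract perturbation principle.

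Your fallback route does not repair this. Dynamical convexity of the regularized rotating Kepler problem is true, but in the paper it is obtained from the complete Conley-Zehnder index list of Proposition 3.1 (in the $\mathbb{R}P^3$/Moser picture, with the extra care needed to pass to the $S^3$ double cover), not from convexity; and, as you note, dynamical convexity is not an open condition in any soft sense, since perturbing the Hamiltonian can create new closed Reeb orbits and change indices, and controlling all orbits of the restricted three body problem for $\mu$ near $1$ is exactly the difficulty the convexity estimate of \cite{AFFHvK} is designed to circumvent. As written, then, the proposal reduces the theorem either to a statement known to be false (strict convexity of the regularized rotating Kepler hypersurface) or to an unproved and genuinely hard stability claim; the remaining parts of your outline (Levi-Civita set-up around the heavy primary, convergence of the critical value to $\tfrac{3}{2}$, diffeomorphism type $S^3$, and the final application of Theorems 3.2 and 3.3) are consistent with the cited proof.
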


In other words, they proved that for such pairs $(\mu, c)$, the Levi-Civita regularized energy hypersurfaces are strictly convex. On the other hand, in \cite{AFFvK}, they also proved the fail of strict convexity for energy hypersurfaces of the rotating Kepler problem after Levi-Civita transformation. Thus Theorem 3.3 cannot be used for the rotating Kepler problem. In this point of view, one can ask whether the rotating Kepler problem has the convex embedding or not. If there is convex embedding, then this provides another proof of dynamical convexity. If there is no convex embedding, then it can be one example showing the gap of strict convexity and dynamical convexity. Because one is topological and geometric property and the other is symplectic property, it is worthwhile to find such a gap. At this moment, it is still open question.

\subsection{Spectrum of the rotating Kepler problem}

Another important ingredient of symplectic homology is the periods of Reeb periodic orbits. Let $(\Sigma, \xi=\ker \lambda)$ be a co-oriented contact manifold. We define the Reeb vector field $R_{\lambda}$
\begin{gather*}
\lambda(R_{\lambda})=1, \quad \iota_{R_{\lambda}} d \lambda=0
\end{gather*}
associated with the contact form $\lambda$. The set of all positive periods of closed Reeb orbits is called the action spectrum. We will denote this by
\begin{gather*}
Spec(\Sigma, \lambda)
\end{gather*}
One can compute the period of the closed Reeb orbit $\gamma$ by the integration $\mathcal{A}(\gamma)=\int_{\gamma} \lambda$. We have seen that $(\Sigma_R^c,  \lambda_{can})$ is a contact manifold for each $c>c_R^0={3 \over 2}$. We will compute the period of every closed Reeb orbit in $(\Sigma_R^c, \lambda_{can})$. One can compute the period of the closed Reeb orbit $\gamma$ by the integration $\int_{\gamma} \lambda$.

First, we will compute the period of the retrograde circular orbit $\gamma_R$. Let $r$ be the distance from the origin for the circular orbit of the Kepler problem. In $q$-coordinate, the circular orbit can be parametrized as follows
\begin{gather*}
q_{K}^r (t)=(r \cos(\omega t), r \sin(\omega t)).
\end{gather*}
We have to determine the frequency $\omega$. From the Hamiltonian equation $\dot{q}={\partial E \over \partial p}$, we have
\begin{gather*}
p_{K}^r (t)=\dot{q_{K}^r }(t)=(-r\omega \sin(\omega t), r \omega \cos (\omega t))
\end{gather*}
and from the Hamiltonian equation $\dot{p}=-{\partial E \over \partial q}$, we have
\begin{gather*}
\dot{p_K^r }(t)=(-r\omega^2 \cos(\omega t), -r \omega^2 \sin (\omega t))=-{q \over |q|^3}=(-r^{-2} \cos (\omega t), -r^{-2} \sin (\omega t)).
\end{gather*}
This implies $\omega=r^{-{3 \over 2}}$. For this circular periodic orbit, the energy is given by $E=-{1 \over 2r}$. The corresponding retrograde orbit in the rotating Kepler problem has the following parametrization
\begin{gather*}
q_R^r (t)=(r \cos((r^{-{3 \over 2}}+1)t), r \sin((r^{-{3 \over 2}}+1)t))
\end{gather*}
on the $q$-coordinate. From the Hamiltonian equation $\dot{q}={\partial H_{RKP} \over \partial p}$, we have
\begin{gather*}
\begin{cases} \dot{q_1}=p_1-q_2 \\ \dot{q_2}=p_2+q_1 \end{cases} 
\Rightarrow 
\begin{cases} p_1=\dot{q_1}+q_2 \\ p_2=\dot{q_2}-q_1 \end{cases}.
\end{gather*}
This implies
\begin{gather*}
p_R^r (t)=(-r^{-{1 \over 2}} \sin ((r^{-{3 \over 2}}+1)t), r^{-{1 \over 2}} \cos ((r^{-{3 \over 2}}+1)t))
\end{gather*}
We define $\gamma_R^r (t):=(q_R^r (t), p_R^r (t))$ and compute the integral 
\begin{eqnarray*}
\mathcal{A}(\gamma_R^r )&=&\int_{\Phi \circ \Psi(\gamma_R^r )} \lambda_{can}
\\&=& \int_{\gamma_R^r} (\Phi \circ \Psi)^{*} \lambda_{can}
\\&=& \int_{\gamma_R^r} -q dp
\\&=& \int_{0}^{2\pi \over {r^{-{3 \over 2}}+1}} (r^{-1}+r^{1 \over 2}) dt= 2\pi r^{1 \over 2}
\end{eqnarray*}
where $\Phi, \Psi$ are symplectomorphisms defined in Section 2. We can express the energy $-c$
\begin{gather*}
-c=H_{RKP}(\gamma_R^r (t))=E+L=-{1 \over 2r}+r^{1 \over 2} \iff c={1 \over 2r}-r^{1 \over 2}
\end{gather*}
for this retrograde orbit in terms of $r$. In sum, the retrograde orbit $\gamma_R^r$ of radius $r$ has the action
\begin{gather*}
\mathcal{A}(\gamma_R^r)=2\pi r^{1 \over 2}
\end{gather*}
and it is on the energy hypersurface
\begin{gather*}
\gamma_R^r \subset H_{RKP}^{-1}(-{1 \over 2r}+r^{1 \over 2})
\end{gather*}
of energy $-{1 \over 2r}+r^{1 \over 2}$.

We can similarly compute the action and energy for the direct orbit of radius $r$. Let $\gamma_D^r (t)=(q_D^r(t), p_D^r (t))$ be the direct orbit of radius $r$ in the rotating Kepler problem. Then we have
\begin{gather*}
q_D^r(t)=(r \cos ((-r^{-{3 \over 2}}+1)t), r \sin ((-r^{-{3 \over 2}}+1)t)),
\\ p_D^r (t)=(r^{-{1 \over 2}} \sin ((-r^{-{3 \over 2}}+1)t), -r^{-{1 \over 2}} \cos ((-r^{-{3 \over 2}}+1)t))
\end{gather*}
by similar computation in above. We can compute the action
\begin{gather*}
\mathcal{A}(\gamma_D^r)=2\pi r^{1 \over 2}
\end{gather*}
and this direct orbit is on the energy level
\begin{gather*}
-c=H_{RKP}(\gamma_D^r(t))=E+L=-{1 \over 2r}-r^{1 \over 2}.
\end{gather*}

Before we go to the non-circular orbits, we want to express the action values of the retrograde and direct orbits in terms of $L$. In the retrograde orbit case, we have $L=r^{1 \over 2}$ and we have $-c=E+L=-{1 \over 2L^2}+L$. Therefore, the action of retrograde orbit $\gamma_R^c$ on $H_{RKP}^{-1}(c)$ is given by
\begin{gather*}
\mathcal{A}(\gamma_R^c)=2 \pi L_R (c)
\end{gather*}
where $L_R (c)$ is the positive zero of an equation
\begin{gather*}
0=-2(x+c)x^2+1 \iff c={1 \over 2x^2}-x
\end{gather*}
for $x$. In the direct orbit case, we have $L=-r^{1 \over 2}$. We also have $-c=E+L=-{1 \over 2L^2}+L$ and $r<1$. Therefore, the action of direct orbit $\gamma_D^c$ on $H_{RKP}^{-1}(c)$ is given by
\begin{gather*}
\mathcal{A}(\gamma_D^c)=-2 \pi L_D (c)
\end{gather*}
where $-1<L_D (c)<0$ is the larger negative zero of the equation
\begin{gather*}
c={1 \over 2x^2}-x
\end{gather*}
for $x$. We define the function $f(x):={1 \over 2x^2}-x$.

\begin{figure}
\centering
\includegraphics[]{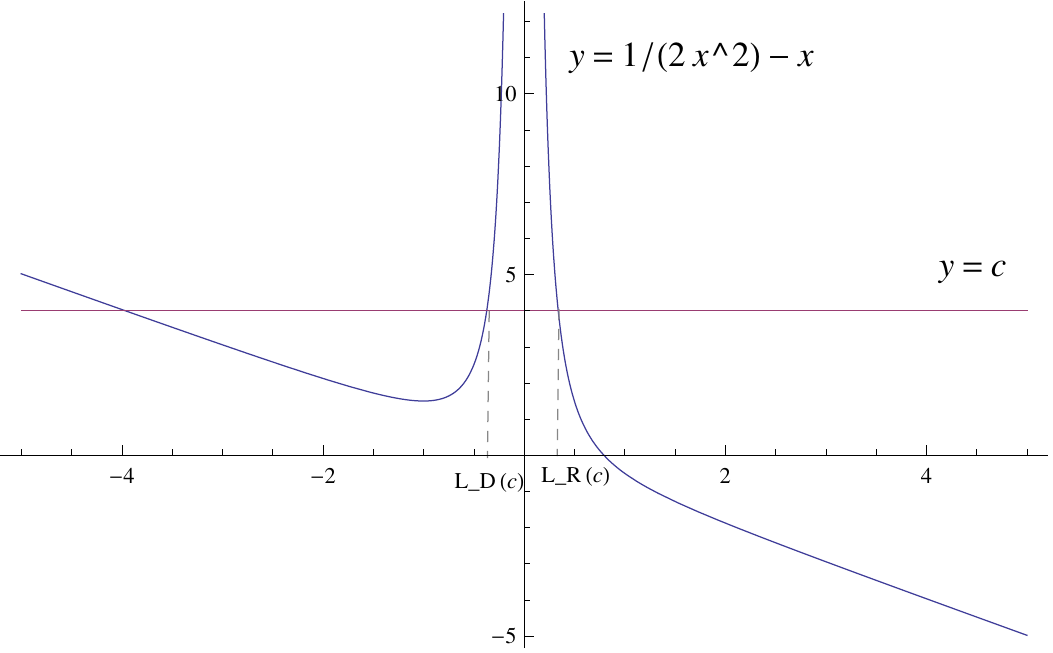}
\caption{Definition of $L_R (c)$ and $L_D(c)$}
\end{figure}

Finally, we have to compute the action of $\gamma_{k,l}$. We recall that $\gamma_{k,l}$ denotes a $k$-fold covered ellipse in an $l$-fold covered coordinate system and thus the period of $\gamma_{k,l}$ is $T_{k, l}=2\pi l$ and the energy of the underlying ellipse is $E_{k,l}=-{1 \over 2}({k \over l})^{2 \over 3}$. In the Kepler problem, every simple periodic orbit in a fixed energy has the same action value because every simple periodic orbit corresponds to the great circle in the standard $S^2$ with round metric by Moser regularization. In fact, the action value of any simple periodic orbit $\gamma^c$ on $E^{-1}(-c)$ is given by 
\begin{gather*}
\mathcal{A}_{KP}(\gamma^c)=2\pi (2c)^{-{1 \over 2}}
\end{gather*}
for each $c>0$. Define
\begin{gather*}
\lambda:=(\Phi \circ \Psi)^* \lambda_{can}=-q dp
\end{gather*}
for the next computation. We compute the action of $\gamma_{k, l}$ as follows.
\begin{eqnarray*}
\mathcal{A}(\gamma_{k,l})&=&\int_{\Phi \circ \Psi(\gamma_{k, l})} \lambda_{can}=\int_{\gamma_{k,l}} \lambda
\\ &=& \int_{0}^{T_{k,l}} \lambda(\dot{\gamma_{k,l}}(t)) dt
\\ &=& \int_{0}^{T_{k,l}} \lambda(X_{H_{RKP}}(\gamma_{k,l}(t))) dt
\\ &=& \int_{0}^{T_{k,l}} \lambda(X_E (\gamma_{k,l}(t))+X_L (\gamma_{k,l}(t))) dt
\\ &=& \int_{0}^{T_{k,l}} \lambda(X_E (\gamma_{k,l}(t))) dt + \int_{0}^{T_{k,l}} \lambda(X_L (\gamma_{k,l}(t))) dt
\\ &=& k(2\pi (-2E_{k,l})^{-{1 \over 2}})+\int_{0}^{2\pi l} L (\gamma_{k,l}(t)) dt
\\ &=&2\pi k \sqrt{1 \over {-2E_{k,l}}}+2\pi lL
\end{eqnarray*}
If we consider the periodic orbit $\gamma_{k,l}^c$ on $H_{RKP}^{-1}(-c)$, then we have
\begin{gather*}
-c=E_{k, l}+L
\end{gather*}
and thus we have
\begin{gather*}
\mathcal{A}(\gamma_{k,l}^c)=2\pi k \sqrt{1 \over {-2E_{k,l}}}+2 \pi l (-c-E_{k,l})=2\pi(-lc+{3 \over 2}k^{2 \over 3} l^{1 \over 3})
\end{gather*}
for every $c \in (c_{k,l}^-, c_{k,l}^+)$. We have seen that 
\begin{gather*}
c_{k,l}^- :=-E_{k,l}-\sqrt{1 \over {-2E_{k,l}}},
\\
c_{k,l}^+ :=-E_{k,l}+\sqrt{1 \over {-2E_{k,l}}}.
\end{gather*}
and so
\begin{gather*}
c_{k,l}^- ={1 \over 2}({k \over l})^{2 \over 3}-({l \over k})^{1 \over 3}, \quad c_{k,l}^+ ={1 \over 2}({k \over l})^{2 \over 3}+({l \over k})^{1 \over 3}.
\end{gather*}
We note that
\begin{gather*}
c_{k,l}^- =f(({l \over k})^{1 \over 3}), \quad c_{k,l}^+ =f(-({l \over k})^{1 \over 3}).
\end{gather*}
and this implies that
\begin{gather*}
{1 \over 2}({k \over l})^{2 \over 3}-({l \over k})^{1 \over 3}<c<{1 \over 2}({k \over l})^{2 \over 3}+({l \over k})^{1 \over 3}
\\ \iff L_R(c)^3<{l \over k}<-L_D (c)^3
\end{gather*}
where $L_R (c)>0$ and $-1<L_D (c)<0$ are zeros of $c=f(x)={1 \over 2x^2}-x$. We have proved the following Proposition.

\begin{prop}
Let $Spec(\Sigma_R^c, \lambda_{can})$ be the set of actions of the energy hypersurfaces of regularized the rotating Kepler problem at energy $-c$. Then we have
\begin{eqnarray*}
Spec(\Sigma_{R}^c, \lambda_{can})&=&2\pi L_R (c) \mathbb{N} \cup (-2\pi L_D (c)) \mathbb{N} 
\\ &\cup& \{ 2\pi (-lc+{3 \over 2} k^{2 \over 3} l^{1 \over 3}) | {l \over k} \in (L_R (c)^3, -L_D (c)^3), k>l \textrm{ and } k,l \in \mathbb{N} \}
\end{eqnarray*}
for each $c>{3 \over 2}$. The values $2\pi L_R(c)$ and $-2 \pi L_D(c)$ are the actions of the retrograde and direct orbit, respectively, where 
\begin{gather*}
L_R (c)>0 \textrm{ and } -1<L_D (c)<0
\end{gather*}
are zeros of $c=f(x)={1 \over 2x^2}-x$.
\end{prop}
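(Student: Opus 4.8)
The plan is to identify $Spec(\Sigma_R^c,\lambda_{can})$ with the set of $\lambda_{can}$-periods of closed Reeb orbits on the regularized hypersurface $\Sigma_R^c$, to translate these into periodic orbits of the rotating Kepler problem on $H_{RKP}^{-1}(-c)$ via the Moser-type regularization of Section 2.2, and then to exhaust all such orbits using Kepler's laws together with the resonance condition coming from the splitting $\phi_{H_{RKP}}^t=\phi_L^t\circ\phi_E^t$.

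First I would recall that, since $\Phi\circ\Psi$ is a symplectomorphism, the Reeb vector field of $\lambda_{can}$ on $\Sigma_R^c=(\overline{\Phi\circ\Psi(H_R^{-1}(-c))})^b$ lies in, hence spans, the characteristic line bundle $L_{\Sigma_R^c}$, which by Lemma 2.3 and the discussion following it is also spanned by the pushed-forward Hamiltonian vector field of the rotating Kepler problem; the same holds on the added collision locus, where the regularized Hamiltonian is smooth. Consequently closed Reeb orbits on $\Sigma_R^c$ correspond bijectively to periodic orbits of $H_{RKP}$ at energy $-c$, the regularized collision orbits included, and since the Reeb period of $\gamma$ equals $\int_\gamma\lambda_{can}$ and this integral is parametrization independent, the period equals the action $\mathcal{A}(\gamma)$ computed in this subsection. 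So the statement reduces to listing all periodic orbits of the rotating Kepler problem on a fixed negative energy level and reading off their actions.

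Next I would carry out the classification. Every bounded solution of the Kepler problem is, after Moser regularization, an ellipse of eccentricity $\epsilon=\sqrt{2EL^2+1}$ or a collision orbit, and since the rotating flow is $\phi_L^t\circ\phi_E^t$ with $\phi_L^t$ the rigid rotation, it closes up in the rotating frame if and only if its Keplerian period is a rational multiple of $2\pi$. This splits the orbits into the circular ones ($\epsilon=0$), which always close up and form the retrograde family $\gamma_R$, the direct family $\gamma_D$, and their $N$-fold iterates, and the non-circular ones $\gamma_{k,l}$, a $k$-fold ellipse in an $l$-fold coordinate frame, which exists precisely when $2E_{k,l}(c+E_{k,l})^2+1>0$, i.e. $c\in(c_{k,l}^-,c_{k,l}^+)$, with $E_{k,l}=-{1 \over 2}(k/l)^{2/3}$; the $N$-fold iterate of $\gamma_{k,l}$ is $\gamma_{Nk,Nl}$, so $\{\gamma_{k,l}\}_{k>l}$ is already closed under iteration. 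For the actions I would invoke the computations just completed: $\mathcal{A}(\gamma_R^c)=2\pi L_R(c)$ and $\mathcal{A}(\gamma_D^c)=-2\pi L_D(c)$, so their iterates contribute $2\pi L_R(c)\mathbb{N}$ and $-2\pi L_D(c)\mathbb{N}$; and splitting $\lambda_{can}(X_{H_{RKP}})=\lambda_{can}(X_E)+\lambda_{can}(X_L)$ along $\gamma_{k,l}$ and using that $L$ is a conserved integral gives $\mathcal{A}(\gamma_{k,l}^c)=2\pi k\sqrt{1/(-2E_{k,l})}+2\pi lL=2\pi(-lc+{3 \over 2}k^{2/3}l^{1/3})$ since $L=-c-E_{k,l}$.

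It remains to rewrite the existence range: since $c_{k,l}^-=f((l/k)^{1/3})$ and $c_{k,l}^+=f(-(l/k)^{1/3})$ for $f(x)={1 \over 2x^2}-x$, and $L_R(c),L_D(c)$ are the positive and the larger negative root of $c=f(x)$, monotonicity of $f$ on $(0,\infty)$ and on $(-1,0)$ converts $c_{k,l}^-<c<c_{k,l}^+$ into $L_R(c)^3<l/k<-L_D(c)^3$; collecting the three contributions then yields the displayed formula. The main obstacle is making the classification airtight: one must be sure the list is genuinely exhaustive — that every periodic Reeb orbit on $\Sigma_R^c$, all multiple covers and the regularized collision orbits included (these are the $L=0$, $\epsilon=1$ members of the families $\gamma_{k,l}$, occurring at $c=-E_{k,l}\in(c_{k,l}^-,c_{k,l}^+)$), is among those listed, and that the degenerations of $\gamma_{k,l}$ into the $(k+l)$-fold iterate of $\gamma_R$ as $c\to c_{k,l}^-$ and the $(k-l)$-fold iterate of $\gamma_D$ as $c\to c_{k,l}^+$ are consistent with the action formula — a quick check being that at $c=c_{k,l}^-$ it gives $2\pi(k+l)(l/k)^{1/3}=2\pi(k+l)L_R(c)$, as it must.
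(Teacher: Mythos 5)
Your proposal is correct and follows essentially the same route as the paper: identify Reeb periods on $\Sigma_R^c$ with $\int\lambda_{can}$ over periodic orbits of the rotating Kepler problem at energy $-c$, compute $2\pi L_R(c)$ and $-2\pi L_D(c)$ for the circular families, use the splitting $\lambda_{can}(X_{H_{RKP}})=\lambda_{can}(X_E)+\lambda_{can}(X_L)$ with $L=-c-E_{k,l}$ to get $\mathcal{A}(\gamma_{k,l}^c)=2\pi(-lc+\tfrac{3}{2}k^{2/3}l^{1/3})$, and convert the existence window $c_{k,l}^-<c<c_{k,l}^+$ into $L_R(c)^3<l/k<-L_D(c)^3$ via $c_{k,l}^\pm=f(\mp(l/k)^{1/3})$. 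Your explicit remarks on collision orbits, closure under iteration, and the degeneration check at $c=c_{k,l}^-$ are consistent with (and slightly more careful than) the paper's presentation.
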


We can have explicit formulas
\begin{gather*}
L_R (c)={1 \over 2}\sqrt{{3 \over 2c}} \sec \left({1 \over 3} \arccos \left( \left({3 \over 2c} \right)^{3 \over 2}\right)\right),
\\ L_D (c)={1 \over 2}\sqrt{{3 \over 2c}} \sec \left({1 \over 3} \arccos \left( \left({3 \over 2c} \right)^{3 \over 2}\right)+{2 \pi \over 3}\right)
\end{gather*}
for the zeros of $c={1 \over 2x^2}-x$ using trigonometric identity.

\begin{figure}
\centering
\includegraphics[]{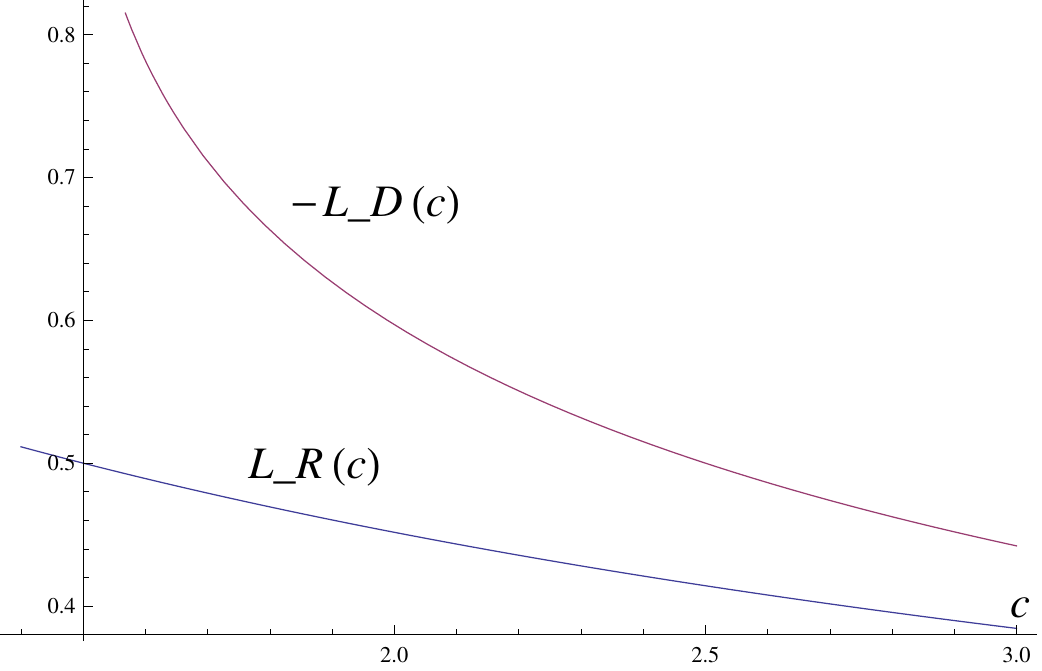}
\caption{The graphs of $L_R (c)$ and $-L_D(c)$ with variable $c$}
\end{figure}

As one can expect and one can see in Figure 4, it is easy to see that the retrograde orbit is the smallest action orbit in the rotating Kepler problem. Even though it is not the mainstream of this paper, it is worthwhile to mention about the systolic volume of the contact manifold $(\Sigma_R^c, \lambda_{can})$. We will see the systolic volume of $(\Sigma_R^c, \lambda_{can})$ for each $c$ in Appendix.

\section{Symplectic capacity of fiberwise star-shaped domains in cotangent bundle}

\subsection{Symplectic homology of Liouville domain}

In this paper, we will use the symplectic homology of cotangent bundles. However, we can define slightly more generally the symplectic homology of Liouville domains without any difference in difficulty. Thus we will define the symplectic homology for Liouville domains. More generally, one can define the symplectic homology for a symplectic manifold $(M, \omega)$ with contact type boundary under the following assumptions.

$\bold{(\Omega)}$: $[\omega]$ vanishes on $\pi_2(M)$.

$\bold{(C)}$: The first Chern class $c_1(M)$ vanishes on $\pi_2(M)$.
\\ One can see this general construction under these assumptions in \cite{CFH} and \cite{V1}. In our case, assumption $\bold{(\Omega)}$ always hold by exactness of symplectic form. Throughout this paper, we will assume that our Liouville domain $(M, \omega)$ satisfies assumption $\bold{(C)}$. This is necessary to define a integer-valued Conley-Zehnder index. Let us recall the definition of Liouville domain.

\begin{Def}
A Liouville domain is a compact symplectic manifold $(M, \omega=d \lambda)$ with a boundary $\partial M$ and a vector field $Y$ satisfying the following conditions.

(1) $L_{Y} \omega= \omega$ or equivalently $\lambda=\iota_{Y} \omega$,

(2) $Y$ transverse to $\partial M$ and pointing outward.
\\ We call the 1-form $\lambda$ the Liouville form and the vector field $Y$ the Liouville vector field.
\end{Def}

We have famous examples of the Liouville domain which satisfy $\bold{(C)}$. In particular, we have to keep in mind the second example throughout this paper.

\begin{Ex}[Star-shaped domain in $\mathbb{R}^{2n}$]
If we take the unit ball $B^{2n}_1(0)=\{x \in \mathbb{R}^{2n} | |x|^2 \le 1\}$ in $\mathbb{R}^{2n}$ with the symplectic form
\begin{gather*}
\omega_{can} = d \lambda_{can} \quad \textrm{ where } \quad \lambda_{can}={1 \over 2}(p dq-q dp)
\end{gather*}
is the canonical Liouville form, then a vector field 
\begin{gather*}
Y_{can}(q,p)={1 \over 2} q{\partial \over \partial q}+{1 \over 2} p{\partial \over \partial p}
\end{gather*}
is the Liouville vector field. The vector field $Y_{can}$ is radial and so transverse to the unit sphere $\partial B^{2n}_1(0)=S^{2n-1}$ pointing outward. Thus $(B^{2n}_1(0), \omega_{can}=d \lambda_{can})$ is a Liouville domain. More generally, if we take a domain $D \in \mathbb{R}^{2n}$ whose boundary $S=\partial D$ is transversal to $Y_{can}$, then $(D, \omega_{can}= d \lambda_{can})$ is a Liouville domain. The condition to have $Y_{can}$-transversal boundary  is the star-shapedness of $D$ with respect to the origin.
\end{Ex}

\begin{Ex}[Fiberwise star-shaped domain in $T^*N$]
Let $(N, g)$ be an orientable Riemannian manifold. If we take the unit disk cotangent bundle $D^*_g N=\{ x \in T^* N | g^*(x, x) \le 1\}$ in $T^* N$ with the canonical symplectic form
\begin{gather*}
\omega_{can} = d \lambda_{can}
\end{gather*}
where $\lambda_{can}$ is the canonical 1-form defined in Section 2.1. We have seen that
\begin{gather*}
\lambda_{can}=p dq, \quad \omega_{can}=dp \wedge dq
\end{gather*}
in any canonical local coordinate system of $T^*N$. Thus a vector field $Y_{can}=p {\partial \over \partial p}$ is the Liouville vector field. The vector field $Y_{can}$ is radial on each fiber and so transverse to the unit sphere cotangent bundle $\partial D^*_g N=S^*_g N$ pointing outward. Therefore, $(D^*_g N, \omega_{can}= d \lambda_{can})$ is a Liouville domain for any Riemannian metric $g$. In fact, we do not need the metric in order to define $\omega_{can}, \lambda_{can}$ and $Y_{can}$ and if we choose a domain $M \in T^* N$ whose boundary $\Sigma=\partial M$ is transversal to $Y_{can}$, then $(M, \omega_{can}= d \lambda_{can})$ is a Liouville domain. As in the above Example, the condition to have $Y_{can}$-transversal boundary is the fiberwise star-shapedness of $M$.
\end{Ex}
 
Let $(M, \omega= d\lambda)$ be a Liouville domain with the Liouville vector field $Y$. We define the completion of $M$ by attaching the symplectization cylinder $[1, \infty) \times \partial M$ along $\partial M$ identified with $\{1\} \times \partial M$. Namely, the completion $(\hat{M},\hat{\omega})$ is
\begin{gather*}
\hat{M}=M \cup_{\{1\} \times \partial M} [1, \infty) \times \partial M,
\\ \hat{\omega}=\begin{cases} \omega & \textrm{ on } M \\ d(r\lambda) & \textrm{ on } [1, \infty) \times \partial M \end{cases}
, \quad \hat{\lambda}=\begin{cases} \lambda & \textrm{ on } M \\ r\lambda & \textrm{ on } [1, \infty) \times \partial M \end{cases}
\end{gather*}
where $r$ is the coordinate for the first component $[1, \infty)$ of symplectization cylinder.

Symplectic homology is obtained by taking a limit on a carefully chosen family of Floer homology on $\hat{M}$. First, we will define the Floer homology for a 1-periodic Hamiltonian and later we will specify the type of Hamiltonian that we use for the symplectic homology. Throughout this paper, we will use $\mathbb{Z}_2$-coefficient to avoid the orientation argument. However, our discussion in this section is still valid in general for $\mathbb{Z}$-coefficient by considering the coherent orientation discussed in \cite{FH} and \cite{CFH}.

Let $(\hat{M}, \hat{\omega}=d \hat{\lambda})$  be the completion of a Liouville domain $(M, \omega)$. We choose a time-dependent and 1-periodic Hamiltonian $H:S^1 \times \hat{M} \rightarrow \mathbb{R}$, $S^1=\mathbb{R} / \mathbb{Z}$. We define $H_t(x)=H(t, x)$ for notational convenience. Recall that we have defined the Hamiltonian vector field $X_H^t$ by $\iota_{X_H^t} \hat{\omega}=-dH_t$ and Hamiltonian flow $\phi_H^t$ as the flow of $X_H^t$. We define the action functional 
\begin{gather*}
\mathcal{A}_H(x)=\int_{S^1} x^* \hat{\lambda}-\int_0^1 H(t, x(t)) dt
\end{gather*}
associated to $H$ on the free loop space $\mathcal{L}\hat{M}:=C^{\infty}(S^1, \hat{M})$ of $\hat{M}$.  We want to formulate Morse homology on $\mathcal{L}\hat{M}$ using the action functional $\mathcal{A}_H$ as a Morse function. However, we do not know if $\mathcal{A}_H$ is a Morse function, namely nondegenerate at every critical point. The corresponding concept is the nondegeneracy of Hamiltonians and it is a generic condition as in usual Morse homology theory. We observe the critical point of $\mathcal{A}_H$. We compute the differential of $\mathcal{A}_H$
\begin{eqnarray*}
d \mathcal{A}_H(x)(\hat{v})&=&\int_{0}^{1} {-\hat{\omega}(\dot{x}(t), \hat{v}(t))-dH_t(\hat{v}(t))} dt
\\ &=&\int_{0}^{1} {-\hat{\omega}(\dot{x}(t), \hat{v}(t))+\hat{\omega}(X_H ^t(x(t)), \hat{v}(t))} dt
\\ &=&\int_{0}^{1} {\hat{\omega}(\hat{v}(t), \dot{x}(t)-X_H ^t (x(t)))} dt
\end{eqnarray*}
for a tangent vector $\hat{v} \in T_{x} \mathcal{L}\hat{M}$ at $x \in \mathcal{L}\hat{M}$ where $\dot{ }={d \over dt}$. Here, we interpret the tangent vector $\hat{v} \in T_{x} \mathcal{L}\hat{M}$ as a section of pull-back bundle $x^* T\hat{M}$, namely $\hat{v}(t) \in T_{x(t)}\hat{M}$. From this computation, we know that the loop $x$ is a critical point of $\mathcal{A}_H$ if and only if 
\begin{gather*}
\dot{x}(t)-X_H ^t (x(t)))=0 \quad \textrm{for all }\quad t \in S^1
\end{gather*}
that is, $x$ is a 1-periodic orbit of the Hamiltonian vector field $X_H^t$ of $H$. Precisely, we have 
\begin{gather*}
Crit(\mathcal{A}_H)=\{x \in \mathcal{L} \hat{M} | \dot{x}(t)=X_H ^t (x(t))\}
\end{gather*}
and we will denote this set of all 1-periodic orbits of Hamiltonian vector field $X_H^t$ by $\mathcal{P}_H$.

\begin{Def}
A 1-periodic orbit $x \in \mathcal{P}_H$ is called nondegenerate if the linearized Hamiltonian flow of time 1 map
\begin{gather*}
d\phi_H^1(x(0)): T_{x(0)} \hat{M} \rightarrow T_{x(0)}\hat{M}
\end{gather*}
at $x(0)$ has no eigenvalue 1, that is, if
\begin{gather*}
\det (I-d\phi_H^1(x(0))) \ne 0.
\end{gather*}
We call a Hamiltonian $H \in C^{\infty}(S^1 \times \hat{M})$ nondegenerate if every $x \in \mathcal{P}_H$ is nondegenerate.
\end{Def}

Nondegeneracy is a generic condition and we will assume our Hamiltonian $H$ is nondegenerate. If a Hamiltonian $H$ is nondegenerate, then we have well-defined Conley-Zehnder indices for all $x \in \mathcal{P}_H$. We state briefly the definition of the Conley-Zehnder index $\mu_{CZ}(x)$ of a 1-periodic orbit $x \in \mathcal{P}_H$. We denote by $Sp(2n)$ the group of $2n \times 2n$ symplectic matrices and we define its subset $Sp^* (2n)$ of all $2n \times 2n$ symplectic matrices which do not have 1 as an eigenvalue. We note that $Sp^* (2n)$ is open, dense and has codimension 1 in $Sp(2n)$. We have a Maslov type index on the set

\begin{gather*}
SP(2n)=\{\Psi:[0,1] \rightarrow Sp(2n) | \Psi(0)=I, \Psi(1) \in Sp^*(2n)\}
\end{gather*}
of paths in $Sp(2n)$. We recall the Conley-Zehnder index for a path of symplectic matrices as defined in \cite{S}.

\begin{Thm}
For each $n \in \mathbb{N}$, there is a unique map
\begin{gather*}
\mu^n_{CZ} : SP(2n) \rightarrow \mathbb{R}
\end{gather*}
satisfying the following properties.

$\bold{(Naturality)}$ For any path $\Psi : [0, 1] \rightarrow Sp(2n)$, $\mu_{CZ}(\Psi^{-1} \Phi \Psi)=\mu_{CZ}(\Phi)$

$\bold{(Homotopy)}$ If $\Phi_1$ and $\Phi_2$ are homotopic in $SP(2n)$, then $\mu_{CZ}(\Phi_1)=\mu_{CZ}(\Phi_2)$.

$\bold{(Zero)}$ If $\Phi(s)$ has no eigenvalue on the unit circle for $s>0$, then $\mu_{CZ}(\Phi)=0$.

$\bold{(Product)}$ For $n_1+n_2=n$ and $\Phi_1 \in SP(2n_1), \Phi_2 \in SP(2n_2)$, we can regard $\Phi_1 \oplus \Phi_2$ as an element of $SP(2n)$. Then $\mu_{CZ}(\Phi_1 \oplus \Phi_2)=\mu_{CZ}(\Phi_1)+\mu_{CZ}(\Phi_2)$.

$\bold{(Loop)}$ If $\Psi :[0,1 ] \rightarrow Sp(2n)$ is a loop, then $\mu_{CZ}(\Psi \Phi)=\mu_{CZ}(\Phi)+2m(\Psi)$.

$\bold{(Signature)}$ If $S$ is a symmetric $2n \times 2n$ matrix with $||S||_{op} <2\pi$ and $\Phi(t)=\exp{tJ_0 S}$, then $\mu_{CZ}(\Phi)={1 \over 2} \textrm{sign}(S)$.
\\
In fact, the map $\mu^n_{CZ}$ is uniquely determined by $\bold{(Homotopy)}$, $\bold{(Loop)}$ and $\bold{(Signature)}$ properties.
\end{Thm}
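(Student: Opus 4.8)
The statement is the classical axiomatic characterization of the Conley–Zehnder index due to Salamon (reference \cite{S}). So the task is one of existence plus uniqueness, and the cleanest route is to reduce everything to the normalized index on the group $Sp^*(2n)$ together with a ``rotation number'' for loops. Let me sketch what I would write.

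\medskip

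\textbf{Existence.} First I would recall (or cite) the construction of the index via the Maslov cycle. The set $Sp^*(2n)$ has two connected components, distinguished by the sign of $\det(\Phi(1)-I)$; one then builds a preferred continuous ``reference path'' from $I$ to each component and defines $\mu_{CZ}(\Phi)$ as an intersection number of $\Phi$ with the Maslov cycle $\Sigma = \{A \in Sp(2n)\mid \det(A-I)=0\}$, using the fact that $\Sigma$ is a co-oriented stratified hypersurface whose top stratum $\Sigma_1$ has codimension $1$. Concretely: perturb $\Phi$ to have only transverse, generic crossings with $\Sigma_1$, count them with signs given by the crossing form, and add a boundary correction at $t=1$. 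Then I would verify the six listed axioms one at a time: $\bold{(Naturality)}$ because conjugation preserves the Maslov cycle and its co-orientation; $\bold{(Homotopy)}$ because the intersection number is a homotopy invariant of paths with fixed, admissible endpoints; $\bold{(Zero)}$ because a path staying in $Sp^*(2n)$ for $s>0$ meets $\Sigma$ only at $s=0$ with the correct boundary term; $\bold{(Product)}$ from additivity of crossing forms under direct sums; $\bold{(Loop)}$ from the fact that $\pi_1(Sp(2n))\cong\mathbb{Z}$ is generated by a loop whose Maslov index is $2$, detected by $m(\Psi)$; and $\bold{(Signature)}$ by the explicit computation for the linear flow $\exp(tJ_0S)$, where the small-norm hypothesis $\|S\|_{op}<2\pi$ guarantees no crossing of $\pm 1$ away from $t=0$ and the crossing form at $t=0$ is $S$ itself, giving $\tfrac12\operatorname{sign}(S)$.

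\medskip

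\textbf{Uniqueness.} This is the part I expect to be the real content, and the stated hint tells me to use only $\bold{(Homotopy)}$, $\bold{(Loop)}$ and $\bold{(Signature)}$. The plan: suppose $\mu$ and $\mu'$ both satisfy these three. By $\bold{(Homotopy)}$ each factors through $\pi_0$ of the space of admissible paths with fixed endpoint type; by $\bold{(Loop)}$ the difference $\mu-\mu'$ is invariant under multiplying by a loop in $Sp(2n)$, and since the loop space acts on the path components so that the quotient is exactly the two components of $Sp^*(2n)$, it suffices to pin down $\mu$ on one preferred path into each component. For each component I would exhibit an explicit path of the form $t\mapsto\exp(tJ_0S)$ (in dimension $1$, e.g.\ small rotations/hyperbolic shears, then take direct sums to reach $2n$), apply $\bold{(Signature)}$ to get the value, and invoke $\bold{(Homotopy)}$ to propagate it to the whole component. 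Combined with the $\bold{(Loop)}$ shift this determines $\mu$ on all of $SP(2n)$, so $\mu=\mu'$. The one delicate point is bookkeeping the connected components of $SP(2n)$ correctly — there are two for each fixed sign of $\det(\Phi(1)-I)$ once one remembers $\pi_1(Sp(2n))=\mathbb{Z}$, so really the components of $SP(2n)$ form a $\mathbb{Z}$-torsor over the two endpoint types — and checking that the representatives chosen via $\bold{(Signature)}$ genuinely hit the right components with the right $m$-value. I would organize this as: (i) classify $\pi_0(SP(2n))$; (ii) build representatives; (iii) evaluate via $\bold{(Signature)}$; (iv) conclude by $\bold{(Homotopy)}$ and $\bold{(Loop)}$. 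Since this is a known result, I would keep the argument brisk and refer to \cite{S} for the full details of the Maslov-cycle construction, presenting here only enough to make the axioms transparent.
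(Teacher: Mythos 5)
The paper does not actually prove this statement: it is recalled verbatim from Salamon's lectures \cite{S} as background for defining $\mu_{CZ}(x)$ for periodic orbits, so there is no in-paper argument to compare yours against. Your outline is essentially the standard construction and uniqueness argument from that reference (crossing/intersection number with the Maslov cycle $\{\det(A-I)=0\}$ for existence; reduction to normal forms via $\bold{(Homotopy)}$, $\bold{(Loop)}$, $\bold{(Signature)}$ for uniqueness), and as a sketch it is sound. Two small points to tighten if you were to write it out: the relevant crossings are with the locus where $1$ is an eigenvalue (not $\pm 1$), which is what $\|S\|_{op}<2\pi$ rules out for $t\in(0,1]$ in the $\bold{(Signature)}$ computation; and the component bookkeeping you flag as ``delicate'' is exactly the nontrivial input — one needs that $Sp^*(2n)$ has two connected components containing the normal forms $W^+=-I$ and $W^-=\mathrm{diag}(2,-1,\dots,-1,\tfrac12,-1,\dots,-1)$, and that every path in $SP(2n)$ is homotopic within $SP(2n)$ to a loop concatenated with a standard path of the form $\exp(tJ_0S)$ (note these direct sums are again of $\bold{(Signature)}$ type, so $\bold{(Product)}$ is indeed not needed), giving $\pi_0(SP(2n))\cong\mathbb{Z}$ with parity matching the sign of $\det(\Phi(1)-I)$. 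Those facts must be proved or cited, but since the paper itself simply cites \cite{S}, doing the same is entirely consistent with its level of detail.
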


For the Conley-Zehnder index of a 1-periodic orbit $x \in \mathcal{P}_H$, if $x$ is contractible then we take a symplectic filling $\bar{x}: D \rightarrow \hat{M}$ and we take a symplectic trivialization
\begin{gather*}
\bar{\Gamma} : \bar{x}^* T\hat{M} \rightarrow D \times \mathbb{R}^{2n}
\end{gather*}
for a symplectic vector bundle $\bar{x}^* T\hat{M} \rightarrow D$. This trivialization induces a trivialization
\begin{gather*}
\Gamma : x^* T\hat{M} \rightarrow S^1 \times \mathbb{R}^{2n}
\end{gather*}
of the subbundle $x^* T \hat{M} \rightarrow S^1$ by restriction. We obtain a path of symplectic matrices 
\begin{gather*}
\Phi^{\Gamma}_{x}(t)=\Gamma(t) d\phi_H^t(x(0)) \Gamma(0)^{-1} \in Sp(2n), \quad t \in [0, 1]
\end{gather*}
from the linearized Hamiltonian flow $d\phi_H^t$. Nondegeneracy of $H$ implies $\Phi^{\Gamma}_{x} \in SP(2n)$. We define the Conley-Zehnder index of $x$ with respect to $\bar{x}, \bar{\Gamma}$ by
\begin{gather*}
\mu_{CZ}(x;\bar{x}, \bar{\Gamma}):=\mu_{CZ}(\Phi^{\Gamma}_{x}).
\end{gather*}
By the condition $\bold{(C)}$ on the first Chern class, in fact, it is independent of the choices of $\bar{x}$ and $\bar{\Gamma}$ and so we will denote simply by $\mu_{CZ}(x):=\mu_{CZ}(x;\bar{x}, \bar{\Gamma})$. For the Conley-Zehnder index of a noncontractible 1-periodic orbit, we choose a representative $x_c$ and a trivialization $\Gamma_c : x_c^* T\hat{M} \rightarrow S^1 \times \mathbb{R}^{2n}$ for each $0 \ne c \in H_1(\hat{M}; \mathbb{Z})$. For a given $x \in \mathcal{P}_H$ with $[x]=c$, we extend the trivialization $\Gamma_c$ along the 2-cycle connecting $x_c$ and $x$. This induces an trivialization $\Gamma : x^* T\hat{M} \rightarrow S^1 \times \mathbb{R}^{2n}$. Then we can define the Conley-Zehnder index of $x$ as before.

One important ingredient of Morse homology is a Riemannian metric. We want to define the metric on $\mathcal{L}\hat{M}$. For this, we recall the definition of $\omega$-compatible almost complex structure on $M$.

\begin{Def}
Let $(M, \omega)$ be a symplectic manifold(possibly with boundary). We call a section $J$ of $\Gamma(End(TM))$ an almost complex structure on $M$ if $J(x)^2=-id |_{T_x M}$ for all $x \in M$. We call an almost complex structure $J$ on the symplectic manifold $(M, \omega)$ is $\omega$-compatible if $\omega(\cdot, J \cdot)$ defines a Riemannian metric on $M$. We denote the space of all $\omega$-compatible almost structure by $\mathcal{J}_{\omega}(M)$.
\end{Def}

Because we need a particular type of $\hat{\omega}$-compatible almost complex structure on $\hat{M}$ in order to define symplectic homology.

\begin{Def}
Let $(\hat{M}, \hat{\omega}=d\hat{\lambda})$ be a completion of a Liouville domain $(M, \omega=d\lambda)$. An $\hat{\omega}$-compatible almost complex structure $J$ is called SFT-like if it satisfies the following conditions
\\ (1) $J$ preserves the contact hyperplane $\xi=\ker \lambda|_{T\partial M}$ on $(\partial M, \lambda)$.
\\ (2) $JY=R$ and $JR=-Y$ on $\partial M$ where $Y$ is the Liouville vector field and $R$ is the Reeb vector field.
\\ (3) $J$ is invariant under the flow of the Liouville vector field $Y$ in the cylindrical end $[1, \infty) \times \partial M$.
We denote by $\mathcal{J}^{SFT}_{\hat{\omega}}(\hat{M})$ the set of all SFT-like $\hat{\omega}$-compatible almost complex structure on $\hat{M}$.
\end{Def} 

The space $\mathcal{J}^{SFT}_{\hat{\omega}}(\hat{M})$ is nonempty and contractible. One can see the proof of this fact, for example, in \cite{McSal1}. We choose an SFT-like $\hat{\omega}$-compatible structure $J \in \mathcal{J}^{SFT}_{\hat{\omega}}(\hat{M})$. From the definition, one can define the metric from $J$. We denote this metric by $\left< v_1, v_2 \right> _{J}:=\hat{\omega}(v_1, Jv_2)$ for $v_1, v_2 \in T_p \hat{M}$. We consider 1-periodic $\omega$-compatible almost complex structure $J:=\{J_t\}_{t \in S^1}$, that is, $J_t \in \mathcal{J}^{SFT}_{\hat{\omega}}(\hat{M})$ for all $t \in S^1$. With this, we induce a metic on $\mathcal{L}\hat{M}$ by $L^2$-metic using $\left< \cdot, \cdot \right> _{J}$. Let $x \in \mathcal{L}\hat{M}$ be a loop in $\hat{M}$. One can think a vector of the tangent space $T_x \mathcal{L}\hat{M}$ as a vector field along $x$, that is, we identify $\hat{v} \in T_x \mathcal{L}\hat{M}$ with a section $\hat{v} \in \Gamma(x^*T\hat{M})$ of the pull-back bundle of tangent bundle $T\hat{M}$ via $x: S^1 \rightarrow M$. With this identification, we define a metric on $\mathcal{L}\hat{M}$ as follows. Given $\hat{v_1}, \hat{v_2} \in T_{x} \mathcal{L}\hat{M}=\Gamma(x^* T\hat{M})$, we define
\begin{eqnarray*}
\left< \left< \hat{v_1}, \hat{v_2} \right> \right>_{J}:=\int_{0}^{1} \left<\hat{v_1}(x(t)), \hat{v_2}(x(t)) \right> _{J_t} dt=\int_{0}^{1} {\omega_{x(t)}(\hat{v_1}(x(t)), J_t \hat{v_2}(x(t)))} dt.
\end{eqnarray*}

We can deduce the gradient flow line equation for $\mathcal{A}_H$ using above computations. Since we have
\begin{eqnarray*}
d \mathcal{A}_H(x)(\hat{v})&=&\int_{0}^{1} {\hat{\omega}(\hat{v}(x(t)), \dot{x}(t)-X_H ^t (x(t)))} dt
\\ &=& \left< \left< \hat{v}, -J(\dot{x}-X_H ^t) \right> \right>_{J}
\end{eqnarray*}
for any $\hat{v} \in T_x \mathcal{L} \hat{M}$ and $x \in \mathcal{L} \hat{M}$, we have the gradient vector
\begin{gather*}
\nabla \mathcal{A}_H (x)=-J(\dot{x}-X_H ^t)
\\ \textrm{precisely, } \quad \nabla \mathcal{A}_H (x)(x(t))=-J_t (x(t))(\dot{x}(t)-X_H ^t (x(t))) \in T_{x(t)}\hat{M}
\end{gather*}
This induces the gradient flow line 
\begin{gather*}
u : \mathbb{R} \rightarrow \mathcal{L}\hat{M},
\\ {du\over ds}=\nabla \mathcal{A}_H (u(s))
\end{gather*}
of $\mathcal{A}_H$ on $\mathcal{L}\hat{M}$. This is an ODE on an infinite dimensional space. Using the identification $C^{\infty}(\mathbb{R}, \mathcal{L}\hat{M})=C^{\infty}(\mathbb{R} \times S^1, \hat{M})$, we can rewrite this ODE to a PDE on $\hat{M}$. Namely, the gradient flow line $u : \mathbb{R} \times S^1 \rightarrow \hat{M}$ satisfies the perturbed Cauchy-Riemann equation
\begin{gather*}
{\partial u \over \partial s}(s, t)=\nabla \mathcal{A}_H (u(s, t))=-J_t (u(s, t))({\partial u \over \partial t}(s, t)-X_H ^t (u(s, t)))
\\ \iff \partial_s u+J_t(u)(\partial_t u-X_H^t(u))=0.
\end{gather*}

As in the Morse homology,  we will define the boundary map by counting the gradient flow line. Given $x^{\pm} \in \mathcal{P}_H=Crit(\mathcal{A}_H)$, we denote by $\widehat{\mathcal{M}}(x^{-}, x^{+})$ the space of gradient flow lines from $x^{-}$ to $x^{+}$, that is, 
\begin{gather*}
\widehat{\mathcal{M}}(x^{-}, x^{+})=\{u : \mathbb{R} \times S^1 \rightarrow \hat{M}| \partial_s u+J_t(u)(\partial_t u-X_H^t(u))=0, \lim_{s\rightarrow \pm \infty} u(s, t)=x^{\pm}\}.
\end{gather*}
We have $\mathbb{R}$-action on $\mathbb{R} \times S^1$ and we can obtain the unparametrized moduli space by taking quotient by this $\mathbb{R}$-action on $\widehat{\mathcal{M}}(x^{-}, x^{+})$. This quotient is called the moduli space of Floer trajectories and is denoted by
\begin{gather*}
\mathcal{M}(x^{-}, x^{+}):=\widehat{\mathcal{M}}(x^{-}, x^{+}) / \mathbb{R}.
\end{gather*}

Assume now that all elements in $\mathcal{P}_H$ and the gradient trajectories between them are contained in the compact subset of $\hat{M}$. This will be achieved by taking $H$ with suitable assumptions and we will introduce these assumptions later. For a generic $J \in S^1 \times \mathcal{J}^{SFT}_{\hat{\omega}}(\hat{M})$, the moduli space $\mathcal{M}(x^{-}, x^{+})$ is a smooth manifold of dimension $\mu_{CZ}(x^+)-\mu_{CZ}(x^-)-1$ for each $x^-, x^+ \in \mathcal{P}_H$. We define the Floer chain group for $H$
\begin{gather*}
CF_k^{<a}(H):=\mathbb{Z}_2 \left<x \in \mathcal{P}_H | \mu_{CZ}(x)=k, \mathcal{A}_H(x)<a\right>
\end{gather*}
as the $\mathbb{Z}_2$-module generated by the 1-periodic orbits of index $k$ and action less than $a$ for $k \in \mathbb{Z}$ and $a \in \mathbb{R} \cup \{\pm \infty\}$. We abbreviate $CF_k^{<+\infty}(H)=CF_k (H)$. We also define the filtered chain complex
\begin{gather*}
CF_k^{[a,b)}(H):=CF_k^{<b} / CF_k^{<a}
\end{gather*}
for $a<b \in \mathbb{R} \cup \{\pm \infty\}$ and define a boundary map
\begin{gather*}
\partial^{[a,b)}: CF_k^{[a,b)}(H) \rightarrow CF_{k-1}^{[a,b)}(H)
\end{gather*}
on it by
\begin{gather*}
\partial^{[a, b)}(x):=\sum_{\begin{matrix}y \in \mathcal{P}_H, \\ \mu_{CZ}(y)=k-1, \\ a\le \mathcal{A}_H(y)<b \end{matrix}}{\#_{\mathbb{Z}_2}\mathcal{M}(y, x)y}
\end{gather*}
If we have compactness for the moduli spaces, then $\partial^{[a,b)}$ is well defined and indeed a boundary map, that is, it satisfies $\partial^{[a,b)} \circ \partial^{[a,b)}=0$. Under the compactness assumption, we can define the filtered Floer homology groups
\begin{gather*}
FH^{[a,b)}_*(H)=\ker \partial^{[a,b)} / \textrm{im} \partial^{[a,b)}
\end{gather*}
for $a<b \in \mathbb{R} \cup \{\pm \infty\}$. From a short exact sequence of chain complexes
\begin{gather*}
0 \rightarrow CF_*^{[a,b)}(H) \rightarrow CF_*^{[a,c)}(H) \rightarrow CF_*^{[b,c)}(H) \rightarrow 0,
\end{gather*}
we have a long exact sequence of the filtered Floer homology groups
\begin{gather*}
\cdots \rightarrow FH^{[a,b)}_*(H) \rightarrow FH^{[a,c)}_*(H) \rightarrow FH^{[b,c)}_*(H) \rightarrow FH^{[a,b)}_{*-1}(H) \rightarrow \cdots
\end{gather*}

A standard argument in Floer homology theory says that $FH^{[a,b)}_*(H)$ is independent of the choice of $J$. However, $FH^{[a,b)}_*(H)$ depends on the choice of the Hamiltonian. Moreover, $FH^{[a,b)}_*(H)$ cannot be defined for an arbitrary Hamiltonian due to the compactness. We have to specify the Hamiltonians which guarantee compactness results.

\begin{Def}
We call a smooth Hamiltonian $H: S^1 \times \hat{M} \rightarrow \mathbb{R}$ admissible if it satisfies the following conditions
\\ (1) $H$ is nondegenerate.
\\ (2) $H|_{S^1 \times M} \le 0$
\\ (3) $\lim_{r \rightarrow \infty} H(\cdot, r, x)=ar+b$ on the symplectic cylinder $(r, x) \in [1, +\infty) \times \partial M$ for some $a, b \in \mathbb{R}$ such that $0<a \notin Spec(\partial M, \lambda)$. We denote by $Ad(M)$ the set of all admissible Hamiltonian on $\hat{M}$.
\end{Def}

For an admissible Hamiltonian $H \in Ad(M)$, there is a $S^1$-family of SFT-like $\hat{\omega}$-compatible almost complex structure $J$ such that the moduli space $\mathcal{M}(x^-, x^+; H, J)$ is a smooth manifold for each $x^-, x^+ \in \mathcal{P}_H$. Moreover, in fact, the set of all such $S^1$-family of SFT-like $\hat{\omega}$-compatible almost complex structure forms a Baire set in $C^{\infty}(S^1, \mathcal{J}_{\hat{\omega}}^{SFT}(\hat{M}))$. We call such pair $(H, J) \in Ad(M) \times C^{\infty}(S^1, \mathcal{J}_{\hat{\omega}}^{SFT}(\hat{M}))$ an admissible pair. We denote by $\mathcal{N}_{reg}(M)$ the set of all admissible pairs. For an admissible pair $(H, J) \in \mathcal{N}_{reg}(M)$, we can define the filtered Floer homology
\begin{gather*}
FH_*^{[a, b)}(H)
\end{gather*}
for $a<b \in \mathbb{R} \cup \{\pm \infty\}$. Moreover, if we have two admissible pairs $(H_0, J_0), (H_1, J_1) \in \mathcal{N}_{reg}(M)$ such that $H_0(x) \le H_1(x)$ for every $x \in \hat{M}$, then we can take a monotone homotopy, say $(L,J)$, between them satisfying
\begin{gather*}
L : \mathbb{R} \times S^1 \times \hat{M} \rightarrow \mathbb{R}, \quad L_s \in Ad(M), \quad {\partial L \over \partial s} \ge 0,
\\ L(s, t, x)=
\begin{cases}
H_0(t, x) & \textrm{ if } \quad s \le -s_0
\\ H_1(t, x) & \textrm{ if }  \quad s \ge s_0 
\end{cases}
\end{gather*}
where $L_s(t, x):=L(s, t, x)$ and
\begin{gather*}
J : \mathbb{R} \times S^1 \rightarrow \mathcal{J}_{\hat{\omega}}^{SFT}(\hat{M})
\\ J(s, t)=
\begin{cases}
J_0(t) & \textrm{ if } \quad s \le -s_0
\\ J_1(t) & \textrm{ if }  \quad s \ge s_0 
\end{cases}
\end{gather*}
for some large $s_0 \in \mathbb{R}$. Using this pair $(L,J)$, we can define moduli spaces
\begin{eqnarray*}
\mathcal{M}(x, y; L, J)&:=&\{u : \mathbb{R} \times S^1 \rightarrow \hat{M}| \partial_s u+J(s, t)(u)(\partial_t u-X_L(s, t, u))=0, 
\\ & & \lim_{s\rightarrow -\infty} u(s, *)=x, \lim_{s \rightarrow +\infty} u(s, *)=y\}.
\end{eqnarray*}
for each $x \in \mathcal{P}_{H_0}, y \in \mathcal{P}_{H_1}$. For a generic $(L, J)$, the moduli space $\mathcal{M}(x, y; L, J)$ is a smooth manifold of dimension $\mu_{CZ}(y)-\mu_{CZ}(y)$. If we consider the degree 0 map
\begin{gather*}
\phi^{(L, J)}: CF_k^{[a, b)}(H_0) \rightarrow CF_k^{[a, b)}(H_1)
\end{gather*}
by defining
\begin{gather*}
\phi^{(L, J)}(x):=\sum_{\begin{matrix}y \in \mathcal{P}_{H_1}, \\ \mu_{CZ}(y)=k, \\ a\le \mathcal{A}_H(y)<b \end{matrix}}{\#_{\mathbb{Z}_2}\mathcal{M}(x, y; L, J)y}.
\end{gather*}
Then this is a chain map between $CF_*(H_0)$ and $CF_*(H_1)$. Thus $\phi^(L, J)$ induces a natural map
\begin{gather*}
\phi_{(H_0, H_1)}^{(L, J)} : FH_*^{[a, b)}(H_0) \rightarrow FH_*^{[a, b)}(H_1)
\end{gather*}
on the filtered Floer homology. This is well-defined by the compactness of the moduli spaces and the monotone property is used for this compactness. In fact, one can prove that this map is independent of the choice of $L$ by considering the homotopy of homotopies and therefore we denote $\phi_{(H_0, H_1)}^{(L,J)}$ by $\phi_{(H_0, H_1)}$ and we have a direct system
\begin{gather*}
(\mathcal{N}_{reg}(M), \le) \xrightarrow{FH^{[a, b)}} \mathcal{G}Ab
\end{gather*}
where $(\mathcal{N}_{reg}(M), \le)$ is a directed set with the induced partial order from $Ad(M)$, namely $(H_0, J_0) \le (H_1, J_1) \iff H_0(t, x) \le H_1(t, x)$ for all $t \in S^1, x \in \hat{M}$ and $\mathcal{G}Ab$ is the category of graded abelian groups. We define the symplectic homology
\begin{gather*}
SH_*^{[a, b)}(M, \omega):=\lim_{\longrightarrow} FH_*^{[a, b)}(H)
\end{gather*}
of a Liouville domain $(M, \omega=d \lambda)$ with filtration $[a, b)$. From the naturality, we have a long exact sequence of symplectic homology
\begin{gather*}
\cdots \rightarrow SH^{[a,b)}_*(M) \rightarrow SH^{[a,c)}_*(M) \rightarrow SH^{[b,c)}_*(M) \rightarrow SH^{[a,b)}_{*-1}(M) \rightarrow \cdots
\end{gather*}
for each $a<b<c \in \mathbb{R} \cup \{ \pm \infty \}$. In particular, we obtain the following long exact sequence
\begin{gather*}
\cdots \rightarrow SH_* ^{<b}(M) \xrightarrow{i^b_M} SH_* (M) \xrightarrow{j^b_M} SH_* ^{\ge b} \rightarrow SH_{*-1}^{<b}(M) \xrightarrow{i^b_M} \cdots
\end{gather*}
by taking $a=-\infty, c=+\infty$ for each $b \in \mathbb{R}$. This will play an important role to define capacity in the next Section. By definition of direct limit, we have the canonical map
\begin{gather*}
\phi_H^{[a, b)} : FH_*^{[a, b)}(H) \rightarrow SH_*^{[a, b)}(M)
\end{gather*}
for each $(H, J) \in \mathcal{N}_{reg}(M)$ and these canonical maps satisfy the following universal property.
\begin{displaymath}
\xymatrix{
 FH_*^{[a, b)}(H_i) \ar[rr]^{\phi_{(H_i, H_j)}^{[a, b)}} \ar[dr]_{\phi_{H_i}^{[a, b)}} \ar[ddddr]_{\psi_{H_i}} & & FH_*^{[a, b)}(H_j) \ar[dl]^{\phi_{H_j}^{[a, b)}}  \ar[ddddl]^{\psi_{H_j}}
\\
 & SH_*^{[a, b)}(M) \ar@{-->}[ddd]_{\exists! \psi_{M}} &
\\
\\
\\
& X_* &
}
\end{displaymath}

Suppose that $(\hat{M}, \hat{\omega}=d \hat{\lambda})$ is an open exact symplectic manifold. We assume that there exist two Liouville domain $(M_1, \lambda_1) \subset (M_2, \lambda_2) \subset (\hat{M}, \hat{\lambda})$ such that we can identify $\hat{M_1}=\hat{M_2}=\hat{M}$. Then we have $Ad(M_2) \subset Ad(M_1)$ and so this induces a map 
\begin{gather*}
\phi_{M_2, M_2}^{[a, b)} : SH_*^{[a, b)}(M_2) \rightarrow SH_*^{[a, b)}(M_1) 
\end{gather*}
on symplectic homology of $M_1$ and $M_2$. We call this map the monotone morphism.

\begin{Ex}
Let $M_1 \subset M_2$ be compact star-shaped domains in $(\mathbb{R}^{2n}, \omega_{can}=d \lambda_{can})$. Then we can regard $\hat{M_1}=\hat{M_2}=\mathbb{R}^{2n}$ and therefore we have the monotone morphism
\begin{gather*}
\phi_{M_2, M_2}^{[a, b)} : SH_*^{[a, b)}(M_2) \rightarrow SH_*^{[a, b)}(M_1) 
\end{gather*}
on the symplectic homology. In \cite{FH}, they define monotone morphisms more generally for symplectic embeddings and in \cite{FHW} they use this morphism in order to study symplectic embeddings of ellipsoids in $\mathbb{R}^{2n}$ and to classify polydisks in $\mathbb{R}^{2n}$ symplectically. Moreover, they construct a symplectic capacity for domains in $\mathbb{R}^{2n}$.
\end{Ex}

\begin{Ex}
Let $M_1 \subset M_2$ be fiberwise star-shaped domains in $(T^*N, \omega_{can}=d \lambda_{can})$. Then we have that $\hat{M_1}=\hat{M_2}=T^* N$.  Thus we have the monotone morphism
\begin{gather*}
\phi_{M_2, M_2}^{[a, b)} : SH_*^{[a, b)}(M_2) \rightarrow SH_*^{[a, b)}(M_1) 
\end{gather*}
on the symplectic homology. Observing this monotone morphism, we will define a symplectic capacity for fiberwise star-shaped domains in a cotangent bundle in the next Section. 
\end{Ex}

We have defined the symplectic homology for a Liouville domain $(M, \omega=d \lambda)$. However, it is hard to see directly the computation of symplectic homology, the generators of the symplectic homology and so on. Because $Ad(M)$ is too big, we can consider a simpler set instead of $Ad(M)$. In $Ad(M)$, we allow only nondegenerate Hamiltonians and so we consider only the time-dependent Hamiltonian(A time-independent Hamiltonian is automatically a degenerate Hamiltonian due to $S^1$-action of each 1-periodic orbit). If one uses the perturbation argument in \cite{FHW}, then it is possible to consider the time-independent Hamiltonian by requiring transversal nondegeneracy, that is, there is no eigenvalue 1 of linearized Hamiltonian flow for a 1-periodic orbit when we restrict to the contact hyperplane. The Conley-Zehnder index defined above will be replaced by the transversal Conley-Zehnder index obtained by restricting the linearized flow to the contact plane. Moreover, we do not need to take a smooth Hamiltonian if we use the remark about $C^0$-Hamiltonian in \cite{V1}. Hence we will assume that $Ad(M)$ contains transversely nondegenerate time-independent $C^0$-Hamiltonians which satisfy the original conditions as well. Following the argument in \cite{V1}, we will use the following family of time-independent Hamiltonians
\begin{gather*}
K_M^c(x)=\begin{cases} 0 & \textrm{ if } x \in M \\ c(r-1) & \textrm{ if } x=(r, p) \in [1, \infty) \times \partial M \end{cases}
\end{gather*}
on $\hat{M}$ for a Liouville domain $(M, \omega=d \lambda)$ and for $0<c \notin Spec(\partial M, \lambda)$. We note that the family $\{K_M^c\}_{c \in \mathbb{R}^+}\backslash Spec(\partial M, \lambda)$ of functions is cofinal in $Ad(M)$, that is, for any $H \in Ad(M)$ there exists $c \in \mathbb{R}^+$ such that
\begin{gather*}
K_M^c \ge H.
\end{gather*}
This implies that
\begin{gather*}
SH_*^{[a, b)}(M)=\lim_{\underset{c}{\longrightarrow}}{FH_*^{[a, b)}(K_M^c)}.
\end{gather*}

\begin{figure}
\centering
\includegraphics[]{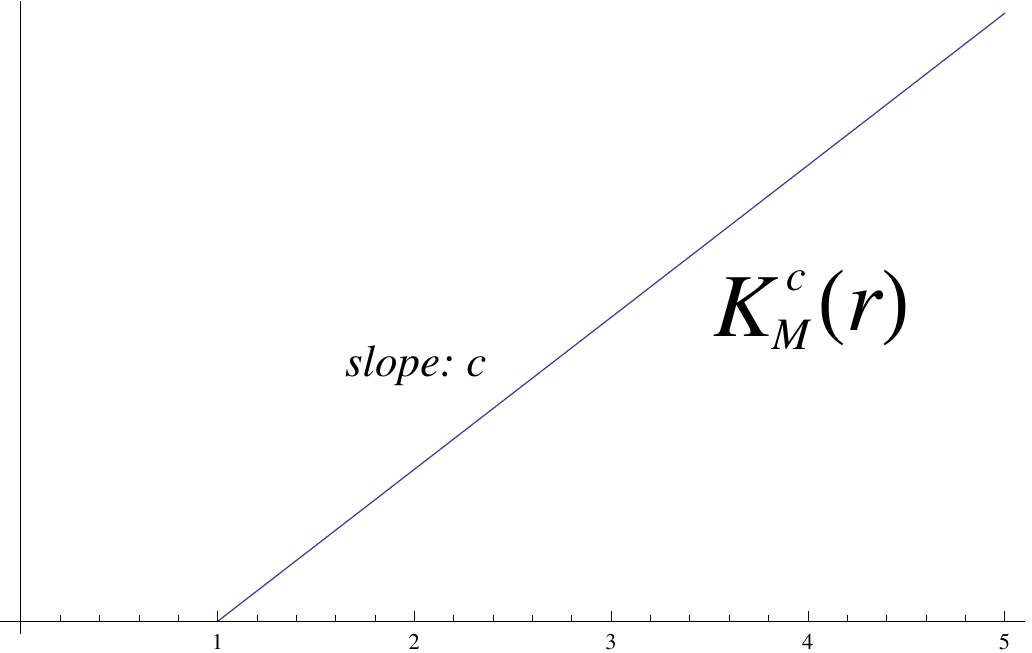}
\caption{$K_M^c : \hat{M} \rightarrow \mathbb{R}$}
\end{figure}

Let $H:\hat{M} \rightarrow \mathbb{R}$ be a time-independent Hamiltonian. We assume that $H$ is $C^2$-small in $M$ and $H(r, x)=h(r)$ on $(r, x) \in [1, \infty) \times \partial M$. Then 1-periodic orbits in $M$ are all constant orbit on the critical points of $H$. We observe the symplectization cylinder part. Because we have
\begin{gather*}
dH(r, x)=dh(r)={dh \over dr}(r) dr \implies X_H(r, x)={dh \over dr}(r)R_{\lambda}(r, x)
\end{gather*}
for $(r, x) \in [1, \infty) \times \partial M$ where $R_{\lambda}(r, x)=(T_r)_*(R_{\lambda}(x))$ for the trivial map $T_r : \partial M \rightarrow \{ r\} \times \partial M$. Let $x :S^1 \rightarrow [1, \infty) \times \partial M$ be a 1-periodic orbit of $H$. Then $x$ lies on a level set, say $\{r \} \times \partial M$. Thus $\dot{x}(t)={dh \over dr}(r)R_{\lambda}(r, x(t))$ and so $x$ is a copy of ${dh \over dr}(r)$-periodic Reeb orbit. Moreover, we have the action value
\begin{eqnarray*}
\mathcal{A}_H(x)&=&\int_{S^1} x^* \hat{\lambda}-\int_{0}^{1}H(x(t)) dt
\\ &=& \int_{0}^{1}{\hat{\lambda}({dh \over dr}(r)R_{\lambda}(r, x))}-\int_{0}^{1}h(r) dt
\\ &=& r{dh \over dr}(r)-h(r)
\end{eqnarray*}
of $x$ in terms of $r, h$. Let us discuss 1-periodic orbits of the Hamiltonian $K_M^c$. We assume that $c \notin Spec(\partial M, \lambda)$ and denote $K_M^c(x, r)=k_M^c(r)$ on the cylinder. In the function $k_M^c$, we can think that every slope between $0$ and $c$ appears exactly once and arbitrarily close to $1$. This implies that the 1-periodic orbits of $K_M^c$ have one-to-one correspondence with the periodic Reeb orbits of period $T \in (0, c)$ in $(\partial M, \lambda)$. Moreover, the action value of a 1-periodic orbit is given by it corresponding Reeb period $T$.

We shall introduce the symplectic homology for our previous examples. Our first example is the star-shaped domain in $\mathbb{R}^{2n}$. It is the simplest example for symplectic homology. In particular, computations of symplectic homologies with any action filtration for ellipsoids and polydisks was done in \cite{FHW}.

\begin{Ex}[Symplectic homology for ellipsoid in \cite{FHW}]
Let $r=(r_1, r_2, \cdots, r_n)$ be an n-tuple of positive real numbers such that $r_1 \le r_2 \le \cdots \le r_n$. We define an open ellipsoid
\begin{gather*}
E(r):=\left\{z \in \mathbb{C}^n | \sum_{k=1}^{n}{\left|  {z_k \over r_k}\right|^2 } < 1 \right\}
\end{gather*}
in $\mathbb{C}^n$. We define the set
\begin{gather*}
\sigma(r):=\left\{ k\pi r_j^2 | k \in \mathbb{N}, j\in \left\{1, 2, \cdots, n\right\}\right\}=\{d_1 \le d_2 \le \cdots \}
\end{gather*}
that allows repeated elements according to the multiplicity. For every $d \in \mathbb{R}\cup\{+\infty\}$, we define a chain complex
\begin{eqnarray*}
C^d(r)&=&0 \quad \textrm{for} \quad d \le 0
\\ C^d(r)&=& (\mathbb{Z}_2, n) \quad \textrm{for} \quad 0 < d \le d_1
\\ C^d(r)&=& (\mathbb{Z}_2, n) \oplus (\mathbb{Z}_2, n+1) \oplus \cdots \oplus (\mathbb{Z}_2, n+2m(d,r)) \quad \textrm{for} \quad d_1 \le d < +\infty
\\ C^{+\infty}(r)&=& \bigoplus_{l=0}^{+\infty}(\mathbb{Z}_2, n+l)
\end{eqnarray*}
where the right component denotes the grade and $m(d,r):=\sup\{l | d_l< d\}$. We also define its quotient
\begin{gather*}
C^{[a, b)}(r):=C^b(r) /C^a(r)
\end{gather*}
The boundary map
\begin{gather*}
\cdots \xrightarrow{id} (\mathbb{Z}_2, n+2m) \xrightarrow{0} (\mathbb{Z}_2, n+2m-1) \xrightarrow{id} \cdots \xrightarrow{id} (\mathbb{Z}_2, n+2) \xrightarrow{0} (\mathbb{Z}_2, n+1) \xrightarrow{id} (\mathbb{Z}_2, n) \xrightarrow{0} 0
\end{gather*}
of infinite chain complex gives the boundary map for each $C^d(r)$ or $C^{[a, b)}$ by restriction. The following result was proved in \cite{FHW}
\begin{gather*}
SH_*^{[a, b)}(E(r))=H_*(C^{[a, b)}(r), \partial^{[a, b)}(r))
\end{gather*}
In particular, we have $SH_*(E(r))=0$.
\end{Ex}

In \cite{FHW}, they answered many embedding problems between two ellipsoids from the information of periodic Reeb orbits because we know every Reeb periodic orbits on $\partial E(r)$. In this paper, we will work in the opposite way. Namely, we will obtain information of Reeb periodic orbit from the embedding relations. We shall see the symplectic homology for our another example. This computation was done in \cite{AS}, \cite{SW} and \cite{V2} independently. We will follow the proof of Abbondandolo-Schwarz.

\begin{Ex}[Symplectic homology for cotangent bundle in \cite{AS}]
Let $M$ be a fiberwise star-shaped domain in $(T^* N, \omega_{can}=d \lambda_{can})$. Then we have the following result.
\begin{ThmAS}[Abbondandolo-Schwarz \cite{AS}, Salamon-Weber \cite{SW}, Viterbo \cite{V2}]
The symplectic homology $SH_* (M)$ is isomorphic to the homology $H_*(\Lambda N)$ of the free loop space of $N$.
\end{ThmAS}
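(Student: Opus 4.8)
The plan is to follow the Abbondandolo--Schwarz strategy quoted in the text: realize $SH_*(M)$ as the Morse homology of a Lagrangian action functional on the free loop space and then identify the latter with $H_*(\Lambda N)$. First I would reduce to a convenient model. The completion of every $M\in FSD(N)$ is all of $T^*N$, and for two such domains the systems of admissible Hamiltonians are mutually cofinal (the fibers of any $M\in FSD(N)$ are bounded and contain a neighbourhood of the origin, so the shapes are comparable up to fiberwise rescaling, which $SH_*$ respects). Hence $SH_*(M)$ is independent of the choice of $M\in FSD(N)$, and $SH_*(M)=\lim_{\longrightarrow} FH_*(K_M^c)$ is unchanged if the cofinal family $\{K_M^c\}$ is enlarged to include Hamiltonians that are, outside a compact set, fiberwise convex and (super)quadratic in the fiber variable $p$ — e.g. suitable perturbations of $H(q,p)=|p|_{g^*}^2$ for a fixed auxiliary metric $g$. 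So it suffices to compute $\lim_{\longrightarrow} FH_*(H)$ over such Tonelli-type Hamiltonians.

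For such an $H$, fiberwise Legendre duality produces a smooth, fiberwise convex, superquadratic Lagrangian $L:S^1\times TN\to\mathbb{R}$, and the central claim is a chain-level isomorphism between the Floer complex $CF_*(H)$ and the Morse complex of the action functional $\mathcal{S}_L(\gamma)=\int_0^1 L(t,\gamma(t),\dot\gamma(t))\,dt$ on $W^{1,2}(S^1,N)$. I would construct the comparison map à la Abbondandolo--Schwarz by counting hybrid ``half-cylinder'' solutions: maps $u$ on $(-\infty,0]\times S^1$ solving the Floer equation for $(H,J)$, asymptotic as $s\to-\infty$ to a prescribed $x\in\mathcal{P}_H$, whose boundary loop $u(0,\cdot)$ lies on a negative $\mathcal{S}_L$-gradient trajectory converging to a prescribed critical point of $\mathcal{S}_L$. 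One then checks that this is a degree-preserving chain map — using the index identity $\mu_{CZ}(x)=\mathrm{ind}_{\mathrm{Morse}}$ for the corresponding closed extremal (no shift in the cotangent-bundle normalization) — and that it is filtered: the value $\mathcal{A}_H(x)$ agrees with $\mathcal{S}_L$ at corresponding critical points through the Legendre transform, so the map is an isomorphism on the associated graded of the action filtration and hence an isomorphism. Passing to the direct limit, which on the Lagrangian side merely exhausts all action sublevels of $\mathcal{S}_L$, gives $SH_*(M)\cong HM_*(\mathcal{S}_L)$, compatibly with the splitting of $\Lambda N$ into free-homotopy classes of loops (for $N=S^2$ there is only the contractible component, but the statement is general).

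It remains to identify $HM_*(\mathcal{S}_L)$ with $H_*(\Lambda N)$. This is the classical Morse theory of the loop space adapted to a Tonelli action functional: $\mathcal{S}_L$ is bounded below, satisfies the Palais--Smale condition on $W^{1,2}(S^1,N)$ by superquadraticity of $L$, has only finitely many critical points below each action level and all of finite Morse index, and its negative gradient flow is complete; the standard Morse-theoretic argument (or comparison with the energy functional of a Riemannian metric, whose loop-space Morse theory is Milnor's classical computation) then yields $HM_*(\mathcal{S}_L)\cong H_*(\Lambda N;\mathbb{Z}_2)$. Combining the two isomorphisms gives $SH_*(M)\cong H_*(\Lambda N)$.

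The hard part — and the bulk of the work in \cite{AS} — is the compactness underlying the hybrid moduli spaces: one must establish uniform $C^0$-bounds for the Floer half-cylinders, so that they stay in a compact part of $T^*N$, which is where the convexity of $H$ enters through maximum-principle and isoperimetric estimates, together with the $W^{1,2}$-bounds on the boundary curves extracted from the energy of the hybrid solutions. Establishing transversality for these mixed objects and proving the index identity $\mu_{CZ}=\mathrm{ind}_{\mathrm{Morse}}$ are the remaining technical pillars; once these are in place, the chain-level isomorphism and the passage to the limit are comparatively formal.
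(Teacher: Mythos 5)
Your proposal is correct and follows essentially the same route the paper sketches, namely the Abbondandolo--Schwarz construction: pass to a cofinal family of fiberwise convex (Tonelli-type) Hamiltonians, use the Legendre transform to obtain a Lagrangian action functional on $W^{1,2}(S^1,N)$, build a chain-level isomorphism via hybrid half-cylinder moduli spaces compatible with the action filtration, and identify the resulting Morse homology with $H_*(\Lambda N)$. The only small discrepancy is your appeal to the maximum principle for the $C^0$-bounds: as the paper notes, Abbondandolo--Schwarz avoid the maximum principle and instead obtain the $L^{\infty}$ estimates directly from the Cauchy--Riemann operator under the conditions $\bold{(H1)}$--$\bold{(H2)}$.
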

We will give a sketch of proof for this result. In \cite{AS}, they regarded a symplectic homology as a Floer homology on the cotangent bundle and defined special conditions for Hamiltonian $H : S^1 \times T^* N \rightarrow \mathbb{R}$ as follows.

$\bold{(H1)}$: $dH(t, q, p)[Y_{can}]-H(t,q,p) \ge c_0|p|^2-h_1$ for some constants $c_0>0, c_1 \ge 0$.

$\bold{(H2)}$: $|\nabla_q H(t, q, p) \le c_2(1+|p|^2), |\nabla_p H(t, q, p) \le c_2(1+|p|^2)$ for some constant $c_2 \ge 0$
\\
Let $Qd(T^*N)$ be the set of Hamiltonians which satisfy the above conditions $\bold{(H1)}$ and $\bold{(H2)}$. A difficulty of this extension of function is the compactness of moduli spaces. Instead of using the maximum principle, they observe directly the Cauchy-Riemann operator and they get an $L^{\infty}$ estimate. The conditions $\bold{(H1)}$ and $\bold{(H2)}$ allow a Hamiltonian $H$ to have a Lagrangian $L$ satisfying

$\bold{(L1)}$: $\nabla_{vv} L(t, q, v) \ge d_0 I$ for some constant $d_0>0$.

$\bold{(L2)}$: $|\nabla_{qq}L(t, q, v)| \le d_1 (1+|v|^2), \nabla_{qv} L(t,q,v)| \le d_1(1+|v|)$ and $|\nabla_{vv}L(t, q, v)| \le d_1$ for some constant $d_1 \ge 0$.
\\ by the Legendre transformation. Using this Lagrangian $L$, one can consider the Lagrangian action functional
\begin{gather*}
\mathcal{E}_L(x)= \int_{0}^{1} L(t, x(t), \dot{x}(t)) dt
\end{gather*}
on the free loop space $x \in \Lambda N:=W^{1, 2}(S^1, N)$ of $N$. They developed the Morse homology on $\Lambda^1 N$ using $\mathcal{E}_L$ and defined an isomorphism
\begin{gather*}
\Theta : (CM_*(\mathcal{E}_L), \partial_*(\mathcal{E}_L, g)) \rightarrow (CF_*(H), \partial_*(H, J))
\end{gather*}
on the chain levels where $g$ is a Morse-Smale Riemannian metric on $\Lambda N$. This proves the isomorphism between the symplectic homology $SH_*(M)$ and the Morse homology $H_*(\Lambda N)$. This will play an important role to define symplectic capacity using a min-max argument.
\end{Ex}

We shall finish this section with one more example. It is a case of Example 3.1.6. We will use this to apply the symplectic capacity defined in this paper to Hill's lunar problem.

\begin{Ex}[Symplectic homology for $T^* S^2$]
We know that
\begin{gather*}
SH_*(M) \cong H_*(\Lambda S^2)
\end{gather*}
for any fiberwise star-shaped domain $M \in T^* S^2$. We know the homology of $\Lambda S^2$ from the result in string topology, see \cite{CJY} including general computations for loop homologies of spheres and projective spaces.
\begin{gather*}
H_*(\Lambda S^2; \mathbb{Z}_2)=
\begin{cases}
\mathbb{Z}_2 & \textrm{if} \quad *=0, 1
\\ \mathbb{Z}_2 \oplus \mathbb{Z}_2 & \textrm{if} \quad * \ge 2
\end{cases}
\end{gather*}
This can be proved also by Morse homology argument on $(S^2, g_{round})$. Because we know the symplectic homology $SH_*(M)$ from the loop homology. We will determine the chain complex and the boundary map of the Liouville domain $M_R^c$ defined by the rotating Kepler problem. We recall the Conley-Zehnder indices and action spectrum of the rotating Kepler problem from the previous Section. The action values of the $N$th-iterated retrograde and direct orbits are given by
\begin{gather*}
\mathcal{A}(\gamma_{R,N})=2\pi L_R (c)N \quad \textrm{ and } \quad \mathcal{A}(\gamma_{D,N})=-2\pi L_D(c)N,
\end{gather*}
respectively, where $0<L_R(c)$ and $-1<L_D(c)<0$ are zeros of $c={1 \over 2x^2}-x$. From Figure 3, we know that $L_D(c)$ and $L_R(c)$ go to zero as $c$ goes to $+\infty$. The Conley-Zehnder indices of these orbits are given by
\begin{gather*}
\mu_{CZ}(\gamma_{R,N})=1+2 \max \{ k \in \mathbb{Z} | k{2 \pi \over (-2E)^{3 \over 2}}<NS_{R} \}
\end{gather*}
and
\begin{gather*}
\mu_{CZ}(\gamma_{D,N})=1+2 \max \{ k \in \mathbb{Z} | k{2 \pi \over (-2E)^{3 \over 2}}<NS_{D} \}
\end{gather*}
for $NS_{R}, NS_D \notin \mathbb{Z} {2 \pi \over (-2E)^{3 \over 2}}$ where $S_{R}={2\pi \over {(-2E)^{3 \over 2} + 1}}$ and $S_{D}={2\pi \over {(-2E)^{3 \over 2} - 1}}$. We have
\begin{gather*}
\max \{ k \in \mathbb{Z} | k{2 \pi \over (-2E)^{3 \over 2}}<NS_{R} \}=\left\lfloor N{S_{R} \over {{2 \pi \over (-2E)^{3 \over 2}}}}  \right\rfloor =\left\lfloor N{(-2E)^{3 \over 2} \over (-2E)^{3 \over 2}+1}  \right\rfloor,
\\ \max \{ k \in \mathbb{Z} | k{2 \pi \over (-2E)^{3 \over 2}}<NS_{D} \}=\left\lfloor N{S_{D} \over {{2 \pi \over (-2E)^{3 \over 2}}}}  \right\rfloor =\left\lfloor N{(-2E)^{3 \over 2} \over (-2E)^{3 \over 2}-1}  \right\rfloor
\end{gather*}
We denote the $E$ of the retrograde and direct orbit on the energy $-c$ by $E_R(c)$ and $E_D (c)$, respectively. Using the relations $E_R(c)=-{1 \over 2L_{R}(c)^2}$ and $E_D(c)=-{1 \over 2L_{D}(c)^2}$, we define
\begin{gather*}
\alpha_R(c):={(-2E_R (c))^{3 \over 2} \over (-2E_R (c))^{3 \over 2}+1}={1 \over 1+L_R(c)^3}
\\ \alpha_D(c):={(-2E_D (c))^{3 \over 2} \over (-2E_D (c))^{3 \over 2}-1}={1 \over 1+L_D(c)^3}
\end{gather*}
and we have the Conley-Zehnder indices
\begin{gather*}
\mu_{CZ}(\gamma_{R,N}^c)=1+2 \left \lfloor N\alpha_R (c) \right\rfloor,\quad \mu_{CZ}(\gamma_{D,N}^c)=1+2 \left \lfloor N\alpha_D (c) \right\rfloor
\end{gather*}
of the retrograde and direct orbit on energy $-c$. Note that $\alpha_R(c)<1$ and $\alpha_D(c)>1$ go to 1 as $c$ goes to $+\infty$. For any large integer $P \in \mathbb{N}$, we consider a sufficiently large $c$ such that
\begin{gather*}
{P \over P+1} < \alpha_R(c) <1 \iff 0<L_R(c)^3<{1 \over P},
\\ 1<\alpha_D(c)<{P+1 \over P} \iff 0<-L_D(c)^3<{1 \over P+1}
\end{gather*}
equivalently $0<-L_D(c)^3<{1 \over P+1}$.
Then we have
\begin{gather*}
\mu_{CZ}(\gamma_{R, N}^c)=2N-1 \quad \textrm{for} \quad N=1, 2, \cdots, P+1,
\\ \mu_{CZ}(\gamma_{D,N}^c)=2N+1 \quad \textrm{for} \quad N=1, 2, \cdots, P
\end{gather*}
for a such $c$. The last condition implies that there is no $T_{k,l}$ satisfying $k \le P+1$ and particularly $T_{k, l}$ whose action is below the $P$-th iteration of the retrograde and direct orbit does not appear on energy level $-c$. The periodic orbit $\gamma_{R, N}^c$(resp, $\gamma_{D,N}^c$) gives two generators, say $\overline{\gamma_{R, N}^c}, \underline{\gamma_{R, N}^c}$(resp, $\overline{\gamma_{D,N}^c}, \underline{\gamma_{D,N}^c}$), on the chain level by perturbation. The boundary map between these orbits should be $0$-map, because the number of generators in chain level coincides with the dimension of resulting homology in each grade less that $2P+2$. Moreover, we have two generators for constant orbit for a suitable Morse function inside $\Sigma_R^c$. In sum, we have that
\begin{gather*}
CF_*(K_{M_R^c}^b)=
\begin{cases}
\mathbb{Z}_2 & \textrm{if} \quad *=0, 1
\\ \mathbb{Z}_2 \oplus \mathbb{Z}_2 & \textrm{if} \quad *=2, 3, \cdots, 2P+1
\end{cases}
\end{gather*}
for any sufficiently large $b$. We know that all boundary maps are $0$-maps. Therefore, we have
\begin{gather*}
FH_*(K_{M_R^c}^b)=
\begin{cases}
\mathbb{Z}_2 & \textrm{if} \quad *=0, 1
\\ \mathbb{Z}_2 \oplus \mathbb{Z}_2 & \textrm{if} \quad *=2, 3, \cdots, 2P
\end{cases}
\end{gather*}
for any sufficiently large $b$ and so
\begin{gather*}
SH_*(M_R^c)=
\begin{cases}
\mathbb{Z}_2 & \textrm{if} \quad *=0, 1
\\ \mathbb{Z}_2 \oplus \mathbb{Z}_2 & \textrm{if} \quad *=2, 3, \cdots, 2P.
\end{cases}
\end{gather*}
for every $c$ satisfying $0<-L_D(c)^3<{1 \over P+1}$. Moreover, the representative of each homology class is unique for each homology class with grade less than $2P+1$.
\end{Ex}

\begin{Rem}
In the above computation, there is an important remark that we will use in order to compute the symplectic capacity for $\Sigma_R^c$. If $c$ satisfies
\begin{gather*}
0<-L_D(c)^3<{1 \over P+1}
\end{gather*}
for $P \in \mathbb{N}$, then we know the retrograde and direct orbits, up to $P$-th iterations, are generators of the symplectic homology $SH(M_R^c)$. In particular, if we consider $P=1$, then c has to satisfy $-L_D(c)^3<{1 \over 2}$ and equivalently $c>2^{2 \over 3}$(the birth point of Hekuba orbit $\gamma_{2,1}$). This implies that if $c>2^{2 \over 3}$ then the retrograde and direct orbit are generators of $SH(M_R^c)$. More generally, for any $P \in \mathbb{N}$, if $-L_D (c)^3<{1 \over P+1}$, then $N$-th iterations of the retrograde and direct orbits are generators of $SH_* (M_R^c)$. It is easy to see that
\begin{gather*}
-L_D (c)^3<{1 \over P+1} \iff {1 \over 2}(P+1)^{2 \over 3}+(P+1)^{-{1 \over 3}} < c.
\end{gather*} 
\end{Rem}

We will use this to get representatives of homology classes for $H_* (\Lambda S^2)$ in order to obtain symplectic capacities for the Liouville domain $M_R^c$ enclosed by regularized energy hypersurfaces of the rotating Kepler problem $\Sigma_R^c$.

\subsection{Symplectic capacity in cotangent bundle}

Let $N$ be a closed manifold. The cotangent bundle $(T^* N, d \lambda_{can})$ is an exact symplectic manifold. We define a symplectic capacity for fiberwise star-shaped domain in $T^* N$. Let $M$ be a fiberwise star-shaped domain. Then $(M, \omega=d \lambda_{can}|_M)$ is a Liouville domain as we discussed in Example 3.1.2. We note that $[\omega]|_{\pi_2 (M)}=0$, $c_1(M)|_{\pi_2 (M)} =0$ and the symplectic completion $\hat{M}$ can be regarded as $T^* N$. We have seen that the symplectic homology for $(M, \omega_{can})$ is isomorphic to the homology of $H_* (\Lambda N)$.  We will denote this isomorphism by 
\begin{gather*}
\Psi_M : H_* (\Lambda N) \rightarrow SH_* (M) 
\end{gather*}
for each fiberwise star-shaped domain $M \subset T^* N$. We recall the long exact sequence of symplectic homology for action filtration. For $b \in \mathbb{R} \cup \{+\infty\}$, we have
\begin{gather*}
\cdots \rightarrow SH_* ^{<b}(M) \xrightarrow{i^b_M} SH_* (M) \xrightarrow{j^b_M} SH_* ^{\ge b} \rightarrow SH_{*-1}^{<b}(M) \xrightarrow{i^b_M} \cdots
\end{gather*}

Using this long exact sequence, we assign a constant in the following way.

\begin{Def}
In the above setup, we define
\begin{gather*}
c_N(M, \alpha) := \inf \{b \in \mathbb{R} \cup \{ +\infty \} | \Psi_M (\alpha) \in \textrm{im}(i^b_M) \}
\end{gather*}
for a homology class $0 \ne \alpha \in H_* (\Lambda N)$. This constant $c_N(M, \alpha)$ is called the spectral invariant of $\alpha$ in the symplectic homology of $M$.
\end{Def}

One can see immediately that we have another description of the spectral invariant $c_N$. Let us define a constant
\begin{gather*}
c_N'(M, \alpha) := \sup \{b \in \mathbb{R} \cup \{+\infty \} | j^b_M(\Psi_M (\alpha))\ne 0 \}
\end{gather*}
for a while. For any $\epsilon>0$, there exist $b \in [c_N(M, \alpha), c_N(M, \alpha)+\epsilon)$ and $\sigma \in SH_*^{>b}(M)$ such that $\Psi_M(\alpha)=i_M^b(\sigma)$. Then we have $j_M^b(\Psi_M (\alpha))=j_M^b \circ i_M^b(\sigma)=0$ by exactness. This implies  that $c_N '(M, \alpha) \le b$ and so $c_N'(M, \alpha) \le c_N(M, \alpha)$ because $\epsilon$ is arbitrary. On the other hand, for any $b > c_N'(M, \alpha)$, we have $j_M^b(\Psi_M (\alpha))=0$. Then we have $\Psi_M (\alpha) \in \ker j_M^b=\textrm{im} i_M^b$. This implies that $c_N(M, \alpha) \le b$ and so $c_N(M, \alpha) \le c_N'(M, \alpha)$. This proves $c_N(M, \alpha)=c_N'(M, \alpha)$. Thus we will denote this common value by $c_N(M, \alpha)$. Because we have a constant whenever we have a fiberwise star-shaped domain and a homology class of the free loop space $\Lambda N$, we can think of $c_N$ as a map
\begin{gather*}
c_N: FSD(N) \times H_* (\Lambda N)^{\times} \rightarrow \mathbb{R}
\end{gather*}
where $FSD(N)$ is the set of all fiberwise star-shaped domains on $T^* N$ and $H_* (\Lambda N)^{\times}=H_*(\Lambda N) \backslash \{ 0 \}$. We will prove the following properties of $c_N$.

\begin{ThmA}[Properties of $c_N$]
The map
\begin{gather*}
c_N : FSD(N) \times H_* (\Lambda N)^{\times} \rightarrow \mathbb{R}
\\ \quad \quad \quad \quad (M, \alpha) \mapsto c(M, \alpha)
\end{gather*}
satisfies the following properties.

(1) (Conformality) $c_N (kM, \alpha)=k c_N (M, \alpha)$ for all $k \in \mathbb{R}^+$.

(2) (Monotonicity) $c_N (M_2, \alpha) \ge \kappa_{min}(\Sigma_1, \Sigma_2) c_N (M_1, \alpha)$ for all $M_1, M_2 \in FSD(N)$ where $\Sigma_i=\partial M_i$, $i=1, 2$ and $\kappa_{min}(\Sigma_1, \Sigma_2)=\min_{x \in \Sigma_1} \{\kappa(x)| \kappa(x)x \in \Sigma_2, \kappa(x)>0 \}$.

(3) (Spectrality) $c_N (M, \alpha) \in Spec(\Sigma, \lambda_{can})$ where $\Sigma= \partial M$.
\\ for each $\alpha \in H_* (\Lambda N)^{\times}$.
\end{ThmA}

In Theorem A, $kM$ in (1) denotes the Liouville domain obtained by multiplying $k$ on each fiber of $M$ as a scalar multiplication in each cotangent space. We define $\kappa_{min}(\Sigma_1, \Sigma_2)$ in (2) by
\begin{gather*}
\kappa_{min}(\Sigma_1, \Sigma_2)=\min_{x \in \Sigma_1} \{\kappa(x)| \kappa(x)x \in \Sigma_2, \kappa(x)>0 \}
\end{gather*}
and we define similarly
\begin{gather*}
\kappa_{max}(\Sigma_1, \Sigma_2)=\max_{x \in \Sigma_1} \{\kappa(x)| \kappa(x)x \in \Sigma_2, \kappa(x)>0 \}
\end{gather*}
for all fiberwise star-shaped hypersurfaces $\Sigma_1, \Sigma_2$. Clearly, these numbers are positive. Finally, We denote by $\mathcal{P}(\Sigma, \lambda_{can})$ the set of all Reeb periodic orbits of the contact manifold $(\Sigma, \lambda_{can})$. As we discussed in the previous section, we can think of the Reeb orbit as a generator of symplectic homology. We denote by $Spec(\Sigma, \lambda_{can}) \subset \mathbb{R}$ the set of all nonnegative Reeb periods of the contact manifold $(\Sigma, \lambda_{can})$. The period of a Reeb periodic orbit can be regarded as an action value of the Reeb orbit in symplectic homology.

We will prove Theorem A in this Section. For the proof, we need the following Lemmas. 

\begin{Lem}
Let $M$ be a fiberwise star-shaped domain in $T^* N$. If $b \in \mathbb{R}^+ \backslash Spec(\Sigma, \lambda_{can})$, then we have an isomorphism
\begin{gather*}
SH_*^{<b}(M) \simeq FH_*(K_M^b)
\end{gather*}
between the symplectic homology of action less than $b$ and the Floer homology with Hamiltonian $K_M^b: \hat{M}=T^* N \rightarrow \mathbb{R}$. The Hamiltonian $K_M^b$ is given by 
\begin{gather*}
K_M^b(x)=
\begin{cases}
0 & \textrm{if} \quad x \in M,
\\ b(r-1) & \textrm{ if } \quad x=(r, p) \in [0, +\infty) \times \Sigma
\end{cases}
\end{gather*}
\end{Lem}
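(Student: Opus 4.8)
The plan is to compute $SH_*^{<b}(M)$ through the cofinal family of linear Hamiltonians. Since $\{K_M^c\}_{c\in\mathbb{R}^+\setminus Spec(\Sigma,\lambda_{can})}$ is cofinal in $Ad(M)$, we have $SH_*^{<b}(M)=\lim_{\longrightarrow}FH_*^{<b}(K_M^c)$, and the content of the lemma is that this direct limit is already reached at slope $b$, precisely because $b$ avoids the action spectrum; I would then identify $FH_*^{<b}(K_M^b)$ with the unfiltered $FH_*(K_M^b)$ by checking that every $1$-periodic orbit of $K_M^b$ has action below $b$.

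First I would describe the $1$-periodic orbits of a small perturbation of $K_M^b$ and their actions, using the computation already recorded in Section~4.1: after smoothing the corner at $r=1$ on a collar $[1,1+\epsilon]\times\Sigma$ and adding a $C^2$-small Morse term inside $M$, these orbits are the critical points of the Morse term (action $\approx 0$) together with, for each Reeb period $T\in Spec(\Sigma,\lambda_{can})\cap(0,b)$, a family sitting on a level $\{r_T\}\times\Sigma$ with $r_T\in(1,1+\epsilon)$ and action $r_T\tfrac{dh}{dr}(r_T)-h(r_T)$, which (by convexity of the profile) lies between $T$ and $(1+\epsilon)T$. Here I would invoke that $Spec(\Sigma,\lambda_{can})$ is closed and $b\notin Spec(\Sigma,\lambda_{can})$, so there is $\delta>0$ with $Spec(\Sigma,\lambda_{can})\cap[b-\delta,b]=\emptyset$; choosing the collar width $\epsilon<\delta/(b-\delta)$ forces every such action to be $<b$. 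Hence $CF_*^{<b}(K_M^b)=CF_*(K_M^b)$ and therefore $FH_*^{<b}(K_M^b)=FH_*(K_M^b)$.

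Next I would show that the direct system stabilizes: for $b\le c\le c'$ outside $Spec(\Sigma,\lambda_{can})$, the transition map $FH_*^{<b}(K_M^c)\to FH_*^{<b}(K_M^{c'})$ is an isomorphism. By the same accounting, the generators of $CF_*^{<b}(K_M^c)$ are independent of $c\ge b$ — the perturbed constants and the Reeb orbits of period $<b$ — while the orbits coming from slopes in $[b,c)$ have action $\ge b$ and do not enter. I would set up the cofinal family so that $K_M^b$ is one of its members and all members agree with $K_M^b$ on $M\cup([1,1+\epsilon]\times\Sigma)$, differing only further out where the slope climbs to $c$; then, taking the monotone homotopies between consecutive members stationary on that collar, both the Floer differential cylinders and the continuation cylinders between generators of action $<b$ remain inside $M\cup([1,1+\epsilon]\times\Sigma)$ by the no-escape lemma for SFT-like $\hat\omega$-compatible almost complex structures (the $J$'s being cylindrical on $[1,\infty)\times\Sigma$ and the profile convex there). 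Consequently the filtered complexes literally coincide and the continuation maps restrict to the identity, so the canonical map $FH_*^{<b}(K_M^b)\to SH_*^{<b}(M)$ is an isomorphism; combining with the first step yields $SH_*^{<b}(M)\simeq FH_*(K_M^b)$.

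The main obstacle is the compactness input in the third paragraph: confining the Floer and continuation trajectories to the thin collar on which all the $K_M^c$ agree. This is exactly the maximum-principle (integrated maximum principle) argument for SFT-like almost complex structures that already underlies the well-definedness of the constructions in Section~4.1, so in the write-up I would quote it rather than reprove it; granting it, the only other nonroutine ingredient is the closedness of the action spectrum, which is standard and is used solely to produce the gap $\delta$ at $b$.
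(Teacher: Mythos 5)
Your proposal is correct and follows essentially the same route as the paper: the paper also computes $SH_*^{<b}(M)$ through the cofinal family $\{K_M^c\}_{c\ge b}$, notes that all orbits of $K_M^b$ have action below $b$ so that $FH_*(K_M^b)\simeq FH_*^{<b}(K_M^b)$, and identifies $FH_*^{<b}(K_M^c)\simeq FH_*^{<b}(K_M^b)$ for $c\ge b$ before passing to the limit. The only difference is that the paper states the stabilization step $FH_*^{<b}(K_M^c)\simeq FH_*^{<b}(K_M^b)$ without proof, whereas you justify it explicitly via the spectral gap at $b$, the choice of collar width, and the no-escape/maximum-principle confinement of Floer and continuation trajectories — a legitimate filling-in of the same argument rather than a different one.
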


\begin{proof}
By definition of the symplectic homology of $M$, we have
\begin{gather*}
SH_*^{<b}(M)=\varinjlim_{H \in Ad(M)} FH_*^{<b}(H).
\end{gather*}
Since the action functional $\mathcal{A}_{K_M^b}$ has no critical value larger than $b$, we have
\begin{gather*}
FH_*(K_M^b) \simeq FH_*^{<b}(K_M^b) \simeq FH_*^{<b}(K_M^c)
\end{gather*}
for all $c \ge b$. Since the set of functions $\{ K_M^c | c \ge b \}$ is cofinal in $Ad(M)$, we have
\begin{eqnarray*}
SH_*^{<b}(M)&=&\varinjlim_{c \ge b} FH_*^{<b}(K_M^c)
\\ &=& FH_*(K_M^b).
\end{eqnarray*}
This proves Lemma 4.2.
\end{proof}

Throughout of this Section, we will assume that $b \in \mathbb{R}^+ \backslash Spec(\Sigma, \lambda_{can})$. Because it is known that $Spec(\Sigma, \lambda_{can})$ is discrete for a generic $\Sigma$. 
\begin{Lem}
The following diagram
\begin{displaymath}
\xymatrix{
 & FH_*(K_M^b) \ar[d]_{\phi_{K_M^b}} \ar[dl]_{\phi_{K_M^b}^{<b}}^{\simeq} 
\\
SH_*^{<b}(M) \ar[r]_{i_M^b} & SH_* (M)
}
\end{displaymath}
commutes where $\phi$ is the canonical inclusion in the direct system from a Floer homology of $M$ to the symplectic homology of $M$.
\end{Lem}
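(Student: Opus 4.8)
The plan is to reduce the statement to a formal property of direct limits, the only substantive ingredient being that the monotone continuation maps respect the action filtration.

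First I would identify the maps concretely. Since $b\in\mathbb{R}^+\backslash Spec(\Sigma,\lambda_{can})$, the $1$-periodic orbits of $K_M^b$ correspond bijectively to the closed Reeb orbits of $(\Sigma,\lambda_{can})$ of period in $(0,b)$, and, as recorded in Section 4.1, the action of such a $1$-periodic orbit equals the corresponding Reeb period, hence lies in $(0,b)$. Therefore $CF_*^{<b}(K_M^b)=CF_*(K_M^b)$ as chain complexes, so $FH_*^{<b}(K_M^b)=FH_*(K_M^b)$, and the map $\phi_{K_M^b}^{<b}$ in the diagram is literally the canonical colimit map $FH_*^{<b}(K_M^b)\to SH_*^{<b}(M)=\varinjlim_{c\ge b}FH_*^{<b}(K_M^c)$, which is an isomorphism because every transition map in this direct system is an isomorphism, as established in the proof of Lemma 4.2. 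Likewise $\phi_{K_M^b}$ is the canonical colimit map $FH_*(K_M^b)\to SH_*(M)=\varinjlim_{c\ge b}FH_*(K_M^c)$.

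Next, for each $c\ge b$ let $\iota_c\colon FH_*^{<b}(K_M^c)\to FH_*(K_M^c)$ be the map induced by the inclusion of subcomplexes $CF_*^{<b}(K_M^c)\hookrightarrow CF_*(K_M^c)$, i.e. the map appearing in the long exact sequence of the pair for the single Hamiltonian $K_M^c$ (the Floer-level version of $i_M^b$). The one step that is not pure bookkeeping is to check that the $\iota_c$ assemble into a morphism of direct systems: for $b\le c\le c'$ the square built from $\iota_c$, $\iota_{c'}$ and the monotone continuation map $\phi_{(K_M^c,K_M^{c'})}$ must commute. This follows from the standard action estimate: for a monotone homotopy $L$ with $\partial_sL\ge0$ and a continuation trajectory $u$ from $x\in\mathcal{P}_{K_M^c}$ to $y\in\mathcal{P}_{K_M^{c'}}$ one has $\mathcal{A}_{K_M^{c'}}(y)\le\mathcal{A}_{K_M^c}(x)$, so the continuation chain map sends $CF_*^{<b}$ into $CF_*^{<b}$ and the chain-level square commutes strictly; passing to homology gives the claim. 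Taking the direct limit, $\varinjlim_c\iota_c$ is defined, and by the construction of the long exact sequence of symplectic homology (the limit of the Floer-level long exact sequences, with $a=-\infty$, $c=+\infty$) this limit map is exactly $i_M^b\colon SH_*^{<b}(M)\to SH_*(M)$.

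The lemma is now the tautology that, for a morphism $(\iota_c)$ of direct systems, the canonical maps to the two colimits intertwine it; evaluating this at the index $c=b$ and using $\iota_b=\mathrm{id}$ from the first paragraph gives $i_M^b\circ\phi_{K_M^b}^{<b}=\phi_{K_M^b}$, as desired. The only point needing genuine care — and hence the main obstacle, modest as it is — is verifying that the monotone continuation maps used to build the two direct systems are action-non-increasing, so that the inclusions of the $<b$ subcomplexes are natural with respect to them; everything else is diagram-chasing with direct limits together with the already-established Lemma 4.2.
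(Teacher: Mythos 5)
Your proposal is correct and follows essentially the same route as the paper: the paper also reduces the lemma to the commuting square between the filtered Floer long exact sequence and the symplectic homology long exact sequence, specialized to $H=K_M^b$, where $FH_*^{<b}(K_M^b)\to FH_*(K_M^b)$ is an isomorphism and Lemma 4.2 identifies $\phi_{K_M^b}^{<b}$ with an isomorphism onto $SH_*^{<b}(M)$. The only difference is one of detail: the paper simply asserts that the canonical maps to the direct limits intertwine the two long exact sequences, whereas you justify that naturality by checking that the monotone continuation maps respect the action filtration, so the inclusions of the $<b$ subcomplexes form a morphism of direct systems whose colimit is $i_M^b$.
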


\begin{proof}
For an admissible Hamiltonian $H \in Ad(M)$, we have the commutative diagram
\begin{displaymath}
\xymatrix{
\cdots \ar[r] & FH_*^{<b}(H) \ar[r] \ar[d]_{\phi_{H}^{<b}}  & FH_*(H) \ar[r] \ar[d]_{\phi_H} & FH_*^{ \ge b}(H) \ar[r] \ar[d]_{\phi_{H}^{ \ge b}}  & FH_{*-1}^{<b}(H) \ar[r] \ar[d] & \cdots
\\
\cdots \ar[r] & SH_*^{<b}(M) \ar[r]_{i_M^b} & SH_*(M) \ar[r]_{j_M^b} & SH_*^{\ge b}(M) \ar[r]_{\partial} & SH_{*-1}^{<b}(M) \ar[r] & \cdots
}
\end{displaymath}
We focus on the first square of the above commutative diagram and we replace $H$ by $K_M^b$. Then we have the commutative diagram
\begin{gather*}
\xymatrix{
 FH_*^{<b}(K_M^b) \ar[r]_{\simeq} \ar[d]_{\phi_{K_M^b}^{<b}}  & FH_*(K_M^b) \ \ar[d]_{\phi_{K_M^b}} 
\\
 SH_*^{<b}(M) \ar[r]_{i_M^b} & SH_*(M) 
}
\end{gather*}
with an isomorphism on the upper and right sides by Lemma 4.2. If we identify two Floer homology groups in the first row, then we get the desired commutative diagram. This proves Lemma 4.3.
\end{proof}

\begin{Rem}
By virtue of Lemma 4.2 and 4.3, we can identify the induced map $SH_*^{<b}(M) \xrightarrow{i_M^b}  SH^*(M)$ on the symplectic homology with the canonical map $FH_* (K_M^b) \xrightarrow{\phi_{K_M^b}} SH_*(M)$ of the direct system.
\end{Rem}

First, we will prove (3) of Theorem A. This will be done by proving the following Lemma.

\begin{Lem}
For each $M \in FSD(N)$ and $\alpha \in H_* (\Lambda N)$,
\begin{gather*}
c_N(M, \alpha)=\min_{\sum_{x \in \mathcal{P}(\Sigma, \lambda)} c_x x \in \Psi_M(\alpha)} \max \left\{ \int_{S^1} x^* \lambda_{can} | c_x \ne 0 \right\}
\end{gather*}
\end{Lem}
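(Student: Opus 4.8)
The plan is to translate the homological definition of $c_N(M,\alpha)$ into a chain-level min--max by means of the cofinal family $K_M^b$, using Lemmas 4.2 and 4.3, and then to read off the stated formula from discreteness of the action spectrum.

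First I would use Lemmas 4.2 and 4.3 (as in the remark following Lemma 4.3) to identify the map $i_M^b : SH_*^{<b}(M) \to SH_*(M)$ with the canonical map $\phi_{K_M^b} : FH_*(K_M^b) \to SH_*(M)$ into the direct limit, so that
\[ c_N(M,\alpha) \;=\; \inf\{\, b \in \mathbb{R} \cup \{+\infty\} \;\mid\; \Psi_M(\alpha) \in \mathrm{im}(\phi_{K_M^b}) \,\}. \]
Next I would recall the structure of the chain complex $CF_*(K_M^b)$: in the transversally nondegenerate $C^0$-Hamiltonian setup used in this Section, its generators are the Reeb periodic orbits $x$ of $(\Sigma, \lambda_{can})$ of period strictly less than $b$, together with the two Morse generators coming from the constant orbits inside $M$, and the $\mathcal{A}_{K_M^b}$-action of the generator lying over $x$ is exactly its Reeb period $\int_{S^1}x^*\lambda_{can}$ (this is the computation $r\,dh/dr-h$ carried out earlier for $K_M^b$, in the limit of slope tending to $1$). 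For $b<b'$ the continuation map $FH_*(K_M^b)\to FH_*(K_M^{b'})$ is induced by the inclusion of these complexes, so the direct limit complex computing $SH_*(M)$ is filtered by action, with the generator over $x$ appearing at filtration level $\int_{S^1}x^*\lambda_{can}$.

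The key step is then the observation that $\Psi_M(\alpha)\in\mathrm{im}(\phi_{K_M^b})$ if and only if $\Psi_M(\alpha)$ is represented by a cycle $\sum_x c_x x$ all of whose generators have action less than $b$, i.e. with $\max\{\int_{S^1}x^*\lambda_{can}\mid c_x\ne 0\}<b$. From this both inequalities follow in the usual way: if some representative cycle has maximal action $m$ then $\Psi_M(\alpha)\in\mathrm{im}(\phi_{K_M^b})$ for every $b>m$, hence $c_N(M,\alpha)\le m$, so $c_N(M,\alpha)$ is at most the infimum over representative cycles of $\max\{\int_{S^1}x^*\lambda_{can}\mid c_x\ne 0\}$; conversely every $b$ admissible in the defining infimum dominates the maximal action of some representative cycle, so that infimum over representatives is at most $c_N(M,\alpha)$. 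Since every such maximal action value lies in $Spec(\Sigma,\lambda_{can})$, which is a discrete subset of $\mathbb{R}$ bounded below (for generic $\Sigma$, as assumed throughout this Section), the infimum over representatives is attained and is thus a minimum; this gives the asserted formula and simultaneously establishes part (3) (Spectrality) of Theorem A.

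The step I expect to be the main obstacle is the bookkeeping around the degeneracy of $K_M^b$: one must check that, in whatever perturbation scheme defines $CF_*(K_M^b)$, chain generators do correspond to Reeb periodic orbits and chain-level actions to Reeb periods, compatibly with the continuation maps into the direct limit. If instead one uses a genuine time-dependent perturbation, each $S^1$-family of Reeb orbits splits into two nondegenerate orbits of nearly equal action, one obtains a value within $\varepsilon$ of a true period for every $\varepsilon>0$, and one must invoke discreteness of $Spec(\Sigma,\lambda_{can})$ to pin the value to an exact period and the minimizing cycle to an honest Reeb cycle. Everything else is the classical spectral-invariant min--max argument and should go through directly.
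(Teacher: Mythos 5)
Your proposal is correct and follows essentially the same route as the paper's proof: identify $i_M^b$ with the canonical map $\phi_{K_M^b}:FH_*(K_M^b)\to SH_*(M)$ via Lemmas 4.2 and 4.3, observe that generators of $CF_*(K_M^b)$ are Reeb orbits with action equal to their period, and run the two-sided inequality between $c_N(M,\alpha)$ and the min--max value. Your extra care about discreteness of $Spec(\Sigma,\lambda_{can})$ (turning the infimum into a minimum) and about the perturbation of the degenerate Hamiltonians $K_M^b$ only makes explicit points the paper treats implicitly.
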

\begin{proof}
Let us denote that
\begin{gather*}
\bar{c}(M, \alpha)=\min_{\sum_{x \in \mathcal{P}(\Sigma, \lambda)} c_x x \in \Psi_M(\alpha)} \max \left\{ \int_{S^1} x^* \lambda_{can} | c_x \ne 0 \right\}
\end{gather*}
for a moment. We want to show that $c_N(\Sigma, \alpha)=\bar{c}(\Sigma, \alpha)$. Let $\sigma=\sum_{x \in \mathcal{P}(\Sigma, \lambda)} c_x x \in \Psi_M(\alpha)$ be a representative of symplectic homology such that $\max \left\{ \int_{S^1} x^* \lambda_{can} | c_x \ne 0 \right\}=\bar{c}(\Sigma, \alpha)=:\bar{c}$. For any $\epsilon>0$, if we take $b=\bar{c}+ \epsilon$, then $[\sigma] \in FH_* (K_M^b)$ since every generator of action below $b$ in chain complex and $\partial \sigma=0$ as well. By the choice of $\sigma$, we have $i_M^b(\sigma)=\Psi_M(\alpha)$ and this implies $c_N(M, \alpha) \le b$ and so $c_N(M, \alpha) \le \bar{c}$ because $\epsilon$ is arbitrary.

Conversely, we suppose that $b< \bar{c}$ and $\Psi_M(\alpha) \in \textrm{im}(i_M^b)$. Then there exists $\sigma \in FH_*(K_M^b)$ such that $i_M^b (\sigma)=\Psi_M (\alpha)$. Since $i_M^b(\sigma)$ consists of the Reeb orbits whose periods are less than or equal to $b$. This implies $\bar{c} \le b$. This contradicts the assumption. Therefore, the inequality $c_N(M, \alpha) \ge \bar{c}$. This completes the proof of Lemma 4.4.
\end{proof}

\begin{Rem}
One can regard $c_N(M, \alpha)$ as the spectral invariant corresponding to $\alpha$ for the Floer homology of Hamiltonian $K_M^b: T^* N \rightarrow \mathbb{R}$ of sufficiently large $b$. In that reason, we call $c_N(M, \alpha)$ by the spectral invariant of $\alpha$ in the symplectic homology of $M$.
\end{Rem}

We consider two fiberwise star-shaped domains $M_1, M_2 \in FSD(N)$ in $T^*N$. We have defined
\begin{gather*}
\kappa_{min}(\Sigma_1, \Sigma_2)=\min_{x \in \Sigma_1} \{\kappa(x)| \kappa(x)x \in \Sigma_2, \kappa(x)>0 \}
\end{gather*}
for $\Sigma_i=\partial M_i$ ($i= 1,2$) and If we assume that $\kappa_{min}(\Sigma_1, \Sigma_2) \ge 1$, that is $M_1 \subset M_2$ and abbreviate $\kappa_{min}:=\kappa_{min}(\Sigma_1, \Sigma_2)$. Then one can easily see that
\begin{gather*}
K_{M_2}^{b \kappa_{min}}(x) \le K_{M_1}^b (x)
\end{gather*}
for all $x \in T^* N=\hat{M_1}=\hat{M_2}$. Then we have the monotone homomorphism
\begin{gather*}
\phi_{(K_{M_1}^b, K_{M_2}^{b \kappa_{min}})} : FH_* (K_{M_2}^{b \kappa_{min}}) \rightarrow FH_*(K_{M_1}^b)
\end{gather*}
between Floer homologies. Using this morphism, we have the following Lemma.

\begin{Lem}
Let $M_1, M_2$ be fiberwise star-shaped domain in $T^* N$. Suppose that $M_1 \subset M_2$. Then the following diagram commutes.
\begin{gather*}
\xymatrix{
 FH_*(K_{M_2}^{b \kappa_{min}}) \ar[rr]_{\phi_{K_{M_2}^{b \kappa_{min}}}} \ar[dd]_{\phi_{(K_{M_1}^b, K_{M_2}^{b \kappa_{min}})}} & & SH_*(M_2) &  
\\
 & & & & \ar[ull]_{\Psi_{M_2}} \ar[dll]_{\Psi_{M_1}} H_* (\Lambda N)  
\\
 FH_*(K_{M_1}^b) \ar[rr]_{\phi_{K_{M_1}^b}} & & SH_*(M_1) &
}
\end{gather*}
\end{Lem}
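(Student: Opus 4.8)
The displayed square is missing one arrow, the monotone morphism $\phi_{M_2,M_1}\colon SH_*(M_2)\to SH_*(M_1)$; the plan is to supply it and then verify commutativity cell by cell. Since $M_1\subset M_2$ have the same completion $\hat M_1=\hat M_2=T^*N$, a Hamiltonian that is non-positive on $M_2$ is in particular non-positive on $M_1$, so $Ad(M_2)\subset Ad(M_1)$, and this inclusion of directed systems induces the map $\phi_{M_2,M_1}$ on the direct limits, exactly as in the discussion of the monotone morphism for fiberwise star-shaped domains above. Adjoining this arrow as the map $SH_*(M_2)\to SH_*(M_1)$, commutativity of the whole diagram reduces to the two identities
\begin{gather*}
\phi_{K_{M_1}^b}\circ\phi_{(K_{M_1}^b,\,K_{M_2}^{b\kappa_{min}})}=\phi_{M_2,M_1}\circ\phi_{K_{M_2}^{b\kappa_{min}}},\qquad \phi_{M_2,M_1}\circ\Psi_{M_2}=\Psi_{M_1}.
\end{gather*}

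For the first identity I would argue formally from the structure of the direct limit. Since $K_{M_2}^{b\kappa_{min}}\le K_{M_1}^b$ on all of $T^*N$ and $K_{M_1}^b$ belongs to the cofinal family $\{K_{M_1}^c\}_c\subset Ad(M_1)$, the Floer homology $FH_*(K_{M_2}^{b\kappa_{min}})$ acquires a well-defined structure map into $SH_*(M_1)$, namely $\phi_{K_{M_1}^b}\circ\phi_{(K_{M_1}^b,\,K_{M_2}^{b\kappa_{min}})}$: here I use that a composite of monotone-homotopy maps realizes the monotone-homotopy map of the concatenated homotopy (independence of the chosen homotopy, as established when the direct system was set up) together with the functoriality of the structure maps of a direct limit. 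On the other hand $K_{M_2}^{b\kappa_{min}}$ is admissible for $M_2$, hence has the structure map $\phi_{K_{M_2}^{b\kappa_{min}}}$ into $SH_*(M_2)$, and by the very definition of $\phi_{M_2,M_1}$ as the map induced by the inclusion $Ad(M_2)\subset Ad(M_1)$ of indexing systems, post-composition with $\phi_{M_2,M_1}$ recovers this same structure map into $SH_*(M_1)$. Comparing the two descriptions yields the first identity. If one worries that $K_{M_2}^{b\kappa_{min}}$ has a direction-dependent asymptotic slope relative to the cylindrical end of $M_1$ and so is not literally in $Ad(M_1)$, one replaces it throughout by an admissible Hamiltonian squeezed between it and $K_{M_1}^b$; none of the maps in sight changes.

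The second identity is the substantive point, and the main obstacle: the Abbondandolo--Schwarz isomorphism $\Psi_M\colon H_*(\Lambda N)\to SH_*(M)$ must be natural with respect to the monotone morphism. I would deduce this from the chain-level construction recalled in Example 3.1.6, where $\Psi_M$ is realized by the chain isomorphism $\Theta$ between the Morse complex of the Lagrangian action $\mathcal{E}_L$ on $\Lambda N$ and the Floer complex $CF_*(H)$ of a Hamiltonian adapted to $M$ (with its Legendre-dual Lagrangian $L$), followed by the structure map to $SH_*(M)$. Given $M_1\subset M_2$ one chooses such data $(H_i,L_i)$ with $H_2\le H_1$; then the monotone-homotopy map $CF_*(H_2)\to CF_*(H_1)$ is intertwined, via $\Theta$, with the corresponding comparison map on the Lagrangian side, and because the loop space $\Lambda N$ and its Morse homology do not depend on $M_i$ that comparison map is chain homotopic to the identity on $H_*(\Lambda N)$; passing to the limit gives $\phi_{M_2,M_1}\circ\Psi_{M_2}=\Psi_{M_1}$. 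The delicate input is precisely the compatibility of the Abbondandolo--Schwarz chain map with continuation maps; this is part of their construction and I would invoke it rather than re-derive it. With both identities in hand, any two directed paths in the diagram coincide --- the triangle through $H_*(\Lambda N)$ by the second identity, and the square of Floer and symplectic homologies by the first --- which is the assertion of the Lemma. This proves the Lemma.
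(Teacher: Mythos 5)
Your argument is correct, but it is organized differently from the paper's proof. You adjoin the monotone (transfer) morphism $\phi_{M_2,M_1}\colon SH_*(M_2)\to SH_*(M_1)$ and split the Lemma into (i) the direct-limit identity $\phi_{K_{M_1}^b}\circ\phi_{(K_{M_1}^b,K_{M_2}^{b\kappa_{min}})}=\phi_{M_2,M_1}\circ\phi_{K_{M_2}^{b\kappa_{min}}}$ and (ii) naturality of the Abbondandolo--Schwarz isomorphism under the transfer, $\phi_{M_2,M_1}\circ\Psi_{M_2}=\Psi_{M_1}$, which you prove by choosing quadratic Hamiltonians $H_2\le H_1$ and invoking, as a black box, the compatibility of the Abbondandolo--Schwarz chain map with continuation maps together with the fact that the induced comparison on the Morse side of $\Lambda N$ is the identity in homology. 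The paper never introduces $\phi_{M_2,M_1}$: it picks one quadratic Hamiltonian $H\in Qd(T^*N)$ with $H\ge K_{M_2}^{b\kappa_{min}}$ and $H\ge K_{M_1}^b$, uses only the composition rule $\phi_{(H,K_{M_1}^b)}\circ\phi_{(K_{M_1}^b,K_{M_2}^{b\kappa_{min}})}=\phi_{(H,K_{M_2}^{b\kappa_{min}})}$ for monotone continuation maps, and then applies the single isomorphism $\Theta^{AS}_H$, through which both $\Psi_{M_1}$ and $\Psi_{M_2}$ are defined since $\hat M_1=\hat M_2=T^*N$; this is lighter, because it needs no statement about how $\Theta^{AS}$ interacts with continuations between different Hamiltonians nor about induced maps on the loop-space side, which is precisely the delicate ingredient you must invoke (and which is standard but not spelled out in the cited form of \cite{AS}). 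Your route buys a cleaner, reusable statement (naturality of $\Psi$ under the Viterbo transfer), and your two identities do combine correctly: from (ii) one gets $\Psi_{M_1}^{-1}\circ\phi_{M_2,M_1}=\Psi_{M_2}^{-1}$, so (i) yields $\Psi_{M_1}^{-1}\circ\phi_{K_{M_1}^b}\circ\phi_{(K_{M_1}^b,K_{M_2}^{b\kappa_{min}})}=\Psi_{M_2}^{-1}\circ\phi_{K_{M_2}^{b\kappa_{min}}}$, which is exactly how the Lemma is used afterwards. Note that your identity (ii) could be had at no extra cost by taking $H_1=H_2$, i.e.\ by the paper's single-Hamiltonian trick; your remark on the admissibility of $K_{M_2}^{b\kappa_{min}}$ relative to $M_1$ is a harmless refinement of a point the paper passes over silently.
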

\begin{proof}
We recall the isomorphism between $\Psi_M : H_*(\Lambda N) \rightarrow SH_*(M)$ in \cite{AS}. They constructed the isomorphism
\begin{gather*}
\Theta^{AS}_H: FH_*(H) \rightarrow HM_*(\mathcal{E}_L)
\end{gather*}
between a Floer homology of a quadratic Hamiltonian $H \in Qd(T^*N)$ and a Morse homology of Lagrangian action functional for $L=\mathcal{L}(H)$, Legendre transformation of $H$. After this construction, $\Psi_M$ can be obtained by identifying $SH_*(M)$ with $FH_*(\mathcal{A}_H)$ and $HM_*(\mathcal{E}_L)$ with $H_*(\Lambda N)$. We can take a quadratic Hamiltonian $H \in Qd(T^* N)$ on $T^* N$ such that $H \ge K_{M_2}^{b \lambda_{min}}$ and $H \ge K_{M_1}^b$. For example, we fix a metric $g$ on $N$ and take sufficiently large $s$ such that $H(q, p)=s|p|_g^2$ satisfies the above inequalities. Then we have
\begin{eqnarray*}
\phi_{(H, K_{M_1}^b)} \circ \phi_{(K_{M_1}^b, K_{M_2}^{b \kappa_{min}})}=\phi_{(H, K_{M_2}^{b \kappa_{min}})}
\end{eqnarray*}
by the naturality of monotone homomorphism. Thus we have 
\begin{eqnarray*}
\Theta^{AS}_{H} \circ \phi_{(H, K_{M_1}^b)} \circ \phi_{(K_{M_1}^b, K_{M_2}^{b \kappa_{min}})}=\Theta^{AS}_{H} \circ \phi_{(H, K_{M_2}^{b \kappa_{min}})}
\end{eqnarray*}
This implies that
\begin{gather*}
\Psi_{M_1}^{-1} \circ \phi_{K_{M_1}^b} \circ \phi_{(K_{M_1}^b, K_{M_2}^{b \kappa_{min}})}=\Psi_{M_2}^{-1} \circ \phi_{K_{M_2}^{b \kappa_{min}}}
\end{gather*}
from the following commutative diagram
\begin{gather*}
\xymatrix{
 SH_*(M) \ar[d]_{\cong} & H_*(\Lambda N) \ar[l]_{\Psi_M} \ar[d]_{\cong}
\\
 FH_*(H) \ar[r]_{\Theta^{AS}_H} & HM_*(\mathcal{E}_L) 
}
\end{gather*}
This proves Lemma 4.5.
\end{proof}

Lemma 4.5 implies the following crucial fact.
\begin{eqnarray*}
& & \Psi_{M_2}(\alpha) \in \textrm{im}(\phi_{K_{M_2}^{b \kappa_{min}}})
\\ &\iff& \Psi_{M_2}(\alpha) \in \textrm{im}(\Psi_{M_2} \circ \Psi_{M_1}^{-1} \circ \phi_{K_{M_1}^b} \circ \phi_{(K_{M_1}^b, K_{M_2}^{b \kappa_{min}})})
\\ &\iff& \alpha \in \textrm{im}(\Psi_{M_1}^{-1} \circ \phi_{K_{M_1}^b} \circ \phi_{(K_{M_1}^b, K_{M_2}^{b \kappa_{min}})})
\\ &\iff& \Psi_{M_1}(\alpha) \in \textrm{im}(\phi_{K_{M_1}^b} \circ \phi_{(K_{M_1}^b, K_{M_2}^{b \kappa_{min}})})
\\ &\Longrightarrow&  \Psi_{M_1}(\alpha) \in \textrm{im}(\phi_{K_{M_1}^b})
\end{eqnarray*}
for any $\alpha \in H_*(\Lambda N)^{\times}$ and $b \in \mathbb{R}$. In sum, we have
\begin{eqnarray*}
\Psi_{M_2}(\alpha) \in \textrm{im}(\phi_{K_{M_2}^{b \kappa_{min}}}) \Longrightarrow  \Psi_{M_1}(\alpha) \in \textrm{im}(\phi_{K_{M_1}^b})
\end{eqnarray*}
for any $\alpha \in H_*(\Lambda N)^{\times}$, $b \in \mathbb{R}$ and $M_1, M_2 \in FSD(N)$. Therefore, we have proved the following Lemma.

\begin{Lem}
Let $M_1, M_2$ be fiberwise star-shaped domains in $T^* N$. Suppose that $\kappa_{min}(\Sigma_1, \Sigma_2) \ge 1$ for $\Sigma_i=\partial M_i$($i=1, 2$). Then the inequality
\begin{gather*}
c_N(M_2, \alpha) \ge \kappa_{min}(\Sigma_1, \Sigma_2) c_N(M_1, \alpha)
\end{gather*}
holds for each $\alpha \in H_* (\Lambda N)$. In particular, the above inequality implies
\begin{gather*}
c_N(M_2, \alpha) \ge c_N(M_1, \alpha)
\end{gather*}
provided $\kappa_{min}(\Sigma_1, \Sigma_2) \ge 1$, that is $M_1 \subset M_2$.
\end{Lem}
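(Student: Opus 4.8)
The plan is to read the desired inequality off from the implication established immediately above the statement, namely
\begin{gather*}
\Psi_{M_2}(\alpha) \in \textrm{im}(\phi_{K_{M_2}^{b \kappa_{min}}}) \ \Longrightarrow \ \Psi_{M_1}(\alpha) \in \textrm{im}(\phi_{K_{M_1}^b}),
\end{gather*}
together with the description of $c_N$ as an infimum. Recall that by Lemma 4.2, Lemma 4.3 and the Remark following them, for every $M \in FSD(N)$ and every $b \in \mathbb{R}^+ \backslash Spec(\partial M, \lambda_{can})$ the map $i_M^b$ is identified with the canonical map $\phi_{K_M^b} : FH_*(K_M^b) \rightarrow SH_*(M)$ of the direct system, so that
\begin{gather*}
c_N(M, \alpha) = \inf \{ b \in \mathbb{R}^+ \backslash Spec(\partial M, \lambda_{can}) \ | \ \Psi_M(\alpha) \in \textrm{im}(\phi_{K_M^b}) \}.
\end{gather*}
Abbreviate $\kappa := \kappa_{min}(\Sigma_1, \Sigma_2) \ge 1$; note that $\kappa \ge 1$ is precisely the hypothesis under which the monotone homomorphism $\phi_{(K_{M_1}^b, K_{M_2}^{b\kappa})}$, and hence the implication above, was produced.

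First I would fix any $b' \in \mathbb{R}^+$ with $\Psi_{M_2}(\alpha) \in \textrm{im}(\phi_{K_{M_2}^{b'}})$ and set $b := b'/\kappa$, so that $b\kappa = b'$. The implication then gives $\Psi_{M_1}(\alpha) \in \textrm{im}(\phi_{K_{M_1}^b})$, whence $c_N(M_1, \alpha) \le b = b'/\kappa$, i.e. $\kappa\, c_N(M_1, \alpha) \le b'$. As $b'$ ranges over all positive slopes realizing the membership condition for $M_2$, the quantity $\kappa\, c_N(M_1, \alpha)$ is thus a lower bound for the set whose infimum defines $c_N(M_2, \alpha)$, so passing to the infimum yields $c_N(M_2, \alpha) \ge \kappa\, c_N(M_1, \alpha)$. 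For the \emph{in particular} clause, one observes that $c_N(M_1, \alpha) \ge 0$ — it is an infimum of positive reals, and indeed lies in $Spec(\Sigma_1, \lambda_{can}) \subset [0, +\infty)$ by (3) of Theorem A — so when $\kappa \ge 1$ we get $c_N(M_2, \alpha) \ge \kappa\, c_N(M_1, \alpha) \ge c_N(M_1, \alpha)$.

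The one point needing care — and the step I expect to be the main (though minor) obstacle — is the bookkeeping of the excluded spectral sets: to apply the identification of Lemmas 4.2--4.3 at the rescaled slope $b = b'/\kappa$ one needs $b \notin Spec(\Sigma_1, \lambda_{can})$, equivalently $b' \notin \kappa\, Spec(\Sigma_1, \lambda_{can})$, on top of $b' \notin Spec(\Sigma_2, \lambda_{can})$. Since both action spectra are countable (and discrete for generic $\Sigma_i$), the slopes $b'$ that satisfy the membership condition for $M_2$ and in addition avoid the countable set $Spec(\Sigma_2, \lambda_{can}) \cup \kappa\, Spec(\Sigma_1, \lambda_{can})$ still have infimum equal to $c_N(M_2, \alpha)$ — the membership condition, once valid for one slope, is valid for all larger ones along the direct system. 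With this remark the argument above goes through verbatim; no further analytic input is required, the substantive content being the commuting square of Lemma 4.5 and the chain of equivalences derived from it.
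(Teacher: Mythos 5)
Your argument is correct and is essentially the paper's own: the lemma is deduced from the implication $\Psi_{M_2}(\alpha)\in\textrm{im}(\phi_{K_{M_2}^{b\kappa_{min}}})\Rightarrow\Psi_{M_1}(\alpha)\in\textrm{im}(\phi_{K_{M_1}^{b}})$ obtained from Lemma 4.5, combined with the identification of $i_M^b$ with $\phi_{K_M^b}$ (Lemmas 4.2--4.3) and the infimum definition of $c_N$. You merely make explicit the rescaling $b=b'/\kappa_{min}$ and the generic-slope (non-spectral) bookkeeping, which the paper leaves implicit and which your monotonicity remark handles correctly.
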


We want to extend the above Lemma for any $\kappa_{min}(\Sigma_1, \Sigma_2)  \in \mathbb{R}^+$. We need the following Lemma in order to define a contactomorphism between two fiberwise star-shaped hypersurfaces.

\begin{Lem}
Let $M_1, M_2$ be fiberwise star-shaped domains in $T^* N$. For $\Sigma_1=\partial M_1$ and $\Sigma_2=\partial M_2$, we define a function
\begin{gather*}
f_{\Sigma_1}^{\Sigma_2} : \Sigma_1 \rightarrow \mathbb{R}^+ \quad \textrm{ by } \quad f_{\Sigma_1}^{\Sigma_2}(x)\cdot x \in \Sigma_2
\end{gather*}
on $\Sigma_1$. In local coordinates $x=(q, p)$, $f_{\Sigma_1}^{\Sigma_2}(x)\cdot x=f_{\Sigma_1}^{\Sigma_2}(x)\cdot (q, p)=(q, f_{\Sigma_1}^{\Sigma_2}(x)p)$ is the scalar multiplication on the cotangent space. We define a map
\begin{gather*}
F_{\Sigma_1}^{\Sigma_2} : \Sigma_1 \rightarrow \Sigma_2 \quad \textrm{ by } \quad x \mapsto f_{\Sigma_1}^{\Sigma_2}(x) \cdot x
\end{gather*}
Then the map $F_{\Sigma_1}^{\Sigma_2}$ is a contactomorphism between $(\Sigma_1, \xi_{can})$ and $(\Sigma_2, \xi_{can})$ where $\xi_{can}=\ker \lambda_{can}$. More precisely, one can compute the pull-back of the Liouville 1-form $\lambda_{can}$ as follows.
\begin{gather*}
(F_{\Sigma_1}^{\Sigma_2})^* \lambda_{can} = f_{\Sigma_1}^{\Sigma_2} \cdot \lambda_{can}
\end{gather*}

\end{Lem}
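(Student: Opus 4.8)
The plan is to realize $F_{\Sigma_1}^{\Sigma_2}$ as a variable-time map of the Liouville flow and then read off the pullback of $\lambda_{can}$ from the defining relation $\lambda_{can}=\iota_{Y_{can}}\omega_{can}$. Recall from Example 3.1.2 that the Liouville vector field of $(T^*N,\lambda_{can})$ is $Y_{can}=p\,\partial_p$, whose flow $\varphi^t(q,p)=(q,e^tp)$ is fiberwise multiplication by $e^t$; it is defined on all of $T^*N$ and restricts to a free $\mathbb{R}$-action on $T^*N\setminus N$. Since $M_1,M_2\in FSD(N)$, the hypersurfaces $\Sigma_1,\Sigma_2$ are disjoint from the zero section, each is transverse to $Y_{can}$ (this is exactly fiberwise star-shapedness, cf. Example 3.1.2), and each open ray $\{(q,sp_0):s>0\}$ in a cotangent fiber meets each $\Sigma_i$ in exactly one point. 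Consequently $f_{\Sigma_1}^{\Sigma_2}$ is well defined and strictly positive, and, writing $\tau:=\log f_{\Sigma_1}^{\Sigma_2}$, we have $F_{\Sigma_1}^{\Sigma_2}(x)=\varphi^{\tau(x)}(x)$ for every $x\in\Sigma_1$.

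First I would check that $\tau$, equivalently $f_{\Sigma_1}^{\Sigma_2}$, is smooth. The map $(x,t)\mapsto\varphi^t(x)$ is a diffeomorphism from $\Sigma_1\times\mathbb{R}$ onto $T^*N\setminus N$, because $\Sigma_1$ is a fiberwise star-shaped hypersurface; under this identification the orbit of $Y_{can}$ through $x$ is the $\mathbb{R}$-factor $\{x\}\times\mathbb{R}$, and $\Sigma_2$ pulls back to a hypersurface transverse to that factor which, meeting each $\{x\}\times\mathbb{R}$ exactly once, is the graph of a smooth function; this function is $\tau$. Hence $\tau\in C^\infty(\Sigma_1)$ and $F_{\Sigma_1}^{\Sigma_2}$ is smooth, and the same argument with $\Sigma_1,\Sigma_2$ interchanged shows $F_{\Sigma_2}^{\Sigma_1}$ is its smooth inverse, so $F_{\Sigma_1}^{\Sigma_2}$ is a diffeomorphism.

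The heart of the argument is the pullback formula. I would first record the scaling identity $(\varphi^t)^*\lambda_{can}=e^t\lambda_{can}$: since $\iota_{Y_{can}}\lambda_{can}=\omega_{can}(Y_{can},Y_{can})=0$, Cartan's formula gives $L_{Y_{can}}\lambda_{can}=d\,\iota_{Y_{can}}\lambda_{can}+\iota_{Y_{can}}d\lambda_{can}=\iota_{Y_{can}}\omega_{can}=\lambda_{can}$, whence $\frac{d}{dt}(\varphi^t)^*\lambda_{can}=(\varphi^t)^*L_{Y_{can}}\lambda_{can}=(\varphi^t)^*\lambda_{can}$, and the identity follows by integrating from $(\varphi^0)^*\lambda_{can}=\lambda_{can}$. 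Now fix $x\in\Sigma_1$ and $v\in T_x\Sigma_1$, choose a curve $c$ in $\Sigma_1$ with $c(0)=x$, $c'(0)=v$, and differentiate $s\mapsto\varphi^{\tau(c(s))}(c(s))$; by the chain rule
\begin{gather*}
dF_{\Sigma_1}^{\Sigma_2}(x)[v]=d\varphi^{\tau(x)}(x)[v]+\big(d\tau(x)[v]\big)\,Y_{can}\!\left(\varphi^{\tau(x)}(x)\right).
\end{gather*}
Applying $(\lambda_{can})_{F_{\Sigma_1}^{\Sigma_2}(x)}$ and using $\lambda_{can}(Y_{can})=0$ annihilates the second term, while the first term equals $\big((\varphi^{\tau(x)})^*\lambda_{can}\big)_x[v]=e^{\tau(x)}(\lambda_{can})_x[v]=f_{\Sigma_1}^{\Sigma_2}(x)\,(\lambda_{can})_x[v]$. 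Since $v$ was arbitrary, this is precisely the asserted identity $(F_{\Sigma_1}^{\Sigma_2})^*\lambda_{can}=f_{\Sigma_1}^{\Sigma_2}\cdot\lambda_{can}$ on $\Sigma_1$.

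Finally, because $f_{\Sigma_1}^{\Sigma_2}>0$ everywhere, the $1$-forms $(F_{\Sigma_1}^{\Sigma_2})^*\lambda_{can}$ and $\lambda_{can}|_{T\Sigma_1}$ have the same kernel, so the diffeomorphism $F_{\Sigma_1}^{\Sigma_2}$ carries $\xi_{can}=\ker\lambda_{can}$ on $\Sigma_1$ isomorphically onto $\xi_{can}$ on $\Sigma_2$, which is exactly the assertion that it is a contactomorphism. I expect the only delicate point to be the variable-time differentiation in the displayed formula, together with the observation that the extra term there drops out precisely because $\lambda_{can}$ annihilates its own Liouville vector field; the smoothness of $f_{\Sigma_1}^{\Sigma_2}$ is routine once $\Sigma_1$ is regarded as a section of the Liouville $\mathbb{R}$-action on $T^*N\setminus N$, and the rest is formal.
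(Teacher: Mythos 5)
Your argument is correct, and at its core it hinges on the same cancellation as the paper's proof: the differential of the rescaling map splits into a part scaled by $f_{\Sigma_1}^{\Sigma_2}$ plus a correction along the fiberwise radial (Liouville) direction, and the latter is annihilated by $\lambda_{can}$. The paper gets this by a bare-hands computation in canonical coordinates, writing $DF(h)=h_q+f(x)h_p+df(x)(h)\cdot p\,\partial_p$ and evaluating $\lambda_{can}=p\,dq$ at $(q,f(x)p)$, so the whole lemma is three lines of local algebra; you instead package the rescaling as the variable-time map $x\mapsto\varphi^{\tau(x)}(x)$ of the Liouville flow, derive $(\varphi^t)^*\lambda_{can}=e^t\lambda_{can}$ from Cartan's formula, and apply the variable-time chain rule, which makes the argument coordinate-free. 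What your route buys in addition is exactly the material the paper leaves implicit: the smoothness of $f_{\Sigma_1}^{\Sigma_2}$ (via viewing $\Sigma_2$ as a graph over $\Sigma_1$ under the identification $\Sigma_1\times\mathbb{R}\cong T^*N\setminus N$ given by the Liouville flow, which uses the transversality built into fiberwise star-shapedness) and the fact that $F_{\Sigma_1}^{\Sigma_2}$ is a diffeomorphism with inverse $F_{\Sigma_2}^{\Sigma_1}$; the paper's version is shorter and entirely elementary, while yours generalizes verbatim to any exact symplectic manifold with a complete Liouville vector field and hypersurfaces transverse to it. Both then conclude the contact property identically from positivity of $f_{\Sigma_1}^{\Sigma_2}$.
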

\begin{proof}
It suffices to prove the last statement. We recall the canonical 1-form $\lambda_{can}=p dq$ in the local coordinate $x=(q, p)$. We will directly compute the evaluation of the pull-back form $(F_{\Sigma_1}^{\Sigma_2})^* \lambda_{can}(x)$ for an arbitrary tangent vector $h \in T_x T^* N$ for $x \in \Sigma_1$. Assume that $h=h_q+h_p$ where $h_q \in <{\partial \over \partial q_1}, {\partial \over \partial q_2}, \cdots, {\partial \over \partial q_n}>$ and $h_p \in <{\partial \over \partial p_1}, {\partial \over \partial p_2}, \cdots, {\partial \over \partial p_n}>$. For a notational convenience, we denote $F:=F_{\Sigma_1}^{\Sigma_2}$ and $f:=f_{\Sigma_1}^{\Sigma_2}$ in this proof.
\begin{eqnarray*}
F^* \lambda_{can}(x)(h)&=& \lambda_{can}(F(x))(DF(h))
\\ &=& \lambda_{can}((q, f(x)p))(h_q+f(x)h_p+df(x)(h)\cdot p{\partial \over \partial p})
\\ &=& \lambda_{can}((q, f(x)p))(h_q)
\\ &=& \lambda_{can}((q, f(x)p))(h)
\\ &=& f(x)\lambda_{can}((q, p))(h)
\end{eqnarray*}
Therefore, we have $(F_{\Sigma_1}^{\Sigma_2})^* \lambda_{can} = f_{\Sigma_1}^{\Sigma_2} \cdot \lambda_{can}$ and this proves Lemma 4.7.
\end{proof}

\begin{Rem}
Lemma 4.7 implies that the map
\begin{gather*}
F_{\Sigma_1}^{\Sigma_2} : (\Sigma_1, f_{\Sigma_1}^{\Sigma_2} \cdot \lambda_{can}) \rightarrow (\Sigma_2, \lambda_{can})
\end{gather*}
is a strict contactomorphism for all pair of fiberwise star-shaped hypersurfaces $\Sigma_1, \Sigma_2$. In particular, if $\Sigma_2=k \Sigma_1$ for some $k>0$, then we have a strict contactomorphism
\begin{gather*}
F : (\Sigma_1, k \lambda_{can}) \rightarrow (\Sigma_2, \lambda_{can})
\end{gather*}
and this extends to a symplectomorphism between two Liouville domains $(M_1, k\omega_{can}), (M_2, \omega_{can})$ enclosed by $\Sigma_1, \Sigma_2$, respectively. This implies the conformality of $c_N$ as follows.
\begin{gather*}
c_N(k M_1, \alpha)=c_N(M_2, \alpha)=kc_N(M_1, \alpha)
\end{gather*}
This proves (1) of Theorem A.
\end{Rem}

We can prove (2) of Theorem A by combining Lemma 4.6 and (1) of Theorem A. Let $M_1, M_2$ be fiberwise star-shaped domains in $T^* N$. We denote $k=\kappa_{min}(\Sigma_1, \Sigma_2)$ where $\Sigma_i=\partial M_i$(i=1,2). If $k \ge 1$, then we already have that $c_N(M_2, \alpha) \ge \kappa_{min}(\Sigma_1, \Sigma_2)c_N(M_1, \alpha)$ from Lemma 4.6. Suppose that $0<k<1$. If we consider $k M_1$ instead of $M_1$, then $\kappa_{min}(k \Sigma_1, \Sigma_2)=1$. Hence we can apply Lemma 4.6 for the pair $(k M_1, M_2)$ and so we have
\begin{gather*}
c_N(M_2, \alpha) \ge c_N(k M_1, \alpha)=kc_N(M_1, \alpha)=\kappa_{min}(\Sigma_1, \Sigma_2) c_N(M_1, \alpha)
\end{gather*}
using (1) of Theorem A. This proves (2) of Theorem A. This completes the proof of Theorem A.

Until now, we have proved Theorem A. Therefore, as we mentioned in the introduction, the spectral invariant $c_N(\cdot, \alpha)$ of $\alpha$ can play the role of symplectic capacity for $FSD(N)$ provided $c_N(\cdot, \alpha) \ne 0$. Moreover, by Spectrality of Theorem A, the spectral invariant $c_N(M, \alpha)$ of $\alpha$ in the symplectic homology of $M$ should be one of the Reeb period. We will use the spectral invariant $c_{S^2}$ to obtain estimates action values of Hill's lunar problem in the next two sections.

\section{Embedding of Hill's lunar problem and the rotating Kepler problem}

In this section, we will prove Theorem B. We recall the contact structures of the regularized rotating Kepler problem and Hill's lunar problem in Section 2.2.
Let $H_R : T^*(\mathbb{R}^2-\{(0,0)\}) \rightarrow \mathbb{R}$ be the Hamiltonian of the rotating Kepler problem
\begin{eqnarray*}
H_R (q,p)={1 \over 2}|p|^2-{1 \over |q|}+p_1q_2-p_2q_1
\end{eqnarray*}
and let $H_H : T^*(\mathbb{R}^2-\{(0,0)\}) \rightarrow \mathbb{R}$ be the Hamiltonian of Hill's lunar problem
\begin{eqnarray*}
H_H (q,p)={1 \over 2}|p|^2-{1 \over |q|}+p_1q_2-p_2q_1-q_1^2+{1 \over 2}q_2^2.
\end{eqnarray*}
In \cite{CFvK}, they proved fiberwise convexity of the rotating Kepler problem below the critical energy level $-c_R^0=-{3 \over 2}$ and so we can think of the energy hypersurfaces $H_R ^{-1}(-c)$ of an energy $-c<-c_R^0$ as a hypersurface $\Sigma_R^c$ in $T^* S^2$ after switching the roles of position($q$) and momentum($p$).
The bounded component of the closure $\Sigma_R^c=\overline{\Phi \circ \Psi (H_R^{-1}(-c))}^b$ of the energy hypersurfaces are fiberwise convex in $T^* S^2$ for all $c>c_R^0$. Let $M_R^c$ be the bounded region in $T^* S^2$ such that its boundary $\partial M_R^c$ is the hypersurface $\Sigma_R^c$. Then $M_R^c$ is a fiberwise convex Liouville domain with the restriction of the canonical symplectic structure $\omega_{can}=d \lambda_{can}$ and the canonical Liouville vector field $Y_{can}$ of $T^* S^2$. In the local coordinates, the canonical 1-form $\lambda_{can}$ and the Liouville vector field $Y_{can}$ can be written as
\begin{eqnarray*}
\lambda_{can}=pdq, \quad Y_{can}=p{\partial \over \partial p}
\end{eqnarray*}
where $q$ are the coordinates for base manifold $S^2$ and $p$ are the dual coordinates for cotangent spaces.
Similarly, we can define the Liouville domain $M_H^c$ determined by the regularized energy hypersurface $\Sigma_H^c$ of Hill's lunar problem for each $c>c_H^0={3^{4 \over 3} \over 2}$.
We will discuss their inclusions among these Liouville domains. One can easily see the inclusions between different energy hypersurfaces of the same problem. For $c_1>c_2>c_R^0$ and $c_1 '>c_2 '>c_H^0$, we have
\begin{eqnarray*}
M_R^{c_1} \subset M_R^{c_2} \quad \textrm{ and } \quad M_H^{c_1 '} \subset M_H^{c_2 '}
\end{eqnarray*}
Namely, the energy hypersurface is getting smaller as the energy goes down. Now we want to know the inclusions between $M_R^c$ and $M_H^{c'}$. We want to investigate some data of both problems.

Let the map $\pi : T^* (\mathbb{R}^2-(0,0)) \rightarrow (\mathbb{R}^2-(0,0))$, $\pi(q, p)=q$ be the obvious projection onto the $q$-coordinates. For $c>c_R^0$, we define by 
\begin{gather*}
\mathfrak{R}_R^c:=\overline{\bigcup_{d>c}\pi(H_R^{-1}(-d))^b}
\end{gather*}
Hill's region of the rotating Kepler problem of energy $-c$. Here, superscript $b$ means the bounded component. Moreover, we define by
\begin{gather*}
\mathfrak{R}_R:=\overline{\bigcup_{c>c_R^0} \mathfrak{R}_R^c}
\end{gather*}
Hill's region of the rotating Kepler problem. We define the Hill's regions of Hill's lunar problem by $\mathfrak{R}_H^{c}$ and $\mathfrak{R}_H$ similarly.

\begin{Lem}
For $c>c_R^0$ and $c'>c_H^0$, the Hill's regions are given by
\begin{gather*}
\mathfrak{R}_R^c=\{(q_1, q_2) \in \mathbb{R}^2 | {1 \over |q|}+{1 \over 2}|q|^2<c, |q|<1\},
\\ \mathfrak{R}_H^c=\{(q_1, q_2) \in \mathbb{R}^2 | {1 \over |q|}+{3 \over 2}q_1^2<c', |q_1|<3^{-1 \over 3}, |q_2|<2\cdot3^{-4 \over 3}\}.
\end{gather*}
\end{Lem}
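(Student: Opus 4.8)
The plan is to reduce both computations to an analysis of the \emph{amended potential} obtained by completing the square in the momentum variables, and then to isolate the bounded connected component of the resulting region of $\mathbb{R}^2$.

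The first step is to complete the square in $p$. Using $\tfrac12|p|^2+p_1q_2-p_2q_1=\tfrac12\big|(p_1+q_2,\,p_2-q_1)\big|^2-\tfrac12|q|^2$, both Hamiltonians acquire the mechanical form $H_\bullet(q,p)=\tfrac12\big|(p_1+q_2,\,p_2-q_1)\big|^2+U_\bullet(q)$, with $U_R(q)=-\tfrac12|q|^2-\tfrac1{|q|}$ for the rotating Kepler problem and, after the cancellation $-\tfrac12|q|^2-q_1^2+\tfrac12q_2^2=-\tfrac32q_1^2$, with $U_H(q)=-\tfrac32q_1^2-\tfrac1{|q|}$ for Hill's problem. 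For fixed $q$ the fibre of $H_\bullet^{-1}(-d)$ over $q$ is a circle, a point, or empty, and it is non-empty exactly when $-U_\bullet(q)\ge d$; hence $\pi\big(H_\bullet^{-1}(-d)\big)=\{\,q:-U_\bullet(q)\ge d\,\}$, i.e. $\{\tfrac12|q|^2+\tfrac1{|q|}\ge d\}$ for the rotating Kepler problem and $\{\tfrac32q_1^2+\tfrac1{|q|}\ge d\}$ for Hill's problem. So the lemma reduces to describing the bounded component of these sets, together with the natural coordinate bound that singles it out, and then passing to the union over $d>c$ and the closure.

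The second step analyses these regions. For the rotating Kepler problem everything is rotationally symmetric: $g(r):=\tfrac12r^2+\tfrac1r$ is strictly decreasing on $(0,1]$ and strictly increasing on $[1,\infty)$ with $g(1)=\tfrac32=c_R^0$, so for $d>c_R^0$ the set $\{g(|q|)\ge d\}$ is the disjoint union of the punctured disc $\{0<|q|\le r_-(d)\}$, where $0<r_-(d)<1$ is the small root of $g=d$ (the bounded, near-the-primary piece), and the unbounded exterior $\{|q|\ge r_+(d)\}$ with $r_+(d)>1$. For Hill's problem there is no rotational symmetry, so the key observation is that for fixed $q_1$ the function $q_2\mapsto\tfrac32q_1^2+\tfrac1{\sqrt{q_1^2+q_2^2}}$ is even and strictly decreasing in $|q_2|$; thus $\{\tfrac32q_1^2+\tfrac1{|q|}\ge d\}$ meets every vertical line in a symmetric interval, and its connectivity is already determined by its trace on the $q_1$-axis. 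There $h_0(q_1):=\tfrac32q_1^2+\tfrac1{|q_1|}$ attains its minimum at $q_1=3^{-1/3}$ with value $\tfrac{3^{4/3}}{2}=c_H^0$, so for $d>c_H^0$ the vertical strip $\{q_1^-(d)<|q_1|<q_1^+(d)\}$ lies off the set and separates the component containing the origin from the two unbounded wings on which $\tfrac32q_1^2$ is large. On the near-origin (bounded) component the inequality $h_0(q_1)\ge d$ forces $|q_1|\le q_1^-(d)<3^{-1/3}$, while the trace on the $q_2$-axis, $h(0,q_2)=1/|q_2|$, forces $|q_2|\le 1/d<1/c_H^0=2\cdot 3^{-4/3}$; this places the bounded component inside the coordinate box of the lemma.

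The third step is bookkeeping. Because $r_-(d)$ (respectively the near-origin component for Hill) shrinks monotonically as $d$ increases, the union over $d>c$ of the bounded components is exactly the open region obtained from the level-$c$ description by making the potential inequality strict, and taking the closure restores that bounded component together with its limiting point at the origin, which yields the two displayed formulas. I expect the Hill's-problem part of the second step to be the real obstacle: one must genuinely argue that $\{\tfrac32q_1^2+\tfrac1{|q|}\ge d\}$ has a single bounded component and pin down the explicit rectangle $\{|q_1|<3^{-1/3},\ |q_2|<2\cdot 3^{-4/3}\}$, and it is the reduction ``monotone in $|q_2|$, hence look only on the $q_1$-axis'', combined with the elementary one-variable analysis identifying $c_H^0=\tfrac{3^{4/3}}{2}=\min_{q_1>0}h_0$, that makes this tractable; the rotating Kepler case is then a strictly easier rotationally symmetric instance of the same scheme.
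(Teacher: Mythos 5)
The paper offers no argument for this lemma at all: its ``proof'' is the single line ``See \cite{L}'', so there is nothing to compare step by step, and your self-contained derivation --- complete the square in $p$, read off $\pi\bigl(H_\bullet^{-1}(-d)\bigr)=\{q\,:\,-U_\bullet(q)\ge d\}$, identify the bounded component, then take the union over $d>c$ and the closure --- is exactly the standard effective-potential route one would expect \cite{L} to follow, and it is structurally sound (the completion of the square, the cancellation giving $U_H(q)=-\tfrac32 q_1^2-\tfrac1{|q|}$, the monotonicity of $q_2\mapsto h(q_1,q_2)$, and the separating vertical strips are all correct).

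Two points, however. First, and most importantly, your computation does \emph{not} ``yield the two displayed formulas'' as literally printed: you (correctly) arrive at the allowed region $\{\tfrac1{|q|}+\tfrac12|q|^2\ge c\}$, resp.\ $\{\tfrac1{|q|}+\tfrac32 q_1^2\ge c'\}$, intersected with the coordinate bounds --- a disc-like neighbourhood of the primary --- whereas the lemma as stated has the strict inequality $<c$, which in the rotating Kepler case describes the annulus between the small root of $\tfrac1r+\tfrac12 r^2=c$ and the unit circle. The printed inequality is evidently a typo: the later uses of the lemma (in Proposition 5.2 one needs $\sigma_{R,p}^c\subset\mathfrak{R}_R^c$, where points of $\sigma_{R,p}^c$ satisfy $\tfrac1{|q|}+\tfrac12|q|^2\ge c$; and in the Claim inside Theorem 5.5 the maximum of $|\bar q_1|$ over $\mathfrak{R}_H^{c_H^P}$ is the \emph{smaller} positive root of $\tfrac32x^2+\tfrac1x=c_H^P$, which fails for the printed region) only make sense for the region you derived. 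So your argument proves the intended statement, but you should have flagged the sign discrepancy instead of asserting agreement with the displayed formulas. Second, a smaller gap: your justification of $|q_2|<2\cdot3^{-4/3}$ uses only the trace on the $q_2$-axis, which bounds $|q_2|$ there but not on the whole bounded component; to place the entire component in the box, note that on it $|q_1|\le q_1^-(d)<3^{-1/3}$, hence $d-\tfrac32q_1^2\ge 1/q_1^-(d)>3^{1/3}$, and $\tfrac1{|q|}\ge d-\tfrac32q_1^2$ gives $q_2^2\le\bigl(d-\tfrac32q_1^2\bigr)^{-2}-q_1^2$, a quantity decreasing in $|q_1|$ on this range, so $|q_2|\le 1/d<2\cdot3^{-4/3}$ everywhere, not just on the axis. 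With these two repairs (state the inequality as $\ge$, and bound $|q_2|$ off the axis) your proof is complete and is, in substance, the argument the paper outsources to \cite{L}.
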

\begin{proof}
See \cite{L}.
\end{proof}

The goal of this paper is obtaing the estimates for symplectic capacities of $M_H^c$ in $T^*S^2$ using symplectic capacities of $M_R^c$ in $T^* S^2$ and inclusions. Thus it is important to show inclusion relations between $M_H^{c'}$ and $M_R^c$. We construct the following Proposition in order to check easily.

\begin{figure}
\centering
\includegraphics[]{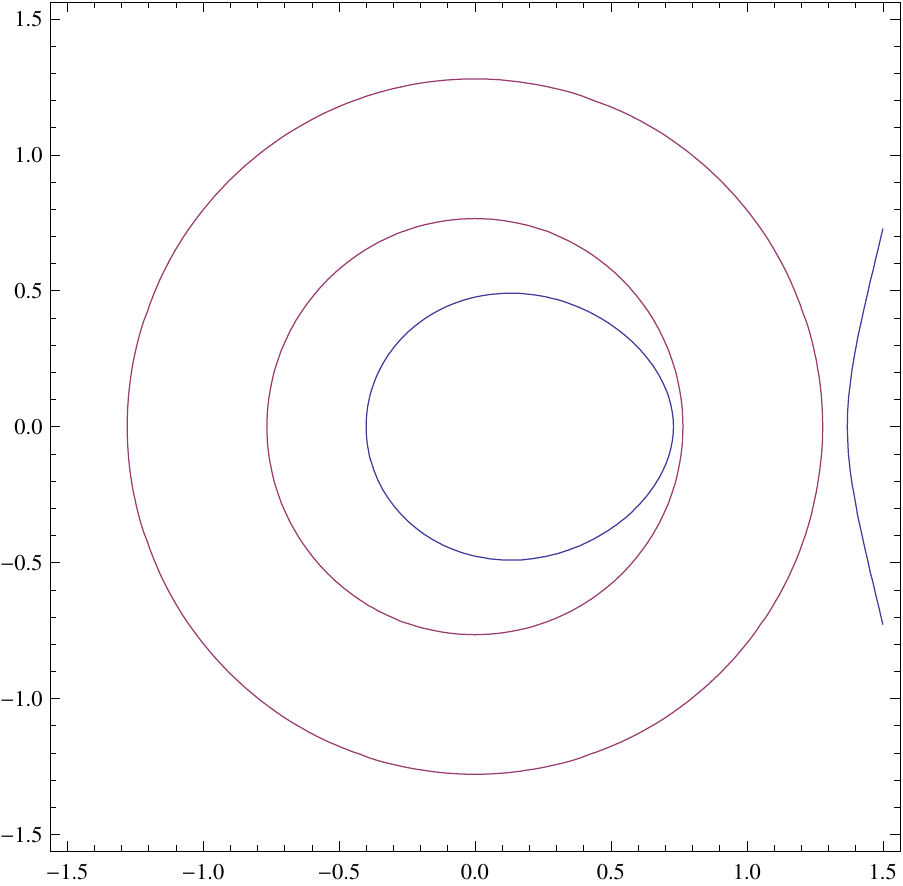}
\caption{$\mathfrak{R}_R^{1.6}$ and $\sigma_{R, (0,1)}^{1.6}$}
\end{figure}

\begin{figure}
\centering
\includegraphics[]{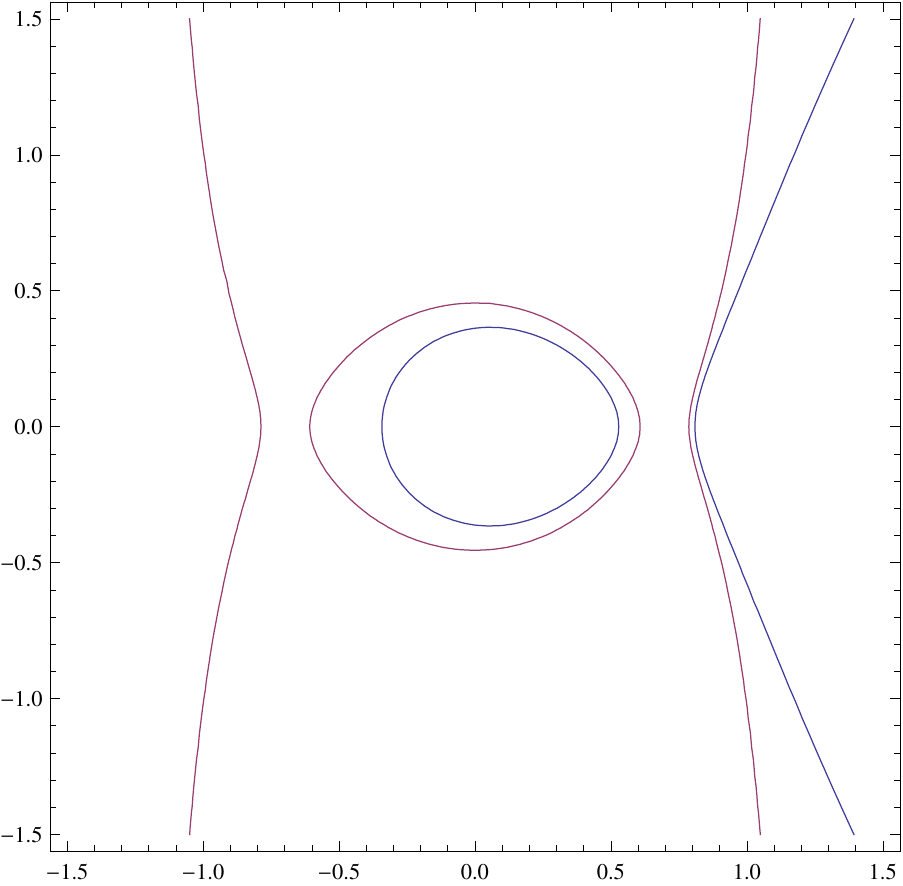}
\caption{$\mathfrak{R}_H^{2.2}$ and $\sigma_{H, (0,1)}^{2.2}$}
\end{figure}

\begin{prop}
We have the following criteria for inclusions.

(1) $M_H^{c'} \subset M_R^c$ if and only if $H_R(q, p)+c \le 0$ for all $(q, p) \in H_H^{-1}(-c')$ with $q \in \mathfrak{R}_H^{c'}$.

(2) $M_H^{c'} \subset M_R^c$ if $H_H(q, p)+c' \ge 0$ for all $(q, p) \in H_R^{-1}(-c)$ with $q \in \mathfrak{R}_R^{c}$.

(3) $M_R^c \subset M_H^{c'}$ if and only if $H_H(q, p)+c' \le 0$ for all $(q, p) \in H_R^{-1}(-c')$ with $q \in \mathfrak{R}_R^{c}$

(4) $M_R^{c} \subset M_R^{c'}$ if $H_R(q, p)+c \ge 0$ for all $(q, p) \in H_H^{-1}(-c')$ with $q \in \mathfrak{R}_H^{c'}$.
\\ for every $c>c_R^0$ and $c'>c_H^0$.
\end{prop}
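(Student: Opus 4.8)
The plan is to pull both domains back through the regularizing symplectomorphism $\Phi\circ\Psi$ and reduce each of the four inclusions to a pointwise inequality between $H_R$ and $H_H$, using one soft fact about fiberwise star-shaped domains: if $A,B\subset T^*S^2$ are fiberwise star-shaped, then $A\subseteq B\iff\partial A\subseteq B\iff\partial B\cap\mathrm{int}\,A=\emptyset$. All three are equivalent because in each cotangent fiber a fiberwise star-shaped domain is exactly the region bounded by a radial function along rays from the origin, so inclusion is the pointwise inequality of these radial functions; the origin of each fiber is the common center, which after Moser regularization is the collision locus $C$ (the fiber over $N$) contained in both $M_R^c$ and $M_H^{c'}$. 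Since $M_R^c$ and $M_H^{c'}$ are fiberwise convex by the cited theorems, they are in particular fiberwise star-shaped, so this applies.

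Next I would transport the geometry to $T^*(\mathbb{R}^2-\{0\})$. Because $\Phi\circ\Psi$ is a symplectomorphism onto $T^*(S^2-\{N\})$ and the regularized hypersurfaces are the closures of the images of $H_R^{-1}(-c)$ and $H_H^{-1}(-c')$, the collision fiber $C$ is handled by closure ($\Sigma_H^{c'}$ is the closure of its non-collision part and $M_R^c$ is compact, so $\Sigma_H^{c'}\subseteq M_R^c$ if and only if the same holds for the non-collision parts), and one only has to compare preimages. Using fiberwise convexity together with Lemma 4.1, I would identify, away from $C$: the hypersurface $\Sigma_R^c$ with $\{(q,p):H_R(q,p)=-c,\ q\in\mathfrak{R}_R^c\}$, and $M_R^c$ with the near-collision component of $\{H_R\le-c\}$, namely $\{H_R\le-c\}\cap\pi^{-1}(\mathfrak{R}_R^c)$; and symmetrically for $H$. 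A small point: on $\Sigma_H^{c'}$ the inequality $H_R\le-c$ already forces $q\in\mathfrak{R}_R^c$, since completing the square gives $H_R(q,p)=\frac12|p+(q_2,-q_1)|^2-\frac12|q|^2-\frac1{|q|}$, so $H_R(q,p)\le-c$ implies $\frac1{|q|}+\frac12|q|^2\ge c$, while both Hill's regions lie inside the small disk $\{|q|<1\}$ about the origin. This is why the criteria in (1) and (3) need not mention $\mathfrak{R}_R^c$, and symmetrically why (3) need not mention $\mathfrak{R}_H^{c'}$.

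Items (1) and (3) then read off the first characterization: $M_H^{c'}\subseteq M_R^c\iff\Sigma_H^{c'}\subseteq M_R^c\iff H_R(q,p)+c\le0$ for all $(q,p)\in H_H^{-1}(-c')$ with $q\in\mathfrak{R}_H^{c'}$; and (3) is obtained the same way, now with $\Sigma_R^c\subseteq M_H^{c'}$, i.e.\ with the roles of the two problems exchanged. For (2) and (4) I would use the third characterization instead: if $H_H+c'\ge0$ on $\Sigma_R^c$ (that is, on $H_R^{-1}(-c)$ over $\mathfrak{R}_R^c$), then $\Sigma_R^c$ misses $\{H_H<-c'\}\supseteq\mathrm{int}\,M_H^{c'}$, hence $\partial M_R^c\cap\mathrm{int}\,M_H^{c'}=\emptyset$ and therefore $M_H^{c'}\subseteq M_R^c$; (4) is symmetric. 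These give only sufficient conditions: the hypothesis excludes every point of $\Sigma_R^c$ with $H_H<-c'$, including those whose base point lies outside $\mathfrak{R}_H^{c'}$, in the unbounded Hill's component, and which therefore are not in $\mathrm{int}\,M_H^{c'}$ and do not obstruct the inclusion. Since this is strictly stronger than $\Sigma_R^c\cap\mathrm{int}\,M_H^{c'}=\emptyset$, (2) and (4) are stated only as implications.

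The part I expect to need most care is the bookkeeping at the collision locus together with the choice of which side is the inner one: one must verify that $M_R^c$ is indeed the near-collision component of $\{H_R\le-c\}$ and that the regularized fiber $C$ over $N$ fits compatibly inside each of $M_R^c$ and $M_H^{c'}$, so that passing between $T^*S^2$ and $T^*(\mathbb{R}^2-\{0\})$ neither drops a genuine constraint nor introduces a spurious one. Once this correspondence is fixed --- which is essentially the content of the fiberwise convexity theorems and Lemma 4.1 --- the rest is the short fiberwise comparison above.
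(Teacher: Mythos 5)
Your argument is correct and is essentially the paper's own proof: both reduce the inclusion of the Liouville domains to a fiberwise comparison (for each fixed momentum $p$) of the bounded convex/star-shaped components of the level curves $H_{R,p}^{-1}(-c)$, $H_{H,p}^{-1}(-c')$, and then translate this into the sign conditions on $H_R+c$ and $H_H+c'$, with (2) and (4) only sufficient for exactly the reason you give. One small slip worth correcting: the common star-shape centers are the points of the zero section of $T^*S^2$ (original $q=0$ with finite $p$), not the collision fiber over $N$ (which is where the closure adds points, i.e. $|p|\to\infty$); this mislabeling does not affect your argument, since each of the two sets is used correctly where it actually matters.
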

\begin{proof}
For a fixed $p \in \mathbb{R}^2$, we define the function $H_{R, p} :(\mathbb{R}^2-(0,0)) \rightarrow \mathbb{R}$ by $H_{R,p}(q):=H_R(q, p)$. Then for any $c>c_R^0$, the curve $H_{R, p}^{-1}(-c)$ has one bounded component. We will denote this bounded component by $\sigma_{R, p}^c$. Since we know that the rotating Kepler problem is fiberwise convex, the closed curve $\sigma_{R, p}^c$ bounds a strictly convex domain, say $D_{R, p}^c$, containing the origin and $\sigma_{R, p}^c \subset \mathfrak{R}_R^c$ for all $p$. Following symplectomorphisms, $\Phi \circ \Psi(\sigma_{R, p}^c)$ becomes a fiber of $\Sigma_R^c$ at $p$ and thus $\Phi \circ \Psi(\sigma_{R, p}^c) \subset T^*_{\phi(p)} S^2$. We can define the fiber $\Phi \circ \Psi(\sigma_{H, p}^{c'})$ of $\Sigma_H^{c'}$  and the strictrly convex domain $D_{H, p}^{c'}$ enclosed by $\sigma_{H, p}^{c'}$ analogously.

Proof of (1): The inclusion $M_H^{c'} \subset M_R^c$ holds if and only if the fiber $\Phi \circ \Psi(\sigma_{H, p}^{c'})$ of $\Sigma_H^{c'}$ is contained inside the fiber $\Phi \circ \Psi(\sigma_{R, p}^c)$ of $\Sigma_R^c$ for every $p \in \mathbb{R}^2$. Because inclusion relation is preserved by $\Phi \circ \Psi$, we have 
\begin{gather*}
M_H^{c'} \subset M_R^c \iff D_{H, p}^{c'} \subset D_{R, p}^c \iff \sigma_{H, p}^{c'} \subset D_{R, p}^c
\end{gather*}
for every $p \in \mathbb{R}^2$. Assume we have $\sigma_{H, p}^{c'} \subset D_{R, p}^c$, then the following holds 
\begin{gather*}
q \in \sigma_{H, p}^{c'} \implies H_{R, p}(q) \le -c \implies H_R(q, p)+c \le 0
\end{gather*}
for every $p \in \mathbb{R}^2$, because $H_{R, p}$ is less than $-c$ on $D_{R, p}^c$. Since we know that $\sigma_{H, p}^{c'}$ is a closed curve including the origin on its inside, if the inequality $H_R(q, p)+c \le 0$ holds for every $q \in \sigma_{H, p}^{c'}$, then $q$ has to on $\mathfrak{R}_R^c$. Therefore, the converse is also true. This proves (1).

Proof of (2): By similar argument in the Proof of (1), we have
\begin{eqnarray*}
& &M_H^{c'} \subset M_R^c
\\ &\iff& \sigma_{R,p}^c \subset \mathbb{R}^2 \backslash D_{H, p}^{c'} \textrm{ for all } p
\\ &\Longleftarrow& H_{H, p}(q) \ge -c' \textrm{ for every } q \in \sigma_{R,p}^c  \textrm{ for all } p
\\ &\iff& H_H (q, p)+c' \ge 0 \textrm{ for all } (q, p) \in H_R^{-1}(-c) \textrm{ such that } q \in \mathfrak{R}_R^c
\end{eqnarray*}
This proves (2).

(3) and (4) can be proved analogously. This proves Proposition 5.2.

\end{proof}

Using the above Proposition, we will prove the inclusions in Theorem B. First, we observe the following Theorem.

\begin{Thm}
Every energy hypersurface of the regularized Hill's lunar problem below the critical value can be embedded in $M_R^{2^{2 \over 3}}$.
\end{Thm}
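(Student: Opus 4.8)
The plan is to recognize $2^{2/3}$ as the value $c_R^1$ and to obtain the inclusion $M_H^{c'}\subset M_R^{2^{2/3}}$ in $T^*S^2$, for every energy $-c'<-c_H^0$, directly from the fiberwise criterion of Proposition 5.2; this is just part (1) of Theorem B specialized to the single energy $c_R^1$. Since the domains are nested, $M_H^{c_1'}\subset M_H^{c_2'}$ for $c_1'>c_2'>c_H^0$, it would even be enough to treat $c'$ close to $c_H^0$, but the estimate below works verbatim for all $c'>c_H^0$, so I would simply carry it out for all such $c'$.

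First I would apply criterion (1) of Proposition 5.2: the inclusion $M_H^{c'}\subset M_R^{2^{2/3}}$ holds if and only if $H_R(q,p)+2^{2/3}\le 0$ for every $(q,p)\in H_H^{-1}(-c')$ with $q\in\mathfrak{R}_H^{c'}$. The decisive simplification is that the two Hamiltonians differ only by a function of the base variables, $H_R=H_H+q_1^2-\tfrac12 q_2^2$, so on the level set $H_H^{-1}(-c')$ one has $H_R(q,p)+2^{2/3}=2^{2/3}-c'+q_1^2-\tfrac12 q_2^2$, an expression with no momentum in it. The whole question thus collapses to the pointwise inequality $q_1^2-\tfrac12 q_2^2< c'-2^{2/3}$ on the Hill region $\mathfrak{R}_H^{c'}$.

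Then I would estimate $q_1^2-\tfrac12 q_2^2\le q_1^2$ and invoke Lemma 5.1, which confines $\mathfrak{R}_H^{c'}$ to the strip $|q_1|<3^{-1/3}$ determined by the Lagrange points of Hill's problem, so that $q_1^2-\tfrac12 q_2^2<3^{-2/3}$. It then suffices to check the arithmetic inequality $3^{-2/3}\le c_H^0-2^{2/3}=\tfrac{3^{4/3}}{2}-2^{2/3}$; moving $2^{2/3}$ to the left and using $\tfrac{3^{4/3}}{2}-3^{-2/3}=\tfrac72\,3^{-2/3}$ turns this into $6^{2/3}\le\tfrac72$, equivalently $288\le 343$ after cubing and clearing denominators, which holds strictly. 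Since $c'>c_H^0$ we then get $q_1^2-\tfrac12 q_2^2<3^{-2/3}<c'-2^{2/3}$, hence $H_R(q,p)+2^{2/3}<0$ on the relevant part of $H_H^{-1}(-c')$, and Proposition 5.2(1) yields $M_H^{c'}\subset M_R^{2^{2/3}}$.

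I do not expect a genuine obstacle here: this is essentially a one-line estimate once Proposition 5.2 and Lemma 5.1 are in hand. The only point needing attention is that criterion (1) must be used rather than criterion (2): over $\mathfrak{R}_R^{2^{2/3}}$ the bound available from Lemma 5.1 is only $q_1^2<1$, which is far too weak when $c'$ is near $c_H^0$, whereas the sharper confinement $|q_1|<3^{-1/3}$ of the bounded Hill region of Hill's problem is exactly what makes the arithmetic close. One should also note that the side condition $q\in\mathfrak{R}_H^{c'}$ automatically picks out the bounded component of $H_H^{-1}(-c')$, so nothing further needs to be said about the unbounded branch or about the collision orbits introduced during regularization.
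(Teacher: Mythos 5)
Your proposal is correct and follows essentially the same route as the paper: reduce via Proposition 5.2(1) to the pointwise inequality $H_R+2^{2/3}\le 0$ on the relevant level set, use that $H_R-H_H=q_1^2-\tfrac12 q_2^2$ depends only on the base point, bound $q_1^2<3^{-2/3}$ from the Hill-region confinement of Lemma 5.1, and close with the same arithmetic $3^{-2/3}+2^{2/3}<\tfrac{3^{4/3}}{2}$. The only cosmetic difference is that the paper first reduces to the single hypersurface at the critical energy $c_H^0$ by nestedness, while you run the identical estimate uniformly for all $c'>c_H^0$.
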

\begin{proof}
It is enough to show that $\Sigma_H^{c_H^0} \subset M_R^{2^{2 \over 3}}$. Assume that $\Phi \circ \Psi (\bar{q}, \bar{p}) \in \Sigma_H^{c_H^0}$ and so
\begin{eqnarray*}
{1 \over 2}|\bar{p}|^2-{1 \over |\bar{q}|}+\bar{p_1}\bar{q_2}-\bar{p_2}\bar{q_1}-\bar{q_1}^2+{1 \over 2}\bar{q_2}^2+c_H^0=0,  \quad \bar{q} \in \mathfrak{R}_H.
\end{eqnarray*}
We compute the value of $H_R^{2^{2 \over 3}}:=H_R+2^{2 \over 3}$. Then we have
\begin{eqnarray*}
H_R^{2^{2 \over 3}}(\bar{q}, \bar{p})&=&{1 \over 2}|\bar{p}|^2-{1 \over |\bar{q}|}+\bar{p_1}\bar{q_2}-\bar{p_2}\bar{q_1}+2^{2 \over 3}
\\ &=& \bar{q_1}^2-{1 \over 2}\bar{q_2}^2-c_H^0+2^{3 \over 2}
\\ &\le& \bar{q_1}^2+2^{2 \over 3}-c_H^0
\\ &\le& 3^{-{2 \over 3}}+2^{2 \over 3}-{3^{4 \over 3} \over 2}<0.
\end{eqnarray*}
The last $\le$ holds because $(\bar{q}, \bar{p}) \in \mathfrak{R}_H$. Above inequality implies that $(\bar{q}, \bar{p}) \in M_R^{2^{2 \over 3}}$ by Proposition 5.2. This completes the proof of Theorem.
\end{proof}
We note that the energy level $-2^{2 \over 3}$ is the bifurcating point of Hekuba orbit. This is important because it is hard to say about generators of symplectic homology when the energy level is between the critical value $c_R^0$ and $-2^{2 \over 3}$ as we discussed in Example 4.1.7 and Remark 4.1. We will discuss this more precisely in the next Section. From the computation in the proof of Theorem 5.2, one can immediately see the following Corollary.

\begin{Cor}
We have the embedding
\begin{gather*}
M_H^c \subset M_R^{c-3^{-{2 \over 3}}}
\end{gather*}
for any $c>c_H^0$.
\end{Cor}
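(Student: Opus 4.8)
The plan is to repeat the computation in the proof of Theorem 5.2, but now at an arbitrary energy $-c$ with $c > c_H^0$, and then quote Proposition 5.2(1). First I would fix a point $(\bar q, \bar p)$ with $\Phi \circ \Psi(\bar q, \bar p) \in \Sigma_H^c$, so that
\begin{gather*}
{1 \over 2}|\bar p|^2 - {1 \over |\bar q|} + \bar p_1 \bar q_2 - \bar p_2 \bar q_1 - \bar q_1^2 + {1 \over 2}\bar q_2^2 + c = 0, \qquad \bar q \in \mathfrak{R}_H^c,
\end{gather*}
and evaluate $H_R + (c - 3^{-{2 \over 3}})$ at that point. Using only this defining equation (the same substitution $H_R = H_H + q_1^2 - {1 \over 2}q_2^2$ that appears in Theorem 5.2) gives
\begin{gather*}
H_R(\bar q, \bar p) + c - 3^{-{2 \over 3}} = \bar q_1^2 - {1 \over 2}\bar q_2^2 - 3^{-{2 \over 3}} \le \bar q_1^2 - 3^{-{2 \over 3}}.
\end{gather*}

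Next I would invoke Lemma 5.1: since $\bar q \in \mathfrak{R}_H^c$ we have $|\bar q_1| < 3^{-{1 \over 3}}$, hence $\bar q_1^2 < 3^{-{2 \over 3}}$ and the right-hand side above is strictly negative. Thus $H_R(\bar q, \bar p) + (c - 3^{-{2 \over 3}}) \le 0$ for every $(\bar q, \bar p) \in H_H^{-1}(-c)$ with $\bar q \in \mathfrak{R}_H^c$. Before applying the criterion I would record that the target energy is admissible, i.e. $c - 3^{-{2 \over 3}} > c_R^0 = {3 \over 2}$; this follows from $c > c_H^0 = {3^{4 \over 3} \over 2}$ together with the numerical inequality ${3^{4 \over 3}/2} - 3^{-{2 \over 3}} > 3/2$, which after multiplying by $3^{2 \over 3}$ is just $7 > 3^{5 \over 3}$. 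Then Proposition 5.2(1) immediately yields $M_H^c \subset M_R^{c - 3^{-{2 \over 3}}}$, which is the claim.

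I do not anticipate a genuine obstacle here: this is a one-line sharpening of the estimate already present inside the proof of Theorem 5.2, where the crude bound $\bar q_1^2 + 2^{2 \over 3} - c_H^0$ is replaced by the sharper, energy-independent bound $\bar q_1^2 - 3^{-{2 \over 3}}$, negative for precisely the same reason, namely the width bound on Hill's region. The only points worth stating with care are that the inequality must be checked on all of $\Sigma_H^c$ rather than only on $\Sigma_H^{c_H^0}$ (which is automatic, since the substitution uses nothing beyond the equation cutting out $H_H^{-1}(-c)$) and the admissibility check $c - 3^{-{2 \over 3}} > c_R^0$, which guarantees that $M_R^{c - 3^{-{2 \over 3}}}$ is defined as a fiberwise convex Liouville domain so that Proposition 5.2 applies.
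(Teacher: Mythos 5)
Your proposal is correct and is exactly the argument the paper intends: the Corollary is stated as an immediate consequence of the computation in the proof of Theorem 5.3, namely substituting $H_R=H_H+q_1^2-\tfrac12 q_2^2$ on $\Sigma_H^c$, bounding $\bar q_1^2<3^{-2/3}$ via the Hill region of Lemma 5.1, and invoking Proposition 5.2(1). Your extra check that $c-3^{-2/3}>c_R^0=\tfrac32$ (equivalently $7>3^{5/3}$) is a worthwhile detail the paper leaves implicit, but it does not change the route.
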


As we observed in Example 4.1.7 and Remark 4.1, when the condition
\begin{gather*}
0<-L_D (c)^3<{1 \over P+1}
\end{gather*}
holds, the iterations of the retrograde and direct orbits represent generators in $SH_*(M_R^c)$ up to $P$-th iteration. We note that
\begin{gather*}
0<-L_D (c)^3<{1 \over P+1} \iff {P+3 \over 2(P+1)^{1 \over 3}}={1 \over 2}(P+1)^{2 \over 3}+(P+1)^{-{1 \over 3}}<c
\end{gather*}
and we define 
\begin{gather*}
c_R^P:={P+3 \over 2(P+1)^{1 \over 3}}
\end{gather*}
for each $P \in \mathbb{N}$. Theorem 5.3 tells us that $M_H^c<M_R^{c_R^1}$ for all $c>c_H^0$, this is (1) of Theorem B. Moreover, we can use the homology classes of the retrograde and direct orbits. We define
\begin{gather*}
c_H^P:={2P+8-\sqrt{(P+1)(P+9)} \over 2(P+1)^{1 \over 3}}
\end{gather*}
for each $P \in \mathbb{N}_{\ge 2}$. Then we have the following Theorem.

\begin{Thm}
For the constants $c_R^P$ and $c_H^P$ defined above, we have the following inclusion
\begin{gather*}
M_H^{c_H^P} \subset M_R^{c_R^P}
\end{gather*}
for each $P\in \{2, 3, 4, \cdots \}$.
\end{Thm}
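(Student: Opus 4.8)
The plan is to apply criterion~(2) of Proposition~5.2 with $c=c_R^P$ and $c'=c_H^P$: it is enough to show that $H_H(q,p)+c_H^P\ge 0$ for every $(q,p)\in H_R^{-1}(-c_R^P)$ whose base point satisfies $q\in\mathfrak{R}_R^{c_R^P}$. Since $H_H-H_R=-q_1^2+\tfrac{1}{2}q_2^2$, on the level set $H_R^{-1}(-c_R^P)$ one has
\[
H_H(q,p)+c_H^P=(c_H^P-c_R^P)-q_1^2+\tfrac{1}{2}q_2^2\ \ge\ (c_H^P-c_R^P)-q_1^2 ,
\]
so the whole statement reduces to the pointwise bound $q_1^2\le c_H^P-c_R^P$ over the relevant part of the energy hypersurface.

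First I would pin down the constraint on $q$. Completing the square in $p$ gives $H_R(q,p)=\tfrac{1}{2}\bigl|p+(q_2,-q_1)\bigr|^2-\tfrac{1}{|q|}-\tfrac{1}{2}|q|^2$, so any $(q,p)\in H_R^{-1}(-c_R^P)$ satisfies $\tfrac{1}{|q|}+\tfrac{1}{2}|q|^2\ge c_R^P$, i.e.\ (writing $r=|q|$) $r^3-2c_R^P r+2\le 0$. Because $c_R^P=\tfrac{(P+1)+2}{2(P+1)^{1/3}}$, the number $r=(P+1)^{1/3}$ is a root of the cubic $r^3-2c_R^P r+2$, and the other positive root is some $\rho_P\in(0,1)$; hence the inequality forces either $|q|\le\rho_P$ or $|q|\ge(P+1)^{1/3}$. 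Requiring in addition $q\in\mathfrak{R}_R^{c_R^P}$ (Lemma~5.1), i.e.\ that $q$ lie in the \emph{bounded} Hill region, rules out the second branch, leaving $|q|\le\rho_P$; in particular $q_1^2\le|q|^2\le\rho_P^2$.

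The crux is then the identity $\rho_P^2=c_H^P-c_R^P$, which is exactly why $c_H^P$ is defined by that formula. To establish it I would set $s:=(P+1)^{1/3}$, so $c_R^P=\tfrac{s^3+2}{2s}$, and clear the denominator in the cubic: $s r^3-(s^3+2)r+2s=(r-s)\bigl(s r^2+s^2 r-2\bigr)$, so $\rho_P$ is the positive root of $s r^2+s^2 r-2=0$. Using $\rho_P^2=\tfrac{2}{s}-s\rho_P$ (from the quadratic) together with $s^3=P+1$ — so that $s^4+8s=s(s^3+8)=s(P+9)$ — a short computation collapses this to
\[
\rho_P^2=\frac{P+5-\sqrt{(P+1)(P+9)}}{2(P+1)^{1/3}}=c_H^P-c_R^P .
\]

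Putting the pieces together: for $(q,p)\in H_R^{-1}(-c_R^P)$ with $q\in\mathfrak{R}_R^{c_R^P}$ we get
\[
H_H(q,p)+c_H^P\ \ge\ (c_H^P-c_R^P)-q_1^2\ =\ \rho_P^2-q_1^2\ \ge\ 0 ,
\]
and Proposition~5.2(2) yields $M_H^{c_H^P}\subset M_R^{c_R^P}$. The only real obstacle is the algebraic identity $\rho_P^2=c_H^P-c_R^P$; geometrically it says that $c_H^P$ has been tuned so that $\Sigma_H^{c_H^P}$ is internally tangent to $\Sigma_R^{c_R^P}$ along the fibers over $q=(\pm\rho_P,0)$, where the displayed inequality becomes an equality. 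There is also a minor point of care in reading the bound $|q|\le\rho_P$ off the description of the bounded Hill region. The argument is the exact analogue of the embedding $\Sigma_H^{c_H^0}\subset M_R^{2^{2/3}}$ proved above, only with the sharper constants $c_R^P,\,c_H^P$ in place of $2^{2/3}$ and $c_H^0$; combined with the monotonicity $M_H^{c_1}\subset M_H^{c_2}$ for $c_1>c_2$ it upgrades at once to $M_H^c\subset M_R^{c_R^P}$ for all $c\ge c_H^P$, which is part~(2) of Theorem~B.
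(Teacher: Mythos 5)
Your proof is correct, but it runs the inclusion test in the opposite direction from the paper. The paper applies criterion (1) of Proposition 5.2: it takes $(\bar q,\bar p)\in\Sigma_H^{c_H^P}$, computes $H_R+c_R^P=\bar q_1^2-\tfrac12\bar q_2^2+c_R^P-c_H^P\le \bar q_1^2-(c_H^P-c_R^P)$, and bounds $\bar q_1^2$ by the width of the Hill region of Hill's lunar problem, i.e. by the square of the smaller positive root of $\tfrac32x^2+\tfrac1x=c_H^P$, which is then checked to equal $c_H^P-c_R^P$. You instead verify criterion (2): you evaluate $H_H+c_H^P$ on $H_R^{-1}(-c_R^P)$ and bound $q_1^2\le|q|^2\le\rho_P^2$, where $\rho_P$ is the smaller positive root of $\tfrac1r+\tfrac12r^2=c_R^P$, and prove the identity $\rho_P^2=c_H^P-c_R^P$. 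Both arguments thus reduce to the same pointwise Hamiltonian comparison plus an algebraic identity pinning down $c_H^P-c_R^P$; your factorization $sr^3-(s^3+2)r+2s=(r-s)(sr^2+s^2r-2)$ with $s=(P+1)^{1/3}$ is right, and the identity checks out since $s\sqrt{s^4+8s}=\sqrt{(P+1)(P+9)}$ and $4+s^3=P+5$. What the paper's route buys is that criterion (1) is an equivalence and the bound on $\bar q_1$ is read directly from the Hill region of Hill's problem; what yours buys is a clearer picture of the tangency (equality exactly over $q=(\pm\rho_P,0)$) and, as you note, the immediate upgrade to $M_H^c\subset M_R^{c_R^P}$ for $c\ge c_H^P$ by monotonicity in $c$, which is how the paper gets its Corollary. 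Two harmless slips you should fix: the constraint $\tfrac1{|q|}+\tfrac12|q|^2\ge c_R^P$ is equivalent to $r^3-2c_R^Pr+2\ge0$, not $\le0$ (your stated conclusion, $|q|\le\rho_P$ or $|q|\ge(P+1)^{1/3}$, is the one belonging to the correct sign, so nothing downstream breaks); and the printed Lemma 5.1 states the defining inequality of the Hill regions with the sign opposite to what the energy-surface computation forces, but all you actually need from it is the bounded-component condition $|q|<1$, which discards the outer branch.
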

\begin{proof}
It is enough to show that $\Sigma_H^{c_H^P} \subset M_R^{c_R^P}$. Assume that $\Phi \circ \Psi (\bar{q}, \bar{p}) \in \Sigma_H^{c_H^P}$. That is,
\begin{gather*}
{1 \over 2}|\bar{p}|^2-{1 \over |\bar{q}|}+\bar{p_1}\bar{q_2}-\bar{p_2}\bar{q_1}-\bar{q_1}^2+{1 \over 2}\bar{q_2}^2+c_H^P=0,  \quad \bar{q}  \in \mathfrak{R}_H^{c_H^P}.
\end{gather*}
We insert $(\bar{q}, \bar{p})$ in $H_R^{c_R^P}:=H_R+c_R^P$, then we have
\begin{eqnarray*}
H_R^{c_R^P}(\bar{q}, \bar{p})&=&{1 \over 2}|\bar{p}|^2-{1 \over |\bar{q}|}+\bar{p_1}\bar{q_2}-\bar{p_2}\bar{q_1}+c_R^P
\\ &=& \bar{q_1}^2-{1 \over 2}\bar{q_2}^2-c_H^P+c_R^P
\\ &\le& \bar{q_1}^2+c_R^P-c_H^P
\end{eqnarray*}
and we want to prove the last term less that or equal to $0$. It suffices to prove the following Claim.
\\Claim: If $\bar{q} \in \mathfrak{R}_H^{c_H^P}$, then $\bar{q_1}^2 \le c_H^P-c_R^P$ for any $P \in \{2, 3, \cdots \}$.
\begin{proof}[Proof of Claim]
For $\bar{q} \in \mathfrak{R}_H^{c_H^P}$, $|\bar{q_1}|$ attains its maximum, say $\bar{q_1}^M$, when $\bar{q_2}=0$. It suffices to prove that
\begin{gather*}
(\bar{q_1}^M)^2 \le c_H^P-c_R^P
\end{gather*}
On the other hand, we know that $\bar{q_1}^M$ is the smaller positive zero of the equation
\begin{gather*}
{3 \over 2}x^2+{1 \over x}=c_H^P={2P+8-\sqrt{(P+1)(P+9)} \over 2(P+1)^{1 \over 3}}
\end{gather*}
by Lemma 5.1. We solve the above equation and obtain
\begin{gather*}
\bar{q_1}^M={\sqrt{P+9}-\sqrt{P+1} \over 2 (P+1)^{1 \over 6}}
\end{gather*}
and so in fact we get
\begin{gather*}
(\bar{q_1}^M)^2=c_H^P-c_R^P
\end{gather*}
This proves the Claim.
\end{proof}
Claim implies that
\begin{gather*}
H_R^{c_R^P}(\bar{q}, \bar{p}) \le 0
\end{gather*}
By Proposition 5.2, this proves Theorem 5.5.
\end{proof}

We have proved (2) of Theorem B. Since $M_H^c$ shrinks as $c$ increases, using Theorem 5.3 and Theorem 5.5, we formulate the inclusion for any $c>c_H^0$ in the following Corollary.

\begin{Cor}
For any $c> c_H^0$, we have the following inclusions
\begin{gather*}
\begin{cases}
M_H^c \subset M_R^{c_R^1} & \textrm{if} \quad c \in (c_H^0, c_H^2),
\\ M_H^c \subset M_R^{c_R^P} & \textrm{if} \quad c \in [c_H^P, c_H^{P+1}) \textrm{ for } P =2, 3, 4, \cdots
\end{cases}
\end{gather*}
\end{Cor}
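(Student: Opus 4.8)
The plan is to obtain this corollary purely by bookkeeping, assembling the two embedding statements already established in this section together with the elementary monotonicity of the family $M_H^c$. The first observation is the numerical identity $c_R^1 = \frac{1+3}{2(1+1)^{1/3}} = 2^{2/3}$, so that Theorem~5.3 says exactly $M_H^c \subset M_R^{c_R^1}$ for every $c > c_H^0$. This already disposes of the first line of the case distinction: for $c \in (c_H^0, c_H^2)$ we simply invoke Theorem~5.3.

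For the second line, fix $P \in \{2, 3, 4, \dots\}$ and let $c \in [c_H^P, c_H^{P+1})$. Because the bounded components of the regularized Hill energy hypersurfaces shrink as the energy $-c$ decreases --- that is, $M_H^{c_1} \subset M_H^{c_2}$ whenever $c_1 \ge c_2 > c_H^0$, the monotonicity recalled at the beginning of this section --- the hypothesis $c \ge c_H^P > c_H^0$ yields $M_H^c \subset M_H^{c_H^P}$. Theorem~5.5 then provides $M_H^{c_H^P} \subset M_R^{c_R^P}$, and composing the two inclusions gives $M_H^c \subset M_R^{c_R^P}$, which is the claim.

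The only remaining point is that the two families of ranges are exhaustive on $(c_H^0, +\infty)$; this holds because the sequence $\{c_H^P\}_{P \ge 2}$ is increasing with $c_H^2 > c_H^0$ and $c_H^P \to +\infty$, so that $(c_H^0, c_H^2) \cup \bigcup_{P \ge 2}[c_H^P, c_H^{P+1}) = (c_H^0, +\infty)$. There is genuinely no hard step in this corollary: the substantive work was carried out in Theorems~5.3 and~5.5 (through Proposition~5.2 and the explicit Hill regions of Lemma~5.1), and all that is left is to check the identity $c_R^1 = 2^{2/3}$ and to apply the monotonicity of $M_H^c$ in the energy parameter.
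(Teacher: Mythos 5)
Your proposal is correct and follows exactly the paper's route: the first case is Theorem~5.3 (together with the identity $c_R^1=2^{2/3}$), and the second case is the monotonicity $M_H^{c}\subset M_H^{c_H^P}$ for $c\ge c_H^P$ combined with Theorem~5.5, which is precisely how the paper deduces the corollary ("Since $M_H^c$ shrinks as $c$ increases, using Theorem 5.3 and Theorem 5.5...").
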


\begin{figure}
\centering
\includegraphics[]{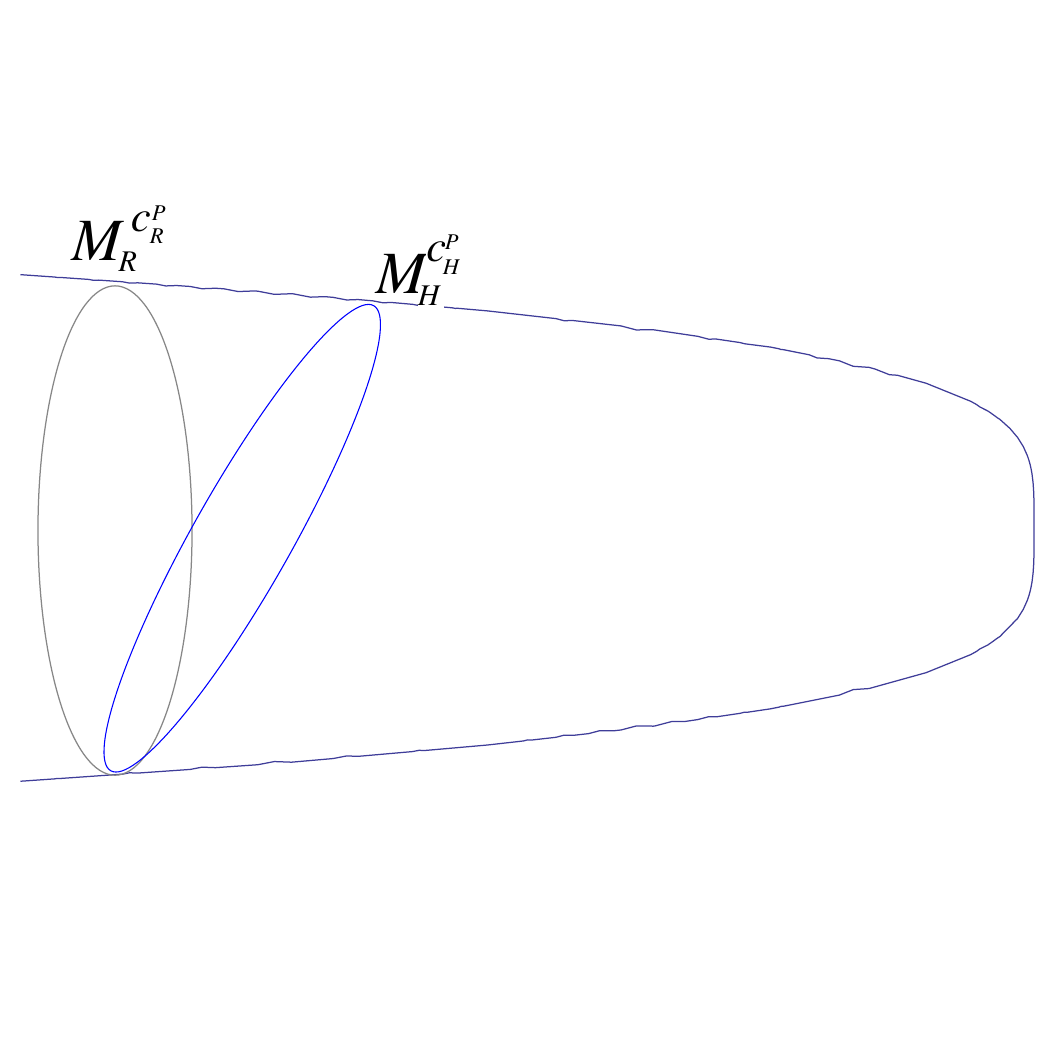}
\caption{The inclusion $M_H^{c_H^P} \subset M_R^{C_R^P}$ in $T^* S^2$}
\end{figure}

We also have embeddings of opposite direction. Namely, the Liouville domain determined by the rotating Kepler problem can be embedded in the Lioville domain determined by Hill's lunar problem.

\begin{prop}
We have the embedding
\begin{gather*}
M_R^{c+{1 \over 2c^2}} \subset M_H^c
\end{gather*}
for each $c>c_H^0$.
\end{prop}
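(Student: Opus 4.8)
The plan is to reduce the inclusion to a pointwise estimate via Proposition 5.2 and then carry out a short computation. Set $c' := c + \tfrac{1}{2c^2}$; since $c > c_H^0 > c_R^0$ we have $c' > c > c_R^0$, so that both $M_R^{c'}$ and $M_H^c$ are defined. Proposition 5.2(3), applied with the rotating--Kepler energy $-c'$ and the Hill energy $-c$, says that $M_R^{c'} \subset M_H^c$ is equivalent to
\[
H_H(q,p) + c \le 0 \qquad \text{for all } (q,p) \in H_R^{-1}(-c') \text{ with } q \in \mathfrak{R}_R^{c'} .
\]
So the whole task is to verify this inequality on the indicated set.

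First I would exploit that the two Hamiltonians differ only by the degree-two term, $H_H(q,p) - H_R(q,p) = -q_1^2 + \tfrac12 q_2^2$. Hence, for $(q,p) \in H_R^{-1}(-c')$,
\[
H_H(q,p) + c = \bigl(H_R(q,p) + c'\bigr) + (c - c') - q_1^2 + \tfrac12 q_2^2 = -\tfrac{1}{2c^2} - q_1^2 + \tfrac12 q_2^2 .
\]
Because $-q_1^2 \le 0$, it then suffices to prove $\tfrac12 q_2^2 \le \tfrac{1}{2c^2}$, i.e. $q_2^2 \le c^{-2}$, for every $q \in \mathfrak{R}_R^{c'}$.

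This last bound is exactly where the choice of $c'$ enters. By Lemma 5.1 (the description of Hill's region of the rotating Kepler problem as the near-origin component of the projected energy hypersurface) every $q \in \mathfrak{R}_R^{c'}$ satisfies $|q| \le \rho$, where $\rho < 1$ is the smaller positive root of $\tfrac1r + \tfrac12 r^2 = c'$. A direct substitution gives $\tfrac{1}{1/c} + \tfrac12 (1/c)^2 = c + \tfrac{1}{2c^2} = c'$ and $1/c < 1$ (as $c > 1$); since $r \mapsto \tfrac1r + \tfrac12 r^2$ is strictly decreasing on $(0,1)$, this forces $\rho = 1/c$. Therefore $q_2^2 \le |q|^2 \le \rho^2 = c^{-2}$ for all $q \in \mathfrak{R}_R^{c'}$, which is the estimate needed above. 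Combining the three displays, $H_H(q,p) + c \le 0$ on the relevant set, and Proposition 5.2(3) delivers $M_R^{c'} \subset M_H^c$.

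I do not expect a serious analytic obstacle here: the only substantive point is the bookkeeping with Hill's region, namely that on $\mathfrak{R}_R^{c'}$ the coordinate $q_2$ is controlled by $1/c$, and the energy shift $\tfrac{1}{2c^2}$ has been chosen precisely so that the radius $1/c$ of that region matches the room left over by the quadratic perturbation $-q_1^2 + \tfrac12 q_2^2$. The two things to be careful about are to apply Proposition 5.2(3) with the energies in the correct order, and to use the near-origin branch of the level set $\tfrac1{|q|} + \tfrac12 |q|^2 = c'$ rather than the whole level set.
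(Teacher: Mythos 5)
Your proposal is correct and follows essentially the same route as the paper: both reduce the inclusion to the pointwise inequality $H_H(q,p)+c\le 0$ on the bounded component of $H_R^{-1}(-(c+\tfrac{1}{2c^2}))$, compute that this equals $-\tfrac{1}{2c^2}-q_1^2+\tfrac12 q_2^2$ there, and close the argument with the bound $|q|\le \tfrac1c$. The only cosmetic difference is how that bound is obtained: you read it off from the Hill-region description (Lemma 5.1) together with the monotonicity of $\tfrac1r+\tfrac12 r^2$ on $(0,1)$, while the paper completes the square in $p$ on the energy hypersurface to get $\tfrac{1}{|q|}+\tfrac12|q|^2\ge c+\tfrac{1}{2c^2}$ and concludes the same way (with the restriction to the near-origin branch left implicit, which you make explicit).
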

\begin{proof}
It suffices to prove that $\Sigma_R^{c+{1 \over 2c^2}} \subset M_H^c$. Suppose that $(\bar{q}, \bar{p}) \in \Sigma_R^{c+{1 \over 2c^2}}$ and so
\begin{gather*}
{1 \over 2}|\bar{p}|^2-{1 \over |\bar{q}|}+\bar{p_1}\bar{q_2}-\bar{p_2}\bar{q_1}+c+{1 \over 2c^2}=0.
\end{gather*}
We evaluate $H_H^c:= H_H+c$ at this $(\bar{q}, \bar{p})$. Then we have
\begin{eqnarray*}
H_H^c(\bar{q}, \bar{p})&=&{1 \over 2}|\bar{p}|^2-{1 \over |\bar{q}|}+\bar{p_1}\bar{q_2}-\bar{p_2}\bar{q_1}-\bar{q_1}^2+{1 \over 2}\bar{q_2}^2+c
\\ &=& -{1 \over 2c^2}-\bar{q_1}^2+{1 \over 2}\bar{q_2}^2
\\ &\le& {1 \over 2}\bar{q_2}^2-{1 \over 2c^2}
\\ &\le& {1 \over 2c^2}-{1 \over 2c^2}=0
\end{eqnarray*}
The last inequality can be proved by the following Claim.
\\Claim: For any $(\bar{q}, \bar{p}) \in \Sigma_R^{c+{1 \over 2c^2}}$, we have $|\bar{q}| \le {1 \over c}$.

\begin{proof}[Proof of Claim]
Since $(\bar{q}, \bar{p}) \in \Sigma_R^{c+{1 \over 2c^2}}$, we have
\begin{gather*}
{1 \over 2}|\bar{p}|^2-{1 \over |\bar{q}|}+\bar{p_1}\bar{q_2}-\bar{p_2}\bar{q_1}+c+{1 \over 2c^2}=0
\\ \Rightarrow {1 \over 2}(\bar{p_1}+\bar{q_2})^2+{1 \over 2}(\bar{p_2}-\bar{q_1})^2={1 \over |\bar{q}|}+{1 \over 2}|\bar{q}|^2-c-{1 \over 2c^2}
\end{gather*}
This implies
\begin{gather*}
{1 \over |\bar{q}|}+{1 \over 2}|\bar{q}|^2 \ge c+{1 \over 2c^2}
\end{gather*}
and so we have
\begin{gather*}
|\bar{q}| \le {1 \over c}
\end{gather*}
This proves the Claim.
\end{proof}
Therefore, we have that $\Sigma_R^{c+{1 \over 2c^2}} \subset M_H^c$. This proves Proposition 5.7.
\end{proof}

This proves (3) of Theorem B and thus this completes the proof of Theorem B. We will use these inclusions to get estimates of action of Hill's lunar problem in the next section.

\section{Spectrum estimates of Hill's lunar problem}

We have prepared every ingredient to estimate the action spectrum of Hill's lunar problem. As a result, we will prove Theorem C and D in this section. In fact, we have almost finished the proof of Theorem C in Section 3 and in the computation of symplectic homology using the rotating Kepler problem. We denoted by $\Sigma_R^c$ ($\Sigma_H^{c'}$) the regularized energy hypersurfaces of the rotating Kepler problem(Hill's lunar problem) of energy $-c$($-c'$). We know that $\Sigma_R^c$ and $\Sigma_H^{c'}$ are fiberwise convex hypersurfaces in $T^* S^2$ for $c>c_R^0={3 \over 2}$ and $c'>c_H^0={3^{4 \over 3} \over 2}$. We defined the Liouville domains $M_R^c$ and $M_H^{c'}$ in $T^* S^2$ enclosed by $\Sigma_R^c$ and $\Sigma_H^{c'}$, respectively. We recall the computation of $SH_*(M_R^c)$ in Example 4.1.7 and Remark 4.1. Then we know that the retrograde(direct) orbit represents a homology class, say $\delta_R \in SH_*(M_R^c)$($\delta_D \in SH_*(M_R^c)$) for $c>c_R^1=2^{2 \over 3}$. Then there are corresponding homology classes $\Delta_R$ and $\Delta_D$ in $H_*(\Lambda S^2)$ such that $\Psi_{M_R^c}(\Delta_R)=\delta_R$ and $\Psi_{M_R^c}(\Delta_D)=\delta_D$ for $c>2^{2 \over 3}$. Then we have
\begin{gather*}
c_{S^2}(M_R^c, \Delta_R)=\mathcal{A}(\gamma_R^c)=2\pi L_R(c),
\\ c_{S^2}(M_R^c, \Delta_D)=\mathcal{A}(\gamma_D^c)=-2\pi L_D(c)
\end{gather*}
for $c>2^{2 \over 3}$. As we discussed before, if $c>c_R^P$, then multiple covers of the retrograde and direct orbits, up to $P$-th-iteration, become the generators of symplectic homology of $M_R^c$. Therefore, when $c \ge c_R^P$, we can determine the symplectic capacity corresponding to these multiple covers. In this case, we define similarly the homology classes $\delta_{R, N}$ and $\delta_{D, N}$ in $SH_*(M_R^c)$ represented by the $N$-th iteration of the retrograde and direct orbits, respectively. Also, we denote by $\Delta_{R, N}, \Delta_{D, N} \in H_*(\Gamma S^2)$ the loop homology classes satisfying $\Psi_{M_R^c}(\Delta_{R, N})=\delta_{R, N}$ and $\Psi_{M_R^c}(\Delta_{D, N})=\delta_{D, N}$, respectively, for $N= 1, 2, \cdots, P$. Then we have
\begin{gather*}
c_{S^2}(M_R^c, \Delta_{R,N})=N\mathcal{A}(\gamma_R^c)=2\pi N L_R(c),
\\ c_{S^2}(M_R^c, \Delta_{D,N})=N\mathcal{A}(\gamma_D^c)=-2\pi N L_D(c)
\end{gather*}
for $c \ge c_R^P$ and $N=1, 2, \cdots, P$. This proves Theorem C. We will prove Theorem D in the rest of this section.

\begin{Thm}
For the homology classes $\Delta_R, \Delta_D \in H_*(\Lambda S^2)$ defined above, the following inequalities
\begin{gather*}
c_{S^2}(M_H^c, \Delta_R) \ge 2 \pi{-1+\sqrt{1+8c^3} \over 4c^2},
\\ c_{S^2}(M_H^c, \Delta_D) \ge 2 \pi{1+\sqrt{1+8c^3} \over 4c^2},
\end{gather*}
hold for all $c>c_H^0={3^{4 \over 3} \over 2}$.
\end{Thm}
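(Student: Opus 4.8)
The plan is to combine the monotonicity property of the spectral invariant (part (2) of Theorem A) with the inclusion supplied by Proposition 5.7, namely $M_R^{c+{1\over 2c^2}}\subset M_H^c$ for all $c>c_H^0$, and then to invoke Theorem C to replace the spectral invariant of the rotating Kepler domain by an explicit action value. Since we need the homology classes $\Delta_R$ and $\Delta_D$ to be defined on $M_R^{c+{1\over 2c^2}}$, the first thing I would check is that $c+{1\over 2c^2}>c_R^1=2^{2\over 3}$ whenever $c>c_H^0={3^{4\over3}\over 2}$; this is immediate because $c>c_H^0>c_R^1$ already, so $c+{1\over 2c^2}$ is all the larger. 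Hence $\Delta_R,\Delta_D\in H_*(\Lambda S^2)$ are exactly the classes appearing in Theorem C (the case $N=1$), and $c_{S^2}(M_R^{c+{1\over 2c^2}},\Delta_R)$, $c_{S^2}(M_R^{c+{1\over 2c^2}},\Delta_D)$ are well-defined.

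Next I would apply part (2) of Theorem A to the pair $M_1=M_R^{c+{1\over 2c^2}}\subset M_2=M_H^c$. Since $M_1\subset M_2$ we have $\kappa_{\min}(\partial M_1,\partial M_2)\ge 1$, so monotonicity gives
\begin{gather*}
c_{S^2}(M_H^c,\Delta_R)\ \ge\ c_{S^2}\bigl(M_R^{c+{1\over 2c^2}},\Delta_R\bigr),\qquad
c_{S^2}(M_H^c,\Delta_D)\ \ge\ c_{S^2}\bigl(M_R^{c+{1\over 2c^2}},\Delta_D\bigr).
\end{gather*}
By Theorem C (with $N=1$ and energy parameter $c+{1\over 2c^2}$) the right-hand sides equal $2\pi L_R\bigl(c+{1\over 2c^2}\bigr)$ and $-2\pi L_D\bigl(c+{1\over 2c^2}\bigr)$ respectively, where $L_R(\cdot)>0$ and $-1<L_D(\cdot)<0$ are the zeros of $f(x)={1\over 2x^2}-x$.

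It then remains to identify these two quantities with the closed-form bounds asserted in the statement. Recall that $L_R(d)$ and $L_D(d)$ solve $d={1\over 2x^2}-x$, i.e.\ $2dx^2=1-2x^3$, equivalently $2x^3+2dx^2-1=0$ — but for the present purpose it is cleaner to note that for the retrograde/direct circular orbits one has $d=-{1\over 2L^2}+L$, so $2L^2 d=-1+2L^3$; specializing to $d=c+{1\over 2c^2}$, I would verify by direct substitution that $L=-{1\over 2c}$ is a root giving the direct branch and $L={1\over 2c}$... no: rather, set $L=\pm{1\over 2c}\cdot(\text{something})$ and match. Concretely, the values $L={-1\pm\sqrt{1+8c^3}\over 4c^2}$ are precisely the two roots one obtains, and one checks $2\pi{-1+\sqrt{1+8c^3}\over 4c^2}=2\pi L_R\bigl(c+{1\over 2c^2}\bigr)$ and $2\pi{1+\sqrt{1+8c^3}\over 4c^2}=-2\pi L_D\bigl(c+{1\over 2c^2}\bigr)$ by plugging these into $f$. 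This last algebraic matching is the only real computation; it is routine, so I would present it as a short verification that $f\bigl({-1+\sqrt{1+8c^3}\over 4c^2}\bigr)=c+{1\over 2c^2}$ with the value lying in $(0,\infty)$, and similarly $f\bigl(-{1+\sqrt{1+8c^3}\over 4c^2}\bigr)=c+{1\over 2c^2}$ with the value in $(-1,0)$, which pins down which root is $L_R$ and which is $L_D$. The main obstacle, such as it is, is purely bookkeeping: making sure the energy shift $c\mapsto c+{1\over 2c^2}$ is handled consistently and that the sign conventions for $L_D<0$ (hence the extra minus sign in $c_{S^2}(M_R^c,\Delta_D)=-2\pi L_D$) are tracked correctly so that the inequality points the right way. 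Combining these steps yields the two claimed lower bounds for every $c>c_H^0$.
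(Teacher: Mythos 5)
Your proposal is correct and follows essentially the same route as the paper: the inclusion $M_R^{c+{1\over 2c^2}}\subset M_H^c$ from Proposition 5.7, monotonicity from Theorem A, identification of the rotating-Kepler capacities via Theorem C with $N=1$, and the observation that $2\pi{-1+\sqrt{1+8c^3}\over 4c^2}=2\pi L_R\bigl(c+{1\over 2c^2}\bigr)$ and $2\pi{1+\sqrt{1+8c^3}\over 4c^2}=-2\pi L_D\bigl(c+{1\over 2c^2}\bigr)$ as roots of $c+{1\over 2c^2}={1\over 2x^2}-x$. (The discarded aside writing $d=-{1\over 2L^2}+L$ has a sign slip, but the verification you ultimately state, $f(x)=c+{1\over 2c^2}$ with the root located in $(0,\infty)$ resp.\ $(-1,0)$, is exactly the check the paper relies on.)
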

\begin{proof}
By Proposition 5.7, we have that
\begin{gather*}
M_R^{c+{1 \over 2c^2}} \subset M_H^c.
\end{gather*}
Then we can deduce by Theorem A the inequalities
\begin{gather*}
c_{S^2}(M_R^{c+{1 \over 2c^2}}, \Delta_R) \le c_{S^2}(M_H^c, \Delta_R),
\\ c_{S^2}(M_R^{c+{1 \over 2c^2}}, \Delta_D) \le c_{S^2}(M_H^c, \Delta_D)
\end{gather*}
of symplectic capacity for $FSD(S^2)$ for all $c>c_H^0$. Since $c+{1 \over 2c^2}>2^{2 \over 3}$ for $c>c_H^0={3^{4 \over 3} \over 2}$, the homology class $\Psi_{M_R^{c+{1 \over 2c^2}}}(\Delta_R)=\delta_R$ is represented by the retrograde orbit $\gamma_R$ and same for the direct orbit. This implies that
\begin{gather*}
c_{S^2}(M_R^{c+{1 \over 2c^2}}, \Delta_R)=2 \pi L_R(c+{1 \over 2c^2}),
\\ c_{S^2}(M_R^{c+{1 \over 2c^2}}, \Delta_D)=-2 \pi L_D(c+{1 \over 2c^2})
\end{gather*}
Because $-1<L_D(c+{1 \over 2c^2})<0<L_R(c)$ are zeros of $c+{1 \over 2c^2}={1 \over 2x^2}-x$, we have
\begin{gather*}
L_R(c+{1 \over 2c^2})={-1+\sqrt{1+8c^3} \over 4c^2}
\\ L_D(c+{1 \over 2c^2})={-1-\sqrt{1+8c^3} \over 4c^2}
\end{gather*}
for all $c>c_H^0$. This completes the proof of Theorem 6.1.
\end{proof}

This provides us a simple and sharp lower bound for symplectic capacity for $FSD(S^2)$ of $M_H^c$. Let us discuss about upper bounds as well.

\begin{Thm}
For the homology classes $\Delta_R, \Delta_D \in H_* (\Lambda S^2)$, the following inequalities
\begin{eqnarray*}
c_{S^2}(M_H^c, \Delta_R) &<& 2\pi \times {1 \over 2}\sqrt{{3 \over 2(c-3^{-{2 \over 3}})}} \sec \left({1 \over 3} \arccos \left( \left({3 \over 2(c-3^{-{2 \over 3}})} \right)^{3 \over 2}\right)\right)
\\ &<& 2^{-{11 \over 6}} \cdot 3^{1 \over 2} \pi \sec \left( {1 \over 3} \arccos(2^{-{5 \over 2}}\cdot 3^{3 \over 2})\right) \approx 2\pi \times 0.490534
\\ c_{S^2}(M_H^c, \Delta_D)&<& -2\pi \times {1 \over 2}\sqrt{{3 \over 2(c-3^{-{2 \over 3}})}} \sec \left({1 \over 3} \arccos \left( \left({3 \over 2(c-3^{-{2 \over 3}})} \right)^{3 \over 2}\right)+{2\pi \over 3}\right)
\\ &<& -2^{-{11 \over 6}} \cdot 3^{1 \over 2} \pi \sec \left( {1 \over 3} \arccos(2^{-{5 \over 2}}\cdot 3^{3 \over 2})+{2 \pi \over 3}\right) \approx 2\pi \times 0.793701
\end{eqnarray*}
hold for all $c>c_H^0$.
\end{Thm}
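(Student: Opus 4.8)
The strategy is to transport the explicit capacity values of the rotating Kepler domains across the inclusion $M_H^c \subset M_R^{c-3^{-2/3}}$ of Corollary 5.4 by Monotonicity, and then to make the resulting bound independent of $c$ using the monotonicity of $L_R$ and $-L_D$. The first thing I would record is the elementary chain of inequalities
\[ c - 3^{-2/3} \;>\; c_H^0 - 3^{-2/3} \;=\; \frac{3^{4/3}}{2} - 3^{-2/3} \;=\; \frac{7}{2\cdot 3^{2/3}} \;>\; 2^{2/3} \;=\; c_R^1 , \]
valid for every $c > c_H^0$ (the last step being $7 > (2^5\cdot 3^2)^{1/3} = 288^{1/3}$). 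By Example 4.1.7 and Remark 4.1 this guarantees that, for the Liouville domain $M_R^{c-3^{-2/3}}$, the retrograde and direct orbits still represent classes $\delta_R,\delta_D \in SH_*(M_R^{c-3^{-2/3}})$, with $\Psi_{M_R^{c-3^{-2/3}}}^{-1}(\delta_R)=\Delta_R=\Delta_{R,1}$ and $\Psi_{M_R^{c-3^{-2/3}}}^{-1}(\delta_D)=\Delta_D=\Delta_{D,1}$; these are the very loop-homology classes appearing in the statement, the independence of the auxiliary energy being a consequence of the naturality of the Abbondandolo--Schwarz isomorphisms with respect to monotone morphisms (Lemma 4.5).

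Now Theorem C applied to $M_R^{c-3^{-2/3}}$ with $N=1$ gives
\[ c_{S^2}\big(M_R^{c-3^{-2/3}},\Delta_R\big) = 2\pi L_R\big(c-3^{-2/3}\big), \qquad c_{S^2}\big(M_R^{c-3^{-2/3}},\Delta_D\big) = -2\pi L_D\big(c-3^{-2/3}\big), \]
each in the closed (secant) form. Feeding the inclusion $M_H^c \subset M_R^{c-3^{-2/3}}$ into Monotonicity (part (2) of Theorem A, with $\kappa_{min}:=\kappa_{min}(\Sigma_H^c,\Sigma_R^{c-3^{-2/3}})\ge 1$) yields $c_{S^2}(M_H^c,\Delta_\bullet)\le c_{S^2}(M_R^{c-3^{-2/3}},\Delta_\bullet)$ for $\bullet\in\{R,D\}$, which is precisely the first inequality in each line. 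To upgrade "$\le$" to "$<$" I would reinspect the computation behind Corollary 5.4: on $\Sigma_H^c$ one has $H_R+(c-3^{-2/3}) = \bar q_1^2 - \tfrac12\bar q_2^2 - 3^{-2/3}$, and since $|\bar q_1|<3^{-1/3}$ on $\mathfrak{R}_H^{c}$ (Lemma 5.1) for finite $c$, this function is strictly negative on the compact hypersurface $\Sigma_H^c$; hence $\kappa_{min}>1$, and since the capacities are positive Reeb periods the inequality is strict.

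For the second, numerical, inequality I would check that $c\mapsto L_R(c)$ and $c\mapsto -L_D(c)$ are strictly decreasing on $(c_R^0,+\infty)$: $L_R(c)$ is the positive root of $c=f(x)=\frac{1}{2x^2}-x$ and $f'(x)=-x^{-3}-1<0$ on $(0,+\infty)$, so $f$ is a strictly decreasing bijection and $L_R=f^{-1}$ is strictly decreasing; likewise $f'(x)>0$ for $x\in(-1,0)$, so $L_D$ is strictly increasing there and $-L_D$ strictly decreasing. Since $c-3^{-2/3}>2^{2/3}$ for all $c>c_H^0$, this gives $L_R(c-3^{-2/3})<L_R(2^{2/3})$ and $-L_D(c-3^{-2/3})<-L_D(2^{2/3})$, and evaluating the secant formulas at $c=2^{2/3}$, where $\tfrac{3}{2\cdot 2^{2/3}}=3\cdot 2^{-5/3}$ and $\big(\tfrac{3}{2\cdot 2^{2/3}}\big)^{3/2}=2^{-5/2}3^{3/2}$, produces the constants on the right-hand sides.

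This is essentially bookkeeping on top of Corollary 5.4, Theorem A, and Theorem C, so I expect no serious obstruction. The two points that genuinely need care are (i) the strictness of the fiberwise inclusion $M_H^c\subset M_R^{c-3^{-2/3}}$, i.e.\ $\kappa_{min}>1$, which hinges on the strict bound $|q_1|<3^{-1/3}$ in the description of $\mathfrak{R}_H^{c}$ for finite $c$; and (ii) verifying that $\Delta_R$ and $\Delta_D$ are intrinsic elements of $H_*(\Lambda S^2)$, i.e.\ that the retrograde and direct orbits represent the same classes in $SH_*(M_R^{c'})$ for every $c'>c_R^1$, which again follows from the functoriality of the $\Psi$'s in Lemma 4.5.
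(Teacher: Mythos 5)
Your main line is the paper's own proof: the inclusion $M_H^c \subset M_R^{c-3^{-2/3}}$ of Corollary 5.4, Monotonicity from Theorem A, the values $c_{S^2}(M_R^{c'},\Delta_R)=2\pi L_R(c')$, $c_{S^2}(M_R^{c'},\Delta_D)=-2\pi L_D(c')$ from Theorem C (legitimate because $c-3^{-2/3}>c_H^0-3^{-2/3}=\tfrac{7}{2\cdot 3^{2/3}}>2^{2/3}=c_R^1$), and then passage to the $c$-independent constants at $c'=2^{2/3}$. The paper does this last step via the further inclusion $M_R^{c-3^{-2/3}}\subset M_R^{2^{2/3}+\epsilon}$ and capacity monotonicity, whereas you invoke the strict monotonicity of $L_R$ and $-L_D$ directly; these are the same computation.

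The one place where you go beyond the paper is flawed: the claim $\kappa_{min}(\Sigma_H^c,\Sigma_R^{c-3^{-2/3}})>1$. Strict negativity of $H_R+(c-3^{-2/3})=\bar q_1^2-\tfrac12 \bar q_2^2-3^{-2/3}$ on $H_H^{-1}(-c)$ only controls the un-regularized part of the hypersurfaces. Over the north pole of $S^2$, i.e. at the collision locus added by Moser regularization, the two regularized hypersurfaces actually coincide: on either energy hypersurface one has $\tfrac12|p|^2|q|\to 1$ as $|p|\to\infty$, while the contributions of $|q|(p_1q_2-p_2q_1)$, $|q|(-q_1^2+\tfrac12 q_2^2)$ and $c|q|$ all vanish in the limit, so the fiber of the closure over $N$ is the same unit circle in $T_N^*S^2$ for both problems and for every energy. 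Hence $\kappa\equiv 1$ on that circle and $\kappa_{min}=1$: a uniformly negative value of $H_R+c'$ does not keep the fibers a definite multiplicative distance apart where $|p|\to\infty$, because the relevant rescaled function $|q|(H_R+c')$ tends to the same limit for both hypersurfaces. Consequently Theorem A only yields $c_{S^2}(M_H^c,\Delta_\bullet)\le 2\pi L_\bullet(c-3^{-2/3})$, and the strictness of the first inequality is not established by your argument (the paper asserts this strictness without justification as well). This does not damage the substantive conclusion: the uniform bounds at $2^{2/3}$ remain strict, exactly as you argue, since $c-3^{-2/3}>2^{2/3}$ with a definite gap for all $c>c_H^0$ and $L_R$, $-L_D$ are strictly decreasing.
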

\begin{proof}
By Theorem 5.3 and Corollary 5.4, we know that $M_H^c \subset M_R^{c-3^{-{2 \over 3}}} \subset M_R^{2^{2 \over 3}+\epsilon}$ for sufficiently small $\epsilon>0$ and, using monotonicity of $c_{S^2}$, we have the inequalities
\begin{eqnarray*}
c_{S^2}(M_H^c, \Delta_R) &<& c_{S^2}(M_R^{c-3^{-{2 \over 3}}}, \Delta_R)=2 \pi L_R(c-3^{-{2 \over 3}}) 
\\ &\le& c_{S^2}(M_R^{2^{2 \over 3}+\epsilon}, \Delta_R)=2\pi L_R(2^{2 \over 3}+\epsilon)<2 \pi L_R(2^{2 \over 3}),
\\ c_{S^2}(M_H^c, \Delta_D) &<&  c_{S^2}(M_R^{c-3^{-{2 \over 3}}}, \Delta_D)=-2 \pi L_D(c-3^{-{2 \over 3}})
\\ &\le& c_{S^2}(M_R^{2^{2 \over 3}+\epsilon}, \Delta_D)=-2\pi L_D(2^{2 \over 3}+\epsilon)<-2 \pi L_D(2^{2 \over 3})
\end{eqnarray*}
for all $c >c_H^0$. Theorem follows by expressing $L_R$ and $L_D$ explicitly.
\end{proof}

From the above Theorem, there is an obvious Corollary. Let $l_1(\Sigma, \lambda)$ be the period of the shortest periodic Reeb orbit. $l_1(\Sigma, \lambda)$ is called the systole of the contact manifold $(\Sigma, \lambda)$.

\begin{Cor}
We have the following estimates
\begin{gather*}
l_1(\Sigma_H^c, \lambda_{can})<2\pi \times {1 \over 2}\sqrt{{3 \over 2(c-3^{-{2 \over 3}})}} \sec \left({1 \over 3} \arccos \left( \left({3 \over 2(c-3^{-{2 \over 3}})} \right)^{3 \over 2}\right)\right)
\end{gather*}
for the systole of the regularized Hill's lunar problem. 
\end{Cor}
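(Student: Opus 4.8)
The plan is to obtain this estimate as an immediate consequence of Spectrality (part (3) of Theorem A) together with the explicit upper bound on $c_{S^2}(M_H^c,\Delta_R)$ already established in Theorem 6.2. The key point is that the spectral invariant $c_{S^2}(M_H^c,\Delta_R)$ is, by Spectrality, an actual period of a closed Reeb orbit of $(\Sigma_H^c,\lambda_{can})$, hence is at least the period of the shortest such orbit, which is precisely the systole $l_1(\Sigma_H^c,\lambda_{can})$.

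More precisely, I would first record that $\Delta_R\in H_*(\Lambda S^2)$ is a well-defined nonzero class in the range under consideration: for $c>c_H^0={3^{4\over3}\over2}$ one has $c-3^{-{2\over3}}>c_H^0-3^{-{2\over3}}>2^{2\over3}=c_R^1$, so by Example 4.1.7 and Remark 4.1 the retrograde orbit of the rotating Kepler problem at energy $-(c-3^{-{2\over3}})$ represents a generator $\delta_R$ of $SH_*(M_R^{c-3^{-{2\over3}}})$, and $\Delta_R:=\Psi_{M_R^{c-3^{-{2\over3}}}}^{-1}(\delta_R)$ is the corresponding loop homology class. Since $M_H^c$ is fiberwise convex, in particular fiberwise star-shaped, the number $c_{S^2}(M_H^c,\Delta_R)$ is defined. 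Applying (3) of Theorem A gives $c_{S^2}(M_H^c,\Delta_R)\in Spec(\Sigma_H^c,\lambda_{can})$, so by the definition of the systole as the smallest positive Reeb period we get $l_1(\Sigma_H^c,\lambda_{can})\le c_{S^2}(M_H^c,\Delta_R)$. Theorem 6.2 then furnishes the strict inequality $c_{S^2}(M_H^c,\Delta_R)<2\pi\cdot{1\over2}\sqrt{{3\over2(c-3^{-{2\over3}})}}\sec\!\left({1\over3}\arccos\!\left(\left({3\over2(c-3^{-{2\over3}})}\right)^{3\over2}\right)\right)$, and chaining the two inequalities finishes the proof.

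I do not expect any genuine obstacle here; the corollary is a formal consequence of Theorem A(3) and Theorem 6.2. The only point that requires a line of verification is the elementary numerical inequality $c_H^0-3^{-{2\over3}}>2^{2\over3}$, which guarantees that $\Delta_R$ remains a genuine generator of the symplectic homology throughout the range $c>c_H^0$ (numerically $c_H^0-3^{-{2\over3}}\approx 1.683>1.587\approx 2^{2\over3}$). It would be natural to try to sharpen the corollary by showing that the Reeb orbit realizing $c_{S^2}(M_H^c,\Delta_R)$ is in fact the systole of $\Sigma_H^c$ — as happens for the rotating Kepler problem — but this would require controlling all short Reeb orbits of Hill's lunar problem and is not attempted.
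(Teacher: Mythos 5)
Your proposal is correct and follows essentially the same route as the paper: the corollary is presented there as an immediate ("obvious") consequence of Theorem 6.2, with the implicit step being exactly your use of Spectrality from Theorem A to place $c_{S^2}(M_H^c,\Delta_R)$ in $Spec(\Sigma_H^c,\lambda_{can})$ and hence above the systole $l_1(\Sigma_H^c,\lambda_{can})$. Your extra check that $c-3^{-{2\over 3}}>2^{2\over 3}$ for all $c>c_H^0$, so that $\Delta_R$ is a genuine generator, is a sensible verification consistent with the paper's Remark 4.1 and adds nothing contradictory.
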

For example, if we consider $c=c_H^0$(in fact, arbitrarily close $c$ to $c_H^0$), then we have spectral gap
\begin{gather*}
2\pi \times 0.43029 \approx 2\pi \times {-1 +\sqrt{82} \over 3^{8 \over 3}}<c_{S^2}(M_H^{{3^{4 \over 3} \over 2}}, \Delta_R)<2\pi \times 0.49053,
\\ 2\pi \times 0.53713 \approx 2\pi \times {1 +\sqrt{82} \over 3^{8 \over 3}}<c_{S^2}(M_H^{{3^{4 \over 3} \over 2}}, \Delta_D)<2\pi \times 0.79370
\end{gather*}
for the contact manifold $(\Sigma_H^{{3^{4 \over 3} \over 2}}, \lambda_{can})$. This means 
\begin{gather*}
Spec(\Sigma_H^{{3^{4 \over 3} \over 2}}, \lambda_{can}) \cap (2\pi \times 0.43029, 2\pi \times 0.49053) \ne \phi,
\\ Spec(\Sigma_H^{{3^{4 \over 3} \over 2}}, \lambda_{can}) \cap (2\pi \times 0.53713, 2\pi \times 0.79370) \ne \phi
\end{gather*}
Because the upper bound of these estimates is global, we can say
\begin{gather*}
l_1(\Sigma_H^c, \lambda_{can})<\pi
\end{gather*} 
for every $c >c_H^0$. As we discussed in Example 4.1.7 and Remark 4.1, If the condition
\begin{gather*}
0<-L_D(c)^3<{1 \over P+1}
\end{gather*}
holds for some $P \in \mathbb{N}$, then we can use the $N$-th iteration of the retrograde and direct orbits as generators of symplectic homology for $N=1, 2, \cdots, P$. For such $c$, we denote these generators by $\delta_{R, N}$ and $\delta_{D, N}$, respectively, for each $N=1, 2, \cdots, P$. Moreover, one can easily compute that
\begin{gather*}
L_R(c_R^P)={-(P+1)+\sqrt{(P+1)(P+9)} \over 4(P+1)^{1 \over 3}},
\\ L_D(c_R^P)=-(P+1)^{-{1 \over 3}}
\end{gather*}
using $c_R^P={1 \over 2}(P+1)^{2 \over 3}+(P+1)^{-{1 \over 3}}$ for all $P \in \mathbb{N}$. Therefore, if we combine these fact with Corollary 5.6 and Proposition 5.7, then we have the following Theorem

\begin{Thm}
Suppose that $c \in [c_H^P, c_H^{P+1})$ for $P \in  \{2, 3, \cdots\}$. Then we have estimates of symplectic capacity
\begin{gather*}
2 \pi N{-1+\sqrt{1+8c^3} \over 4c^2} \le c_{S^2}(M_H^c, \Delta_{R, N}) \le 2\pi N {-(P+1)+\sqrt{(P+1)(P+9)} \over 4(P+1)^{1 \over 3}},
\\ 2 \pi N{1+\sqrt{1+8c^3} \over 4c^2} \le c_{S^2}(M_H^c, \Delta_{D, N}) \le 2\pi N (P+1)^{-{1 \over 3}}
\end{gather*}
for all $N=1, 2, \cdots, P$.
\end{Thm}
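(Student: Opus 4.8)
The strategy is to squeeze $M_H^c$ between two fiberwise convex domains of the regularized rotating Kepler problem — for which Theorem C gives the spectral invariants in closed form — and then invoke Monotonicity (part (2) of Theorem A). Fix $P\ge 2$ and $c\in[c_H^P,c_H^{P+1})$. Proposition 5.7 gives $M_R^{c+1/(2c^2)}\subset M_H^c$ and Corollary 5.6 (part (2) of Theorem B) gives $M_H^c\subset M_R^{c_R^P}$, so Monotonicity of $c_{S^2}(\cdot,\alpha)$ yields, for every $N=1,\dots,P$,
\[
c_{S^2}\bigl(M_R^{c+1/(2c^2)},\Delta_{R,N}\bigr)\ \le\ c_{S^2}(M_H^c,\Delta_{R,N})\ \le\ c_{S^2}\bigl(M_R^{c_R^P},\Delta_{R,N}\bigr),
\]
and the corresponding chain with $\Delta_{D,N}$ in place of $\Delta_{R,N}$. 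What remains is to evaluate the two outer quantities by Theorem C together with an elementary root computation.

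For the lower bounds I would first verify $c+1/(2c^2)>c_R^P$. This holds because $c\ge c_H^P$ and $c_H^P>c_R^P$, the latter reducing, after clearing the common denominator and squaring the inequality $P+5\ge\sqrt{(P+1)(P+9)}$, to $25>9$. Hence $c+1/(2c^2)$ lies in some window $[c_R^Q,c_R^{Q+1})$ with $Q\ge P$, so Theorem C applies at that energy and $c_{S^2}(M_R^{c+1/(2c^2)},\Delta_{R,N})=2\pi N\,L_R(c+1/(2c^2))$ and $c_{S^2}(M_R^{c+1/(2c^2)},\Delta_{D,N})=-2\pi N\,L_D(c+1/(2c^2))$ for $N\le P$. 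To compute $L_R,L_D$ at this argument, recall they are the positive and the largest-negative roots of $c+1/(2c^2)=\tfrac1{2x^2}-x$; clearing denominators gives a cubic that is divisible by $2c^2x^2+x-c$ (substituting $2c^2x^2=c-x$ makes the cubic vanish identically), whose roots are $\dfrac{-1\pm\sqrt{1+8c^3}}{4c^2}$. Thus $L_R(c+1/(2c^2))=\dfrac{-1+\sqrt{1+8c^3}}{4c^2}$ and $-L_D(c+1/(2c^2))=\dfrac{1+\sqrt{1+8c^3}}{4c^2}$, which are exactly the asserted lower bounds.

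For the upper bounds I would apply Theorem C at $c_R^P$ itself (the window $[c_R^P,c_R^{P+1})$ is half-open precisely so that its left endpoint is included), giving $c_{S^2}(M_R^{c_R^P},\Delta_{R,N})=2\pi N\,L_R(c_R^P)$ and $c_{S^2}(M_R^{c_R^P},\Delta_{D,N})=-2\pi N\,L_D(c_R^P)$ for $N\le P$. Writing $c_R^P=\tfrac12(P+1)^{2/3}+(P+1)^{-1/3}$, one root of $c_R^P=\tfrac1{2x^2}-x$ is visibly $x=-(P+1)^{-1/3}$, so $L_D(c_R^P)=-(P+1)^{-1/3}$; dividing the cubic $2x^3+2c_R^Px^2-1$ by $x+(P+1)^{-1/3}$ leaves the quadratic $2x^2+(P+1)^{2/3}x-(P+1)^{1/3}$, whose positive root is $L_R(c_R^P)=\dfrac{-(P+1)+\sqrt{(P+1)(P+9)}}{4(P+1)^{1/3}}$. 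Substituting into the inequalities above gives the claimed upper bounds $2\pi N\dfrac{-(P+1)+\sqrt{(P+1)(P+9)}}{4(P+1)^{1/3}}$ and $2\pi N(P+1)^{-1/3}$.

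The only delicate point is the use of Theorem C at the left endpoint $c_R^P$: this is exactly the birth value of $T_{P+1,1}$ and the degeneracy value of the $P$-fold covered direct orbit $\gamma_{D,P}$, so the chain complex there is not manifestly generated by the iterated retrograde and direct orbits alone. I would deal with this in the way already used in the paper — either via the transversally-nondegenerate $C^0$-Hamiltonian framework of Section 4.1 (under which $\gamma_{D,P}$ still contributes the action $-2\pi P\,L_D(c_R^P)$), or, to be safe, by the $\varepsilon$-perturbation from the proof of Theorem 6.2: for $c>c_H^P$ one even has $M_H^c\subset M_R^{c_R^P+\varepsilon}$ for some $\varepsilon(c)>0$, since then $\Sigma_H^c$ lies strictly inside $M_R^{c_R^P}$ (using $\bar q_1^2<(\bar q_1^M)^2=c_H^P-c_R^P$ on $\mathfrak{R}_H^c$), so Monotonicity plus continuity of $L_R,L_D$ and $\varepsilon\downarrow 0$ gives the bound, with the single remaining value $c=c_H^P$ covered directly by Theorem C at $c_R^P$. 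All the remaining steps are a mechanical combination of Theorems A, B and C with the two elementary root computations above.
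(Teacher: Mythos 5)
Your proposal is correct and follows essentially the same route as the paper: sandwich $M_H^c$ between $M_R^{c+{1 \over 2c^2}}$ and $M_R^{c_R^P}$ via Proposition 5.7 and Corollary 5.6, apply Monotonicity of $c_{S^2}$ from Theorem A, and evaluate the outer capacities by Theorem C together with the explicit roots $L_R$, $L_D$ at $c+{1 \over 2c^2}$ and at $c_R^P$ (the same factorizations the paper uses in Theorem 6.1 and just before Theorem 6.4). Your extra discussion of the endpoint $c=c_R^P$, where the $P$-fold covered direct orbit degenerates, is additional care on a point the paper glosses over, not a divergence in method.
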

This provides spectral gaps for the contact manifold $(\Sigma_H^c, \lambda_{can})$. If $c \in [c_H^P, c_H^{P+1})$, then we have
\begin{gather*}
Spec(\Sigma_H^c, \lambda_{can}) \cap ([2 \pi N{-1+\sqrt{1+8c^3} \over 4c^2}, 2\pi N {-(P+1)+\sqrt{(P+1)(P+9)} \over 4(P+1)^{1 \over 3}}]; 2N-1) \ne \phi
\end{gather*}
and
\begin{gather*}
Spec(\Sigma_H^c, \lambda_{can}) \cap ([2 \pi N{1+\sqrt{1+8c^3} \over 4c^2}, 2\pi N (P+1)^{-{1 \over 3}}]; 2N+1) \ne \phi
\end{gather*}
for $N=1, 2, \cdots, P$. We denote by $([a, b]; k)$ the action value between $a$ and $b$ with Conley-Zehnder index $k$. This implies only the existence of orbits with an action range and an index. Thus we do not know whether they are geometrically different or a orbit is multiple cover of another orbit and so on. Unfortunately, it is hard to get such geometric informations from homological informations. Conjecturally, the author guesses that the retrograde orbit of Hill's lunar problem has period $c_{S^2}(M_H^c, \Delta_R)$ with index 1 and the direct orbit of Hill's lunar problem has period $c_{S^2}(M_H^c, \Delta_D)$ with index 3. However, there is no evidence for this guess.

\section{Appendix}

We recall the definition of the systole and the systolic volume of a contact manifold $(\Sigma, \lambda)$ from \cite{AB}.

\begin{Def}
The systole of a contact manifold $(\Sigma, \lambda)$ is the smallest period of its periodic Reeb orbits. We denote the systole of $(\Sigma, \lambda)$ by $l_1(\Sigma, \lambda)$. We define the systolic volume of $(\Sigma, \lambda)$ by
\begin{gather*}
\mathfrak{S}(\Sigma, \lambda)={Vol(\Sigma, \lambda) \over l_1(\Sigma, \lambda)^n}
\end{gather*}
where $(\Sigma, \lambda)$ is a $(2n-1)$-dimensional contact manifold and $Vol(\Sigma, \lambda)$ is the contact volume $\int_{\Sigma} \lambda \wedge \lambda^{n-1}$.
\end{Def}

The goal of this Appendix is to find the systolic volume $\mathfrak{S}(\Sigma_R^c, \lambda_{can})$ for the energy hypersurface of the regularized rotating Kepler problem at energy $-c$. We already know that the systole periodic orbit is the retrograde orbit and its action.

\begin{Cor}
The systole of $(\Sigma_R^c, \lambda_{can})$ is
\begin{gather*}
l_1(\Sigma_R^c, \lambda_{can})=2 \pi L_R(c)
\end{gather*}
where $L_R(c)$ is the positive zero of the equation
\begin{gather*}
c={1 \over 2x^2}-x
\end{gather*}
for each $c>{3 \over 2}$. 
\end{Cor}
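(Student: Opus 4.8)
The plan is to read the systole off the description of $Spec(\Sigma_R^c,\lambda_{can})$ obtained in Section 3.2, and then to check that the simple retrograde orbit $\gamma_R^c$ realizes the smallest Reeb period. By definition $l_1(\Sigma_R^c,\lambda_{can})=\inf Spec(\Sigma_R^c,\lambda_{can})$, and $2\pi L_R(c)=\mathcal{A}(\gamma_R^c)$ already lies in this set; so it is enough to prove $2\pi L_R(c)\le a$ for every $a\in Spec(\Sigma_R^c,\lambda_{can})$. That spectrum is $2\pi L_R(c)\mathbb{N}\cup(-2\pi L_D(c))\mathbb{N}$ together with the non-circular values $\mathcal{A}(\gamma_{k,l}^c)=2\pi(-lc+\tfrac{3}{2}k^{2/3}l^{1/3})$ for admissible $k>l\ge 1$. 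The least element of the first family is $2\pi L_R(c)$ and of the second is $-2\pi L_D(c)$, so two inequalities remain: $L_R(c)\le -L_D(c)$, and $2\pi L_R(c)\le\mathcal{A}(\gamma_{k,l}^c)$ for every admissible $(k,l)$.

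The retrograde--direct comparison is elementary: $a:=L_R(c)>0$ and $b:=-L_D(c)\in(0,1)$ both solve $c=f(x)=\tfrac{1}{2x^2}-x$, hence $\tfrac{1}{2a^2}-a=\tfrac{1}{2b^2}+b$; if $a\ge b$ then $\tfrac{1}{2a^2}\le\tfrac{1}{2b^2}$ and $-a\le -b<b$, which would make the left side strictly smaller than the right. So $a<b$, meaning $\gamma_R^c$ is strictly shorter than $\gamma_D^c$ and than all its iterates.

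For the non-circular orbits I would use the bifurcation picture recorded in Section 3: $\gamma_{k,l}^c$ exists precisely for $c\in(c_{k,l}^-,c_{k,l}^+)$, equivalently $L_R(c)^3<l/k<-L_D(c)^3$; its action is affine in $c$ with negative slope $-2\pi l$; and it degenerates to the $(k+l)$-fold cover of $\gamma_R$ at $c=c_{k,l}^-$ and to the $(k-l)$-fold cover of $\gamma_D$ at $c=c_{k,l}^+$. Since $c_{k,l}^-=f((l/k)^{1/3})$ and $c_{k,l}^+=f(-(l/k)^{1/3})$ give $L_R(c_{k,l}^-)=(l/k)^{1/3}=-L_D(c_{k,l}^+)$, continuity of the action yields $\mathcal{A}(\gamma_{k,l}^{c_{k,l}^-})=(k+l)\cdot 2\pi L_R(c_{k,l}^-)>2\pi L_R(c_{k,l}^-)$ (using $k+l\ge 2$) and $\mathcal{A}(\gamma_{k,l}^{c_{k,l}^+})=(k-l)\cdot 2\pi(-L_D(c_{k,l}^+))\ge 2\pi(-L_D(c_{k,l}^+))>2\pi L_R(c_{k,l}^+)$ by the previous step. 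Hence $c\mapsto\mathcal{A}(\gamma_{k,l}^c)$ dominates $c\mapsto 2\pi L_R(c)$ at both endpoints of $[c_{k,l}^-,c_{k,l}^+]$. To cover the interior I would invoke convexity: implicit differentiation of $c=\tfrac{1}{2L_R^2}-L_R$ gives $L_R'(c)=-L_R^3/(1+L_R^3)$ and then $L_R''(c)=3L_R^5/(1+L_R^3)^3>0$, so $c\mapsto 2\pi L_R(c)$ is convex. On $[c_{k,l}^-,c_{k,l}^+]$ the affine function $c\mapsto\mathcal{A}(\gamma_{k,l}^c)$ stays above the chord of this convex function (two affine functions of one variable that are ordered at both endpoints of an interval are ordered throughout), and the chord stays above $2\pi L_R(c)$ by convexity; therefore $\mathcal{A}(\gamma_{k,l}^c)>2\pi L_R(c)$ on the whole interval. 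Assembling the three estimates gives $\inf Spec(\Sigma_R^c,\lambda_{can})=2\pi L_R(c)$, attained by $\gamma_R^c$, which proves the claim.

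The one step that is not purely mechanical is the convexity of $c\mapsto L_R(c)$; everything else drops out of the explicit formulas of Section 3. Should one prefer to avoid it, the inequality $-lc+\tfrac{3}{2}k^{2/3}l^{1/3}>L_R(c)$ can be checked directly as a single-variable estimate from $L_R(c)^3<l/k$ and $L_R(c)<(l/k)^{1/3}$, but reusing the bifurcation structure is cleaner and relies only on results already proved.
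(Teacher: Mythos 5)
Your argument is correct, and it supplies substantially more than the paper does: the paper never really proves this corollary, but simply asserts after Proposition 3.2 that ``it is easy to see'' from the graphs of $L_R(c)$ and $-L_D(c)$ (Figure 4) that the retrograde orbit has the smallest action, and the Appendix then quotes that assertion. Your route starts from the same place --- the explicit description of $Spec(\Sigma_R^c,\lambda_{can})$ --- but you actually verify the two comparisons the paper leaves implicit: the elementary root comparison giving $L_R(c)<-L_D(c)$, and, more importantly, the estimate $\mathcal{A}(\gamma_{k,l}^c)>2\pi L_R(c)$ for the non-circular orbits, which Figure 4 does not address at all. Your mechanism for the latter (the action of $\gamma_{k,l}$ is affine in $c$, it dominates $2\pi L_R$ at both bifurcation endpoints $c_{k,l}^{\pm}$ because there it equals $(k+l)\,2\pi L_R$ resp.\ $(k-l)\,2\pi(-L_D)$, and $c\mapsto L_R(c)$ is convex since $L_R''=3L_R^5/(1+L_R^3)^3>0$) is sound; I checked the endpoint identities $-lc_{k,l}^{-}+\tfrac32 k^{2/3}l^{1/3}=(k+l)(l/k)^{1/3}$ and $-lc_{k,l}^{+}+\tfrac32 k^{2/3}l^{1/3}=(k-l)(l/k)^{1/3}$, and they do hold. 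Two small points you should make explicit: the left endpoint $c_{k,l}^{-}$ can lie below $\tfrac32$ (e.g.\ $c_{2,1}^{-}=0$), which is harmless because $L_R(c)$, being the unique positive root of $f(x)=c$ with $f$ strictly decreasing on $(0,\infty)$, is defined, smooth and convex for all $c$, so the chord argument on $[c_{k,l}^{-},c_{k,l}^{+}]$ is legitimate; and your use of $L_R<-L_D$ at the right endpoint needs $c_{k,l}^{+}>\tfrac32$, which follows since $f\ge\tfrac32$ on the negative axis with equality only at $x=-1$, i.e.\ only when $l=k$, which is excluded. With those remarks added, your proof is a complete justification of a statement the paper only claims by inspection.
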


It is enough to obtain the contact volume of $(\Sigma_R^c, \lambda_{can})$. We use the result in \cite{CFvK}. They compute the Finsler function
\begin{gather*}
F_c^*(q, p)={1 \over 4}(|p|^2+2c)|q|\Big( 1+ \sqrt{1+{16<p^{\perp}, q> \over |q|(|p|^2+2c)^2}}\Big)
\end{gather*}
corresponding to $(\Sigma_R^c, \lambda_{can})$ in the stereographic projection chart of $T^* S^2$. Namely, we have
\begin{gather*}
(F_c^*)^{-1}(1)=\Phi^{-1}(\Sigma_R^c)
\end{gather*}
for each $c>{3 \over 2}$. We also have that
\begin{gather*}
Vol(\Sigma_R^c, \lambda_{can})=\int_{\Sigma_R^c} \lambda_{can} \wedge d \lambda_{can}=\int_{M_R^c} \omega_{can}^2
\end{gather*}
by definition and Stokes' Theorem. Furthermore, we can deduce
\begin{eqnarray*}
Vol(\Sigma_R^c, \lambda_{can})&=&\int_{\Phi \circ \Phi^{-1}(M_R^c)} \omega_{can}^2=\int_{\Phi^{-1}(M_R^c)} (\Phi^* \omega_{can})^2
\\ &=& \int_{F_c^*(q, p) \le 1} 2 dq dp
\end{eqnarray*}
where the last term is the usual Riemann integral. We note that
\begin{eqnarray*}
& & 1=F_c^*(q, p)
\\ &\iff& 1={1 \over 4}(|p|^2+2c)|q|\Big( 1+ \sqrt{1+{16<p^{\perp}, q> \over |q|(|p|^2+2c)^2}}\Big)
\\ &\iff& 1={1 \over 2}(|p|^2+2c)|q|+|q|<p^{\perp}, q>
\end{eqnarray*}
We use the polar coordinates $(q_1, q_2)=(r \cos \theta, r \sin \theta)$. Then we can express the condition
\begin{eqnarray*}
& & 1=F_c^*(q, p)
\\ &\iff& <p^{\perp}, u_{\theta}>r^2+{1 \over 2}(|p|^2+2c)r-1=0
\end{eqnarray*}
in terms of $r, \theta$ where $u_{\theta}=(\cos \theta, \sin \theta)$. For fixed $p, c$, we have the polar equation
\begin{gather*}
r_{p, c}(\theta)={4 \over {(|p|^2+2c)+\sqrt{(|p|^2+2c)^2+16<p^{\perp}, u_{\theta}>}}}
\end{gather*}
for the trajectory of $q$. Therefore,
\begin{eqnarray*}
& & \int_{F_c^*(q, p) \le 1} 2 dq dp
\\ &=& \int_{\mathbb{R}^2} \Big( \int_{0}^{2\pi} 2\cdot{1\over 2}r_{p,c}(\theta)^2 d\theta \Big) dp
\\ &=&\int_{\mathbb{R}^2} \Big( \int_{0}^{2\pi} {16 \over ({(|p|^2+2c)+\sqrt{(|p|^2+2c)^2+16<p^{\perp}, u_{\theta}>}})^2} d \theta \Big) dp
\\ &=&\int_{\mathbb{R}^2} \Big( \int_{0}^{2\pi} {16 \over ({(|p|^2+2c)+\sqrt{(|p|^2+2c)^2+16|p| \cos \theta}})^2} d \theta \Big) dp
\end{eqnarray*}
If we use the polar coordinates $(p_1, p_2)=(R\cos \alpha, R \sin \alpha)$, then we get the integral
\begin{eqnarray*}
& & \int_{\mathbb{R}^2} \Big( \int_{0}^{2\pi} {16 \over ({(|p|^2+2c)+\sqrt{(|p|^2+2c)^2+16|p| \cos \theta}})^2} d \theta \Big) dp
\\ &=& \int_{0}^{\infty} \int_{0}^{2 \pi} {32\pi R \over {((R^2+2c)+\sqrt{(R^2+2c)^2+16R \cos \theta})^2}} d\theta dR
\end{eqnarray*}
for the volume. In sum, we have the contact volume
\begin{gather*}
Vol(\Sigma_R^c, \lambda_{can})=\int_{0}^{\infty} \int_{0}^{2 \pi} {32\pi r \over {((r^2+2c)+\sqrt{(r^2+2c)^2+16r \cos \theta})^2}} d\theta dr
\end{gather*}
and therefore we have proved the following Theorem.

\begin{Thm}
The systolic volume of the energy hypersurface of the regularized rotating Kepler problem is given by
\begin{gather*}
\mathfrak{S}(\Sigma_R^c, \lambda_{can})={1 \over (2 \pi L_R(c))^2} {\int_{0}^{\infty} \int_{0}^{2 \pi} {32\pi r \over {((r^2+2c)+\sqrt{(r^2+2c)^2+16r \cos \theta})^2}} d\theta dr}
\end{gather*}
for each $c>{3 \over 2}$ where $L_R(c)$ is the positive zero of the equation $c={1 \over 2x^2}-x$.
\end{Thm}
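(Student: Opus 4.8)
The plan is to read off $n$ from the dimension ($\Sigma_R^c$ is a $3$-manifold, so $2n-1=3$ and $n=2$) and to use the definition $\mathfrak{S}(\Sigma_R^c,\lambda_{can})=\mathrm{Vol}(\Sigma_R^c,\lambda_{can})/l_1(\Sigma_R^c,\lambda_{can})^2$. The systole is already identified: by Corollary 7.1, which itself rests on the spectrum computation of Proposition 3.2 together with the observation (visible from Figure 4) that the retrograde orbit is the shortest Reeb orbit, we have $l_1(\Sigma_R^c,\lambda_{can})=2\pi L_R(c)$ for $c>\tfrac32$. So the entire content of the theorem reduces to producing the claimed closed form for the contact volume $\mathrm{Vol}(\Sigma_R^c,\lambda_{can})=\int_{\Sigma_R^c}\lambda_{can}\wedge d\lambda_{can}$.

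For the volume I would argue as follows. Since $\Sigma_R^c=\partial M_R^c$ and $d(\lambda_{can}\wedge d\lambda_{can})=\omega_{can}^2$, Stokes' theorem gives $\mathrm{Vol}(\Sigma_R^c,\lambda_{can})=\int_{M_R^c}\omega_{can}^2$. Next I would transport the integral to the stereographic chart $\Phi$ of $T^*S^2$: pulling back, $\int_{M_R^c}\omega_{can}^2=\int_{\Phi^{-1}(M_R^c)}(\Phi^{*}\omega_{can})^2$, and $(\Phi^{*}\omega_{can})^2$ is just twice the standard Lebesgue volume $dq\,dp$ on $T^*\mathbb{R}^2$. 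Here one uses the description of $\Phi^{-1}(M_R^c)$ as the sublevel set $\{F_c^{*}\le 1\}$ of the dual Finsler function $F_c^{*}$ computed in \cite{CFvK}, so that $\mathrm{Vol}(\Sigma_R^c,\lambda_{can})=\int_{\{F_c^{*}(q,p)\le 1\}}2\,dq\,dp$.

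The remaining step is the explicit evaluation. Unpacking the formula for $F_c^{*}$, the inequality $F_c^{*}(q,p)\le 1$ is equivalent to $\langle p^{\perp},u_\theta\rangle r^2+\tfrac12(|p|^2+2c)r-1\le 0$ in polar coordinates $q=(r\cos\theta,r\sin\theta)$, $u_\theta=(\cos\theta,\sin\theta)$; for each fixed $(p,\theta)$ this confines $r$ to $[0,r_{p,c}(\theta)]$, where $r_{p,c}(\theta)$ is the relevant root of the quadratic. Integrating $2r\,dr$ over this interval contributes $r_{p,c}(\theta)^2$, and then passing to polar coordinates $p=(R\cos\alpha,R\sin\alpha)$ and using rotational invariance (of both the domain and the integrand under simultaneous rotation of $p$ and $\theta$) to replace $\langle p^{\perp},u_\theta\rangle$ by $R\cos\theta$ yields exactly the stated double integral after performing the trivial $\alpha$-integration. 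I expect the main obstacle to be the bookkeeping of this change of variables — in particular, carefully justifying the rotational-symmetry reduction $\langle p^{\perp},u_\theta\rangle\rightsquigarrow R\cos\theta$ and checking convergence of the improper $R$-integral — rather than any conceptual difficulty; everything else is routine once the Finsler function from \cite{CFvK} and Corollary 7.1 are in hand.
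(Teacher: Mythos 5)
Your proposal follows essentially the same route as the paper's appendix proof: systole from Corollary 7.1, Stokes' theorem to convert the contact volume to $\int_{M_R^c}\omega_{can}^2$, pullback to the stereographic chart where the domain is the sublevel set $\{F_c^*\le 1\}$ of the Finsler function from \cite{CFvK}, reduction of $F_c^*=1$ to the quadratic $\langle p^\perp,u_\theta\rangle r^2+\tfrac12(|p|^2+2c)r-1=0$, and the same polar-coordinate and rotational-symmetry computation yielding the stated double integral. The argument is correct and matches the paper's proof in both structure and detail.
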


\newpage

\end{document}